\newtheorem{theo}{{Theorem}}[section]
\newtheorem{coro}[theo]{{Corollary}}
\newtheorem{lemma}[theo]{{Lemma}}
\newtheorem{prop}[theo]{Proposition}
\newtheorem*{claim}{{Claim}}
\newtheorem*{ntheo}{{Theorem}}
\theoremstyle{definition} \newtheorem{remark}[theo]{\textbf{Remark}}
\newtheorem{defn}[theo]{Definition}
\newtheorem{example}[theo]{Example}
\newcommand{\ra}{\rightarrow}
\newcommand{\ol}{\overline}
\newcommand{\ul}{\underline}
\newcommand{\rA}{\mathrm{A}}
\newcommand{\rB}{\mathrm{B}}
\newcommand{\rC}{\mathrm{C}}
\newcommand{\rD}{\mathrm{D}}
\newcommand{\rS}{\mathrm{S}}
\newcommand{\rG}{\mathrm{G}}
\newcommand{\rH}{\mathrm{H}}
\newcommand{\rI}{\mathrm{I}}
\newcommand{\rJ}{\mathrm{J}}
\newcommand{\rT}{\mathrm{T}}
\newcommand{\rM}{\mathrm{M}}
\newcommand{\rN}{\mathrm{N}}
\newcommand{\rL}{\mathrm{L}}
\newcommand{\rP}{\mathrm{P}}
\newcommand{\rQ}{\mathrm{Q}}
\newcommand{\rR}{\mathrm{R}}
\newcommand{\rE}{\mathrm{E}}
\newcommand{\rF}{\mathrm{F}}
\newcommand{\rU}{\mathrm{U}}
\newcommand{\Nr}{\mathrm{Nr}}
\newcommand{\rW}{\mathrm{W}}
\newcommand{\rX}{\mathrm{X}}
\newcommand{\rY}{\mathrm{Y}}
\newcommand{\rZ}{\mathrm{Z}}
\newcommand{\Cl}{\mathrm{Cl}}
\newcommand{\sE}{\mathcal{E}}
\newcommand{\sF}{\mathcal{F}}
\newcommand{\sG}{\mathcal{G}}
\newcommand{\sH}{\mathcal{H}}
\newcommand{\sM}{\mathcal{M}}
\newcommand{\sO}{\mathcal{O}}
\newcommand{\sP}{\mathcal{P}}
\newcommand{\sR}{\mathcal{R}}
\newcommand{\sV}{\mathcal{V}}
\newcommand{\Hom}{\mathrm{Hom}}
\newcommand{\Aut}{\mathrm{Aut}}
\newcommand{\Autext}{\mathrm{Autext}}
\newcommand{\Stab}{\mathrm{Stab}}
\newcommand{\Centr}{\mathrm{Centr}}
\newcommand{\Isom}{\mathrm{Isom}}
\newcommand{\Isomext}{\mathrm{Isomext}}
\newcommand{\Isomint}{\mathrm{Isomint}}
\newcommand{\Norm}{\mathrm{Norm}}
\newcommand{\Transt}{\mathrm{Transt}}
\newcommand{\Trans}{\mathrm{Trans}}
\newcommand{\Par}{\mathrm{Par}}
\newcommand{\Dyn}{\mathrm{Dyn}}
\newcommand{\Of}{\mathrm{Of}}
\newcommand{\diag}{\mathrm{diag}}
\newcommand{\inter}{\mathrm{int}}
\newcommand{\id}{\mathrm{id}}
\newcommand{\ad}{\mathrm{ad}}
\newcommand{\sesi}{\mathrm{ss}}
\newcommand{\der}{\mathrm{der}}
\newcommand{\sico}{\mathrm{sc}}
\newcommand{\rad}{\mathrm{rad}}
\newcommand{\corad}{\mathrm{corad}}
\newcommand{\rk}{\mathrm{rank}}
\newcommand{\Gal}{\mathrm{Gal}}
\newcommand{\spec}{\mathrm{Spec}}
\newcommand{\spin}{\mathrm{Spin}}
\newcommand{\pso}{\mathrm{PSO}}
\newcommand{\Lie}{\mathrm{Lie}}
\newcommand{\lieg}{\mathfrak{g}}
\newcommand{\liet}{\mathfrak{t}}
\newcommand{\gm}{\mathbb{G}}
\newcommand{\fE}{\mathfrak{E}}
\newcommand{\fG}{\mathfrak{G}}
\newcommand{\fH}{\mathfrak{H}}
\newcommand{\fX}{\mathfrak{X}}
\newcommand{\fY}{\mathfrak{Y}}
\newcommand{\fZ}{\mathfrak{Z}}
\newcommand{\bt}{\mathbf{t}}
\newcommand{\bM}{\mathbf{M}}
\newcommand{\bT}{\mathbf{T}}
\newcommand{\bv}{\mathbf{v}}
\newcommand{\bD}{\mathbf{D}}
\newcommand{\rat}{\mathbb{Q}}
\newcommand{\ent}{\mathbb{Z}}
\newcommand{\re}{\mathbb{R}}
\newcommand{\cpx}{\mathbb{C}}
  \newcommand{\textcyr}[1]{%
    {\fontencoding{OT2}\fontfamily{wncyr}\fontseries{m}\fontshape{n}%
     \selectfont #1}}
\newcommand{\sha}{{\mbox{\textcyr{Sh}}}}
\begin{document}
\tolerance 400 \pretolerance 200 \selectlanguage{english}
\title{Embedding functors and their arithmetic properties}
\date{\today}
\author{}
\maketitle

\begin{center}
{\bf Abstract}
\end{center}
\small{ In this article, we focus on how to embed a torus $\rT$ into
a reductive group $\rG$ with respect to a given root datum $\Psi$
over a scheme $\rS$. This problem is also related to embedding an
\'etale algebra with involution into a central simple algebra with
involution (cf.~\cite{PR1}). We approach this problem by defining
the embedding functor, which is representable and is a left
homogeneous space over $\rS$ under the automorphism group of $\rG$.
In order to fix a connected component of the embedding functor, we
define an orientation $u$ of $\Psi$ with respect to $\rG$. We  show
that the oriented embedding functor is also representable and is a
homogeneous space under the adjoint action of $\rG$. Over a local
field, the orientation $u$ and the Tits index of $\rG$ determine the
existence of an embedding of $\rT$ into $\rG$ with respect to the
given root datum $\Psi$. We also use the techniques developed in
Borovoi's paper~\cite{Bo} to prove that the local-global principle
holds for oriented embedding functors in certain cases. Actually,
the Brauer-Manin obstruction is the only obstruction to the
local-global principle for the oriented embedding functor. Finally,
we apply the results on oriented embedding functors to give an
alternative proof of Prasad and Rapinchuk's Theorem, and to improve
Theorem 7.3 in~\cite{PR1}.

\noindent {\bf Key words:} torus, root datum, reductive group,
central simple algebra, \'etale algebra, local-global principle, \
Tits index

\noindent{\bf   MSC 2010:} 11E57, 14L15, 14L30, 14L35, 20G30. }
\normalsize

\section*{Introduction}
Let $K$ be a field, $\rA$ be a central simple algebra over $K$ with
involution $\tau$, and $\rE$ be an \'etale algebra over $K$ with
involution $\sigma$. Suppose that $\tau|_K=\sigma|_K$. Let $k$ be
the field of invariants $K^{\tau}$, which is a global field.
Motivated by the weak commensurability and length-commensurability
between locally symmetric spaces (\cite{PR2}), Prasad and Rapinchuk
discuss in \cite{PR1} the local-global principle for embeddings of
$(\rE,\sigma)$ into $(\rA,\tau)$ over $K$. This problem is also
related to studying the condition under which  the isomorphism
classes of simple groups are determined by their isomorphism classes
of maximal tori over a number field (cf.~\cite{Ga} and~\cite{PR2}
Thm 7.5).

Motivated by the work of Prasad and Rapinchuk, we consider the
embedding problem of a twisted root datum. Loosely speaking, a
twisted root datum is a torus equipped with some extra data related
to the roots. In this article, we transform such embedding problem
of algebras into a embedding problem of algebraic groups, in order
to work in a more conceptual framework. Moreover, in this framework,
our criteria can be applied not only to the classical groups but
also to the exceptional groups. Instead of  a global field, we work
 over an arbitrary scheme.

Let $\rS$ be a scheme and $\rG$ be a reductive group scheme over
$\rS$. Given an $\rS$-torus $\rT$ and a twisted root datum $\Psi$
associated to $\rT$, we want to know when it is possible to embed
$\rT$ in $\rG$ so that the corresponding twisted root datum
$\Phi(\rG,\rT)$ is isomorphic to $\Psi$.  To approach this problem,
we first define the \emph{embedding functor} $\fE(\rG,\Psi)$.
Roughly speaking, each point of the embedding functor is a closed
immersion $f$ from $\rT$ to $\rG$ such that the twisted root datum
$\Phi(\rG,f(\rT))$ is isomorphic to $\Psi$. For the formal
definition, we refer to Section 1.1. Then our problem can be
reformulated as: when is the set $\fE(\rG,\rT)(\rS)$ nonempty?

We first prove that the embedding functor is a sheaf for the \'etale
topology (in the sense of big \'etale site). To be more precise, the
embedding functor is a homogeneous sheaf under the action of the
automorphism group $\ul{\Aut}_{\rS-gr}(\rG)$ over $\rS$, and it is a
principal homogeneous space under the automorphism group
$\ul{\Aut}(\Psi)$ over the scheme of maximal tori of $\rG$. Then by
the result in~\cite{SGA3} Exp. X, 5.5, we conclude that the
embedding functor $\fE(\rG,\Psi)$ is representable.

However, the embedding functor $\fE(\rG,\Psi)$ can be disconnected
if $\ul{\Aut}_{\rS-gr}(\rG)$ is. Therefore, instead of dealing with
$\fE(\rG,\Psi)$, we fix a particular connected component of
$\fE(\rG,\Psi)$ which will be called an \emph{oriented embedding
functor}. The way we fix a connected component is to fix an
\emph{orientation} of $\Psi$ with respect to $\rG$. An orientation
between semisimple $\rS$-groups was previously defined by Petrov and
Stavrova (\cite{PS}). Here, we generalize it to an orientation
between a twisted root datum $\Psi$ and a reductive $\rS$-group
$\rG$, which is an element $u$ in $\ul{\Isomext}(\Psi,\rG)(\rS)$
(Section. 1.2.1). We show that the oriented embedding functor
$\fE(\rG,\Psi,u)$ is homogeneous under the adjoint action of $\rG$
over $\rS$ and is principal homogeneous under the action of the Weyl
group $\rW(\Psi)$. Hence, $\fE(\rG,\Psi,u)$ is also representable
(ref. \cite{SGA3} Exp. X, 5.5). Moreover, in Theorem~\ref{1.10}, we
show that over a local field $L$, the orientation together with the
Tits index of the given group determine the existence of $L$-points
of the oriented embedding functor.

The main application of embedding functors is to the embedding
problem of Azumaya algebras with involutions. Let $\tilde{R}$ be a
commutative ring, where $2$ is invertible in $\tilde{R}$. Let $\rE$
be an \'etale algebra over $\tilde{R}$ with involution $\sigma$ and
$\rA$ be an Azumaya algebra with involution $\tau$. Suppose
$\sigma|_{\tilde{R}}=\tau|_{\tilde{R}}$. We ask when $(\rE,\sigma)$
can be embedded into $(\rA,\tau)$. The case where $\tilde{R}$ is a
global field is  discussed in Prasad and Rapinchuk's paper
(\cite{PR1}).

Over a commutative ring $\tilde{R}$ in which $2$ is an invertible
element, we let $R$ be the ring of invariants $\tilde{R}^{\tau}$. If
$\tilde{R}$ is equal to $R$, then $\tau$ is said to be of the first
kind. If $\tilde{R}$ is a quadratic extension of $R$, then $\tau$ is
said to be of the second kind.  We consider the reductive group
$\rG=\rU(\rA,\tau)^{\circ}$, the torus $\rT=\rU(\rE,\sigma)^{\circ}$
over $R$, and a twisted root datum $\Psi$ attached to $\rT$. There
is a nice correspondence between the $R$-points of the embedding
functor $\fE(\rG,\Psi)$ and the embeddings
$\iota:(\rE,\sigma)\ra(\rA,\tau)$, namely:
\begin{ntheo}
Keep all the notation defined above.
  The set of $k$-embeddings from $(\rE,\sigma)$ into $(\rA,\tau)$ is in one-to-one correspondence with the set of $R$-points of
  $\fE(\rG,\Psi)$, except for $\rG$ of type $D_4$ or $\rA$ of degree
  2 with $\tau$ orthogonal.
\end{ntheo}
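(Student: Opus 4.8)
The plan is to set up the correspondence in two directions and check it is inverse on both sides, with the exceptional types flagged precisely where the argument would break.

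First I would fix the dictionary between the algebraic and algebra-theoretic data. Recall $\rG=\rU(\rA,\tau)^{\circ}$ is the connected unitary group, and a maximal $R$-torus of $\rG$ of the prescribed type corresponds (via the theory of maximal tori in classical groups, as in \cite{KMRT} for the field case, extended to Azumaya algebras by descent) to a maximal \'etale subalgebra of $\rA$ stable under $\tau$. Concretely, given an embedding $\iota\colon(\rE,\sigma)\hookrightarrow(\rA,\tau)$, the image $\iota(\rE)$ is a $\tau$-stable commutative \'etale subalgebra, its unitary group $\rU(\iota(\rE),\tau|_{\iota(\rE)})^{\circ}=\iota_*(\rT)$ is a subtorus of $\rG$, and the inclusion $\iota_*(\rT)\hookrightarrow\rG$ is a closed immersion. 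The content to verify is that the twisted root datum $\Phi(\rG,\iota_*(\rT))$ is isomorphic to $\Psi$; this is where one uses that $\Psi$ was \emph{defined} as the twisted root datum attached to $\rT=\rU(\rE,\sigma)^{\circ}$ in the standard way, so that the isomorphism type of $\Phi(\rG,\iota_*(\rT))$ is forced by the combinatorics of $(\rE,\sigma)$ and $(\rA,\tau)$ — the ranks match because $\dim\rE=\deg\rA$ (or the appropriate relation for the second kind), and the root system is read off from the decomposition of $\rA\otimes\iota(\rE)$ as an $\iota(\rE)$-module. This gives a well-defined map from embeddings to $\fE(\rG,\Psi)(R)$.

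Next I would construct the inverse. Given $f\in\fE(\rG,\Psi)(R)$, i.e.\ a closed immersion $f\colon\rT\hookrightarrow\rG$ with $\Phi(\rG,f(\rT))\cong\Psi$, I need to produce an embedding of $\tilde R$-algebras-with-involution $\rE\hookrightarrow\rA$. The torus $f(\rT)$ is a maximal torus of $\rG=\rU(\rA,\tau)^{\circ}$; its centralizer (or rather the \'etale $\tilde R$-subalgebra of $\rA$ it generates) is a maximal $\tau$-stable commutative \'etale subalgebra $\rE'\subseteq\rA$ with $\rU(\rE',\tau|_{\rE'})^{\circ}=f(\rT)$. What remains is to identify $(\rE',\tau|_{\rE'})$ with $(\rE,\sigma)$ as $\tilde R$-algebras with involution, and this is exactly what the condition $\Phi(\rG,f(\rT))\cong\Psi$ buys us, \emph{provided} the twisted root datum $\Psi$ together with its $\sigma$-action determines $(\rE,\sigma)$ up to isomorphism. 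I would then check the two maps are mutually inverse: composing embedding $\to$ functor point $\to$ embedding recovers $\iota$ up to the identification $\rE\cong\iota(\rE)$, and the other composite is handled similarly, using that both constructions are functorial and compatible with \'etale localization on $R$ (so that it suffices to check over a strict henselization, where everything is split).

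The main obstacle — and the reason for the exceptions — is precisely the step ``$\Psi$ (with its semilinear structure) determines $(\rE,\sigma)$''. For type $D_4$ the automorphism group of the root datum is $S_3$ rather than $S_2$, so triality introduces outer automorphisms of $\rG$ that are not visible on the level of the algebra-with-involution: two non-isomorphic embeddings can yield the same root datum data, or a root datum isomorphism need not come from an algebra isomorphism, so the correspondence fails to be bijective. For $\rA$ of degree $2$ with $\tau$ orthogonal, $\rG=\rU(\rA,\tau)^{\circ}$ is a rank-one torus (type $A_1$ behaves like the degenerate case), and $\rA$ itself — a quaternion algebra — is not recovered from the torus plus root datum alone, since the relevant automorphism group again is too large relative to the rigidity of the algebra; the root datum ``$\Psi$ of $A_1$'' does not see the discriminant/quaternion data. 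So I would carry out the bijection cleanly away from these two cases, and for those two cases simply note where injectivity or surjectivity of the correspondence breaks, matching the statement's exclusion clause. I expect the bulk of the write-up to be the bookkeeping identifying centralizers of maximal tori with maximal \'etale $\tau$-stable subalgebras over a general base $\tilde R$, which is routine \'etale descent from the split case but needs care since $2$ being invertible is used throughout.
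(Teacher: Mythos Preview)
Your strategy via centralizers is a reasonable alternative to the paper's torsor-twisting method, but as written it has a genuine gap in the reverse direction, and your analysis of the exceptional cases is off in one place.

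\textbf{The gap.} When you pass from $f\in\fE(\rG,\Psi)(R)$ to an embedding, you take the centralizer $\rE'\subseteq\rA$ of $f(\rT)$ and then say the condition $\Phi(\rG,f(\rT))\cong\Psi$ lets you ``identify $(\rE',\tau|_{\rE'})$ with $(\rE,\sigma)$ \emph{up to isomorphism}.'' But that only gives \emph{some} isomorphism $\alpha:(\rE,\sigma)\xrightarrow{\sim}(\rE',\tau|_{\rE'})$; different choices of $\alpha$ produce different embeddings $\rE\to\rA$, so your inverse map is not well-defined. What you actually need is that the functor $(\rE,\sigma)\mapsto\Psi$ is \emph{fully faithful} on isomorphisms, i.e.\ that $\underline{\Isom}((\rE,\sigma),(\rE',\tau|_{\rE'}))\to\underline{\Isom}(\Psi,\Psi')$ is a bijection, so that the specific isomorphism $f^{\Psi}$ (which is part of the data of $f$) singles out a unique $\alpha$. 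This is precisely the content of the paper's Lemma~\ref{1.13}: the canonical map $\underline{\Aut}(\rE_0,\sigma_0)\to\underline{\Aut}(\Psi_0)$ is an isomorphism away from $D_4$. You gesture at this but never state or prove it, and ``determines up to isomorphism'' is strictly weaker than what is required.

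The paper sidesteps this by never forming the centralizer at all: given $f$, it considers the torsor $\fZ=\underline{\Isom}((\rG_0,\rT_0,f_0),(\rG,\rT,f))$ and uses Lemma~\ref{1.13} to produce canonical maps $\fZ\to\underline{\Isom}((\rE_0,\sigma_0),(\rE,\sigma))$ and $\fZ\to\underline{\Isom}((\rA_0,\tau_0),(\rA,\tau))$, then twists $\iota_0$ by $\fZ$. The bijection and its inverse are both built from the same torsor, so mutual invertibility is immediate.

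\textbf{The degree-$2$ orthogonal case.} Your diagnosis here is wrong. In that case $\rG_0=\rU(\rA_0,\tau_0)^{\circ}$ is the one-dimensional split torus (root system empty), not a group of type $A_1$. The map $\underline{\Aut}(\rE_0,\sigma_0)\to\underline{\Aut}(\Psi_0)$ is still an isomorphism (see Remark~\ref{1.16}); what fails is on the \emph{other} side: $\underline{\Aut}(\rA_0,\rE_0,\iota_0)\to\underline{\Aut}_{\rS\text{-}gr}(\rG_0,f_0(\rT_0))$ is surjective but not injective, because $\rG_0$ acts trivially on itself by conjugation yet nontrivially on $\rA_0$. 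So the failure is of injectivity of embeddings $\to$ functor points, not of the torus-to-algebra recovery.
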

For $\rG$ of type $D_4$, we have a finer treatment and we refer to
Proposition~\ref{1.15}. Moreover, for the involution $\tau$ of the
second kind, we prove that there is an orientation $u$ such that all
$R$-points on the connected component $\fE(\rG,\Psi,u)$ are in
one-to-one correspondence with $R$-embeddings from $(\rE,\sigma)$
into $(\rA,\tau)$ (see Remark~\ref{1.12}, Lemma~\ref{2.11}).

The second part of  this article is devoted to the arithmetic
properties of the embedding functor. In particular, we want to know
if the Hasse principle holds for the existence of $k$-points of the
oriented embedding functor $\fE(\rG,\Psi,u)$, when $k$ is a global
field. Since $\fE(\rG,\Psi,u)$ is a homogeneous space under the
group $\rG$ whose stabilizer is a torus, we use the technique
developed by Borovoi to solve this problem, cf.~\cite{Bo}. Actually,
in \cite{Bo}, Borovoi proved that the Brauer-Manin obstruction to
the Hasse principle is the only obstruction in this case. He also
computed the obstruction using the Galois hypercohomology. We apply
his result to show the following:
\begin{ntheo}
Let $\rG$, $\Psi$ be as above, and $\rT$ be the torus determined by
$\Psi$. Let $u\in\ul{\Isomext}(\Psi,\rG)(k)$ be an orientation.
Suppose that $\Psi$ satisfies one of the following conditions:
\begin{itemize}
  \item [1.] all connected components of $\ul{\Dyn}(\Psi)({k}^{s})$ are  of type $C$, where $k_s$ is a separable closure of $k$.
  \item [2.] $\rT$ is anisotropic  at some place $v\in\Omega_k$.
\end{itemize}
Then the local-global principle holds for the existence of a
$k$-point of the oriented embedding functor $\fE(\rG,\Psi,u)$. In
particular, when $\Psi$ is generic, the local-global principle
holds.
\end{ntheo}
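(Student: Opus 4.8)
Write $X=\fE(\rG,\Psi,u)$. The strategy is to reduce to Borovoi's theorem, identify the resulting obstruction as a Tate--Shafarevich group of the stabilizer torus, and then kill that group in each of the two cases. Recall from the preceding sections that $X$ is a smooth geometrically connected $k$-variety, homogeneous under the adjoint action of $\rG$, the stabilizer of a point $f\in X$ (a closed immersion $f\colon\rT\hookrightarrow\rG$ with $\Phi(\rG,f(\rT))\cong\Psi$) being $\Centr_{\rG}(f(\rT))=f(\rT)\cong\rT$, the maximal torus $f(\rT)$. In particular the geometric stabilizer is \emph{connected}; since moreover $X(k_{v})\neq\varnothing$ for every $v\in\Omega_{k}$, Borovoi's theorem (\cite{Bo}) applies and the Brauer--Manin obstruction to the Hasse principle for $X$ is the only one, so it suffices to prove that it vanishes. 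I would then invoke Borovoi's hypercohomological computation of this obstruction: since the stabilizer is a torus, $X$ is geometrically rational, so $\mathrm{Br}(\overline{X})=0$ and only the algebraic Brauer group is relevant; computing $\mathrm{Pic}(\overline{X})$ from the character lattice $X^{*}(\rT)$ via the fibration $\rT\to\rG\to X$, and using $\Sha^{1}(k,\rG^{\mathrm{sc}})=0$ (Kneser--Harder--Chernousov, which enters through the abelianization $\rG^{\mathrm{sc}}\to\rG$), one finds that the obstruction to $X(k)\neq\varnothing$, given local points everywhere, is a single class $\eta(X)\in\Sha^{2}(k,\rT)$, the $k$-structure on the stabilizer torus being exactly the one singled out by the orientation $u$, namely the given torus $\rT$. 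By Poitou--Tate duality $\Sha^{2}(k,\rT)$ is dual to $\Sha^{1}(k,X^{*}(\rT))$, so the theorem is reduced to the vanishing of this group under hypothesis $1$ or $2$.

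For hypothesis $2$, if $\rT$ is anisotropic at some $v$ then $M:=X^{*}(\rT)$ has $M^{\Gamma_{k_{v}}}=0$, hence also $M^{\Gamma_{k}}=0$. Since $(M\otimes\rat)^{\Gamma_{K}}=M^{\Gamma_{K}}\otimes\rat=0$ and $H^{1}(K,M\otimes\rat)=0$ for $K\in\{k,k_{v}\}$, the sequence $0\to M\to M\otimes\rat\to M\otimes\rat/\ent\to 0$ yields natural isomorphisms $H^{1}(k,M)\cong(M\otimes\rat/\ent)^{\Gamma_{k}}$ and $H^{1}(k_{v},M)\cong(M\otimes\rat/\ent)^{\Gamma_{k_{v}}}$, under which restriction to $k_{v}$ becomes the inclusion of invariants and is therefore injective. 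Hence $\Sha^{1}(k,M)=0$, so $\Sha^{2}(k,\rT)=0$ and $\eta(X)=0$; this is exactly the mechanism used by Prasad and Rapinchuk. The \emph{generic} case is similar: for generic $\Psi$ some Frobenius realizes a Coxeter element of $\rW(\Psi)$, which has no nonzero fixed vector on the reflection representation, so the semisimple part of $\rT$ is anisotropic at that place, while the radical part — a Weil restriction of the norm-one torus of a quadratic extension in the cases of interest — has $\Sha^{2}=0$ by the next paragraph.

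For hypothesis $1$, all components of $\ul{\Dyn}(\Psi)(k_{s})$ have type $C$, so $\ul{\Aut}(\Psi)=\rW(\Psi)$ and the Galois action on $X^{*}(\rT)$ factors through $\rW(\Psi)=\prod_{i}\rW(C_{n_{i}})$. If $\rG$ is simply connected (the main case in the applications), then $X^{*}(\rT)=\bigoplus_{i}\ent^{n_{i}}$ with signed-permutation action, hence is induced from sign characters of coordinate-axis stabilizers; dually, $\rT$ is a product, over the Galois orbits of coordinate axes, of tori $\mathrm{Res}_{F/k}\bigl(\mathrm{Res}^{1}_{E/F}\gm\bigr)$ with $[E:F]\le 2$. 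For such tori $\Sha^{1}$ and $\Sha^{2}$ vanish: Shapiro's lemma reduces to $\mathrm{Res}^{1}_{E/F}\gm$ with $E/F$ quadratic, where the Hasse norm theorem (together with surjectivity of corestriction on Brauer groups) gives $\Sha^{1}=\Sha^{2}=0$. In general $X^{*}(\rT)$ fits in a short exact sequence $0\to X^{*}(\rT)\to\rP\to\rQ\to 0$ of $\Gamma_{k}$-lattices with $\rP$, $\rQ$ permutation modules (on the $2n_{i}$- and $n_{i}$-element sets underlying each $C_{n_{i}}$), and the long exact cohomology sequence reduces the vanishing of $\Sha^{1}(k,X^{*}(\rT))$ to the assertion that a $\Gamma_{k}$-invariant element of $\rQ$ which lifts to $\rP$ over every $k_{v}$ lifts over $k$ — a local--global statement that a Chebotarev density argument settles, since the lifting obstruction is detected by decomposition groups. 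Either way $\Sha^{2}(k,\rT)=0$.

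The main obstacle I anticipate is the identification made in the first paragraph: extracting from Borovoi's formalism the precise cohomological invariant in this mixed situation — a homogeneous space of a general reductive $\rG$ whose geometric stabilizer is a torus that need not be a $k$-subgroup of $\rG$ and where $X$ may have no $k$-point — and, crucially, checking that the orientation $u$ identifies the band of $X$ with the given torus $\rT$, so that ``$\Sha^{2}$ of the stabilizer'' genuinely means $\Sha^{2}(k,\rT)$ and the hypotheses bear on the correct torus. Secondary matters are the bookkeeping of the induced-module description over several simple factors and the Chebotarev argument for non-simply-connected $\rG$ in Case $1$, and, when $k$ is a global function field rather than a number field, the replacement of Borovoi's theorem by its function-field analogue.
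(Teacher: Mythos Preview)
Your overall strategy --- reduce to Borovoi, identify the obstruction as a Tate--Shafarevich class of the stabilizer torus, and kill $\sha^{2}$ of that torus under each hypothesis --- is exactly the paper's. The treatment of each case is also essentially the same: for condition~(2) you give a direct proof (via $0\to M\to M\otimes\rat\to M\otimes\rat/\ent\to 0$) of the result the paper simply cites as Kneser's theorem (\cite{San}, Lemme~1.9); for condition~(1) both you and the paper use the explicit description of the torus in the simply connected type~$C$ case as $\rR_{\rE^{\sigma}/k}\bigl(\rR^{(1)}_{\rE/\rE^{\sigma}}(\gm_m)\bigr)$ together with Shapiro and Brauer--Hasse--Noether; and for the generic case both use that a Coxeter element has no nonzero fixed vector, plus \v{C}ebotarev.

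There is, however, one genuine gap, and it is precisely the step that dissolves all the difficulties you flag in your last paragraph. Borovoi's theorem (your \cite{Bo}, the paper's Theorem~\ref{2.2}/Proposition~\ref{2.17}) requires $\rG^{ss}$ simply connected, which is \emph{not} assumed here; and even once it applies, the obstruction lands in $\rH^{1}(k,\rH^{m}\to\rG^{tor})$, which equals $\sha^{2}(k,\rH^{m})$ only when $\rG^{tor}=0$. You try to work around this by invoking ``$\sha^{1}(k,\rG^{\mathrm{sc}})=0$ via the abelianization $\rG^{\mathrm{sc}}\to\rG$'', but that is not how Borovoi's formalism is set up, and the sentence does not yield a proof. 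The paper handles this in one clean stroke \emph{before} invoking Borovoi: by Corollary~\ref{0.8} there is a natural isomorphism
\[
\fE(\rG,\Psi,u)\;\xrightarrow{\ \sim\ }\;\fE(\sico(\rG),\sico(\Psi),u_{\sico}),
\]
so one may assume from the outset that $\rG$ is semisimple simply connected and $\Psi$ is simply connected. Then $\rG^{tor}=0$, Borovoi applies directly, Lemma~\ref{2.14} identifies $\rH^{m}$ with $\sico(\rT)$, and the obstruction sits in $\sha^{2}(k,\sico(\rT))$. The paper also notes (and one must check) that if $\Psi$ satisfies condition~(1) or~(2) then so does $\sico(\Psi)$.

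Once this reduction is made, your worries evaporate: the ``band'' identification is Lemma~\ref{2.14}; there is no radical part in the generic case, so the Coxeter argument gives anisotropy of the whole torus at some place and you land squarely in condition~(2); and in condition~(1) there is no ``non-simply-connected bookkeeping'' or auxiliary Chebotarev argument needed --- the permutation/induced description you wrote for the simply connected case is the only case. In short: insert the $\sico$-reduction at the very start, replace $\rT$ by $\sico(\rT)$ throughout, and your proof becomes the paper's.
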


Finally, for the global field $k$ of characteristic different from
2, we combine these techniques and the correspondence established in
Theorem~\ref{1.4} and Proposition~\ref{1.15} to give an alternative
proof of Theorem A and Theorem 6.7 in ~\cite{PR1}. Besides, we
provide an example (\ref{2.13}) to show that the Hasse principle
fails in some cases when the involution $\tau$ is orthogonal and
$\rA$ is $\bM_{2m}(\rD)$, where $\rD$ is a division algebra over
$K$. The main reason for the failure is that the embedding functor
$\fE(\rG,\Psi)$ is disconnected in this case. Let
$\fE(\rG,\Psi)=\rX_1\coprod\rX_2$. Then it may happen that the
embedding functor has a $k_v$ point at each place $v$, but only
$\rX_1$ has a $k_{v_1}$-point, at some place $v_1$, and only $\rX_2$
has a $k_{v_2}$ point, at another place $v_2$. This explains the
failure of the Hasse principle.

\section{Some general facts and notations}
In this section, we briefly recall the notation and definitions
which will be used later. We also state some well-known theorems
which are necessary for the development of the main results about
the embedding functor. Most of the material here can be found
in~\cite{SGA3}, and in the Appendix A of the book by Conrad, Gabber,
and Prasad~\cite{CGP}.

\subsection{Notation and conventions}
Let $\rS$ be a scheme and $\rS'$ an $\rS$-scheme. For an
$\rS$-scheme $\rX$, we let $\rX_{\rS'}$ be the scheme
$\rX\underset{\rS}{\times}\rS'$ over $\rS'$. For a set $\Lambda$, we
let $\Lambda_{\rS}$ denote the disjoint union of the schemes
$\rS_{i}$, where $i\in\Lambda$ and each $\rS_i$ is isomorphic to
$\rS$, i.e. $\Lambda_{\rS}=\underset{i\in\Lambda}{\coprod}\rS_i$. We
call $\Lambda_{\rS}$ the constant scheme over $\rS$ of type
$\Lambda$. (ref. ~\cite{SGA3}, Exp. I, 1.8).

Let $Sch/\rS$ be the category of all $\rS$-schemes. Throughout this
article,  the \'etale site of $\rS$ means the big \'etale site.
Namely, we equip the category $Sch/\rS$ with the following topology:
for an $\rS$-scheme $\rU$, $\{\rU_i\xrightarrow{f_i}\rU\}_i$ is a
covering of $\rU$ if for each $i$, $f_i$ is an \'etale morphism and
$\rU=\cup_i f_i(\rU_i)$. For a detailed introduction to Grothendieck
topology, we refer to the lecture notes by Brochard~\cite{Bro}.

\subsection{Torsors and homogeneous spaces} Let $\rG$ be an
$\rS$-group sheaf for \'etale topology. Let $\sF$ and $\rX$ be
$\rS$-sheaves. Let $p:\sF\ra\rX$ be a morphism between
$\rS$-sheaves. Then $\sF$ is called a right (resp. left) $\rG$-sheaf
over $\rX$ with respect to $p$ if $\sF$ is equipped with an
$\rG$-action satisfying $p(fg)=p(f)$ (resp. $p(gf)=p(f)$ ) for all
$(f,g)\in(\sF{\times}\rG)(\rS')$ and for all $\rS$-schemes $\rS'$.
Note that $\rX{\times}\rG$ can be equipped with a right $\rG$-action
as $(x,g)\sigma=(x,g\sigma)$. Let $p_{\rX}$ be the projection from
$\rX{\times}\rG$ to $\rX$. Then $\rX{\times}\rG$ is a right
$\rG$-sheaf over $\rX$ with respect to $p_{\rX}$. A $\rG$-sheaf
$\sF$ over $\rX$ with respect to $p$ is \emph{trivial} if it is
isomorphic to $\rX{\times}\rG$ with respect to $p_{\rX}$ as
$\rG$-sheaves over $\rX$. A right $\rG$-sheaf $\sF$ over $\rX$ with
respect to $p$ is called a \emph{$\rG$-torsor} over $\rX$ if there
is an epimorphism of $\rS$-sheaves $\pi:\rY\ra\rX$ such that
$\rY\underset{\rX}{\times}\sF$ over $\rY$ is a trivial $\rG$-sheaf.
\begin{prop}\label{0:1}
Let $\mathrm{G}$ be an $\rS$-group sheaf, $\rX$ be a sheaf over
$\rS$. Let $\sF$ be a
$\rG$-sheaf over $\rX$ with respect to an $\rS$-sheaf morphism
$p:\sF\ra\rX$. Then $\sF$ is a torsor over $\rX$ if and only if $p$
is an epimorphism of $\rS$-sheaves and the morphism
$i:\sF{\times}\rG\ra\sF\underset{\rX}{\times}\sF$ defined as
$i(x,h)=(x,xh)$ is invertible.
\end{prop}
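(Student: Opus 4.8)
The plan is to read the statement as the sheaf-theoretic version of the elementary fact that an action of a group on a set is simply transitive exactly when the associated ``shearing'' map is bijective, keeping in mind that for us ``torsor'' just means ``locally trivial for the \'etale topology''. I will treat the two implications separately.

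Assume first that $\sF$ is a $\rG$-torsor over $\rX$, and fix an epimorphism $\pi\colon\rY\ra\rX$ together with an isomorphism $\varphi\colon\rY\underset{\rX}{\times}\sF\ra\rY{\times}\rG$ of $\rG$-sheaves over $\rY$. Since $\varphi$ respects the structure maps to $\rY$, the first projection $\rY\underset{\rX}{\times}\sF\ra\rY$ is identified with $\rY{\times}\rG\ra\rY$, which admits the section given by the identity section of $\rG$; hence $\rY\underset{\rX}{\times}\sF\ra\rY$, and therefore its composite with $\pi$, is an epimorphism. As that composite is the structure map $\rY\underset{\rX}{\times}\sF\ra\rX$, which factors as $p\circ\mathrm{pr}_{\sF}$ (with $\mathrm{pr}_{\sF}$ the projection to $\sF$), the morphism $p$ is an epimorphism. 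For the invertibility of $i$, I will use the (standard) fact that a morphism of $\rS$-sheaves which becomes invertible after pullback along an epimorphism is already invertible. The morphism $i$ lies over $\rX$, so it suffices to check that $i\underset{\rX}{\times}\rY$ is invertible; unwinding the fibre products and using $\varphi$, both $(\sF{\times}\rG)\underset{\rX}{\times}\rY$ and $(\sF\underset{\rX}{\times}\sF)\underset{\rX}{\times}\rY$ are identified with $\rY{\times}\rG{\times}\rG$, and under these identifications $i\underset{\rX}{\times}\rY$ becomes $(y,g,h)\mapsto(y,g,gh)$, which is invertible with inverse $(y,g,g')\mapsto(y,g,g^{-1}g')$.

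Conversely, assume $p$ is an epimorphism and $i$ is invertible. Here the idea is to take for the trivialising cover the tautological map $\pi:=p\colon\sF\ra\rX$, which is an epimorphism by hypothesis; it then remains to see that the first projection $\sF\underset{\rX}{\times}\sF\ra\sF$ is a trivial $\rG$-sheaf over $\sF$. But the morphism $i\colon\sF{\times}\rG\ra\sF\underset{\rX}{\times}\sF$ is precisely a candidate trivialisation: it commutes with the projections to $\sF$ (both $(x,h)$ and $i(x,h)=(x,xh)$ have first component $x$), and it is $\rG$-equivariant, since $i((x,h)\sigma)=i(x,h\sigma)=(x,x(h\sigma))=(x,(xh)\sigma)=i(x,h)\sigma$ by associativity of the action. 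As $i$ is invertible by assumption, it is an isomorphism of $\rG$-sheaves over $\sF$ from the trivial sheaf $\sF{\times}\rG$ onto $\sF\underset{\rX}{\times}\sF$; hence $\sF\underset{\rX}{\times}\sF$ is trivial over $\sF$, and together with the epimorphism $p\colon\sF\ra\rX$ this exhibits $\sF$ as a $\rG$-torsor over $\rX$.

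The one ingredient that is not just unwinding definitions is the descent-type statement invoked in the first implication — that invertibility of a morphism of sheaves can be tested after pullback along an epimorphism — which is standard in the \'etale topos. The remaining work is the careful bookkeeping of the several nested fibre products, in particular verifying that the identifications of $(\sF{\times}\rG)\underset{\rX}{\times}\rY$ and $(\sF\underset{\rX}{\times}\sF)\underset{\rX}{\times}\rY$ with $\rY{\times}\rG{\times}\rG$ are the claimed ones and that $i$ transports to the shearing map; I expect this bookkeeping to be the main (and fairly mild) obstacle.
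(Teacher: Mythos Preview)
Your argument is correct and is the standard proof of this fact. The paper does not actually prove the proposition: its entire proof consists of the citation ``\cite{DG}, Chap.~III, \S4, Corollary~1.7''. So there is nothing to compare at the level of strategy; you have simply supplied the details that the paper outsources to Demazure--Gabriel, and your two implications (trivialising cover $\Rightarrow$ $p$ epi and $i$ invertible by descent of isomorphisms; conversely, use $p\colon\sF\ra\rX$ itself as the trivialising epimorphism with $i$ as the trivialisation) are exactly the ones one finds there.
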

\begin{proof}
~\cite{DG}, Chap. III, \S 4, Corollary 1.7.
\end{proof}

Let $\rG$ be an $\rS$-group sheaf. Let $\sF$ and $\rX$ be
$\rS$-sheaves. Let $p:\sF\ra\rX$ be a morphism between
$\rS$-sheaves. A $\rG$-sheaf $\sF$ over $\rX$ with respect to $p$ is
called a \emph{$\rG$-homogeneous space} if $p$ is an epimorphism of
$\rS$-sheaves and the morphism
$i:\sF{\times}\rG\ra\sF\underset{\rX}{\times}\sF$ defined as
$i(x,h)=(x,xh)$ is an epimorphism between sheaves .

\begin{subsection}{Root data and twisted root datum}

Let $\psi=(\rM,\rM^{\vee},\rR,\rR^{\vee})$ be a root datum (ref.
\cite{SGA3}, Exp. XXI, 1.1.1).  Let $\Delta\subseteq\rR$ be a system
of simple roots of $\rR$. The root datum $\psi$ plus a system of
simple roots $\Delta$ of $\rR$ is called a \emph{pinning root
datum}, and we denote it as
$(\rM,\rM^{\vee},\rR,\rR^{\vee},\Delta)$.

The subgroup of the automorphism group of $\rM$ generated by the
reflections  $\{s_\alpha\}_{\alpha\in\rR}$ is called the Weyl group
of $\psi$, and we denote it as $\rW(\psi)$.

For a finite subset $\rR$ (resp. $\rR^{\vee}$)of $\rM$ (resp.
$\rM^{\vee}$), we let $\Gamma _0(\rR)$ (resp. $\Gamma
_0(\rR^{\vee})$) be the subgroup generated by $\rR$ (resp.
$\rR^{\vee}$) and we let $\sV(\rR)$ (resp. $\sV(\rR^{\vee}$) be the
vector space defined by $\Gamma_0(\rR)\otimes_\ent\rat$ (resp.
$\Gamma_0(\rR^{\vee})\otimes_\ent\rat$).

A root datum is called \emph{reduced} if for all $\alpha\in\rR$, we
have $2\alpha\notin\rR$. A root datum is called \emph{semisimple} if
$\rk(\Gamma_0(\rR))=\rk(\rM)$. A root datum
$(\rM,\rM^{\vee},\rR,\rR^{\vee})$ is called \emph{adjoint} (resp.
\emph{simply connected}) if $\rM=\Gamma_0(\rR)$ (resp.
$\rM^{\vee}=\Gamma_0(\rR^{\vee})$).

We define the dual root datum of $\psi$ to be
$(\rM^{\vee},\rM,\rR^{\vee},\rR)$, and denote it as $\psi^{\vee}$.

\begin{subsubsection}{Radical and coradical of root data}
Let
$$\rN=\{x\in\rM|\ \alpha^{\vee}(x)=0,\ \forall \alpha^{\vee}\in\rR^{\vee}\}.$$
Then the dual of $\rN$ can be identified with
$\rM^{\vee}/\sV(\rR^{\vee})\cap\rM^{\vee}$ (ref.~\cite{SGA3},
Exp.XXI, 6.3.1).

Define the coradical of $\psi$ to be the root datum
$(\rN,\rN^{\vee},\emptyset,\emptyset)$ and denote it as
$\corad(\psi)$. We define the radical of $\psi$ to be
$\corad(\psi^{\vee})^{\vee}$, and denote it as $\rad(\psi)$.
\end{subsubsection}
\begin{subsubsection}{Induced and coinduced root data}
Given a root datum $\psi=(\rM,\rM^{\vee},\rR,\rR^{\vee})$, and a
subgroup $\rN$ of $\rM$ which contains $\Gamma_0(\rR)$, let
$i_\rN:\rN\ra\rM$ be the natural inclusion, and
$i_\rN^{\vee}:\rM^{\vee}\ra\rN^{\vee}$ be the corresponding map on
$\rM^{\vee}$. Let $\rR_\rN=\rR$ and
$\rR_\rN^{\vee}=i_\rN^{\vee}(\rR^{\vee})$. We define the root datum
$\psi_{\rN}$ as $(\rN,\rN^{\vee},\rR_\rN,\rR_\rN^{\vee})$, which is
called the \emph{induced root datum} of $\psi$ respect to $\rN$. If
$\rN=\Gamma_0(\rR)$, then $\psi_\rN$ is an adjoint root datum, and
we denote it as $\ad(\psi)$. If $\rN=\sV(\rR)\bigcap\rM$, then
$\psi_\rN$ is a semisimple root datum, and we denote $\psi_\rN$ as
$\sesi(\psi)$. We let $\der(\psi)=\sesi(\psi^{\vee})^{\vee}$, and
$\sico(\psi)=\ad(\psi^{\vee})^{\vee}$.
\end{subsubsection}
\begin{subsubsection}{Morphisms between root data}
Let $\psi_1=(\rM_1,\rM_1^{\vee},\rR_1,\rR_1^{\vee})$,
$\psi_2=(\rM_2,\rM_2^{\vee},\rR_2,\rR_2^{\vee})$ be two root data. A
module morphism $f:\rM_1\ra\rM_2$ is a \emph{morphism between
$\psi_1$ and $\psi_2$} if $f$ induces a bijection between $\rR_1$
and $\rR_2$ and the transpose map
$^{t}f:\rM_2^{\vee}\ra\rM_1^{\vee}$ is a bijection between
$\rR_2^{\vee}$ and $\rR_1^{\vee}$.
\begin{prop}\label{0.10}
Keep all the notations above. If $f:\rM_1\ra\rM_2$ is a morphism
between $\psi_1$ and $\psi_2$, then
$^{t}f(f(\alpha)^{\vee})=\alpha^{\vee}$, and the map $s_\alpha\ra
s_{f(\alpha)}$ for $\alpha\in\rR_1$ extends to an isomorphism
between $\rW(\psi_1)$ and $\rW(\psi_2)$,
\end{prop}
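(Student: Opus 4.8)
I would prove this in two stages: first the pointwise identity $^{t}f(f(\alpha)^{\vee})=\alpha^{\vee}$ in $\rM_1^{\vee}$, then the Weyl group statement by transport of structure along $f$. Write $\Gamma_i=\Gamma_0(\rR_i)$ and $\Gamma_i^{\vee}=\Gamma_0(\rR_i^{\vee})$. I will use repeatedly that for any root datum the canonical pairing $\rM\times\rM^{\vee}\ra\ent$ induces a \emph{perfect} pairing $\sV(\rR)\times\sV(\rR^{\vee})\ra\rat$: if $v\in\sV(\rR)$ pairs trivially with every $\alpha^{\vee}\in\rR^{\vee}$ then $s_{\alpha}(v)=v$ for all $\alpha$, and a nonzero $\rW(\psi)$-invariant vector in $\sV(\rR)$ cannot exist (it would be orthogonal to every root for a $\rW$-invariant inner product on $\sV(\rR)\otimes_{\rat}\re$, hence zero); the same works on the other side, which also gives $\dim_{\rat}\sV(\rR)=\dim_{\rat}\sV(\rR^{\vee})$.

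The first step is to note that $f$ restricts to an isomorphism $f_0\colon\sV(\rR_1)\xrightarrow{\sim}\sV(\rR_2)$. Indeed $f(\rR_1)=\rR_2$ makes $f\colon\Gamma_1\ra\Gamma_2$ surjective, hence $\sV(\rR_1)\twoheadrightarrow\sV(\rR_2)$ after $\otimes_{\ent}\rat$; dually $^{t}f(\rR_2^{\vee})=\rR_1^{\vee}$ gives $\sV(\rR_2^{\vee})\twoheadrightarrow\sV(\rR_1^{\vee})$; combining the two dimension inequalities with $\dim\sV(\rR_i)=\dim\sV(\rR_i^{\vee})$ forces both to be isomorphisms. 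In particular $f$ is injective on $\Gamma_1$ (even though it need not be injective on all of $\rM_1$). Next, fix $\alpha\in\rR_1$ and set $\alpha'^{\vee}:={}^{t}f(f(\alpha)^{\vee})$; since $f(\alpha)^{\vee}\in\rR_2^{\vee}$ and $^{t}f(\rR_2^{\vee})\subseteq\Gamma_1^{\vee}\subseteq\sV(\rR_1^{\vee})$, the element $\alpha'^{\vee}$ lies in $\sV(\rR_1^{\vee})$, and $\langle x,\alpha'^{\vee}\rangle=\langle f(x),f(\alpha)^{\vee}\rangle$ for all $x\in\rM_1$; in particular $\langle\alpha,\alpha'^{\vee}\rangle=\langle f(\alpha),f(\alpha)^{\vee}\rangle=2$. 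Consider the ``mixed reflection'' $s'\colon\rM_1\ra\rM_1$, $s'(x)=x-\langle x,\alpha'^{\vee}\rangle\alpha$; then $s'(\alpha)=-\alpha$ and $(s'-\id)(\rM_1)\subseteq\ent\alpha$, and for $\beta\in\rR_1$ one computes $f(s'(\beta))=f(\beta)-\langle f(\beta),f(\alpha)^{\vee}\rangle f(\alpha)=s_{f(\alpha)}(f(\beta))\in\rR_2$ because reflections of $\psi_2$ stabilize $\rR_2$; since $\rR_2=f(\rR_1)$ and $f$ is injective on $\Gamma_1$, this forces $s'(\beta)\in\rR_1$, i.e.\ $s'(\rR_1)=\rR_1$.

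To finish the first assertion I would run the classical uniqueness argument for reflections. Put $t:=s'\circ s_{\alpha}$: then $t(\rR_1)=\rR_1$, $t(\alpha)=s'(-\alpha)=\alpha$, and $(t-\id)(\rM_1)\subseteq\ent\alpha$ with $(t-\id)(\alpha)=0$; writing $t=\id+N$ gives $N^{2}=0$ and $t^{k}=\id+kN$. As $t$ permutes the finite spanning set $\rR_1$ of $\sV(\rR_1)$, the restriction $t|_{\sV(\rR_1)}$ has finite order, so $N|_{\sV(\rR_1)}=0$, i.e.\ $s'|_{\sV(\rR_1)}=s_{\alpha}|_{\sV(\rR_1)}$. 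Comparing defining formulas, $\langle x,\alpha'^{\vee}-\alpha^{\vee}\rangle=0$ for all $x\in\sV(\rR_1)$; since $\alpha'^{\vee}-\alpha^{\vee}\in\sV(\rR_1^{\vee})$ and the pairing $\sV(\rR_1)\times\sV(\rR_1^{\vee})\ra\rat$ is perfect, $\alpha'^{\vee}=\alpha^{\vee}$, proving $^{t}f(f(\alpha)^{\vee})=\alpha^{\vee}$. For the Weyl groups, this identity yields $f(s_{\alpha}(x))=f(x)-\langle x,\alpha^{\vee}\rangle f(\alpha)=f(x)-\langle f(x),f(\alpha)^{\vee}\rangle f(\alpha)=s_{f(\alpha)}(f(x))$ for all $x\in\rM_1$, i.e.\ $f\circ s_{\alpha}=s_{f(\alpha)}\circ f$. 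Restricting to $\Gamma_1$ and tensoring with $\rat$, the isomorphism $f_0$ satisfies $f_0\,s_{\alpha}\,f_0^{-1}=s_{f(\alpha)}$ in $\Gl(\sV(\rR_2))$. Since each $\rW(\psi_i)$ acts faithfully on $\sV(\rR_i)$ (it embeds in the permutation group of $\rR_i$), conjugation by $f_0$ is an isomorphism $\Gl(\sV(\rR_1))\xrightarrow{\sim}\Gl(\sV(\rR_2))$ taking the generators $s_{\alpha}$ of $\rW(\psi_1)$ to the generators $s_{f(\alpha)}$ of $\rW(\psi_2)$ (as $f(\alpha)$ runs over all of $\rR_2$), hence restricts to the desired isomorphism $\rW(\psi_1)\xrightarrow{\sim}\rW(\psi_2)$, $s_{\alpha}\mapsto s_{f(\alpha)}$.

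The main obstacle is conceptual rather than computational: in a general (non-semisimple, possibly non-reduced) root datum the coroot of a root is \emph{not} determined by the root together with the requirement that its reflection stabilize $\rR$, so one genuinely has to use both halves of the definition of a morphism of root data — the condition on $\rR$ \emph{and} the condition on $\rR^{\vee}$ — together with the perfect duality between $\sV(\rR)$ and $\sV(\rR^{\vee})$. Concretely, this is what makes the first step (injectivity of $f$ on $\Gamma_1$) and the final descent from ``agreement on $\sV(\rR_1)$'' to ``equality in $\rM_1^{\vee}$'' work; the fact that $f$ may have a kernel on $\rM_1$ is then harmless, since everything relevant takes place inside $\Gamma_1$ and $\Gamma_1^{\vee}$.
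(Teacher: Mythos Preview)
Your argument is correct and complete. The paper itself does not give a proof at all: it simply cites \cite{SGA3}, Exp.~XXI, 6.1.1 and 6.2.2. What you have written is essentially the standard proof found there (and in Bourbaki), namely the ``rigidity of reflections'' argument: pull back $f(\alpha)^{\vee}$ along $^{t}f$, check that the associated pseudo-reflection stabilises $\rR_1$, and then use the unipotency trick $t=\id+N$, $N^{2}=0$, together with finiteness of the order of $t$ on $\sV(\rR_1)$, to force $N=0$ there; the perfect pairing $\sV(\rR_1)\times\sV(\rR_1^{\vee})\to\rat$ then upgrades this to the desired equality in $\rM_1^{\vee}$.

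One small point worth making explicit: the clause ``each $\rW(\psi_i)$ acts faithfully on $\sV(\rR_i)$'' is true but not quite justified by the parenthetical remark, since the injectivity of $\rW(\psi_i)\to\mathrm{Sym}(\rR_i)$ is exactly what is at stake. It does, however, follow immediately from the tools you have already set up: if $w\in\rW(\psi_1)$ is trivial on $\sV(\rR_1)$ then, by the perfectness of $\sV(\rR_1)\times\sV(\rR_1^{\vee})\to\rat$, the contragredient action of $w$ is trivial on $\sV(\rR_1^{\vee})$; hence for any $x\in\rM_1$ the element $w(x)-x\in\Gamma_0(\rR_1)$ pairs to zero with every coroot and is therefore zero. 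With that remark your proof is self-contained, whereas the paper's is not.
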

\begin{proof}
~\cite{SGA3}, Exp. XXI, 6.1.1 and 6.2.2.
\end{proof}

Let $\Aut(\psi)$ be the automorphism group of $\psi$, and fix a
system of simple roots $\Delta$ of $\rR$. Define the abstract group
$$\rE_\Delta(\psi)=\{u\in\Aut(\psi)|\ u(\Delta)=\Delta\}.$$ Then we
have the following:
\begin{prop}\label{0.5}
$\rW(\psi)$ is a normal subgroup of $\Aut({\psi})$, and $\Aut(\psi)$
is a semi-direct product of $\rW(\psi)$ by $\rE_\Delta(\psi)$.
\end{prop}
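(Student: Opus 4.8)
\emph{Proposal.} The plan is to prove the two assertions separately, using Proposition~\ref{0.10} for the normality and the simple transitivity of the Weyl group on systems of simple roots for the factorization.

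First, observe that $\rW(\psi)\subseteq\Aut(\psi)$: each reflection $s_\alpha$ preserves $\rR$ (a standard fact for root systems, which one imports for $\psi$ by passing to the associated reduced root system if $\rR$ is not reduced), and its transpose $s_{\alpha^\vee}$ preserves $\rR^\vee$, so $s_\alpha\in\Aut(\psi)$. For normality, let $\phi\in\Aut(\psi)$ and $\alpha\in\rR$. Since $\phi$ is a morphism of root data, $\phi(\alpha)\in\rR$, and by Proposition~\ref{0.10} one has ${}^{t}\phi(\phi(\alpha)^{\vee})=\alpha^{\vee}$, hence $\phi(\alpha)^{\vee}(\phi(x))=\alpha^{\vee}(x)$ for all $x\in\rM$. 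Plugging this into the reflection formula $s_\alpha(x)=x-\alpha^{\vee}(x)\,\alpha$ gives
\[
\phi\circ s_\alpha\circ\phi^{-1}(y)=\phi\bigl(\phi^{-1}(y)-\alpha^{\vee}(\phi^{-1}(y))\,\alpha\bigr)=y-\phi(\alpha)^{\vee}(y)\,\phi(\alpha)=s_{\phi(\alpha)}(y).
\]
As $\phi$ permutes $\rR$, the right-hand side lies in $\rW(\psi)$; since $\rW(\psi)$ is generated by the $s_\alpha$, we get $\phi\,\rW(\psi)\,\phi^{-1}=\rW(\psi)$, so $\rW(\psi)\trianglelefteq\Aut(\psi)$.

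Next I record the key input. An automorphism $\phi\in\Aut(\psi)$ carries a system of simple roots to a system of simple roots: being a base of $\rR$ is characterized by the purely module-theoretic condition that every element of $\rR$ be a non-negative or non-positive integral combination of it, a property preserved by the module isomorphism $\phi$ (which permutes $\rR$). Moreover, $\rW(\psi)$ acts simply transitively on the set of all systems of simple roots of $\rR$; this is classical for reduced root systems, and for general $\rR$ one reduces to $\rR_{\mathrm{red}}$, whose systems of simple roots and whose Weyl group coincide with those of $\rR$. I would cite~\cite{SGA3}, Exp. XXI for these statements. This transitivity is the step I expect to be the main obstacle, essentially because it is the one genuinely non-formal ingredient and needs the reduction to the reduced case to be spelled out.

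Finally I conclude. Simple transitivity gives $\rW(\psi)\cap\rE_\Delta(\psi)=\{1\}$: an element of $\rW(\psi)$ stabilizing $\Delta$ must be the identity. Given any $\phi\in\Aut(\psi)$, the set $\phi(\Delta)$ is again a system of simple roots, so there is $w\in\rW(\psi)$ with $w\bigl(\phi(\Delta)\bigr)=\Delta$, i.e. $w\phi\in\rE_\Delta(\psi)$; hence $\phi=w^{-1}\,(w\phi)\in\rW(\psi)\,\rE_\Delta(\psi)$, and $\Aut(\psi)=\rW(\psi)\,\rE_\Delta(\psi)$. Combining this with the normality of $\rW(\psi)$ and the triviality of the intersection yields the internal semidirect product decomposition $\Aut(\psi)=\rW(\psi)\rtimes\rE_\Delta(\psi)$, as claimed.
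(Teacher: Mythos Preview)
Your argument is correct and is precisely the standard proof: normality via the conjugation identity $\phi\, s_\alpha\,\phi^{-1}=s_{\phi(\alpha)}$, and the semidirect product decomposition via the simple transitivity of $\rW(\psi)$ on systems of simple roots. The paper does not actually give a proof here; it simply cites~\cite{SGA3}, Exp.~XXI, 6.7.1 and 6.7.2, and what you have written is essentially the content of that reference, so your proposal is fully aligned with it.
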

\begin{proof}
~\cite{SGA3}, Exp. XXI, 6.7.1 and 6.7.2.
\end{proof}
\end{subsubsection}
\end{subsection}
\begin{subsubsection}{Twisted root data}
Let $\rT$ be an $\rS$-torus.  Let $\sM$ be the \emph{character group
scheme} associated to $\rT$, i. e.
$\sM(\rS')=\Hom_{\rS'-gr}(\rT_{\rS'}, \gm_{m,\ \rS'})$. Let
$\Psi=(\sM,\sM^{\vee},\sR,\sR^{\vee})$ be a twisted root datum
associated to $\rT$ (ref.~\cite{SGA3}, Exp. 22, Def. 1.9).

The root datum $\Psi$ is \emph{split} if $\rT$ is split. A twisted
root datum is called \emph{reduced} if for all $\rS$-schemes $\rS'$
and all $\alpha\in\sR(\rS')$, we have $2\alpha\notin\sR(\rS')$.

Let $\psi=(\rM,\rM^{\vee},\rR,\rR^{\vee})$ be a root datum. A
twisted root datum $\Psi$ is said to have \emph{type} $\psi$ at the point $s$
of $\rS$ if
$\Psi_{\ol{s}}\simeq(\rM_{\ol{s}},\rM^{\vee}_{\ol{s}},\rR_{\ol{s}},\rR^{\vee}_{\ol{s}})$.

Let $\Psi=(\sM,\sM^{\vee},\sR,\sR^{\vee})$ be a twisted root datum.
Since at each $s\in\rS$, there is an \'{e}tale neighborhood such
that $\rT$ splits, we can define $\ad(\Psi)$, $\sico(\Psi)$,
$\sesi(\Psi)$, $\der(\Psi)$ \'{e}tale locally, and by the
functoriality  of induced root data, define them over $\rS$ by
descent (~\cite{SGA3}, Exp. XXI, 6.5).

Let $\Psi_1=(\sM_1,\sM_1^{\vee},\sR_1,\sR_1^{\vee})$,
$\Psi_2=(\sM_2,\sM_2^{\vee},\sR_2,\sR_2^{\vee})$ be two twisted root
data. Let $\rT_1$, $\rT_2$ be the tori determined by $\Psi_1$ and
$\Psi_2$ respectively. An $\rS$-group morphism $f:\rT_2\ra\rT_1$ is
a \emph{morphism from $\Psi_1$ to $\Psi_2$} if $f$ induces an
isomorphism from $\sR_1$ to $\sR_2$ and an isomorphism from
$\sR_2^{\vee}$ to $\sR_1^{\vee}$. We can also define the induced
twisted root data by \'etale descent, and define $\ad(\Psi)$,
$\sesi(\Psi)$, $\der(\Psi)$ and $\sico(\Psi)$ as we have done for
the root data.

\end{subsubsection}
\begin{subsubsection}{Weyl groups, Isom, Isomext and Isomint for twisted root data}
Let $\Psi$ be a twisted root datum. Suppose $\Psi$ is split and
$\Psi=(\rM_\rS,\rM^{\vee}_\rS,\rR_\rS,\rR^{\vee}_\rS)$. Let $\psi$
be the root datum $(\rM,\rM^{\vee},\rR,\rR^{\vee})$. Let $\rW$ be
the Weyl group of $\psi$, and define $\rW(\Psi)=\rW_\rS$. Suppose that
$\Psi$ is not split. Then we can find an \'etale covering
$\{\rS_i\ra\rS\}$ such that $\Psi_{\rS_i}$ is split. By
Proposition~\ref{0.4}, the canonical isomorphism between
$(\Psi_{\rS_i})_{\rS_j}$ and $(\Psi_{\rS_j})_{\rS_i}$ gives a
canonical isomorphism between $\rW(\Psi_{\rS_i})_{\rS_j}$ and
$\rW(\Psi_{\rS_j})_{\rS_i}$ and hence gives descent data for
$\{\rW(\Psi_{\rS_i})\}_i$, which allows us to define $\rW(\Psi)$.

Let $\ul{\Aut}(\Psi)$ be the automorphism functor of $\Psi$. By
descent, we can define the Weyl group $\rW(\Psi)$. Then from
Proposition~\ref{0.5}, we can define the following exact sequence by
\'etale descent: $$\xymatrix@C=0.5cm{
  1 \ar[r] & \rW(\Psi) \ar[r] &  \ul{\Aut}(\Psi)\ar[r] & \ul{\Autext}(\Psi)
  \ar[r] & 1 }$$
Then we have the following proposition:
\begin{prop}\label{0.6}
Keep all the notations above. The automorphism group
$\ul{\Aut}(\Psi)$ is representable by a twisted constant
$\rS$-scheme, and $\rW(\Psi)$ is normal in $\ul{\Aut}(\Psi)$. Let
$\ul{\Autext}(\Psi)$ be the quotient group of $\ul{\Aut}(\Psi)$ by
$\rW(\Psi)$. Then $\ul{\Autext}(\Psi)$ is also representable by a
twisted constant $\rS$-scheme.
\end{prop}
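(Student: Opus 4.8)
The plan is to prove the statement first when $\Psi$ is split, where it is essentially a reformulation of Proposition~\ref{0.5}, and then to obtain the general case by \'etale descent along a trivialisation of the torus $\rT$. First I would treat the split case: suppose $\Psi=(\rM_\rS,\rM^{\vee}_\rS,\rR_\rS,\rR^{\vee}_\rS)$ comes from the (finite-type) root datum $\psi=(\rM,\rM^{\vee},\rR,\rR^{\vee})$. Then $\Aut(\psi)$ is a finite abstract group, and a direct unwinding of the definitions shows that $\ul{\Aut}(\Psi)$ is the constant $\rS$-group scheme $\Aut(\psi)_\rS$ (cf.~\cite{SGA3}, Exp.~I, 1.8), which is representable by a finite \'etale $\rS$-scheme. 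Proposition~\ref{0.5} then gives that $\rW(\psi)$ is normal in $\Aut(\psi)$ with finite quotient $\rE_\Delta(\psi)$ for any chosen system of simple roots $\Delta$; passing to constant group schemes, $\rW(\Psi)=\rW(\psi)_\rS$ becomes a normal open-and-closed subgroup scheme, and the quotient sheaf $\ul{\Aut}(\Psi)/\rW(\Psi)$ is represented by the constant scheme $\rE_\Delta(\psi)_\rS$.

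Next I would globalise by descent. Choose an \'etale covering $\{\rS_i\to\rS\}$ over which $\Psi$ splits, with $\Psi_{\rS_i}$ of some type $\psi_i$; the split case applies over each $\rS_i$, so $\ul{\Aut}(\Psi)|_{\rS_i}\cong\Aut(\psi_i)_{\rS_i}$ and $\rW(\Psi)|_{\rS_i}\cong\rW(\psi_i)_{\rS_i}$. The canonical isomorphisms of $\Psi_{\rS_i}$ and $\Psi_{\rS_j}$ over the overlaps $\rS_i\times_\rS\rS_j$ induce, functorially, isomorphisms of the associated automorphism (resp.\ Weyl) group schemes satisfying the cocycle condition --- this is exactly the descent datum already used in the excerpt to define $\ul{\Aut}(\Psi)$ and $\rW(\Psi)$. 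Since a twisted constant $\rS$-scheme is, by definition, one that becomes constant after an \'etale base change, and since finite \'etale morphisms are affine and hence satisfy effective fpqc descent, the glued objects $\ul{\Aut}(\Psi)$ and $\rW(\Psi)$ are representable by twisted constant $\rS$-schemes. Normality of $\rW(\Psi)$ in $\ul{\Aut}(\Psi)$ is a condition that may be tested after the \'etale base change $\rS'\times_\rS\rS_i\to\rS'$ --- it says that conjugation by any section of $\ul{\Aut}(\Psi)$ stabilises $\rW(\Psi)$ on points --- so it follows from the split case.

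It remains to handle $\ul{\Autext}(\Psi)$. I would define it as the \'etale sheaf quotient $\ul{\Aut}(\Psi)/\rW(\Psi)$, which makes the exact sequence displayed before the statement tautological. Restricting to $\rS_i$, this quotient is the constant scheme $\rE_\Delta(\psi_i)_{\rS_i}$ by the split case, and the descent data for $\ul{\Aut}(\Psi)$ and $\rW(\Psi)$ induce a compatible descent datum on these quotients; effective descent for finite \'etale, hence affine, $\rS$-schemes then shows that $\ul{\Autext}(\Psi)$ is representable by a twisted constant $\rS$-scheme.

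The step I expect to be the main obstacle is purely in making the quotient construction commute with descent: one needs the \'etale-local models $\ul{\Aut}(\Psi_{\rS_i})/\rW(\Psi_{\rS_i})$ to be genuinely representable --- which reduces to Proposition~\ref{0.5} together with the elementary fact that the sheaf quotient of a constant group scheme by a normal subgroup is the constant scheme on the abstract quotient group --- and one needs the resulting finite \'etale descent datum to be effective. Both points are classical because every scheme in question is finite over its base, so I anticipate only bookkeeping (checking the cocycle compatibilities) rather than a genuine difficulty. As an alternative that sidesteps constructing the descent datum by hand, one could instead extract the representability of $\ul{\Aut}(\Psi)$ and of $\ul{\Autext}(\Psi)$ from the structure theory of the automorphism group of a reductive $\rS$-group scheme in \cite{SGA3}, and read off the present statement from the same semidirect-product decomposition.
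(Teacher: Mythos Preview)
Your approach is essentially the same as the paper's: reduce to the split case via Proposition~\ref{0.5}, then use \'etale descent. The paper's proof is very terse --- it simply observes that $\Psi$ splits over an \'etale cover, that $\ul{\Aut}(\Psi_{\rS_i})$ and $\ul{\Autext}(\Psi_{\rS_i})$ are constant group schemes by Proposition~\ref{0.5}, and then cites \cite{SGA3} Exp.~X, 5.5 directly for effectiveness of the descent.

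There is, however, one genuine slip in your write-up. You assert that $\Aut(\psi)$ is a \emph{finite} abstract group and then base effectiveness of descent on the constant scheme being finite \'etale, hence affine. This is false in general: for a non-semisimple root datum --- e.g.\ $\rM=\ent^n$ with $\rR=\emptyset$, the root datum of a torus --- one has $\Aut(\psi)=\Gl_n(\ent)$, which is infinite, and $\Aut(\psi)_\rS$ is then not affine (not even quasi-compact) over $\rS$. Since the paper later applies this proposition to root data of reductive (not necessarily semisimple) groups, the general case is actually needed. The fix is exactly what the paper does: invoke \cite{SGA3} Exp.~X, 5.5, which gives effective \'etale descent for locally constant $\rS$-schemes without any finiteness hypothesis. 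Once you replace your ``finite \'etale $\Rightarrow$ affine $\Rightarrow$ effective descent'' step by this reference, your argument is correct and coincides with the paper's.
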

\begin{proof}
Note that we can find an \'etale covering $\{\rS_i\ra\rS\}_i$ such
that $\Psi_{\rS_i}$ is split (~\cite{SGA3}, Exp. X, 4.5). By
Proposition~\ref{0.5} , $\ul{\Aut}(\Psi_{\rS_i})$ and
$\ul{\Autext}(\Psi_{\rS_i})$ are constant group schemes over
$\rS_i$.  By~\cite{SGA3}, Exp. X, 5.5, $\{\rS_i\ra\rS\}_i$ gives an
effective descent datum, so $\ul{\Aut}(\Psi_{\rS_i})$ and
$\ul{\Autext}(\Psi_{\rS_i})$ are representable.
\end{proof}

Let $\Psi_1$, $\Psi_2$ be two twisted root data. Suppose $\Psi_2$ is
a twisted form of $\Psi_1$. Let $\ul{\Isom}(\Psi_1,\Psi_2)$ be the
isomorphism functor between $\Psi_1$ and $\Psi_2$. Then
$\ul{\Isom}(\Psi_1,\Psi_2)$ is a right principal homogeneous space
of $\ul{\Aut}(\Psi_1)$ and a left principle homogeneous of
$\ul{\Aut}(\Psi_2)$. Since $\ul{\Aut}(\Psi_2)$ is representable,
$\ul{\Isom}(\Psi_1,\Psi_2)$ is also representable.

Define
$\ul{\Isomext}(\Psi_1,\Psi_2)=\rW(\Psi_2)\setminus\ul{\Isom}(\Psi_1,\Psi_2)$.

Note that for $f\in\ul{\Isom}(\Psi_1,\Psi_2)(\rS)$,
$f^{-1}\circ\rW(\Psi_2)\circ f=\rW(\Psi_1)$ by
Proposition~\ref{0.10}. Therefore we have a natural isomorphism from
$\ul{\Isomext}(\Psi_1,\Psi_2)$ to
$\ul{\Isom}(\Psi_1,\Psi_2)/\rW(\Psi_1)$. Then
$\ul{\Isomext}(\Psi_1,\Psi_2)$ is a left
$\ul{\Autext}(\Psi_2)$-principal homogeneous space and a right
$\ul{\Autext}(\Psi_1)$-principal homogeneous space. An
\emph{orientation} of $\Psi_1$ with respect to $\Psi_2$ is an
$\rS$-point of $\ul{\Isomext}(\Psi_1,\Psi_2)$.

Suppose that there is $u\in\ul{\Isomext}(\Psi_1,\Psi_2)(\rS)$. Then
we can regard $\rS$ as an $\ul{\Isomext}(\Psi_1,\Psi_2)$-scheme
through $u$ and define
$$\ul{\Isomint}_u(\Psi_1,\Psi_2):=\rS\underset{\ul{\Isomext}(\Psi_1,\Psi_2)}{\times}\ul{\Isom}(\Psi_1,\Psi_2).$$
\end{subsubsection}


\begin{subsection}{Reductive groups}\label{s0.3}
An $\rS$-group scheme $\rG$ is called \emph{reductive (resp.
semi-simple)} if it is affine and smooth over $\rS$, and all the
geometrical fibers are connected and reductive (resp. semisimple)
(ref.~\cite{SGA3}, Exp. XIX, Def. 2.7).

Let $\rG$ be a reductive $\rS$-group scheme and suppose that $\rT$
is a maximal torus in $\rG$.  We let $\Phi(\rG,\rT)$ be the twisted
root datum of $\rG$ with respect to $\rT$ (ref.~\cite{SGA3} Exp.
XXII, 1.10).

For a point $s\in\rS$, let $\kappa(s)$ be the residue field of $s$
and $\ol{\kappa(s)}$ be the algebraic closure of $\kappa(s)$. Let
$\ol{s}$ be the scheme $\spec(\ol{\kappa(s)})$. The \emph{type} of
$\rG$ at $s$ is the type of $\Phi(\rG_{\ol{s}},\rT_{0})$, where
$\rT_{0}$ is a maximal torus of $\rG_{\ol{s}}$ (ref.~\cite{SGA3},
Exp. 22, Def. 2.6.1, 2.7). Note that the type of $\rG$ is locally
constant over $\rS$ (ref~\cite{SGA3}, Exp. 22, Prop. 2.8).

A reductive $\rS$-group $\rG$ is \emph{split} if there is a maximal
torus $\rT$ of $\rG$ and a root datum
$(\rM,\rM^{\vee},\rR,\rR^{\vee})$ such that
$\Phi(\rG,\rT)\simeq(\rM_{\rS},\rM^{\vee}_{\rS},\rR_{\rS},\rR^{\vee}_{\rS})$
and satisfying the following:
\begin{itemize}
\item[1]
$\rS$ is nonempty and each root $\alpha\in\rS$ (resp.
$\alpha^{\vee}\in\rR^{\vee}$) can be identified as a constant map
from $\rS$ to $\rM$ (resp. $\rM^{\vee}$).
\item[2]
Let $\lieg=\Lie(\rG/\rS)$ and $\liet=\Lie(\rT/\rS)$. Under the
adjoint action of $\rT$,
$\lieg=\liet\oplus\underset{\alpha\in\rR}{\coprod}\lieg^{\alpha}$,
where the $\lieg^\alpha$'s are free $\sO_\rS$-modules.
\end{itemize}
In this case, we say that $\rG$ is \emph{split relatively to $\rT$}
(ref.~\cite{SGA3}, Exp. XXII, 1.13 and 2.7).

Let us endow $\rS$ with the \'{e}tale topology. Let $\rS'$ be an
$\rS$-scheme, and $\rG$ be an $\rS$-group scheme. Let
$\underline{\Aut}_{\rS-gr}(\rG)$ be the sheaf of group automorphisms
of $\rG$. Then we can define the group homomorphism
\begin{center}
$\ad:\rG\ra\underline{\Aut}_{\rS-gr}(\rG)$
\end{center}
which maps an element $g$ of $\rG(\rS')$ to an automorphism of
$\rG_{\rS'}$ defined by the conjugation by $g$. Let
$\underline{\Centr}(\rG)$ be the center of $\rG$. Then the image
sheaf of $\ad$ is isomorphic to $\rG / \underline{\Centr}(\rG)$ and
$\ad(\rG)$ is normal in $\underline{\Aut}_{\rS-gr}(\rG)$. So we have
the exact sequence of $\rS$-group sheaves:
\begin{center}
$1\ra\ad(\rG)\ra\underline{\Aut}_{\rS-gr}(\rG)\ra\underline{\Autext}(\rG)\ra
1$.
\end{center}
\begin{theo}\label{0.3}
Let $\rS$ be a scheme and $\rG$ be a reductive $\rS$-group scheme.
For the exact sequence of $\rS$-sheaves:
\begin{center}
$\xymatrix{
1\ar[r]&\ad(\rG)\ar[r]&\underline{\Aut}_{\rS-gr}(\rG)\ar[r]^{p}&\underline{\Autext}(\rG)\ar[r]&
1}$,
\end{center}
we have the following:
\begin{itemize}
\item[(i)]
$\underline{\Aut}_{\rS-gr}(\rG)$ is represented by a separated,
smooth $\rS$-scheme.
\item[(ii)]
$\underline{\Autext}(\rG)$ is represented by a twisted finitely
generated constant scheme.
\item[(iii)]
Suppose that $\rG$ splits relatively to $\rT$, and
$\Phi(\rG,\rT)\simeq(\rM_{\rS},\rM^{\vee}_{\rS},\rR_{\rS},\rR^{\vee}_{\rS})$.
Let $(\psi,\Delta)=(\rM,\rM^{\vee},\rR,\rR^{\vee},\Delta)$ be a
pinning root datum. Then there is a monomorphism between sheaves
$a:\rE_{\Delta}(\psi)_{\rS}\ra\ul{\Aut}_{\rS-gr}(\rG)$ such that
 $$p\circ a:\rE_{\Delta}(\psi)_{\rS}\ra\underline{\Autext}(\rG)$$
 is an isomorphism.
\end{itemize}
\end{theo}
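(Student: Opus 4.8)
The plan is to reduce everything to the situation of assertion~(iii) --- $\rG$ split with a chosen pinning, where the structural input lives --- and then to obtain (i) and (ii) by \'etale descent, using that $\ul{\Aut}_{\rS-gr}(\rG)$ and $\ul{\Autext}(\rG)$ are \'etale sheaves. (The whole statement is, in substance, \cite{SGA3}, Exp.~XXIV, Th.~1.3, and the argument I sketch follows that reference.) So suppose first that $\rG$ splits relatively to $\rT$ and fix a pinning $(\psi,\Delta)$ with $\psi=(\rM,\rM^{\vee},\rR,\rR^{\vee})$; concretely this also fixes the Borel $\rB\supseteq\rT$ attached to $\Delta$ together with an $\sO_\rS$-basis $X_\alpha$ of each simple root space $\lieg^{\alpha}$, $\alpha\in\Delta$. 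I would prove, as an equality of \'etale sheaves,
$$\ul{\Aut}_{\rS-gr}(\rG)=\ad(\rG)\cdot a\big(\rE_\Delta(\psi)_{\rS}\big),\qquad \ad(\rG)\cap a\big(\rE_\Delta(\psi)_{\rS}\big)=1 ,$$
which at once yields (iii), and, since $\ad(\rG)\cong\rG/\ul{\Centr}(\rG)$ is representable by a smooth affine $\rS$-group scheme and $\rE_\Delta(\psi)_{\rS}$ is a finitely generated constant scheme, also (i) and (ii) over such a base.

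The equality is established in two steps. \emph{Rigidity}: an automorphism of $\rG_{\rS'}$ preserving the pinning must preserve $\rT$ and $\rB$, hence acts on $\Phi(\rG,\rT)$ through an element of $\rE_\Delta(\psi)$; conversely, every element of $\rE_\Delta(\psi)$ is realized, and uniquely once one insists that the $X_\alpha$ be fixed, by the Existence and Isomorphism Theorems for split reductive group schemes (\cite{SGA3}, Exp.~XXIII). This produces the monomorphism $a$ of~(iii) and identifies the stabilizer of the pinning in $\ul{\Aut}_{\rS-gr}(\rG)$ with $\rE_\Delta(\psi)_{\rS}$. \emph{Transitivity}: $\ad(\rG)$ acts on the sheaf of pinnings of $\rG$, and this action is transitive and free \emph{as a morphism of sheaves} --- \'etale-locally all maximal tori are conjugate and all Borels through a fixed torus are conjugate under its normalizer, while the residual ambiguity in the $X_\alpha$ is absorbed by a section of the maximal torus $\rT/\ul{\Centr}(\rG)$ of $\ad(\rG)$, which maps onto $\gm_m^{\Delta}$ since $\Delta$ is a $\ent$-basis of the character group of $\rT/\ul{\Centr}(\rG)$; and $\ad(g)$ fixing the whole pinning forces $g\in\ul{\Centr}(\rG)$. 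Combining the two, every local section of $\ul{\Aut}_{\rS-gr}(\rG)$ factors uniquely as $\ad(g)\circ a(w)$, giving the displayed formula and showing that $p\circ a\colon\rE_\Delta(\psi)_{\rS}\to\ul{\Autext}(\rG)$ is an isomorphism.

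For general $\rS$ I would pick, by \cite{SGA3}, Exp.~X (as in the proof of Proposition~\ref{0.6}), an \'etale covering $\{\rS_i\to\rS\}$ over which $\rG$ becomes split relatively to a maximal torus, so that the split case applies over each $\rS_i$, and then check effectivity of the resulting descent data for the sheaves $\ul{\Aut}_{\rS-gr}(\rG)$ and $\ul{\Autext}(\rG)$. On the component-group side this is a descent datum for the finitely generated constant schemes $\rE_\Delta(\psi)_{\rS_i}$, effective by \cite{SGA3}, Exp.~X,~5.5, exactly as in Proposition~\ref{0.6}; this proves~(ii). For~(i), the map $\ul{\Aut}_{\rS-gr}(\rG)\to\ul{\Autext}(\rG)$ is, by the split case, a torsor under the affine group $\ad(\rG)=\rG/\ul{\Centr}(\rG)$, hence an affine (in particular representable) morphism; since its target is a scheme, so is $\ul{\Aut}_{\rS-gr}(\rG)$, and since the latter is \'etale-locally separated and smooth over $\rS$, it is separated and smooth over $\rS$. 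Finally, the fibers of $\ul{\Autext}(\rG)$ being finite \'etale, $\ad(\rG)$ is the open and closed identity-component subgroup sheaf, in accordance with the exact sequence preceding the statement.

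The genuine content is the split case, and within it the rigidity step: that a pinning-preserving automorphism is determined by its action on the root datum, and that every admissible such action is realized. Over a field this is the classical isomorphism theorem; over an arbitrary base it is the Existence/Isomorphism package of \cite{SGA3}, Exp.~XXIII, which I would invoke rather than reprove. The remaining difficulty is bookkeeping --- the product decomposition above is an equality of \emph{sheaves} (equivalently of schemes, once representability is in hand), not of groups of $\rS$-points, since conjugacy of maximal tori and Borels only holds \'etale-locally; the torsor formalism of~\S1 (Proposition~\ref{0:1}) is what lets one pass from this local transitivity to the global identity, and thence to representability via the affine morphism $\ul{\Aut}_{\rS-gr}(\rG)\to\ul{\Autext}(\rG)$.
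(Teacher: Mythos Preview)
The paper does not give a proof of this statement at all: its entire proof is the single citation ``\cite{SGA3}, Exp.~XXIV, Theorem~1.3.'' Your proposal is a correct and reasonably detailed outline of the argument behind that reference, and you explicitly acknowledge as much; so there is nothing to compare beyond noting that you have expanded what the paper left as a black-box citation.
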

\begin{proof}
~\cite{SGA3}, Exp. XXIV, Theorem 1.3.
\end{proof}
For a subgroup scheme $\rH$ of $\rG$, we let
$\ul{\Aut}_{\rS-gr}(\rG,\rH)$ be the subsheaf of
$\ul{\Aut}_{\rS-gr}(\rG)$ which normalizes $\rH$, i. e.
$\ul{\Aut}_{\rS-gr}(\rG,\rH)=\ul{\Norm}_{\ul{\Aut}_{\rS-gr}(\rG)}(\rH)$
(cf.~\cite{SGA3} Exp. $\mathrm{VI_{B}}$, Def. 6.1 (iii)). We let
$\ul{\Autext}(\rG,\rH)$ be the quotient sheaf
$\ul{\Aut}_{\rS-gr}(\rG,\rH)/\ul{\Norm}_{\ad(\rG)}(\rH)$.
\end{subsection}
\begin{subsection}{Dynkin diagrams}

For each reductive $\rS$-group $\rG$, we can associate a
\emph{Dynkin diagram scheme} $\underline{\Dyn}(\rG)$ to $\rG$
(ref.~\cite{SGA3}, Exp. XXIV, 3.2 and 3.3). Moreover we have the
following:
\begin{prop}\label{0.4}
If $\rG$ is semisimple (resp. adjoint or simply connected), then the
morphism
$$\underline{\Autext}(\rG)\ra\underline{\Aut}_{\Dyn}(\underline{\Dyn}(\rG))$$
is a monomorphism (resp. isomorphism).
\end{prop}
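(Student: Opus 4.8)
The plan is to reduce the statement to the corresponding fact about abstract root data and Dynkin diagrams, and then propagate it along an étale covering. Since both $\ul{\Autext}(\rG)$ and $\ul{\Aut}_{\Dyn}(\ul{\Dyn}(\rG))$ are (by Theorem~\ref{0.3}(ii) and the construction of the Dynkin diagram scheme in \cite{SGA3}, Exp. XXIV, 3.2--3.3) representable by twisted constant $\rS$-schemes, and since any morphism of twisted constant schemes is a monomorphism (resp. isomorphism) if and only if it is so after passing to an étale covering over which both become constant, it suffices to check the claim when $\rG$ is split. So first I would choose an étale covering $\{\rS_i\ra\rS\}$ trivializing both schemes --- which exists because $\rG$ becomes split étale locally (\cite{SGA3}, Exp. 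XXII, 2.3) --- and reduce to the split case, where $\ul{\Autext}(\rG)$ is identified with $\rE_\Delta(\psi)_\rS$ via Theorem~\ref{0.3}(iii) and $\ul{\Dyn}(\rG)$ is the constant scheme on the Dynkin diagram of $\psi$.

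Next, in the split case the statement becomes the purely combinatorial fact: for a semisimple root datum $\psi$, the natural map from $\rE_\Delta(\psi)$ to the automorphism group of the Dynkin diagram $\Dyn(\psi)$ is injective, and it is bijective when $\psi$ is adjoint or simply connected. Injectivity is classical: an element $u\in\rE_\Delta(\psi)$ fixing $\Delta$ permutes the simple roots according to its image in $\Aut(\Dyn(\psi))$, so if that image is trivial then $u$ fixes every simple root; since $\psi$ is semisimple, $\Delta$ spans $\rM\otimes\rat$, hence $u$ acts trivially on $\rM\otimes\rat$ and therefore on $\rM$ (as $\rM$ is torsion-free), so $u=\id$. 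For surjectivity in the adjoint or simply connected case, one observes that a diagram automorphism $\gamma$ permutes $\Delta$ and hence induces an automorphism of the free abelian group $\Gamma_0(\rR)$; in the adjoint case $\rM=\Gamma_0(\rR)$ so $\gamma$ lifts to $\rM$ directly, and one checks it preserves $\rR$ and is compatible on the coroot side, giving an element of $\rE_\Delta(\psi)$ over $\gamma$; the simply connected case follows by duality, passing to $\psi^\vee$, which is adjoint, and using that a diagram automorphism of $\psi$ is the same as one of $\psi^\vee$. This is exactly the content of \cite{SGA3}, Exp.~XXI, 6.7 and Exp.~XXIV, 3.6, which I would invoke rather than re-derive.

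Finally I would assemble the pieces via descent: the collection of monomorphisms (resp. isomorphisms) $\ul{\Autext}(\rG_{\rS_i})\ra\ul{\Aut}_{\Dyn}(\ul{\Dyn}(\rG_{\rS_i}))$ obtained in the split case is compatible with the canonical identifications over the double overlaps $\rS_i\times_\rS\rS_j$ --- because the map in question is functorial in $\rG$ and both $\ul{\Autext}$ and $\ul{\Dyn}$ commute with base change --- so it glues to the global morphism, and being a monomorphism (resp. isomorphism) is an étale-local property on the base.

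The main obstacle, and the step deserving the most care, is the surjectivity in the adjoint/simply connected case: one must genuinely verify that a combinatorial symmetry of the Dynkin diagram respects the full root-datum structure (both $\rR\subseteq\rM$ and $\rR^\vee\subseteq\rM^\vee$, together with the pairing), which is where the adjoint or simply connected hypothesis is used --- for a general semisimple root datum the lift to $\rM$ need not exist. Everything else (the reduction to the split case, the injectivity, and the descent) is formal once Theorem~\ref{0.3} and the representability of the Dynkin scheme are in hand.
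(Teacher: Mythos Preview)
Your proposal is correct and in fact more detailed than the paper's own treatment: the paper's proof of this proposition is simply a citation to \cite{SGA3}, Exp.~XXIV, 3.6, with no further argument. Your sketch --- reduce to the split case by \'etale localization, identify $\ul{\Autext}(\rG)$ with $\rE_\Delta(\psi)_\rS$ via Theorem~\ref{0.3}(iii), use the combinatorial fact about root data, and descend --- is exactly how one unpacks that reference, and you even cite the same place in SGA3.
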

\begin{proof}
~\cite{SGA3}, Exp. XXIV, 3.6.
\end{proof}

Given a twisted root datum $\Psi$ over $\rS$,  we can also define
the Dynkin scheme of $\Psi$ in a similar way and denote it by
$\underline{\Dyn}(\Psi)$. We also have a natural morphism from
$\ul{\Autext}(\Psi)$ to
$\underline{\Aut}_{\Dyn}(\underline{\Dyn}(\Psi))$, which will be a
monomorphism (resp. isomorphism) if $\Psi$ is reduced semisimple
(resp. reduced adjoint or reduced simply connected).

For a root datum $\psi$, we can associate to each connected
component of its Dynkin diagram $\Dyn(\psi)$ a type according to the
classification of Dynkin diagrams (ref.~\cite{SGA3} Exp. XXI,
7.4.6). Let $\bT$ be the set of all types of Dynkin diagram.
Similarly, for each Dynkin scheme $\bD$ over $\rS$, we can associate
the scheme of connected components $\bD_0$ to $\rD$
(ref.~\cite{SGA3} Exp. XXIV, 5.2). We can also define a morphism
$$a:\bD_0\ra\bT_{\rS}.$$ Let $\bv\in\bT$. If $\bD_0=a^{-1}(\bv)$,
then we say $\bD$ is \emph{isotypical of type $\bv$}. If the Dynkin
scheme $\ul{\Dyn}(\Psi)$ is connected at each fiber over $\rS$ and
is of constant type $\bv$, then we say that $\Psi$ is \emph{simple}
of type $\bv$.
\end{subsection}
\subsection{Parabolic subgroups}
Let $\rS$ be a scheme and $\rG$ be a reductive $\rS$-group. A
subgroup scheme $\rP$ of $\rG$ over $\rS$ is called parabolic if
\begin{itemize}
  \item [1] $\rP$ is smooth over $\rS$.
  \item [2] For each $s\in\rS$, the quotient
$\rG_{\ol{s}}/\rP_{\ol{s}}$ is proper.
\end{itemize}

Let us keep the notations in Section~\ref{s0.3}. Let $\sE=\{\rG,\
\rT,\ \rR,\ \Delta,\ \{\rX_\alpha\}_{\alpha\in\Delta}\}$ be a
pinning of $\rG$ and $\rP$ be a parabolic subgroup. The pinning
$\rE$ is said to be adapted to $\rP$ if $\rP$ contains $\rT$ and
$\Lie(\rP/\rS)=\liet\oplus\coprod_{\alpha\in\rR'}\lieg^\alpha$,
where $\rR'$ is a subset of $\rR$ which contains all the positive
roots. In this case, we denote $\Delta(\rP)=\Delta\cap -\rR'$.

Let $\Of(\ul{\Dyn}(\rG))$ be the functor defined as the following:
for each $\rS$-scheme $\rS'$, $\Of(\ul{\Dyn}(\rG))(\rS')$ is the set
of all subschemes of $\ul{\Dyn}(\rG)_{\rS'}$ which are open and closed.
Then $\Of(\ul{\Dyn}(\rG))$ is a  twisted finite constant scheme. Let
$\ul{\Par}(\rG)$ be the functor defined by $\ul{\Par}(\rG)(\rS')$ is the
set of all parabolic subgroups of $\rG_\rS'$, for each $\rS$-scheme
$\rS'$. One can define a morphism
$$\bt:\ul{\Par}(\rG)\ra\Of(\ul{\Dyn}(\rG))$$
satisfying
\begin{itemize}
  \item [1] $\bt$ is functorial in $\rG$.
  \item [2] If $\sE$ is a pinning of $\rG$ adapted to the parabolic subgroup
$\rP$, then $\bt(\rP)=\Delta(\rP)_{\rS}$.
\end{itemize}

For a parabolic subgroup $\rP$ of $\rG$, we call $\bt(\rP)$ the type
of $\rP$.

\begin{prop}\label{0.9}
Let $\rS$, $\rG$ be as above. Let $\rP$ be a parabolic subgroup of
$\rG$. Let $t'$ be a section of $\Of(\ul{\Dyn}(\rG))$ over $\rS$ and
$t'\supseteq\bt(\rP)$. Then there is a unique parabolic subgroup
$\rP'$ of $\rG$ which contains $\rP$ and the type of $\rP'$ is $t'$.
\end{prop}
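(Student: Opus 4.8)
The plan is to reduce to the split case by \'etale descent, settle existence and uniqueness there using the combinatorics of standard parabolic subgroups, and then glue the local solutions.

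First I would record that uniqueness is local for the \'etale topology: two closed subgroup schemes of $\rG$ that agree after a faithfully flat base change coincide. Consequently, once existence and uniqueness of $\rP'$ are established \'etale-locally on $\rS$, the local solutions automatically agree on the overlaps of an \'etale cover (apply the local uniqueness over the fiber products), hence descend to a closed subscheme $\rP'\subseteq\rG$; the properties of being a subgroup scheme, being smooth over $\rS$, having $\rG_{\ol{s}}/\rP'_{\ol{s}}$ proper for every $s\in\rS$, containing $\rP$, and having type $t'$ are each either \'etale-local on $\rS$ or fiberwise (here one uses that $\Of(\ul{\Dyn}(\rG))$ is a sheaf and that $\bt$ is compatible with base change), so the glued $\rP'$ is the required parabolic and it is unique. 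Thus it suffices to argue \'etale-locally, where I may assume $\rS$ is connected, $\rG$ is split relatively to a maximal torus $\rT$ with Borel subgroup $\rB\supseteq\rT$ and a pinning $\sE$. After a further \'etale base change I would put $\rP$ in standard position: parabolics of a fixed type form a homogeneous space under $\rG$ with smooth surjective orbit map, hence any parabolic of type $\bt(\rP)$ lifts \'etale-locally to an element of $\rG$ and so is conjugate to the standard parabolic $\rP_I\supseteq\rB$ attached to the subset $I\subseteq\Delta$ with $I_\rS=\bt(\rP)$; such a conjugation changes neither $t'$ nor the hypothesis, since inner automorphisms act trivially on $\ul{\Dyn}(\rG)$. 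So I may assume $\rP=\rP_I$.

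Now $t'$ corresponds to a subset $I'\subseteq\Delta$, and the hypothesis $t'\supseteq\bt(\rP)$ reads $I\subseteq I'$. For existence I would take $\rP':=\rP_{I'}$, the standard parabolic attached to $I'$: since $I\subseteq I'$ one has $\rP=\rP_I\subseteq\rP_{I'}=\rP'$, and the pinning $\sE$ is adapted to $\rP_{I'}$ with $\Delta(\rP_{I'})=I'$, so property (2) of the map $\bt$ gives $\bt(\rP')=I'_\rS=t'$. For uniqueness, let $\rQ$ be any parabolic with $\rP\subseteq\rQ$ and $\bt(\rQ)=t'$; then $\rQ\supseteq\rB$, so by the classification of parabolic subgroups containing a fixed Borel subgroup we may write $\rQ=\rP_J$ for a unique $J\subseteq\Delta$, with $\bt(\rP_J)=J_\rS$; hence $J_\rS=t'=I'_\rS$, so $J=I'$ and $\rQ=\rP'$.

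The step I expect to carry the real content is the input just used for uniqueness: that a parabolic subgroup of $\rG$ containing the Borel $\rB$ is one of the standard parabolics $\rP_J$ and is determined by its type. Over a field this is classical, amounting to the Bruhat decomposition --- the only $\rB$-fixed point of $\rG/\rP_{I'}$ is the base point --- and in the relative setting it follows from the representability and smoothness of $\ul{\Par}(\rG)$ (cf.~\cite{SGA3}, Exp.~XXVI) together with this fiberwise statement; alternatively one checks directly that the closed subscheme of $\rG/\rP_{I'}$ parametrizing parabolics of type $t'$ containing $\rP$ becomes, after any \'etale base change, the single base point, via the relative Bruhat decomposition of $\rG/\rP_{I'}$. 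Everything else is either formal \'etale descent or elementary combinatorics of subsets of $\Delta$.
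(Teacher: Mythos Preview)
Your argument is correct and follows the standard route: \'etale-local reduction to the split case, conjugation into standard position, then the combinatorics of subsets of $\Delta$ together with the classification of parabolics containing a fixed Borel. The paper, however, gives no proof of its own here; it simply cites \cite{SGA3} Exp.~XXVI, Lemme~3.8. So there is nothing to compare beyond noting that your proposal is essentially a sketch of how that lemma is proved in the source.
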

\begin{proof}
~\cite{SGA3} Exp. XXVI, Lemme 3.8.
\end{proof}

\begin{section}{Embedding functors}
Let $\rS$ be a scheme and $\rG$ be a reductive group over $\rS$. Let
$\rT$ be an $\rS$-torus and $\Psi$ be a root datum associated to
$\rT$. We would like to know if we can embed $\rT$ in $\rG$ as a
maximal torus such that the twisted root datum $\Phi(\rG,\rT)$ is
isomorphic to $\Psi$. To answer this question, we first define the
embedding functor $\fE(\rG,\Psi)$. The embedding functor is
representable and is a left $\ul{\Aut}_{\rS-gr}(\rG)$-homogeneous
space. Briefly speaking, each $\rS$-point of $\rE(\rG,\Psi)$
corresponds to an embedding from $\rT$ to $\rG$ with respect to
$\Psi$.

In the second part, we first define an orientation $v$ of $\Psi$
with respect to $\rG$. Once we can fix an orientation, we can fix a
connected component of $\rE(\rG,\Psi)$, which is called an oriented
embedding functor.  The oriented embedding functor $\fE(\rG,\Psi,v)$
is also representable and is a left $\rG$-homogeneous space.

In the end of this section, we show that the embedding functor has
an interpretation in the embedding problem of Azumaya algebras with
involution. Moreover, we show that there is a one-to-one
correspondence between the $k$-points of the embedding functor and
the $k$-embeddings from an \'etale $k$-algebra with involution into an Azumaya
algebra with involution.
\begin{subsection}{Embedding functors}
Let $\rS$ be a scheme, $\rG$ be a reductive $\rS$-group scheme. Let
$\rT$ be an $\rS$-torus. Let $\sM$ be the character group scheme
associated to $\rT$, and $\Psi=(\sM,\sM^{\vee},\sR,\sR^{\vee})$ be a
root datum associated to $\rT$. We define the \emph{embedding
functor} by:  \[ \fE(\rG,\Psi)(\rS')=\left\{\begin{array}{l}
\mbox{$f:\rT_{\rS'}\hookrightarrow\rG_{\rS'}$}\left|\begin{array}{l}\mbox{$
f$ is both a closed
immersion and a group }\\
 \mbox{homomorphism which induces an isomorphism
}\\
\mbox{$f^{\Psi}:\Psi_{\rS'}\xrightarrow{\sim}\Phi(\rG_{\rS'},f(\rT_{\rS'}))$
such that }\\
\mbox{ $f^{\Psi}(\alpha)=\alpha\circ f^{-1}|_{f(\rT_{\rS'})}$ for
all $\alpha\in\sM(\rS'')$,}\\\mbox{for each $\rS'$-scheme
$\rS''$.}\end{array}\right.\end{array}\right\}\]
 for
$\rS'$ a scheme over $\rS$. In this article, we always assume that
at each geometric point $\ol{s}\in\rS$, the root datum
$\Psi_{\ol{s}}$ is isomorphic to the root datum of $\rG_{\ol{s}}$.
Therefore, $\fE(\rG,\Psi)$ is not empty in our case.

The embedding functor $\fE(\rG,\Psi)$ is naturally equipped with a
left $\underline{\mathrm{Aut}}_{\rS-gr}(\rG)$-action defined as
compositions of functions. Namely define
$$l:\underline{\mathrm{Aut}}_{\rS-gr}(\rG)\times\fE(\rG,\Psi)\ra\fE(\rG,\Psi)$$ as
$l(\sigma,f)=\sigma\circ f$ for all
$\sigma\in\underline{\mathrm{Aut}}_{\rS-gr}(\rG)(\rS')$,
$f\in\fE(\rG,\Psi)(\rS')$ and $\rS'$ an $\rS$-scheme.

Since $\underline{\Aut}(\Psi)\subseteq\underline{\mathrm{Aut}}_{\rS-gr}(\sM)$
and
$\underline{\mathrm{Aut}}_{\rS-gr}(\rT)=\underline{\mathrm{Aut}}_{\rS-gr}(\sM)^{op}$,
we can regard $\ul{\Aut}(\Psi)$ as a subgroup of
$\ul{\Aut}_{\rS-gr}(\rT)$ through the inverse map between
$\underline{\mathrm{Aut}}_{\rS-gr}(\sM)$ and
$\underline{\mathrm{Aut}}_{\rS-gr}(\sM)^{op}$. We define a
right $\ul{\Aut}(\Psi)$-action on $\fE(\rG,\Psi)$ as a composition
of an automorphism of $\rT$ followed by a closed embedding from
$\fE(\rG,\Psi)$.

Now, let $\mathcal{T}$ be the scheme of maximal tori of $\rG$
(cf.~\cite{SGA3} XII, 1.10). We think about the morphism
$\pi:\fE(\rG,\Psi)\ra\mathcal{T}$ defined as $\pi(f)=f(\rT_{\rS'})$,
where $f\in\fE(\rG,\Psi)(\rS')$, and $\rS'$ is a scheme over $\rS$.
Then we have the following:
\begin{theo}\label{1:1}
In the sense of the \'{e}tale topology, $\fE(\rG,\Psi)$ is a homogeneous
space over $\rS$ under the left
$\underline{\mathrm{Aut}}_{\rS-gr}(\rG)$-action, and a torsor over
$\mathcal{T}$ under the right $\ul{\Aut}(\Psi)$-action. Moreover,
$\fE(\rG,\Psi)$ is representable by an $\rS$-scheme.
\end{theo}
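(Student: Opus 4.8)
The plan is to verify the three assertions of Theorem~\ref{1:1} in the following order: first that $\pi:\fE(\rG,\Psi)\ra\mathcal{T}$ makes $\fE(\rG,\Psi)$ an $\ul{\Aut}(\Psi)$-torsor over $\mathcal{T}$; then that $\fE(\rG,\Psi)$ is $\ul{\Aut}_{\rS-gr}(\rG)$-homogeneous over $\rS$; and finally deduce representability from the torsor structure together with Proposition~\ref{0.6} and the representability of $\mathcal{T}$. For the first point, I would use Proposition~\ref{0:1}: it suffices to check that $\pi$ is an epimorphism of \'etale sheaves and that the map $i:\fE(\rG,\Psi)\times\ul{\Aut}(\Psi)\ra\fE(\rG,\Psi)\times_{\mathcal{T}}\fE(\rG,\Psi)$, $(f,h)\mapsto(f,f\circ h)$, is an isomorphism. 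Surjectivity of $\pi$ is \'etale-local: over a cover splitting both $\rT$ and a maximal torus $\rT'$ of $\rG$ lying over a section of $\mathcal{T}$, one uses the standing hypothesis that $\Psi_{\ol{s}}$ is isomorphic to the root datum of $\rG_{\ol{s}}$, so that the split tori $\rT$ and $\rT'$ have isomorphic character lattices carrying isomorphic root data; any isomorphism of root data then gives, after possibly passing to a further \'etale cover to match up the chosen isomorphism with an actual conjugation, a closed immersion $\rT\hookrightarrow\rG$ hitting $\rT'$ (here one invokes \cite{SGA3} Exp.~X, 5.5 and the conjugacy of maximal tori \'etale-locally). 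For the bijectivity of $i$: given $f\in\fE(\rG,\Psi)(\rS')$, a second point $f'$ with $\pi(f')=\pi(f)=\rT'$ means $f$ and $f'$ are two closed immersions $\rT_{\rS'}\ra\rG_{\rS'}$ with the same image and with isomorphisms $f^\Psi,(f')^\Psi$ onto $\Phi(\rG,\rT')$ normalized by the formula in the definition; then $h:=f^{-1}\circ f'$ is an automorphism of $\rT_{\rS'}$, and the compatibility of the induced maps on root data forces $h\in\ul{\Aut}(\Psi)(\rS')$, giving the inverse $(f',f)\mapsto(f',f^{-1}\circ f')$ — one checks functoriality and that this is a two-sided inverse to $i$, which is a routine diagram chase through the normalization condition $f^\Psi(\alpha)=\alpha\circ f^{-1}$.

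Next, for the $\ul{\Aut}_{\rS-gr}(\rG)$-homogeneity over $\rS$ (i.e.\ with respect to the structure morphism $\fE(\rG,\Psi)\ra\rS$), I need to show that the structure map is an epimorphism of sheaves — this is exactly the non-emptiness \'etale-locally, which follows from the standing hypothesis as above — and that $\ul{\Aut}_{\rS-gr}(\rG)\times\fE(\rG,\Psi)\ra\fE(\rG,\Psi)\times_{\rS}\fE(\rG,\Psi)$, $(\sigma,f)\mapsto(\sigma\circ f,f)$ (or $(f,\sigma\circ f)$, matching the definition of homogeneous space in the text), is an epimorphism. The content here is: given two closed immersions $f_1,f_2:\rT_{\rS'}\ra\rG_{\rS'}$ both in $\fE(\rG,\Psi)(\rS')$, \'etale-locally on $\rS'$ there is an automorphism $\sigma$ of $\rG$ with $\sigma\circ f_1=f_2$. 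After an \'etale cover we may assume $\rT$ splits and $\rG$ is split relative to a maximal torus; then $f_1(\rT)$ and $f_2(\rT)$ are two split maximal tori of $\rG$, hence conjugate \'etale-locally by an element of $\rG$ (so by an inner automorphism) — after applying such an inner automorphism we reduce to $f_1(\rT)=f_2(\rT)=:\rT'$, and then $f_1,f_2$ differ by an element $h$ of $\ul{\Aut}(\Psi)$ as in the torsor argument; by Theorem~\ref{0.3}(iii) and Proposition~\ref{0.5}, $\ul{\Aut}(\Psi)$ — regarded inside $\ul{\Aut}_{\rS-gr}(\rT')$ as $\rW(\Psi)\rtimes\rE_\Delta$ — is realized inside $\ul{\Aut}_{\rS-gr}(\rG)$ (the Weyl part by $\rN_{\rG}(\rT')/\rT'$, the $\rE_\Delta$ part by the pinning-preserving automorphisms), so $h$ extends to an automorphism $\sigma$ of $\rG$ with $\sigma\circ f_1=f_2$. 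Composing the two \'etale-local adjustments gives the desired $\sigma$.

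Finally, representability: $\fE(\rG,\Psi)$ is an $\ul{\Aut}(\Psi)$-torsor over $\mathcal{T}$; the scheme $\mathcal{T}$ of maximal tori of $\rG$ is representable (\cite{SGA3} Exp.~XII, 1.10) and $\ul{\Aut}(\Psi)$ is representable by a twisted constant $\rS$-scheme (Proposition~\ref{0.6}), hence affine over $\rS$; a torsor under an affine — in fact here finite type, even finite — group scheme over a (locally noetherian, or arbitrary) base scheme is representable, and one can cite \cite{SGA3} Exp.~X, 5.5 as the paper suggests, noting that the torsor becomes trivial over an \'etale cover of $\mathcal{T}$ so effective descent applies. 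I expect the main obstacle to be the careful \'etale-local bookkeeping in the homogeneity step — specifically, making precise that two split maximal tori of a split reductive group are \'etale-locally conjugate by an element of $\rG(\rS')$ rather than merely by an element of $\ul{\Aut}_{\rS-gr}(\rG)$, and then checking that the residual discrepancy is captured exactly by the image of $\ul{\Aut}(\Psi)$ under the map of Theorem~\ref{0.3}(iii); the normalization condition $f^\Psi(\alpha)=\alpha\circ f^{-1}$ in the definition of the functor must be used to rule out spurious identifications and to see that the $\ul{\Aut}(\Psi)$-action is the correct one. Everything else is a sheaf-theoretic formality once Propositions~\ref{0:1}, \ref{0.5}, \ref{0.6} and Theorem~\ref{0.3} are in hand.
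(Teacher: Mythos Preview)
Your proposal is correct and follows essentially the same approach as the paper's proof: the paper also establishes the torsor claim over $\mathcal{T}$ via Proposition~\ref{0:1} (surjectivity of $\pi$ \'etale-locally plus an explicit inverse to the action map), the homogeneity claim by \'etale-locally conjugating the two image tori and then invoking Theorem~\ref{0.3} to lift the residual root-datum automorphism to an automorphism of $\rG$, and concludes representability from \cite{SGA3} Exp.~X, 5.5. The only point you skipped is that the paper first verifies explicitly that $\fE(\rG,\Psi)$ is an \'etale sheaf (as a subfunctor of $\ul{\Hom}_{\rS-gr}(\rT,\rG)$, checking that the conditions ``closed immersion'', ``image is a maximal torus'', and ``$f^\Psi$ is an isomorphism'' are all \'etale-local), which you should add as a brief preliminary step before applying Proposition~\ref{0:1}.
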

\begin{proof}
We divide the argument into the following three parts:
\begin{claim}
$\fE(\rG,\Psi)$ is a sheaf for the \'{e}tale topology.
\end{claim}
\begin{proof}
Let $\{\rS_i\ra\rS\}$ be an \'etale covering. Since $\fE(\rG,\Psi)$
is a subfunctor of
$\underline{\mathrm{Hom}}_{\rS-\mathrm{gr}}(\rT,\rG)$ and
$\underline{\mathrm{Hom}}_{\rS-\mathrm{gr}}(\rT,\rG)$ is a sheaf, we
only need to prove that for
$f\in\underline{\mathrm{Hom}}_{\rS-\mathrm{gr}}(\rT,\rG)(\rS)$ if
$f_{\rS_i}\in\fE(\rG,\Psi)(\rS_i)$, then $f\in\fE(\rG,\Psi)(\rS)$.

We note that to verify that $f$ is a closed immersion, it is enough
to verify it \'etale locally. Hence $f$ is a closed immersion since
$f_{\rS_i}$ is. Since $f(\rT)$ is locally a maximal torus in $\rG$,
$f(\rT)$ is a maximal torus. Finally, $f^{\Psi}$ is an isomorphism
\'etale locally, so $f^{\Psi}$ is an isomorphism. We conclude that
$\fE(\rG,\Psi)$ is a sheaf.
\end{proof}

\begin{claim}
$\fE(\rG,\Psi)$ is homogeneous under the left
$\underline{\mathrm{Aut}}_{\rS-gr}(\rG)$-action, which is defined as
composition of functions.
\end{claim}
\begin{proof}
Let $\rS'$ be a scheme over $\rS$, and $f_{1}$, $f_{2}$ be two
elements in $\fE(\rG,\Psi)(\rS')$. Let $\rF_{i}=f_i(\rT_{\rS'})$,
$i=$1, 2 respectively. Then there exists an \'{e}tale neighborhood
$\rU$ of $\rS'$ where $\rF_1$ and $\rF_2$ are conjugated
(ref.~\cite{SGA3} Exp. 12, Theorem 1.7), so we can assume
$\rF_{1,\rU}=\rF_{2,\rU}$. Moreover, we can even assume $\rG_{\rU}$
is split relatively to $\rF_{1,\rU}$ (ref.~\cite{SGA3} Exp. 22,
2.3). By abuse of notation, we still use $f_{2}\circ
f_{1}^{-1}$ to denote the morphism from $\rF_{1,\rU}$ to
$\rF_{2,\rU}$. Then by the definition of the
$\fE(\rG,\Psi)$-functor, we know that $f_{2}\circ f_{1}^{-1}$
induces an automorphism on $\Phi(\rG_{\rU},f(\rF_{1,\rU}))$.
According to Theorem~\ref{0.3}, we can find $\sigma$, which is an
automorphism of $\rG_{\rU}$, such that $\sigma\circ f_2=f_1$, which
proves the claim.
\end{proof}

\begin{claim}
$\fE(\rG,\Psi)$ is a right $\ul{\Aut}(\Psi)$-torsor over
$\mathcal{T}$ for \'{e}tale topology.
\end{claim}
\begin{proof}
We first prove that $\pi:\fE(\rG,\Psi)\ra\mathcal{T}$ is surjective
as an $\rS$-sheaf morphism for the \'{e}tale topology. For an
$\rS$-scheme $\rS'$ and an element $\rF$ in $\mathcal{T}(\rS')$,
which means that $\rF$ is a maximal torus in $\rG_{\rS'}$, for each
$s'\in\rS'$, we can find an \'{e}tale open neighborhood
$\rU'\ra\rS'$ such that $\Psi_{\mathrm{U}'}$ splits and $\rG_{\rU'}$
splits relatively to $\rF_{\rU'}$. Therefore, $\Psi_{\mathrm{U}'}$
and $\Phi(\rG,\rF)_{\mathrm{U}'}$ are isomorphic as we assume that
both of them are with the same type at each geometric point. Hence,
there is $f\in\fE(\rG,\Psi)(\mathrm{U}')$ such that
$\pi(f)=\rF\underset{\mathrm{S}'}{\times}\mathrm{U}'$.

Next, let us show that
$\fE(\rG,\Psi)\underset{\rS}{\times}\ul{\Aut}(\Psi)\simeq\fE(\rG,\Psi)\underset{\mathcal{T}}{\times}\fE(\rG,\Psi)$
as $\ul{\Aut}(\Psi)$-space. By identifying $\ul{\Aut}(\Psi)$ with a
subgroup of $\ul{\Aut}(\rT)$, we regard
$\sigma\in\ul{\Aut}(\Psi)(\rS')$ as an element of
$\ul{\Aut}_{\rS-gr}(\rT)(\rS')$. Define
$$m:\fE(\rG,\Psi)\underset{\rS}{\times}\ul{\Aut}(\Psi)\ra\fE(\rG,\Psi)\underset{\mathcal{T}}{\times}\fE(\rG,\Psi)$$
as $m(f,\sigma)=(f, f\circ\sigma)$ for all $\rS'$ a scheme over
$\rS$.

Given
$(f_1,f_2)\in(\fE(\rG,\Psi)\underset{\mathcal{T}}{\times}\fE(\rG,\Psi))(\rS')$,
we let $\rF=f_1(\rT_{\rS'})=f_2(\rT_{\rS'})$ and
$\Phi=\Phi(\rG_{\rS'},\rF)$. Then both $f_1^{\Psi},f_2^{\Psi}$
induce isomorphisms from $\Psi_{\rS'}$ to $\Phi$, so
$({f_1^{\Psi}})^{-1}\circ {f_2^{\Psi}}$ is an automorphism of
$\Psi_{\rS'}$. So we can define
$$i:\fE(\rG,\Psi)\underset{\mathcal{T}}{\times}\fE(\rG,\Psi)\ra\fE(\rG,,\Psi)\underset{\rS}{\times}\ul{\Aut}(\Psi)$$
as $i(f_1,f_2)=(f_1,f_1^{-1}\circ f_2)$. Then we have
\begin{center}
$i\circ m(f,\sigma)=i(f,f\circ\sigma)=(f,f^{-1}\circ
f\circ\sigma)=(f, \sigma)$; $m\circ i(f_1,f_2)=m(f_1,{f_1}^{-1}\circ
f_2)=(f_1,f_2).$
\end{center}
Therefore $i$ is the inverse map of $m$ and the claim follows from
Proposition~\ref{0:1}.
\end{proof}

Now we want to show that $\fE(\rG,\Psi)$ is a scheme. As we have
mentioned in Proposition~\ref{0.10}, the group scheme
$\ul{\Aut}(\Psi)$ is \'etale locally constant. Therefore, the
$\ul{\Aut}(\Psi)$-torsor $\fE(\rG,\Psi)$ is representable
by~\cite{SGA3} Exp. X, 5.5.
\end{proof}
For a maximal torus $\rX$ of $\rG$, we let $\rX^{\ad}$ be the
corresponding torus in $\ad(\rG)$. Note that
$\rX/\underline{\Centr}(\rG)\xrightarrow{\sim}\rX^{\ad}$
(ref.~\cite{SGA3}, Exp. 24, Prop. 2.1). For
$f\in\fE(\rG,\Psi)(\rS')$, we define the stabilizer of $f$ under the
$\underline{\Aut}_{\rS'-gr}(\rG_{\rS'})$ as:
\begin{center}
\[\underline{\Stab}(f)(\rS'')=\left\{\begin{array}{l}\mbox{$x\in\Aut_{\rS''-gr}(\rG_{\rS''})|$
$x\circ f_{\mathrm{\rS''}}=f_{\mathrm{\rS''}}$ }
\end{array}\right\}\]
\end{center}

\begin{prop}\label{1:2}
Let $f\in\fE(\rG,\Psi)(\rS')$ and $\rX=f(\rT_{\rS'})$. Then
$\underline{\Stab}(f)$ is isomorphic to $\rX^{\ad}$.
\end{prop}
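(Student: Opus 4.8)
The plan is to identify the stabilizer of $f$ inside $\ul{\Aut}_{\rS'-gr}(\rG_{\rS'})$ by first decomposing an element of the stabilizer along the exact sequence of Theorem~\ref{0.3}, and then using the torsor structure established in Theorem~\ref{1:1} to pin down the candidate. First I would reduce to the case where $\rG_{\rS'}$ is split relatively to $\rX = f(\rT_{\rS'})$: this is an \'etale-local question, so since both $\ul{\Stab}(f)$ and $\rX^{\ad}$ are \'etale sheaves, it suffices to produce a canonical isomorphism over such a covering and check it is compatible with descent. Over the split locus, fix a pinning $\sE$ adapted to $\rX$, so that by Theorem~\ref{0.3}(iii) every automorphism of $\rG_{\rS'}$ can be written (locally) as a composition of an inner automorphism and an element of $\rE_\Delta(\psi)_{\rS'}$.

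Next I would analyze which such automorphisms fix $f$. An element $x \in \ul{\Stab}(f)(\rS'')$ satisfies $x \circ f = f$, so in particular $x$ maps $\rX_{\rS''} = f(\rT_{\rS''})$ to itself; hence $x$ normalizes $\rX$, and the induced map on $\rX$ corresponds, via $f$, to an automorphism of $\Psi_{\rS''}$. But $x\circ f = f$ forces that automorphism of $\Psi$ to be the identity, so $x$ acts trivially on $\rX$. An automorphism of $\rG$ that restricts to the identity on a maximal torus must be inner, given by conjugation by an element of the torus itself: more precisely, by \cite{SGA3}, Exp.~XXIV, such an automorphism lies in the image of $\ad$ and is realized by conjugation by a point of $\rX$. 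Conjugation by $t \in \rX$ fixes $\rX$ pointwise and hence fixes $f$ iff it is the trivial automorphism of $\rG$, i.e. iff $t$ lies in $\ul{\Centr}(\rG)$. Therefore $\ul{\Stab}(f)(\rS'') \cong \rX(\rS'')/\ul{\Centr}(\rG)(\rS'')$ functorially, and by the isomorphism $\rX/\ul{\Centr}(\rG)\xrightarrow{\sim}\rX^{\ad}$ recalled just before the statement, we get $\ul{\Stab}(f) \simeq \rX^{\ad}$.

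The main obstacle I anticipate is the step asserting that an automorphism of $\rG$ restricting to the identity on a maximal torus is conjugation by a point of that torus; over a general base $\rS'$ one must be careful that this holds as an identity of sheaves, not merely on geometric fibers. The cleanest route is to invoke the explicit structure in Theorem~\ref{0.3}(iii): writing $x = \ad(g)\circ a(w)$ with $w \in \rE_\Delta(\psi)_{\rS'}$ and $g \in \rG_{\rS'}$, the condition that $x$ act trivially on $\rX$ first forces $w$ to act trivially on the root datum and hence $w = \id$ (an element of $\rE_\Delta(\psi)$ acting trivially on $\rM$ is the identity), and then forces $\ad(g)$ to normalize $\rX$ and act trivially on it, so $g \in \ul{\Norm}_\rG(\rX)$ with trivial image in the Weyl group, i.e. $g \in \rX$ (using that $\ul{\Norm}_\rG(\rX)/\rX$ is the Weyl group and acts faithfully). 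One then finishes as above. I would also note in passing that this identification is $\rG$-equivariant in the evident sense, so that $\ul{\Stab}(f)$ for varying $f$ are the various adjoint maximal tori, which is the form in which the result will be used to realize $\fE(\rG,\Psi,u)$ as a homogeneous space with torus stabilizer.
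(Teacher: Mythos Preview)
Your proof is correct but considerably more elaborate than the paper's. Both arguments begin with the same observation: $x \in \ul{\Stab}(f)(\rS'')$ if and only if $x|_{\rX} = \id_{\rX}$, since $x \circ f = f$ means precisely that $x$ fixes $f(\rT_{\rS''}) = \rX_{\rS''}$ pointwise. From there the paper simply invokes \cite{SGA3}, Exp.~XXIV, Prop.~2.11, which states directly that the sheaf of automorphisms of $\rG$ restricting to the identity on a maximal torus $\rX$ is $\rX^{\ad}$; that gives the result in one line, with no \'etale localization or pinning needed.

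Your route instead re-derives the content of that SGA3 proposition by hand via the semidirect product decomposition of Theorem~\ref{0.3}(iii). This is a legitimate and self-contained alternative, but two small points deserve tightening. First, when you write $x = \ad(g)\circ a(w)$ with $g \in \rG_{\rS'}$, strictly speaking $g$ lives in $\ad(\rG)(\rS'')$ rather than $\rG(\rS'')$, since a section of $\ad(\rG)$ need not lift to $\rG$; the argument then concludes $g \in \rX^{\ad}$ directly, which is exactly what you want anyway. Second, the phrase ``forces $w$ to act trivially on the root datum'' skips a beat: the condition $x|_\rX = \id$ only tells you that $a(w)|_\rX$ coincides with the inverse of a Weyl group element (namely $\ad(g)^{-1}|_\rX$), and you then need $\rE_\Delta(\psi) \cap \rW(\psi) = \{1\}$ from Proposition~\ref{0.5} to force $w = 1$. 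With those adjustments your argument goes through; it buys independence from the SGA3 citation at the cost of length.
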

\begin{proof}
Let $\sigma\in{\Aut}_{\rS''-gr}(\rG_{\rS''})$. Then
$\sigma\in\underline{\Stab}(f)(\rS'')$ if and only if
$\sigma|_{\rX}$ is the identity map on $\rX$, which means
$\underline{\Stab}(f)=\underline{\Aut}_{\rS-gr}(\rG,\id_{\rX})$.
Since $\underline{\Aut}_{\rS-gr}(\rG,\id_{\rX})=\rX^{\ad}$
(ref.~\cite{SGA3}, Exp. 24, Prop. 2.11),
$\underline{\Stab}(f)=\rX^{\ad}$.
\end{proof}
\end{subsection}
\begin{subsection}{Oriented embedding functors}
\begin{subsubsection}{The definition of an orientation}
Let $\Psi=(\sM,\sM^{\vee},\sR,\sR^{\vee})$ be a twisted reduced root
datum over $\rS$, and $\rG$ be a reductive group over $\rS$. Suppose
that $\Psi$ and $\rG$ have the same type at each $s\in\rS$. From
Theorem~\ref{1:1}, we know that $\fE(\rG,\Psi)$ is a homogeneous
space under the action of $\ul{\Aut}_{\rS-gr}(\rG)$. However,
$\ul{\Aut}_{\rS-gr}(\rG)$ may be disconnected, so we would like to
fix an extra datum "v" to make our embedding functor together with
"v" to be a homogeneous space under the adjoint action of $\rG$. The "v" will be
called an orientation of $\Psi$ with respect to $\rG$.

First, we suppose that $\rG$ has a
maximal torus $\rT$.  Let $\Phi(\rG,\rT)$ be the twisted root datum
of $\rG$ with respect to $\rT$.

For an $\rS$-scheme $\rS'$, and for
$\sigma\in\ul{\Aut}_{\rS-gr}(\rG,\rT)(\rS')$, $\sigma$ induces an
automorphism on $\Phi(\rG,\rT)$, and induces a left action on
$f\in\ul{\Isom}(\Psi,\Phi(\rG,\rT))(\rS')$ which is defined as
$$(\sigma\cdot f)(x)=f(x)\circ\sigma^{-1},$$ for all $x\in\sM_{\rS'}(\rS'')$, where $\rS''$ is an $\rS'$-scheme.

Let $\rT'$ be another maximal torus of $\rG$, and
$\ul{\Transt}_{\rG}(\rT,\rT')$ be the strict transporter from $\rT$
to $\rT'$ (cf.~\cite{SGA3} Exp. $\mathrm{VI}_{\mathrm{B}}$, Def. 6.1
(ii)). Then we have a natural morphism (for the convention, we refer
to  \cite{Gir} Chap III, Def. 1.3.1.):
 $$\ul{\Transt}_{\rG}(\rT,\rT')\overset{\ul{\Norm}_{\rG}(\rT)}{\wedge}\ul{\Isom}(\Psi,\Phi(\rG,\rT))\ra\ul{\Isom}(\Psi,\Phi(\rG,\rT')).$$

Since $\ul{\Transt}_{\rG}(\rT,\rT')$ is a right principal
homogeneous space under $\ul{\Norm}_{\rG}(\rT)$ and
$\ul{\Norm}_{\rG}(\rT)$ acts on the left of
$\ul{\Isomext}(\Psi,\Phi(\rG,\rT))$ trivially, we have the following
canonical morphism:
$$\ul{\Isomext}(\Psi,\Phi(\rG,\rT))\simeq\ul{\Transt}_{\rG}(\rT,\rT')\overset{\ul{\Norm}_{\rG}(\rT)}{\wedge}\ul{\Isomext}(\Psi,\Phi(\rG,\rT))
\simeq\ul{\Isomext}(\Psi,\Phi(\rG,\rT')).$$

Therefore, for $\rG$ with a maximal torus $\rT$, we can define
$$\ul{\Isomext}(\Psi,\rG):=\ul{\Isomext}(\Psi,\Phi(\rG,\rT)).$$

In general, since $\rG$ has a maximal torus \'etale locally, we
can find an \'etale covering $\{\rS_i\ra\rS\}_i$ such that
$\rG_{\rS_i}$ has a maximal torus, and we can define
$\ul{\Isomext}(\Psi,\rG)$ by the descent data of
$\ul{\Isomext}(\Psi_{\rS_i},\rG_{\rS_i})$.

An \emph{orientation} of $\Psi$ with respect to $\rG$ is an
$\rS$-point of $\ul{\Isomext}(\Psi,\rG)$. A twisted root datum
$\Psi$ together with an orientation
$v\in\ul{\Isomext}(\Psi,\rG)(\rS)$ is called an oriented root datum
and we denote it as $(\Psi,v)$.

One can also define the functor $\ul{\Isomext}(\rG,\Psi)$ in the
same way. Suppose that $\rG$ is with a maximal torus $\rT$. Then
there is a natural isomorphism $\iota$ between
$\ul{\Isom}(\Psi,\Phi(\rG,\rT))$ and
$\ul{\Isom}(\Phi(\rG,\rT),\Psi)$ which sends $u$ to $u^{-1}$. This
isomorphism also induces an isomorphism between
$\ul{\Isomext}(\Psi,\Phi(\rG,\rT))$ and
$\ul{\Isomext}(\Phi(\rG,\rT),\Psi)$. Let $\rT'$ be another maximal
torus of $\rG$. We have the following commutative diagram:
\begin{center}
$\xymatrix{
  \ul{\Transt}_{\rG}(\rT,\rT')\overset{\ul{\Norm}_{\rG}(\rT)}{\wedge}\ul{\Isom}(\Psi,\Phi(\rG,\rT)) \ar[d] \ar[r] & \ul{\Isom}(\Phi(\rG,\rT),\Psi)\overset{\ul{\Norm}_{\rG}(\rT)}{\wedge} \ul{\Transt}_{\rG}(\rT',\rT)\ar[d] \\
  \ul{\Isom}(\Psi,\Phi(\rG,\rT')) \ar[r] & \ul{\Isom}(\Phi(\rG,\rT'),\Psi)
  }$
\end{center}
Therefore, the morphism $\iota$ defines an isomorphism between
$\ul{\Isomext}(\Psi,\rG)$ and $\ul{\Isomext}(\rG,\Psi)$ and we can
define $\iota$ for an arbitrary reductive group $\rG$ by descent.
\begin{remark}
Actually, in our case, there is no difference between the
transporter $\ul{\Trans}_{\rG}(\rT,\rT')$ and strict transporter
$\ul{\Transt}_{\rG}(\rT,\rT')$ since both $\rT$ and $\rT'$ are
maximal tori.
\end{remark}
\begin{prop}\label{1.14}
Let $\rG'$ be another reductive group over $\rS$. Suppose that
$\rG'$ and $\Psi$ have the same type at each fibre over $\rS$. Then
we will have the following map:
$$\ul{\Isomext}(\Psi,\rG')\times\ul{\Isomext}(\rG,\Psi)\ra\ul{\Isomext}(\rG,\rG').$$
\end{prop}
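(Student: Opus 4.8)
The plan is to reduce the construction of the map to the case where $\rG$ admits a maximal torus, define the map explicitly there by composing isomorphisms of twisted root data through the "common target" $\Psi$, and then check that the construction is compatible with change of maximal torus so that it descends. First I would recall that $\ul{\Isomext}(\Psi,\rG')$ and $\ul{\Isomext}(\rG,\Psi)$ were already defined by descent from the case of a group with a maximal torus, and that by the isomorphism $\iota$ constructed just above we may identify $\ul{\Isomext}(\rG,\Psi)$ with $\ul{\Isomext}(\Psi,\rG)^{-1}$, i.e. work with $\ul{\Isom}(\Phi(\rG,\rT),\Psi)$ modulo the Weyl group. Working \'etale-locally, I may assume that both $\rG$ and $\rG'$ have maximal tori $\rT$ and $\rT'$; then $\ul{\Isomext}(\rG,\rG')$ is by definition $\ul{\Isomext}(\Phi(\rG,\rT),\Phi(\rG',\rT'))$ (again independent of the choice of tori, by the strict-transporter argument already used in the definition of $\ul{\Isomext}(\Psi,\rG)$).

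With these identifications in place, the map is the obvious composition. Given $\rS'$-points $v\in\ul{\Isomext}(\Psi,\rG')(\rS')$, represented \'etale-locally by $g\in\ul{\Isom}(\Psi,\Phi(\rG',\rT'))$, and $w\in\ul{\Isomext}(\rG,\Psi)(\rS')$, represented by $h\in\ul{\Isom}(\Phi(\rG,\rT),\Psi)$, I would send $(v,w)$ to the class of $g\circ h\in\ul{\Isom}(\Phi(\rG,\rT),\Phi(\rG',\rT'))$ in $\ul{\Isomext}(\rG,\rG')=\rW(\Phi(\rG',\rT'))\backslash\ul{\Isom}(\Phi(\rG,\rT),\Phi(\rG',\rT'))$. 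This is the standard "composition of torsors" / contracted-product construction: $\ul{\Isom}$ of root data compose, and passing to the quotient by the Weyl group on the target is well-defined because, by Proposition \ref{0.10}, conjugating the Weyl group through an isomorphism of root data carries $\rW(\Phi(\rG,\rT))$ onto $\rW(\Phi(\rG',\rT'))$, so the ambiguity in $h$ (left multiplication by $\rW(\Phi(\rG,\rT))$, equivalently right multiplication inside its class) is absorbed into the ambiguity in $g$ and the final quotient.

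The step I expect to require the most care is descent: showing the locally-defined map is independent of the auxiliary choices (the maximal tori $\rT,\rT'$ and the \'etale trivializations) and hence glues to a morphism of sheaves over $\rS$. For the independence of the maximal torus I would invoke exactly the canonical isomorphisms $\ul{\Isomext}(\Psi,\Phi(\rG',\rT'))\simeq\ul{\Isomext}(\Psi,\Phi(\rG',\rT''))$ obtained from $\ul{\Transt}_{\rG'}(\rT',\rT'')$ wedged over $\ul{\Norm}_{\rG'}(\rT')$ — the same device already established in the construction of $\ul{\Isomext}(\Psi,\rG)$ — together with the functoriality of composition, so that one obtains a commutative square of the shape displayed just before the statement (with $\ul{\Isom}$ replaced by the composite). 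Once the map is checked to commute with restriction along \'etale covers and with these change-of-torus isomorphisms, effective descent for the (representable, \'etale-locally constant) sheaves $\ul{\Isomext}(-,-)$ produces the desired morphism $\ul{\Isomext}(\Psi,\rG')\times\ul{\Isomext}(\rG,\Psi)\ra\ul{\Isomext}(\rG,\rG')$ over $\rS$, completing the proof.
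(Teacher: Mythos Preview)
Your composition argument at the level of $\ul{\Isom}$ and its passage to the Weyl-group quotient is correct and matches the paper's first step. The gap is in the sentence ``then $\ul{\Isomext}(\rG,\rG')$ is by definition $\ul{\Isomext}(\Phi(\rG,\rT),\Phi(\rG',\rT'))$.'' This is not a definition but the main content of the proof. The object $\ul{\Isomext}(\rG,\rG')$ is defined group-theoretically, as (the sheaf quotient of) $\ul{\Isom}_{\rS-gr}(\rG,\rG')$ by $\ad(\rG)$, in line with the definition of $\ul{\Autext}(\rG)$ in Section~\ref{s0.3} and the conventions of \cite{SGA3} Exp.~XXIV. The whole point of the proposition is that the composition of root-datum isomorphisms lands in \emph{this} object; your transporter/descent discussion only establishes independence of the torus on the root-datum side, which you already knew.

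The paper fills this gap as follows. Using \cite{SGA3} Exp.~XXIV, 2.2 one has
\[
\ul{\Isom}_{\rS-gr}(\rG,\rT;\rG',\rT')/\ul{\Norm}_{\ad(\rG)}(\ad(\rT))\xrightarrow{\ \sim\ }\ul{\Isomext}(\rG,\rG'),
\]
which yields a natural map $\imath_1:\ul{\Isomext}(\rG,\rG')\ra\ul{\Isomext}(\Phi(\rG,\rT),\Phi(\rG',\rT'))$. Both sides are principal homogeneous spaces, under $\ul{\Autext}(\rG)$ and $\ul{\Autext}(\Phi(\rG,\rT))$ respectively, and by \cite{SGA3} Exp.~XXIV, 2.1 together with Theorem~\ref{0.3} and Proposition~\ref{0.5} one has $\ul{\Autext}(\rG)\simeq\ul{\Autext}(\rG,\rT)\simeq\ul{\Autext}(\Phi(\rG,\rT))$. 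Under this identification $\imath_1$ is a morphism of torsors, hence an isomorphism. Only after this identification does your composition map produce an element of the genuine $\ul{\Isomext}(\rG,\rG')$; the descent step is then routine, as you say.
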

\begin{proof}
To see this, we first suppose that both $\rG$ and $\rG'$ have
maximal tori. Let $\rT$ and $\rT'$ be the maximal tori of $\rG$ and
$\rG'$ respectively. Then the natural map from
$\ul{\Isom}(\Psi,\Phi(\rG',\rT'))\times\ul{\Isom}(\Phi(\rG,\rT),\Psi)$
to $\ul{\Isom}(\Phi(\rG,\rT),\Phi(\rG',\rT')),$ induces the map
$$\ul{\Isomext}(\Psi,\Phi(\rG',\rT'))\times\ul{\Isomext}(\Phi(\rG,\rT),\Psi)\ra\ul{\Isomext}(\Phi(\rG,\rT),\Phi(\rG',\rT')).$$

We now want to show
$\ul{\Isomext}(\Phi(\rG,\rT),\Phi(\rG',\rT'))\simeq\ul{\Isomext}(\rG,\rG')$.
Note that we have natural morphisms from
$\ul{\Isom}_{\rS-gr}(\rG,\rT;\rG',\rT')/\ul{\Norm}_{\ad(\rG)}(\ad(\rT))$
to $\ul{\Isomext}(\Phi(\rG,\rT),\Phi(\rG',\rT'))$. By~\cite{SGA3}
Exp.XXIV, 2.2,
$$\ul{\Isom}_{\rS-gr}(\rG,\rT;\rG',\rT')/\ul{\Norm}_{\ad(\rG)}(\ad(\rT))\xrightarrow{\sim}\ul{\Isomext}(\rG,\rG')$$
So we have a map from
$$\imath_1:\ul{\Isomext}(\rG,\rG')\ra\ul{\Isomext}(\Phi(\rG,\rT),\Phi(\rG',\rT')).$$

Note that $\ul{\Isomext}(\rG;\rG')$ and
$\ul{\Isomext}(\Phi(\rG,\rT),\Phi(\rG',\rT'))$ are principal
homogeneous spaces under $\ul{\Autext}(\rG)$ and
$\ul{\Autext}(\Phi(\rG,\rT))$ respectively.

By~\cite{SGA3} Exp. XXIV, 2.1, we have
$\ul{\Autext}(\rG,\rT)\simeq\ul{\Autext}(\rG)$. By Theorem~\ref{0.3}
and Proposition~\ref{0.5}, the natural map between
$\ul{\Autext}(\rG,\rT)$ and $\ul{\Autext}(\Phi(\rG,\rT))$ is an
isomorphism on each geometric fiber, so
$$\ul{\Autext}(\rG)\simeq\ul{\Autext}(\rG,\rT)\simeq\ul{\Autext}(\Phi(\rG,\rT)).$$
Under these identifications, the map $\imath_1$ is a morphism
between $\ul{\Autext}(\rG)$-principal homogeneous spaces. So it is
an isomorphism.

For general reductive groups $\rG$ and $\rG'$, we can define this
map by descent.

\end{proof}
\begin{remark}
For a semisimple group $\rG$, there is also a definition of an
orientation of $\rG$ (ref.~\cite{PS}). Let $\rG^{qs}$ be a
quasi-split form of $\rG$, and $\rT'$ be a maximal torus of
$\rG^{qs}$. If we replace $\Psi$ above by $\Phi(\rG^{qs},\rT')$,
then an orientation of $\Psi$ with respect to $\rG$ is called an
orientation of $\rG$ in~\cite{PS} \S 2.
\end{remark}
\end{subsubsection}

\begin{subsubsection}{Oriented embedding functors}
Given an oriented twisted root datum $(\Psi, v)$ of $\Psi$ with
respect to $\rG$, we define the \emph{oriented embedding functor}
as:

\[\fE(\rG,\Psi,v)(\rS')=\left\{\begin{array}{l}\mbox{$f:\rT_{\rS'}\hookrightarrow\rG_{\rS'}$}\left|
\begin{array}{l}\mbox{$f\in\fE(\rG,\Psi)(\rS')$,
and the image of $f^{\Psi}$ } \\
\mbox{ in $\underline{\Isomext}(\Psi,\rG)(\rS')$ is
$v$.}\end{array}\right.\end{array}\right\}\]

With all the notation defined above, we have the following result
similar to Theorem~\ref{1:1}:
\begin{theo}\label{0.7}
Suppose that $\rG$ is reductive. Then in the sense of the \'{e}tale
topology, $\fE(\rG,\Psi,v)$ is a left homogeneous space under the
adjoint action of $\rG$, and a torsor over $\mathcal{T}$ under the
right $\rW(\Psi)$-action. Moreover, $\fE(\rG,\Psi,v)$ is
representable by an affine $\rS$-scheme.
\end{theo}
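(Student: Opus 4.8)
The plan is to mirror the proof of Theorem~\ref{1:1}, replacing $\ul{\Aut}_{\rS-gr}(\rG)$ by $\ad(\rG)$ and $\ul{\Aut}(\Psi)$ by $\rW(\Psi)$, once we have exhibited $\fE(\rG,\Psi,v)$ as the fibre of $\fE(\rG,\Psi)$ over the orientation $v$. Concretely, recall that for $f\in\fE(\rG,\Psi)(\rS')$ the isomorphism $f^{\Psi}\colon\Psi_{\rS'}\xrightarrow{\sim}\Phi(\rG_{\rS'},f(\rT_{\rS'}))$ has a well-defined image in $\ul{\Isomext}(\Psi,\rG)(\rS')$, via the canonical identification $\ul{\Isomext}(\Psi,\Phi(\rG,f(\rT_{\rS'})))\simeq\ul{\Isomext}(\Psi,\rG)$ discussed in Section~1.2.1; call this image $o(f)$. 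Then $\fE(\rG,\Psi,v)=o^{-1}(v)$. First I would check that $\fE(\rG,\Psi,v)$ is a sheaf: it is a subfunctor of the sheaf $\fE(\rG,\Psi)$ cut out by the condition $o(f)=v$, and since $\ul{\Isomext}(\Psi,\rG)$ is a (twisted constant, hence separated) sheaf and $o$ is a morphism of sheaves, the equalizer of $o$ and the constant map $v$ is again a sheaf.

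Next I would establish that $\pi\colon\fE(\rG,\Psi,v)\to\mathcal{T}$ is an epimorphism for the \'etale topology and that the map $m(f,w)=(f,f\circ w)$, for $w\in\rW(\Psi)(\rS')$ regarded inside $\ul{\Aut}_{\rS-gr}(\rT)$, gives an isomorphism $\fE(\rG,\Psi,v)\underset{\rS}{\times}\rW(\Psi)\xrightarrow{\sim}\fE(\rG,\Psi,v)\underset{\mathcal{T}}{\times}\fE(\rG,\Psi,v)$. Surjectivity of $\pi$ is the same local argument as in Theorem~\ref{1:1}: \'etale locally $\rG$ is split relative to a maximal torus $\rF$ and $\Psi$ is split, and any $f_0\in\fE(\rG,\Psi)$ over such a chart can be corrected by composing with an element of $\ul{\Aut}(\Psi)$ to adjust its image in $\ul{\Isomext}(\Psi,\rG)$ to $v$; since $\ul{\Isomext}(\Psi,\rG)$ is a torsor under $\ul{\Autext}(\Psi)=\ul{\Aut}(\Psi)/\rW(\Psi)$, after passing to a further \'etale cover we may lift the required correction to an element of $\ul{\Aut}(\Psi)$. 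For the torsor identity, I would note that if $f_1,f_2\in\fE(\rG,\Psi,v)(\rS')$ have the same image torus $\rF$, then $(f_1^{\Psi})^{-1}\circ f_2^{\Psi}\in\ul{\Aut}(\Psi)(\rS')$ lies in $\rW(\Psi)(\rS')$ precisely because both $f_1^{\Psi}$ and $f_2^{\Psi}$ have the same image $v$ in $\ul{\Isomext}(\Psi,\rG)=\ul{\Isom}(\Psi,\Phi(\rG,\rF))/\rW(\Psi)$; this gives the inverse $i(f_1,f_2)=(f_1,f_1^{-1}\circ f_2)$ of $m$, and Proposition~\ref{0:1} finishes the torsor claim.

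For the homogeneity under $\ad(\rG)$: given $f_1,f_2\in\fE(\rG,\Psi,v)(\rS')$, by Theorem~\ref{1:1} there is, \'etale locally, $\sigma\in\ul{\Aut}_{\rS-gr}(\rG)$ with $\sigma\circ f_2=f_1$; I must show $\sigma$ can be chosen in $\ad(\rG)$. The point is that $\sigma$ carries the orientation $o(f_2)$ to $o(f_1)$ under the action of $\ul{\Autext}(\rG)$ on $\ul{\Isomext}(\Psi,\rG)$ (here using the identification $\ul{\Autext}(\rG)\simeq\ul{\Autext}(\Phi(\rG,\rT))$ acting on $\ul{\Isomext}(\Psi,\Phi(\rG,\rT))$ from Proposition~\ref{1.14} and Theorem~\ref{0.3}(iii)); since $o(f_1)=o(f_2)=v$, the image of $\sigma$ in $\ul{\Autext}(\rG)(\rU)$ fixes a point of the $\ul{\Autext}(\rG)$-torsor $\ul{\Isomext}(\Psi,\rG)$ over that chart, hence is trivial, so $\sigma\in\ad(\rG)(\rU)$ after shrinking. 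Finally, representability: $\fE(\rG,\Psi,v)$ is an $\rW(\Psi)$-torsor over $\mathcal{T}$, $\rW(\Psi)$ is \'etale locally constant (Proposition~\ref{0.6}), and $\mathcal{T}$ is an affine $\rS$-scheme (it is a closed subscheme of a Hilbert-type scheme; in any case it is representable by~\cite{SGA3} Exp. XII, 1.10), so by~\cite{SGA3} Exp. X, 5.5 the torsor is representable; affineness follows since $\rW(\Psi)$ is finite and $\mathcal{T}\to\rS$ is affine, making $\fE(\rG,\Psi,v)\to\mathcal{T}$ finite.

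The main obstacle I anticipate is the bookkeeping around the various identifications of $\ul{\Isomext}$-spaces—in particular making rigorous that the map $f\mapsto o(f)$ is a well-defined morphism of sheaves compatible with both the $\ad(\rG)$-action on the source and the $\ul{\Autext}(\rG)$-action on $\ul{\Isomext}(\Psi,\rG)$, so that "$o(f_1)=o(f_2)$ forces the connecting automorphism into $\ad(\rG)$" is legitimate rather than circular. This is where the compatibility of the canonical isomorphism $\ul{\Isomext}(\Psi,\Phi(\rG,\rT))\simeq\ul{\Isomext}(\Psi,\Phi(\rG,\rT'))$ under change of maximal torus, established via the transporter in Section~1.2.1, does the real work; everything else is a routine transcription of the proof of Theorem~\ref{1:1}.
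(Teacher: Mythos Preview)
Your proposal is correct and follows the same architecture as the paper: sheaf property, $\rW(\Psi)$-torsor over $\mathcal{T}$, $\ad(\rG)$-homogeneity, then representability and affineness via \cite{SGA3} Exp.~X, 5.5. The one tactical difference is in the homogeneity step. The paper argues directly: pass to an \'etale chart where $\rG$ splits relative to both image tori, pick $g\in\ul{\Transt}_{\rG}(\rT_1,\rT_2)$, and then correct by an element of $\ul{\Norm}_{\rG}(\rT_2)$ lifting the Weyl element that matches $g\cdot f_1^{\Psi}$ to $f_2^{\Psi}$, thus exhibiting an element of $\rG$ itself conjugating $f_1$ to $f_2$. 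Your route---invoke Theorem~\ref{1:1} to get $\sigma\in\ul{\Aut}_{\rS-gr}(\rG)$, then observe that its image in $\ul{\Autext}(\rG)$ fixes the point $v$ of the $\ul{\Autext}(\rG)$-torsor $\ul{\Isomext}(\Psi,\rG)$ and is therefore trivial---is equally valid and arguably more conceptual, at the cost of the compatibility bookkeeping you flag. Similarly, for surjectivity of $\pi$ the paper corrects the orientation on the group side (via $\ul{\Aut}_{\rS-gr}(\rG,\rT)\twoheadrightarrow\ul{\Autext}(\rG)$ in the split case) rather than on the $\Psi$ side as you do; both work. One small remark: no further shrinking is needed in your homogeneity step, since once the image of $\sigma$ in $\ul{\Autext}(\rG)(\rU)$ is trivial, $\sigma$ already lies in $\ad(\rG)(\rU)$.
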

\begin{proof}
Since $\fE(\rG,\Psi)$ and $\ul{\Isomext}(\Psi,\rG)$ are sheaves,
$\fE(\rG,\Psi,v)$ is an $\rS$-sheaf.

Let $\rS'$ be an $\rS$-scheme, $f_1,f_2\in\fE(\rG,\Psi,v)(\rS')$ and
$\rT_i=f_i(\rT_{\rS'}),$ for $i=1,2$ respectively. There is an
\'{e}tale neighborhood $\rU$ of $\rS'$ such that $\rG$ splits
relatively  to $\rT_{i,\rU}$'s and hence there is
$g\in\ul{\Transt}(\rT_1,\rT_2)(\rU)$ (ref.~\cite{SGA3}, Exp. XXIV,
1.5). By the definition of $\fE(\rG,\Psi,v)$, we know that
$f_1^{\Psi}$ and $f_2^{\Psi}$ have the same image in
$\ul{\Isomext}(\Psi,\rG)(\rS')$, so $g\cdot f_1^{\Psi}$ and
$f_2^{\Psi}$ has the same image in
$\ul{\Isomext}(\Psi,\Phi(\rG,\rT_2))(\rS')$. Since $\rG_{\rU}$
splits relatively to $\rT_{2,\rU}$, we can find
$n\in\ul{\Norm}_{\rG}(\rT)(\rU)$ such that $n\cdot g\cdot
f_1^{\Psi}=f_2^{\Psi}$, which proves that $\fE(\rG,\Psi,v)$ is a
homogeneous space under the adjoint action of $\rG$.

Next, we show that $\pi:\fE(\rG,\Psi,v)\ra\mathcal{T}$ is surjective
as a morphism of sheaves. As we have seen in the proof of
Theorem~\ref{1:1}, $\pi:\fE(\rG,\Psi)\ra\mathcal{T}$ is surjective,
so for an $\rS$-scheme $\rS'$ and $\rX\in\mathcal{T}(\rS')$, there
is an \'etale covering  $\{\rS_i'\ra\rS'\}$ such that for each $i$,
there is $f_i\in\fE(\rG,\Psi)(\rS_i')$ with $\pi(f)=\rX_{\rS_i'}$.
Moreover, we can assume $\rG_{\rS_i'}$ is split relatively to
$\rX_{\rS_i'}$. Then $\underline{\Aut}_{\rS-gr}(\rG,\rT)(\rS_i')$ is mapped
surjectively to $\underline{\Autext}(\rG)(\rS_i')$ (ref.~\cite{SGA3},
Exp. XXIV, 2.1), which allows us to find
$\sigma_i\in\underline{\Aut}_{\rS-gr}(\rG,\rT)(\rS_i')$ such that
$\sigma_i\circ f_i\in\fE(\rG,\Psi,v)(\rS_i')$. Therefore,
$\pi:\fE(\rG,\Psi,v)\ra\mathcal{T}$ is surjective.

Finally, we want to prove that $\fE(\rG,\Psi,v)$ is a right
$\rW(\Psi)$-torsor over $\mathcal{T}$. We identify $\rW(\Psi)$ with
a subgroup of $\ul{\Aut}_{\rS-gr}(\rT)$. So for
$w\in\rW(\Psi)(\rS')$, we can regard it as an element in
$\ul{\Aut}_{\rS-gr}(\rT)(\rS')$. By the  definition of $\underline{\Isomext}(\Psi,\rG)$,
$\rW(\Psi)$ acts trivially on  $\underline{\Isomext}(\Psi,\rG)$.
Therefore, we can consider the map
$$m_v:\fE(\rG,\Psi,v)\underset{\rS}{\times}\rW(\Psi)\ra\fE(\rG,\Psi,v)\underset{\mathcal{T}}{\times}\fE(\rG,\Psi,v)$$
defined as $m_{v}(f,w)=(f,f\circ w),$ for
$f\in\fE(\rG,\Psi,v)(\rS')$, $w\in\rW(\Psi)(\rS')$, where $\rS'$ is
an $\rS$-scheme.

On the other hand, given $f_1$, $f_2\in\fE(\rG,\Psi,v)(\rS')$ with
$f_1(\rT)=f_2(\rT)$, $f_1^{\Psi}$ and $f_2^{\Psi}$ have the same
image in $\ul{\Isomext}(\Psi,\Phi(\rG,f_1(\rT)))$, so
$f_{1}^{-1}\circ f_2$ defines an element in $\rW(\Psi)(\rS')$.

Then we can define the map
$$i_v:\fE(\rG,\Psi,v)\underset{\mathcal{T}}{\times}\fE(\rG,\Psi,v)\ra\fE(\rG,\Psi,v)\underset{\rS}{\times}\rW(\rT)$$ as
$i_v(f_1,f_2)=(f_1,f_{1}^{-1}\circ f_2)$ for
$(f_1,f_2)\in\fE(\rG,\Psi,v)(\rS')\underset{\mathcal{T}}{\times}\fE(\rG,\Psi,v)(\rS')$,
$\rS'$ an $\rS$-scheme. As what we have verified in the proof of
Theorem~\ref{1:1}, we have that $i_v$, $m_v$ are the inverse maps of
each other. Again, by Proposition~\ref{0:1}, we conclude that
$\fE(\rG,\Psi,v)$ is a $\rW(\Psi)$-torsor over $\mathcal{T}$ and
by~\cite{SGA3} Exp. X, 5.5, $\fE(\rG,\Psi,v)$ is representable.
Since $\mathcal{T}$ is affine and $\rW(\Psi)$ is finite,
$\fE(\rG,\Psi,v)$ is represented by an affine $\rS$-scheme.
\end{proof}

For a reductive group $\rG$, we let $\der(\rG)$ be the derived group
of $\rG$ and  $\sesi(\rG)$ be the semisimple group associated to
$\rG$. Let $\sico(\rG)$ be the simply connected group associated to
$\der(\rG)$. The following corollary allows us to reduce the
oriented embedding problem of reductive groups to that of semisimple
simply connected groups, which is useful for arithmetic
purposes.

\begin{coro}\label{0.8}
Let
$v\in\ul{\Isomext}(\Psi,\rG)(\rS).$ Then $v$ induces an orientation
$v_{\der}\in\ul{\Isomext}(\der(\Psi),\der(\rG))(\rS)$. Moreover, we
have a natural isomorphism
$\fE(\rG,\Psi,v)\xrightarrow{\sim}\fE(\der(\rG),\der(\Psi),v_\der)$.

One can also replace $\der(\Psi)$ and $\der(\rG)$ by $\ad(\Psi)$ and
$\ad(\rG)$, $\sesi(\Psi)$ and $\sesi(\rG)$, $\sico(\Psi)$ and
$\sico(\rG)$ respectively.
\end{coro}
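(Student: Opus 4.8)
The plan is to construct, for each of the four operations $\der$, $\ad$, $\sesi$, $\sico$, a functorial ``restriction of embeddings'' morphism and to recognise it as a morphism of torsors over a suitably identified base, which is then automatically an isomorphism. I carry this out for $\der$; the others are literally the same argument, with cosmetic changes noted at the end. First I would produce $v_{\der}$. Working \'etale-locally so that $\rG$ acquires a maximal torus $\rT_0$ — and recalling that $\ul{\Isomext}(\Psi,\rG)$ is by definition $\ul{\Isomext}(\Psi,\Phi(\rG,\rT_0))$ — one uses that the derived construction on (twisted) root data is functorial and sends isomorphisms to isomorphisms, together with the canonical identification $\Phi(\der(\rG),\rT_0\cap\der(\rG))=\der(\Phi(\rG,\rT_0))$ (immediate for split $\rG$, in general by descent). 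This gives a map $\ul{\Isom}(\Psi,\Phi(\rG,\rT_0))\ra\ul{\Isom}(\der(\Psi),\Phi(\der(\rG),\rT_0\cap\der(\rG)))$. Since the natural morphism of root data $\Psi\ra\der(\Psi)$ is a bijection on roots, Proposition~\ref{0.10} supplies a canonical identification $\rW(\der(\Psi))=\rW(\Psi)$ for which this map is equivariant, so it descends to a map of $\ul{\Isomext}$'s; a diagram chase against the transporter gluing of Section~1.2.1 shows it is independent of $\rT_0$, and descent over $\rS$ then yields a canonical map $\ul{\Isomext}(\Psi,\rG)\ra\ul{\Isomext}(\der(\Psi),\der(\rG))$. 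I set $v_{\der}$ to be the image of $v$.

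Next I would define the comparison morphism $\Theta$. Let $\rT'$ be the torus determined by $\der(\Psi)$, with its natural closed immersion $j\colon\rT'\hookrightarrow\rT$ (a closed immersion because $\der(\Psi)$ has character group a quotient of that of $\Psi$; in fact $j(\rT')=\rT\cap\der(\rG)$ once $\rT$ is realised as a maximal torus of $\rG$). For $f\in\fE(\rG,\Psi,v)(\rS')$ set $\Theta(f)=f\circ j_{\rS'}$, a composite of closed immersions which factors through $\der(\rG)_{\rS'}$ with image the maximal torus $f(\rT_{\rS'})\cap\der(\rG)_{\rS'}$. Functoriality of $\der$ gives $\Theta(f)^{\der(\Psi)}=\der(f^{\Psi})$ under $\Phi(\der(\rG),\Theta(f)(\rT'))=\der(\Phi(\rG,f(\rT)))$, so $\Theta(f)\in\fE(\der(\rG),\der(\Psi))(\rS')$; and because passing to the image in $\ul{\Isomext}$ commutes with $\der$ by the previous step, the orientation of $\Theta(f)$ is exactly $v_{\der}$. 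Thus $\Theta\colon\fE(\rG,\Psi,v)\ra\fE(\der(\rG),\der(\Psi),v_{\der})$ is a well-defined morphism of \'etale sheaves.

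To see that $\Theta$ is an isomorphism I would invoke the torsor description of Theorem~\ref{0.7}. The radical $\rad(\rG)$ is a central subtorus lying in every maximal torus, so $\rT_1\mapsto\rT_1\cdot\rad(\rG)$ is an isomorphism of schemes of maximal tori $\mathcal{T}_{\der(\rG)}\xrightarrow{\sim}\mathcal{T}_{\rG}$ with inverse $\rT_2\mapsto\rT_2\cap\der(\rG)$. Under this identification, and under $\rW(\der(\Psi))=\rW(\Psi)$, the morphism $\Theta$ covers the base isomorphism (since $\pi(\Theta(f))=f(\rT)\cap\der(\rG)$) and is $\rW(\Psi)$-equivariant, because $\rW(\Psi)$ is generated by the reflections in the roots — the same for $\Psi$ and $\der(\Psi)$ — whose corresponding automorphisms of $\rT$ restrict to those of $\rT'$. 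Hence $\Theta$ is a morphism of $\rW(\Psi)$-torsors over $\mathcal{T}_{\rG}$, and any morphism of torsors under a fixed group sheaf over a fixed base is an isomorphism.

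For $\ad$, $\sesi$, $\sico$ one argues identically, replacing the inclusion $\der(\rG)\hookrightarrow\rG$ by the projection $\rG\twoheadrightarrow\ad(\rG)$, by the analogous quotient defining $\sesi(\rG)$, and by the central isogeny $\sico(\rG)\ra\der(\rG)$, and $j\colon\rT'\hookrightarrow\rT$ by $\rT\twoheadrightarrow\rT/\underline{\Centr}(\rG)$, resp.\ by the relevant maximal torus of $\sico(\rG)$; in each case the scheme of maximal tori is unchanged (a central isogeny induces an isomorphism of such schemes), so the torsor argument goes through verbatim. For $\ad$ one in fact has $\ul{\Isomext}(\Psi,\rG)\simeq\ul{\Isomext}(\ad(\Psi),\ad(\rG))$ since $\ul{\Autext}$ is unaffected, cf.\ the proof of Proposition~\ref{1.14}; composing the four isomorphisms identifies $\fE(\rG,\Psi,v)$ with $\fE(\sico(\rG),\sico(\Psi),v_{\sico})$, which is the reduction announced before the statement. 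The step I expect to be the main obstacle is the compatibility bookkeeping in the first two paragraphs — checking that the restriction map on $\ul{\Isom}$ is genuinely independent of the auxiliary torus $\rT_0$ (it must be matched against the transporter gluing defining $\ul{\Isomext}(\Psi,\rG)$) and that ``passing to the image of $f^{\Psi}$ in $\ul{\Isomext}$'' commutes with $\der$ on the nose — but this is routine SGA3-style diagram chasing, carried out \'etale-locally and descended, with no genuine difficulty since all the sheaves in sight are already known to be representable.
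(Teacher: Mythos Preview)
Your proposal is correct and follows essentially the same approach as the paper: construct $v_{\der}$ by functoriality of $\der$ on root data together with the identification $\Phi(\der(\rG),\rT_0\cap\der(\rG))=\der(\Phi(\rG,\rT_0))$, identify $\rW(\der(\Psi))\simeq\rW(\Psi)$ via Proposition~\ref{0.10}, and conclude by recognising the comparison map as a morphism of $\rW(\Psi)$-torsors over the (canonically identified) schemes of maximal tori, invoking Theorem~\ref{0.7}. Your version is in fact more explicit than the paper's---you write down the morphism $\Theta$ concretely and check its equivariance---but the skeleton is identical.
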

\begin{proof}
The key point lies in the functoriality of the induced and coinduced
operation on the root data and the one-to-one correspondence between
the maximal tori of $\rG$ and the maximal tori of $\der(\rG)$ (resp.
$\ad(\rG)$,  $\sico(\rG)$), which gives us a natural isomorphism
from $\ul{\Isomext}(\Psi,\rG)$ to
$\ul{\Isomext}(\der(\Psi),\der(\rG))$ (resp.
$\ul{\Isomext}(\ad(\Psi),\ad(\rG))$,
$\ul{\Isomext}(\sesi(\Psi),\sesi(\rG))$,
$\ul{\Isomext}(\sico(\Psi),\sico(\rG))$). Hence, we only prove the case
for $\der(\Psi)$ and $\der(\rG)$ in detail, all the other cases can
be proved similarly.

Suppose that $\rG$ has a maximal torus $\rT$. Let
$\rT'=\rT\cap\der(\rG)$. Then $\rT'$ is a maximal torus of
$\der(\rG)$ and $\der(\Phi(\rG,\rT))=\Phi(\der(\rG),\rT')$.
Moreover, the scheme of maximal tori of $\rG$ is isomorphic to the
scheme of maximal tori of $\der(\rG)$ (ref.~\cite{SGA3}, Exp. XXII,
6.2.7, 6.2.8). Therefore, there is a natural morphism $i_{\der}$
from $\ul{\Isom}(\Psi,\Phi(\rG,\rT))$ to
$\ul{\Isom}(\der(\Psi),\Phi(\der(\rG),\rT'))$. Moreover, by
Proposition~\ref{0.4}, the natural morphism from $\Psi$ to
$\der(\Psi)$ induces an isomorphism from $\rW(\Psi)$ to
$\rW(\der(\Psi))$, so $i_\der$ induces a natural morphism from
$\ul{\Isomext}(\Psi,\Phi(\rG,\rT))$ to
$\ul{\Isomext}(\der(\Psi),\Phi(\der(\rG),\rT'))$, and hence a
natural morphism from $\ul{\Isomext}(\Psi,\rG)$ to
$\ul{\Isomext}(\der(\Psi),\der(\rG))$ by descent. Therefore, given
$v\in\ul{\Isomext}(\Psi,\rG)(\rS)$, we can have
$v_\der\in\ul{\Isomext}(\der(\Psi),\der(\rG))$ induced by $v$.

Since $\rW(\Psi)$ is isomorphic to $\rW(\der(\Psi))$, by Theorem~\ref{0.7}, both
$\fE(\rG,\Psi,v)$ and $\fE(\der(\rG),\der(\Psi),v_\der)$ are $\rW(\Psi)$-torsors
 over the scheme of
maximal tori. Thus, the natural morphism
$$\fE(\rG,\Psi,v)\xrightarrow{\sim}\fE(\der(\rG),\der(\Psi),v_\der)$$
is an isomorphism.
\end{proof}
\end{subsubsection}
\end{subsection}
\begin{subsection}{Examples--Embedding functors and embedding problems of Azumaya algebras with
involution}\label{s1.3} In this section, we want to show the
relations between the embedding functors and embedding problems of
Azumaya algebras with involution. For the background of Azumaya
algebras, we refer to the book by Knus~\cite{KN} Chap. III, \S 5 and
the paper~\cite{KPS}.

Let $K$ be a commutative ring and suppose that 2 is invertible in
$K$. Let $\rA$ be an Azumaya algebra over $K$ of degree $n$ equipped
with an involution $\tau$. Let $k=K^{\tau}$, which are the elements
in $K$ fixed by $\tau$. If $k=K$, then $\tau$ is said to be of the
first kind. If $K$ is an \'etale quadratic extension over $k$, then
$\tau$ is said to be of the second kind.  Let $\rE$ be a commutative
\'etale algebra over $K$ of rank $n$ equipped with an involution
$\sigma$. Assume $\sigma|K=\tau|K$.

Let $\rU(\rE,\sigma)$ and $\rU(\rA,\tau)$ be two algebraic
$k$-groups defined as follows: for any commutative $k$-algebra $\rC$,
$$\rU(\rE,\sigma)(\rC)=\{x\in\rE\otimes_k\rC|\ x\sigma(x)=1\},$$ and
$$\rU(\rA,\tau)(\rC)=\{x\in\rA\otimes_k\rC|\ x\tau(x)=1\}.$$
Let $\rT=\rU(\rE,\sigma)^{\circ}$, the identity component of $\rU(\rE,\sigma)$,
 and $\rG=\rU(\rA,\tau)^{\circ}$.
Since 2 is invertible in $K$,  $\rG$ is smooth at each fiber.

Then we  associate a root datum $\Psi$ to $\rT$. The idea is to
associate a "split form" $(\rA_0,\tau_0)$ (resp. $(\rE_0,\sigma_0)$)
to each ($\rA,\tau$) (resp. $(\rE,\sigma)$). From the split form
$(\rA_0,\tau_0)$, we get a group $\rG_0$ with a split maximal torus
$\rT_0$. Let $\Phi(\rG_0,\rT_0)$ be the root datum of $\rG_0$ with
respect to $\rT_0$. Then we use the isomorphism between
$\ul{\Aut}(\rE_0,\sigma_0)$ and $\ul{\Aut}(\Psi_0)$ to associate a twisted
root datum $\Psi$ to $(\rE,\sigma)$. This allows us to transfer a
$k$-embedding from $(\rE,\sigma)$ to
$(\rA,\tau)$ to a $k$-point of the embedding functor
$\fE(\rG,\Psi)$. Moreover, we will show that the $k$-points of
$\fE(\rG,\Psi)$ are in one-to-one correspondence
with the $k$-embeddings from $(\rE,\sigma)$ to $(\rA,\tau)$. To
simplify things, we always assume that $\rA$ and $\rE$ have constant rank
over $K$.
\begin{subsubsection}{The root datum associated to $\rT$}\label{s1.3.1}
\begin{paragraph}{Notations for the case where the involution $\tau$ is of the second
kind} If $\tau$ is an involution of the second kind, then we let
$\rA_0$ be $\bM_{n,k}\times\bM_{n,k}^{op}$, where $\bM_{n,k}$ stands
for the $n\times n$-matrix algebra defined over $k$, and let $\rE_0$
be $k^n\times k^n$, which is viewed as an \'etale algebra over
$k\times k$. In this case, we let $\tau_0$ be the exchange
involution of $\rA_0$ defined by $\tau_0(\rM,\rN)=(\rN,\rM)$. Let
$\iota_0:\rE_0\ra\rA_0$ be defined as
$\iota_0(x_1,...,x_n,y_1,...,y_n)=(\diag(x_1,...,x_n),\diag(y_1,...,y_n))$,
where $\diag(x_1,...,x_{n})$ stands for the diagonal matrix with the
$(i,i)$-th entry $x_i$. Clearly it is a $k\times k$-homomorphism and
the image of $\iota_0$ is invariant under $\tau_0$. Let $\sigma_0$ be
the exchange involution on $\rE_0$ induced by $\tau_0$. Let
$\rT_0=\rU(\rE_0,\sigma_0)$ and $\rG_0=\rU(\rA_0,\tau_0)$ and
$f_0:\rT_0\ra\rG_0$ be the embedding induced by $\iota_0$. Let
$\Psi_0$ be the root datum associated to $\rT_0$ defined as
$$\Psi_0(\rC)=\Phi(\rG_0,f_0(\rT_0))(\rC)\circ f_0$$ for any
$k$-algebra $\rC$.

In this case, we let $i_{\rT_0}:\gm_{m,k}\ra\rT_0$ (resp.
$i_{\rG_0}:\gm_{m,K}\ra\rG_0$) denote the embedding defined by the
$k\times k$-structure morphism of $\rE_0$ (resp. $\rA_0$), and let
$i_{\rT}:\rR_{K/k}^{(1)}(\gm_{m,K})\ra\rT$ (resp.
$i_{\rG}:\rR_{K/k}^{(1)}(\gm_{m,K})\ra\rG$) denote the embedding
defined by the $K$-structure morphism of $\rE$ (resp. $\rA$).

An isomorphism between $(\rE_0,\sigma_0,k\times k)$ and
$(\rE,\sigma,K)$ is a $k$-isomorphism between $\rE_0$ and $\rE$ commuting with
the involutions, and sends $k\times k$ to $K$. Let $\fX=\ul{\Isom}((\rE_0,\sigma_0,k\times
k),(\rE,\sigma,K))$ be the isomorphism functor between
$(\rE_0,\sigma_0,k\times k)$ and $(\rE,\sigma,K)$. Note that
$\fX(\ol{s})$ is not empty for each geometric point $\ol{s}$ of
$\spec(k)$, if and only if $\rk_k\rE^{\sigma}=\rk_K\rE$. Throughout
this article, we assume that $\fX$ is non-empty. Then $\fX$ is a
right $\ul{\Aut}(\rE_0,\sigma_0,k\times k)$-torsor.
\end{paragraph}
\begin{paragraph}{Notations for the case where the involution $\tau$ is of the first
kind}\label{s1.3.1.2} For $\tau$ an involution of the first kind, we
let $\rA_0=\bM_{n,k}$, and $\rE_0=k^n$. Let $\iota_0:\rE_0\ra\rA_0$
be defined as $\iota(x_1,...,x_{n})=\diag(x_1,...,x_{n})$.

If $\tau$ is an orthogonal involution and $n$ is odd, we let $n=2m+1$ and
\\$\rB=(b_{ij})_{0\leq i,j\leq 2m}$, where
$$b_{i,j}=\left\{
         \begin{array}{ll}
             1, & \hbox{if i=j=0,} \\
             1, & \hbox{if i=j$\pm$ m, with i,j$\geq$ 1} \\
             0, & \hbox{otherwise.}
           \end{array}
         \right.$$

For $\tau$ an orthogonal involution and $n$ even, we let $n=2m$ and
\\$\rB=(b_{i,j})_{1\leq i,j\leq 2m}$, where
$$b_{i,j}=\left\{
         \begin{array}{ll}
             1, & \hbox{if i=j$\pm$ m, with i,j$\geq$ 1} \\
             0, & \hbox{otherwise.}
           \end{array}
         \right.$$

For $\tau$ a symplectic involution, we let $n=2m$ and
\\$\rB=(b_{i,j})_{1\leq i,j\leq 2m}$, where
$$b_{i,j}=\left\{
         \begin{array}{ll}
             1, & \hbox{if j=i+m,} \\
             -1, & \hbox{if j=i-m,} \\
             0, & \hbox{otherwise.}
           \end{array}
         \right.$$

Let $\tau_0$ be the involution on $\rA_0$ defined by
$\tau_0(\rM)=\rB\rM^t\rB^{-1}$, and let $\sigma_0$ be the involution
on $\rE_0$ induced by $\tau_0$. Let
$\rT_0=\rU(\rE_0,\sigma_0)^{\circ}$ and
$\rG_0=\rU(\rA_0,\tau_0)^{\circ}$. Let $f_0:\rT_0\ra\rG_0$ be the
embedding induced by $\iota_0$.

Let $\Psi_0$ be the root datum associated to $\rT_0$ defined as
$$\Psi_0(\rC)=\Phi(\rG_0,f_0(\rT_0))(\rC)\circ f_0$$ for any
$k$-algebra $\rC$. For $\tau$ of the first kind, let
$\fX=\ul{\Isom}((\rE_0,\sigma_0),(\rE,\sigma))$.

Note that $\fX(\ol{s})$ is non-empty for each geometric point
$\ol{s}$ of $\spec(k)$ if and only if
$\rk_K\rE^{\sigma}=\lceil\frac{1}{2}\rk_K\rE\rceil$. Throughout this
article, we assume that $\fX$ is non-empty.

\end{paragraph}
\paragraph{The definition of the twisted root datum $\Psi$}
\begin{defn}~\label{1.6}
\begin{itemize}
\item [1]
Suppose that $\tau$ is of the first kind. A \emph{$k$-embedding}
$\iota:(\rE,\sigma)\ra(\rA,\tau)$ is an injective $k$-homomorphism
commuting with the involutions.
\item [2]Suppose $\tau$ is of the second kind.
A \emph{$k$-embedding} $\iota:(\rE,\sigma)\ra(\rA,\tau)$ is an
injective $k$-homomorphism commuting with  the involutions and sends
$K$ to $K$.
\item [3]
Let $\iota$ be a $k$-embedding from $(\rE,\sigma)$ to $(\rA,\tau)$.
Define the isomorphisms between $(\rE_0,\rA_0,\iota_0)$ and
$(\rE,\rA,\iota)$ to be pairs $(\alpha,\beta)$, where $\alpha$
is an isomorphism between $(\rE_0,\sigma_0)$ and $(\rE,\sigma)$,
$\beta$ is an isomorphism between $(\rA_0,\tau_0)$ and $(\rA,\tau)$,
and $\alpha$, $\beta$ satisfy  $\iota\circ\alpha=\beta\circ\iota_0$.
Let $\ul{\Isom}((\rE_0,\rA_0,\iota_0),(\rE,\rA,\iota))$ be the
isomorphism functor between $(\rE_0,\rA_0,\iota_0)$ and
$(\rE,\rA,\iota)$.
\item[4]
A morphism $f:\rT\ra\rG$ is called an embedding if it is a closed
immersion and a group homomorphism.  Let $f:\rT\ra\rG$ be an
embedding. Define the isomorphisms between $(\rG_0,\rT_0,f_0)$ and
$(\rG,\rT,f)$ to be pairs $(h,g)$, where $h$ is an isomorphism
from $\rT_0$ to $\rT$ and $g$ is an isomorphism from $\rG_0$ to
$\rG$, and $h$, $g$ satisfy $f\circ h=g\circ f_0$. Let
$\ul{\Isom}((\rG_0,\rT_0,f_0),(\rG,\rT,f))$ be the isomorphism
functor between  $(\rG_0,\rT_0,f_0)$ and $(\rG,\rT,f)$.
\end{itemize}
\end{defn}
\begin{remark}~\label{1.8}
Suppose that $\tau$ is of the second kind, and $\iota$ is an embedding
from $(\rE,\sigma)$ to $(\rA,\tau)$. For a $k$-algebra $\rC$
and
$(\alpha,\beta)\in\ul{\Isom}((\rE_0,\rA_0,\iota_0),(\rE,\rA,\iota))(\rC)$,
$\alpha$ will automatically be in $\ul{\Isom}((\rE_0,\sigma_0,
k\times k),(\rE,\sigma,K))(\rC)$, because $\beta$ sends the center
of $\rA_0$ to the center of $\rA$ and
$\iota\circ\alpha=\beta\circ\iota_0$.
\end{remark}

\begin{remark}~\label{1.9}
Let f be a $k$-point of $\fE(\rG,\Psi)$. Since $f$ induces an
isomorphism between $\Psi$ and $\Phi(\rG,f(\rT))$, f induces an
isomorphism between $\rad(\Psi)$ and $\rad(\Phi(\rG,f(\rT)))$. As
$i_\rT(\rR_{K/k}^{(1)}(\gm_{m,K}))$ (resp.
$i_\rG(\rR_{K/k}^{(1)}(\gm_{m,K}))$) is the torus associated to
$\rad(\Psi)$ (resp. $\rad(\Phi(\rG,f(\rT)))$), f maps
$i_\rT(\rR_{K/k}^{(1)}(\gm_{m,K}))$ to
$i_\rG(\rR_{K/k}^{(1)}(\gm_{m,K}))$.
\end{remark}

The following lemma enables us to attach a twisted root datum to the
torus $\rT$.
\begin{lemma}\label{1.13}
Let $\rS=\spec(k)$. Then we
have the following:
\begin{itemize}
  \item [1] The canonical homomorphism from
  $\ul{\Aut}(\rE_0,\rA_0,\iota_0)$ to
  $\ul{\Aut}_{\rS-gr}(\rG_0,f_0(\rT_0))$ is an isomorphism except for $\rG_0$ of type $D_4$ or $\rA$ of degree 2 with $\tau$ orthogonal.
  \item [2] For the involution $\tau_0$ of the first kind, there is a canonical monomorphism $j_{\rE_0}$ from
  $\ul{\Aut}(\rE_0,\sigma_0)$ to
  $\ul{\Aut}(\Psi_0)$. In particular,  if $\Psi_0$ is not of type
  $D_4$, then the homomorphism $j_{\rE_0}$ is an isomorphism.
  \item [3] For the involution $\tau_0$ of the second kind, there is a canonical isomorphism from
  $\ul{\Aut}(\rE_0, \sigma_0,k\times k)$ to
  $\ul{\Aut}(\Psi)$.
\end{itemize}
\end{lemma}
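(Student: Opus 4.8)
The plan is to verify all three assertions on geometric fibres over $\spec k$. Since $2$ is invertible in $k$, the group schemes $\ul{\Aut}(\rE_0,\rA_0,\iota_0)$ and $\ul{\Aut}_{\rS-gr}(\rG_0,f_0(\rT_0))$ are smooth affine (Theorem~\ref{0.3}), while $\ul{\Aut}(\Psi_0)$, $\ul{\Aut}(\rE_0,\sigma_0)$ and $\ul{\Aut}(\rE_0,\sigma_0,k\times k)$ are twisted finite constant (Proposition~\ref{0.6}); hence a homomorphism between objects of these kinds is a monomorphism (resp.\ an isomorphism) as soon as it is so on $\ol{\kappa(s)}$-points at each $s\in\rS$, and we may base change to an algebraic closure. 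There $\rG_0=\rU(\rA_0,\tau_0)^{\circ}$ is the split classical group $\mathrm{SO}_n$ or $\mathrm{Sp}_n$ (first kind) or $\Gl_n$ (second kind), $f_0(\rT_0)$ is the diagonal maximal torus, and $\iota_0(\rE_0)$ is the algebra of diagonal elements, so the problem becomes combinatorial.

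All three maps come from one construction. Given $(\alpha,\beta)\in\ul{\Aut}(\rE_0,\rA_0,\iota_0)$, functoriality of $\rU(-)^{\circ}$ turns $\beta$ into an automorphism $\beta_{*}$ of $\rG_0$, and the relation $\iota_0\circ\alpha=\beta\circ\iota_0$ forces $\beta_{*}$ to normalise $f_0(\rT_0)$; the assignment $(\alpha,\beta)\mapsto\beta_{*}$ is the homomorphism of part~1. I will then compose with the projection attached to the exact sequence
\[1\longrightarrow f_0(\rT_0)^{\ad}\longrightarrow\ul{\Aut}_{\rS-gr}(\rG_0,f_0(\rT_0))\longrightarrow\ul{\Aut}(\Phi(\rG_0,f_0(\rT_0)))\longrightarrow 1,\]
whose kernel is $\ul{\Aut}_{\rS-gr}(\rG_0,\id_{f_0(\rT_0)})=f_0(\rT_0)^{\ad}$ and whose cokernel is $\ul{\Autext}(\rG_0)\cong\ul{\Autext}(\Phi(\rG_0,f_0(\rT_0)))$ (\cite{SGA3}, Exp.\ XXIV, 2.1 and 2.11), after identifying $\Phi(\rG_0,f_0(\rT_0))$ with $\Psi_0$ through $f_0$. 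The resulting homomorphism $\ul{\Aut}(\rE_0,\rA_0,\iota_0)\to\ul{\Aut}(\Psi_0)$ kills every pair $(\id,\beta)$ for which $\beta$ fixes $\iota_0(\rE_0)$ pointwise, since such $\beta$ induces conjugation on $\rG_0$ by an element of $f_0(\rT_0)$; hence it factors through the forgetful map $\ul{\Aut}(\rE_0,\rA_0,\iota_0)\to\ul{\Aut}(\rE_0,\sigma_0)$ for the first kind (resp.\ through $\ul{\Aut}(\rE_0,\rA_0,\iota_0)\to\ul{\Aut}(\rE_0,\sigma_0,k\times k)$ for the second kind, by Remark~\ref{1.8}), and this forgetful map is surjective because every signed, resp.\ monomial-with-block-exchange, permutation of the coordinates is realised by a monomial element of the relevant similitude group $\mathrm{GO}$, $\mathrm{GSp}$ or $\Gl_n\times\Gl_n^{op}$. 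This produces the maps $j_{\rE_0}$ of parts~2 and~3.

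Injectivity of $j_{\rE_0}$ is easy: an element of its kernel acts trivially on the character lattice of $\rT_0$, hence on $\rT_0=\rU(\rE_0,\sigma_0)^{\circ}$, and the natural action of $\ul{\Aut}(\rE_0,\sigma_0)$ (resp.\ $\ul{\Aut}(\rE_0,\sigma_0,k\times k)$) on $\rT_0\cong\mathbb{G}_m^{m}$ (resp.\ $\mathbb{G}_m^{n}$) is faithful. For part~1, $\alpha$ is recovered from $\beta$ (as $\iota_0$ is injective and $\beta$ preserves $\iota_0(\rE_0)$), so injectivity of $(\alpha,\beta)\mapsto\beta_{*}$ reduces to the statement that $\beta_{*}=\id_{\rG_0}$ forces $\beta=\id_{\rA_0}$; this holds whenever $\rG_0$ spans $\rA_0$ as a $k$-module — true for $\Gl_n$, for $\mathrm{Sp}_n$, and for $\mathrm{SO}_n$ with $n\geq 3$, by irreducibility of the standard representation and the double-centraliser theorem — and fails precisely for $\mathrm{SO}_2$, i.e.\ for $\rA$ of degree $2$ with $\tau$ orthogonal, where $f_0(\rT_0)=\rG_0$ is a torus and the kernel acquires an extra $\mathbb{G}_m$.

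It remains to prove surjectivity away from the two exceptions. For part~1 one observes that automorphisms of $(\rA_0,\tau_0)$ preserving $\iota_0(\rE_0)$ are exactly conjugations by monomial elements of the similitude group, and that these realise all of $f_0(\rT_0)^{\ad}$ (via diagonal similitudes) together with a subgroup $W'$ of coordinate permutations; it then suffices to check, type by type, that $W'$ coincides with $\ul{\Aut}(\Phi(\rG_0,f_0(\rT_0)))$ — it equals $W(B_m)=W(C_m)$ in types $B$ and $C$, equals $W(B_m)$ for type $D_m$ with $m\neq4$, and equals $S_n\rtimes(\mathbb{Z}/2)$ for type $A_{n-1}$ — the sole discrepancy being type $D_4$, where the $S_3$ of triality automorphisms is not realised on the algebra side (this becomes visible, in the sense of Corollary~\ref{0.8}, once one passes to the simply connected cover $\mathrm{Spin}_8$), so that only a monomorphism is obtained. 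For parts~2 and~3 it is enough to compare $|\ul{\Aut}(\rE_0,\sigma_0)|$ (resp.\ $|\ul{\Aut}(\rE_0,\sigma_0,k\times k)|$) with $|\ul{\Aut}(\Psi_0)|$; these orders agree in all types except $D_4$, and for the second kind (type $A$, so no exception) the isomorphism asserted for the twisted data $(\rE,\sigma,K)$ and $\Psi$ follows from the split case by twisting along the torsor $\fX$, the naturality of $j_{\rE_0}$ being clear from its construction. I expect the main obstacle to be precisely this last bookkeeping — isolating exactly when the algebra with involution fails to realise all outer automorphisms of $\rG_0$ or of $\Psi_0$, which amounts to the standard case analysis for classical groups (cf.~\cite{KN},~\cite{KPS}); everything else is formal once Theorem~\ref{0.3} and the representability results recorded above are available.
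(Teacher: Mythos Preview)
Your argument is correct and, for parts~(2) and~(3), essentially identical to the paper's: both identify $\ul{\Aut}(\rE_0,\sigma_0)$ with the group of signed permutations acting on the character lattice and compare orders type by type via the Bourbaki Planches.

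For part~(1) the paper takes a shorter route. Rather than filtering $\ul{\Aut}_{\rS-gr}(\rG_0,f_0(\rT_0))$ by the exact sequence with kernel $f_0(\rT_0)^{\ad}$ and quotient $\ul{\Aut}(\Psi_0)$, and then matching each piece against monomial similitudes as you do, the paper observes that an automorphism of $(\rA_0,\tau_0)$ preserves $\iota_0(\rE_0)$ if and only if the induced automorphism of $\rG_0$ preserves $f_0(\rT_0)$, so it suffices to show that the \emph{unrestricted} map $\ul{\Aut}(\rA_0,\tau_0)\to\ul{\Aut}_{\rS-gr}(\rG_0)$ is an isomorphism. This last fact is then quoted type by type from~\cite{KMRT} (Prop.~12.4, Thm.~12.15, Thm.~26.14--26.15). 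Your approach is more hands-on and makes the Weyl-group bookkeeping explicit; the paper's is quicker because it outsources the case analysis to a standard reference and never needs to describe the normaliser of the torus concretely.

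One wording issue in your surjectivity step for type $A$: you say the automorphisms of $(\rA_0,\tau_0)$ preserving $\iota_0(\rE_0)$ are ``exactly conjugations by monomial elements of the similitude group'', but the outer involution $(M,N)\mapsto(N^{t},M^{t})$ is not of that form. You reach the correct group $S_n\rtimes\mathbb{Z}/2\mathbb{Z}$ in the end, so the argument goes through, but the sentence should be amended to include this extra generator (or, following the paper, bypassed by invoking $\ul{\Aut}(\rA_0,\tau_0)\cong\mathrm{PGL}_n\rtimes\mathbb{Z}/2\mathbb{Z}$ directly).
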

\begin{proof}
 To verify that the canonical homomorphism
$j_{\rA_0}$ from $\ul{\Aut}(\rE_0,\rA_0,\iota_0)$ to
$\ul{\Aut}_{\rS-gr}(\rG_0,f_0(\rT_0))$ is an isomorphism,  it
suffices to verify that the natural morphism from
$\ul{\Aut}(\rA_0,\tau_0)$ to $\ul{\Aut}_{\rS-gr}(\rG_0)$ is an
isomorphism, since the automorphism preserves $\iota_0(\rE_0)$ if
and only if it preserves $f_0(\rT_0)$. To see that the natural
morphism from $\ul{\Aut}(\rA_0,\tau_0)$ to
$\ul{\Aut}_{\rS-gr}(\rG_0)$ is an isomorphism, we check it case by
case. For $\rG_0$ of type $A_n$, let $\sigma$ be the automorphism of
$(\rA_0,\tau_0)$ which maps $(\rM,\rN)\in\rA_0$ to
$(\rN^{t},\rM^t)$, where $\rN^{t}$ denotes the transpose of $\rN$.
Then $\ul{\Aut}(\rA_0,\tau_0)$ is $\mathrm{PGL}_{n+1}\rtimes
\ent/2\ent$, where $\ent/2\ent$ is generated by $\sigma$. Note that
$\sigma$ induces the outer automorphism of
$\rG_0=\mathrm{GL}_{n+1}$, and we have
$\ul{\Aut}_{\rS-gr}(\rG_0)=\ul{\Aut}_{\rS-gr}(\mathrm{GL}_{n+1})=\mathrm{PGL}_{n+1}\rtimes
\ent/2\ent$. For $\rG_0$ of type $B_n$, we have
$$\ul{\Aut}(\rA_0,\tau_0)=\mathrm{PGO}(\rA_0,\tau_0)\cong\rG_0$$ (cf.~\cite{KMRT}, Thm. 12.15 and Prop. 12.4).
Since $\rG_0$ is adjoint of type $B_n$ in this case, we have
$\rG_0=\ul{\Aut}_{\rS-gr}(\rG_0)$ and hence
$$\ul{\Aut}(\rA_0,\tau_0)\cong\rG_0=\ul{\Aut}_{\rS-gr}(\rG_0).$$
Similar calculation can be done for $\rG_0$ of type $C_n$ or $\rG_0$
of type $D_n$ with $n\geq2$ and $n\neq4$, and we refer to
\cite{KMRT}, Theorem 26.14 and Theorem 26.15.

To prove (2), we first note that there is a natural isomorphism
$j_{\rE_0}$ from  $\ul{\Aut}(\rE_0,\sigma_0)$ to
$\ul{\Aut}_{\rS-gr}(\rT_0)$. To see that the image of $j_{\rE_0}$ is
contained in $\ul{\Aut}(\Psi)$, we  verify it case by case. For
example, for $\sigma_0$ orthogonal and $\rE$ of degree $2m$, the
automorphism group $\ul{\Aut}(\rE_0,\sigma_0)$ is isomorphic to the
constant group scheme $((\ent/2\ent)^{m}\rtimes\rS_m)_{\rS}$. We can
check that the corresponding action of
$((\ent/2\ent)^{m}\rtimes\rS_m)_{\rS}$ on $\rT_0$ actually preserves
the root datum $\Psi_0$. Moreover, by~\cite{Bou} Plan. IV, we know
that $((\ent/2\ent)^{m}\rtimes\rS_m)_{\rS}$ is exactly the
automorphism group of $\Psi_0$ for $m\neq 4$ and is a subgroup of
$\ul{\Aut}(\Psi_0)$ for $m=4$. From this, we conclude that
$j_{\rE_0}$ maps $\ul{\Aut}(\rE_0,\sigma_0)$ isomorphically to
$\ul{\Aut}(\Psi)$ for $\Psi_0$ not of type $D_4$. One can check the
other cases in the same way, which allows us to conclude the
statement (2). One can prove (3) in the same way.
\end{proof}
Since $\rT_0$ is a maximal torus of $\rG_0$, we have the following
exact sequence:
$$\xymatrix@C=0.5cm{
  0 \ar[r] & \ad(\rT_0) \ar[r] &\ul{\Aut}_{\rS-gr}(\rG_0,f_0(\rT_0))  \ar[r] & \ul{\Aut}(\Psi_0) \ar[r] & 0 }$$
  by Proposition~\ref{0.5} and~\cite{SGA3} Exp.
  XXIV, Proposition 2.6. Therefore, we can summarize the above lemma
  as the following diagram:
 $$\xymatrix{
 0 \ar[r] & \ad(\rT_0) \ar[d]\ar[r] &\ul{\Aut}(\rA_0,\rE_0,\iota_0)  \ar[d]^{\alpha}\ar[r] &
 \ul{\Aut}(\rE_0,\sigma)\ar[d]^{\beta}\ar[r] &0\\
 0 \ar[r] & \ad(\rT_0)  \ar[r] &\ul{\Aut}_{\rS-gr}(\rG_0,f_0(\rT_0)) \ar[r] & \ul{\Aut}(\Psi_0) \ar[r] & 0
  },$$ where $\alpha$, $\beta$ are isomorphisms if $\tau_0$ is of  the first kind and $\rG_0$ is not of
  type $D_4$. For $\tau_0$  of  the second kind, we just replace
  $\ul{\Aut}(\rE_0,\sigma)$ by $\ul{\Aut}(\rE_0,\sigma,k\times k)$
  and $\alpha$, $\beta$ are isomorphisms.
Now for the involution $\tau_0$ of the first kind, we  define the
twisted root datum $\Psi$ related to $\rT$ as
$$\Psi:=\fX\wedge^{\Aut(\rE_0,\sigma_0)}\Psi_0.$$
For the involution $\tau_0$ of the second kind, we define the
twisted root datum $\Psi$ related to $\rT$ as
$$\Psi:=\fX\wedge^{\Aut(\rE_0,\sigma_0,k\times k)}\Psi_0.$$
\begin{remark}~\label{1.17}
If we regard $\Psi_0$ as a set of combinatorial data satisfying the
axioms of root data, the canonical morphism $\beta$ between
$\ul{\Aut}(\rE_0,\sigma_0)$ and $\ul{\Aut}(\Psi_0)$ is defined over
any arbitrary base. However, for the involution $\tau_0$ of the
first kind, the group $\rG_0$ is not reductive over arbitrary base.
Hence, we ask  $2$ to be invertible over the base so that  $\Psi_0$
can be regarded as a root datum of $\rG_0$.
\end{remark}

\begin{remark}~\label{1.5}
We have a canonical morphism from $\fX$ to $\ul{\Isom}(\Psi_0,\Psi)$
which is an isomorphism except $\Psi_0$ is of type $D_4$ by
Lemma~\ref{1.13}. The natural morphism from
$\ul{\Isom}{((\rA_0,\tau_0),(\rA,\tau))}$ to
$\ul{\Isom}{(\rG_0,,\rG)}$ over $k$ is a canonical monomorphism
which is an isomorphism except $\rG$ is of type $D_4$, since
$\ul{\Aut}(\rA_0,\tau_0)=\ul{\Aut}(\rG_0)$ except for $\rG_0$ of
type $D_4$ (~\cite{KMRT} Chap. IV, \S 23 and \S 26).
\end{remark}
\begin{remark}\label{1.16}
For $\rA$  of degree $2$ with $\tau$ orthogonal, the corresponding
split group $\rG_0$ is actually the one dimensional split torus.
Therefore, we have $\rG_0$ acts trivially on itself but non
trivially on $\rA_0$. However, the conjugation by $\begin{pmatrix}
0 & 1 \\
1 & 0 \\
\end{pmatrix}$
induces a nontrivial isomorphism of $\rG_0$. Hence the natural
morphism from
  $\ul{\Aut}(\rA_0,\rE_0,\iota_0)$ to
  $\ul{\Aut}_{\rS-gr}(\rG_0,f_0(\rT_0))$ is surjective but not
  injective.
However, in this case, we have
$$\ul{\Aut}(\rE_0,\sigma_0)\simeq(\ent/2\ent)_{K}\simeq\ul{\Aut}(\Psi_0),$$
so the natural map from $\ul{\Isom}(\rE_0,\rE)$ to
$\ul{\Isom}(\Psi_0,\Psi)$ is still an isomorphism.
\end{remark}
\end{subsubsection}
\begin{subsubsection}{Embedding functors and embedding problems for algebras with
involution}\label{s1.3.2}



\begin{theo}~\label{1.4}
Keep all the notations defined above. Then:
\begin{itemize}
  \item [1.] The set of $k$-embeddings from $(\rE,\sigma)$ into $(\rA,\tau)$ is in one-to-one correspondence with the set of $k$-points of
  $\fE(\rG,\Psi)$, except for $\rG$ of type $D_4$ or $\rA$
  of degree 2 with $\tau$ orthogonal.
  \item [2.] If $\tau$ is of the second kind,
  the set of $K$-algebra embeddings from $(\rE,\sigma)$ into $(\rA,\tau)$ is in one-to-one correspondence with the set of $k$-points $f$ of $\fE(\rG,\Psi)$ which satisfy $f\circ i_\rT=i_\rG$.
\end{itemize}
\end{theo}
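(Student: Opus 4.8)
The plan is to construct explicit mutually inverse maps between the set of $k$-embeddings $\iota:(\rE,\sigma)\hookrightarrow(\rA,\tau)$ and the set $\fE(\rG,\Psi)(k)$, and to extract the second statement by keeping track of the action of the central subtorus $\rR_{K/k}^{(1)}(\gm_{m,K})$. First I would set up the dictionary already suggested in Section~\ref{s1.3}: given a $k$-embedding $\iota$, the functor $\ul{\Isom}((\rE_0,\rA_0,\iota_0),(\rE,\rA,\iota))$ is nonempty étale-locally (by the assumption that $\fX\neq\emptyset$ together with the Skolem--Noether theorem for Azumaya algebras) and, by Lemma~\ref{1.13}(1), its structure group $\ul{\Aut}(\rE_0,\rA_0,\iota_0)$ is isomorphic (away from type $D_4$ and the degree-$2$ orthogonal case) to $\ul{\Aut}_{\rS-gr}(\rG_0,f_0(\rT_0))$; hence the $\ul{\Aut}(\rE_0,\rA_0,\iota_0)$-torsor $\ul{\Isom}((\rE_0,\rA_0,\iota_0),(\rE,\rA,\iota))$ maps to an $\ul{\Aut}_{\rS-gr}(\rG_0,f_0(\rT_0))$-torsor, which by twisting $f_0$ produces a well-defined closed immersion $f_\iota:\rT\hookrightarrow\rG$. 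One checks $f_\iota\in\fE(\rG,\Psi)(k)$ by unwinding the definition of $\Psi=\fX\wedge^{\Aut(\rE_0,\sigma_0)}\Psi_0$: the isomorphism $f_\iota^\Psi:\Psi\xrightarrow{\sim}\Phi(\rG,f_\iota(\rT))$ is exactly the one obtained by transporting $\Psi_0\simeq\Phi(\rG_0,f_0(\rT_0))$ along the same torsor, using the compatibility $\iota\circ\alpha=\beta\circ\iota_0$ built into Definition~\ref{1.6}.

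For the reverse map, given $f\in\fE(\rG,\Psi)(k)$ I would use that $\ul{\Isom}((\rG_0,\rT_0,f_0),(\rG,\rT,f))$ is an $\ul{\Aut}_{\rS-gr}(\rG_0,f_0(\rT_0))$-torsor (nonempty étale-locally since $\rG$ becomes split relative to $f(\rT)$ étale-locally, by~\cite{SGA3} Exp.~XXII, and the root data match by hypothesis), transport it back through the inverse of the isomorphism of Lemma~\ref{1.13}(1) to an $\ul{\Aut}(\rE_0,\rA_0,\iota_0)$-torsor, and recover from it a $k$-embedding $\iota_f:(\rE,\sigma)\to(\rA,\tau)$ by twisting $\iota_0$. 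The fact that these two constructions are inverse to each other is a torsor-theoretic bookkeeping argument: both the forward and backward maps amount to the single isomorphism $\ul{\Aut}(\rE_0,\rA_0,\iota_0)\xrightarrow{\sim}\ul{\Aut}_{\rS-gr}(\rG_0,f_0(\rT_0))$ of Lemma~\ref{1.13}(1) applied to a torsor, and functoriality of the contracted product $\wedge$ gives the round-trip identities. It is precisely here that the exceptions enter: for $\rG$ of type $D_4$ the map $\alpha$ in the diagram after Lemma~\ref{1.13} fails to be an isomorphism (the triality automorphisms of $\rG_0$ are not realized on $(\rA_0,\tau_0)$), and for $\rA$ of degree $2$ with $\tau$ orthogonal Remark~\ref{1.16} shows $\alpha$ is surjective but not injective, so the correspondence breaks in exactly the way the statement allows.

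For part (2), with $\tau$ of the second kind, I would observe that a $k$-embedding $\iota:(\rE,\sigma)\to(\rA,\tau)$ in the sense of Definition~\ref{1.6}(2) is one that moreover sends $K$ into $K$; equivalently (by Remark~\ref{1.8}) it restricts to an isomorphism of centers $K\to K$. Under the correspondence of part (1) this translates, via the explicit embeddings $i_{\rT_0}, i_{\rG_0}$ (resp. $i_\rT, i_\rG$) recording the $K$-structure, into the condition that the associated $f$ carries the central subtorus $i_\rT(\rR_{K/k}^{(1)}(\gm_{m,K}))$ into $i_\rG(\rR_{K/k}^{(1)}(\gm_{m,K}))$ compatibly; by Remark~\ref{1.9} every $f\in\fE(\rG,\Psi)(k)$ already maps the radical torus of $\Psi$ onto the radical torus of $\Phi(\rG,f(\rT))$, so the only extra content is that $f\circ i_\rT=i_\rG$ on the nose rather than up to the automorphism group of $\rR_{K/k}^{(1)}(\gm_{m,K})$. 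Tracking this identification through the torsor construction (the $k\times k$- versus $K$-structure is exactly the data $\fX$ lands in by Remark~\ref{1.8}) yields the stated bijection with $\{f : f\circ i_\rT=i_\rG\}$.

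\medskip

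The main obstacle I anticipate is not any single hard step but the careful verification that the two torsor-level constructions are genuinely inverse \emph{as maps on $k$-points} (not merely étale-locally): one must check that descending the forward and backward constructions along an étale cover commutes with the twisting, i.e. that the chosen local trivializations glue consistently. Concretely this reduces to the assertion that the isomorphism of Lemma~\ref{1.13}(1) is an isomorphism of \emph{group sheaves} over $\spec(k)$ (not just an abstract-group isomorphism fiberwise), together with the compatibility of the two exact sequences in the diagram following Lemma~\ref{1.13}; granting those, the bijection on $k$-points is formal from the equivalence between torsors under isomorphic group sheaves and their associated twisted forms.
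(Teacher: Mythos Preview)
Your proposal is correct and follows essentially the same approach as the paper: both directions are built from the torsors $\ul{\Isom}((\rE_0,\rA_0,\iota_0),(\rE,\rA,\iota))$ and $\ul{\Isom}((\rG_0,\rT_0,f_0),(\rG,\rT,f))$, transported through the isomorphism of Lemma~\ref{1.13}(1), with part~(2) handled by tracking the central subtorus. One minor difference in presentation: the paper obtains $f$ directly from $\iota$ (since $\iota$ tautologically induces a closed immersion $\rT\hookrightarrow\rG$) and only invokes the torsor $\fY$ to verify that $f^\Psi$ is an isomorphism, whereas you phrase the forward map as ``twisting $f_0$''; these agree, and your anticipated obstacle about descent is a non-issue because Lemma~\ref{1.13} already gives isomorphisms of group \emph{sheaves}.
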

\begin{proof}
The crucial ingredient of the proof is Lemma~\ref{1.13}. We prove
(1) first. Let $\iota$ be a $k$-embedding from $(\rE,\sigma)$ to
$(\rA,\tau)$. Clearly, $\iota$ induces an embedding $f:\rT\ra\rG$.
To see that $f$ is a $k$ point of $\fE(\rG,\Psi)$, we need to verify
that $f$ induces an isomorphism between $\Psi$ and
$\Phi(\rG,f(\rT))$.

Let $\fY=\ul{\Isom}((\rE_0,\rA_0,\iota_0),(\rE,\rA,\iota))$. By
Lemma~\ref{1.13}, we have
$$\fY\simeq\ul{\Isom}((\rG_0,f_0(\rT_0)),(\rG,f(\rT))).$$ This allows
us to define an isomorphism from
$\fY\wedge^{\ul{\Aut}(\rE_0,\rA_0,\iota_0)}(\rG_0,f_0(\rT_0))$ to
$(\rG,f(\rT))$, which induces an isomorphism from
$\fY\wedge^{\ul{\Aut}(\rE_0,\rA_0,\iota_0)}\Phi(\rG_0,f_0(\rT_0))$
to $\Phi(\rG,f(\rT))$.

Given a $k$-algebra $\rC$ and $(\alpha,\beta)\in\fY(\rC)$, we have a
natural map from $\fY$ to $\fX$ which maps $(\alpha,\beta)$ to
$\alpha$. By the definition of $\Psi_0$, $f_0$ induces an
isomorphism between $\Phi(\rG_0,f_0(\rT_0))$ and $\Psi_0$. Therefore
we have the following natural map
$$\fY\wedge^{\ul{\Aut}(\rG_0,f_0(\rT_0))}\Phi(\rG_0,f_0(\rT_0))\simeq\fY\wedge^{\ul{\Aut}(\rG_0,f_0(\rT_0))}\Psi_0\ra
\fX\wedge^{\ul{\Aut}(\Psi_0)}\Psi_0=\Psi.$$ Since
$\fY\wedge^{\ul{\Aut}(\rG_0,f_0(\rT_0))}\Phi(\rG_0,f_0(\rT_0))$ is
isomorphic to $\Phi(\rG,f(\rT))$, we conclude that $f$ induces a
natural map from $\Phi(\rG,f(\rT))$ to $\Psi$. Since this natural
map becomes an isomorphism at each geometric fiber, it is an
isomorphism and hence $f\in\fE(\rG,\Psi)(k)$, and we denote it as
$\rI_\rA(\iota)$.

Given $f\in\fE(\rG,\Psi)(k)$, now we want to define a $k$-embedding
$\iota$ from $f$. Note that $f$ induces an isomorphism from
$\Phi(\rG,f(\rT))$ to $\Psi$ by definition.

Let $\fZ=\ul{\Isom}((\rG_0,\rT_0,f_0),(\rG,\rT,f))$. Note that for a
$k$-algebra $\rC$ and $(h,g)\in\fZ(\rC)$, $g$ induces an isomorphism
between $\Phi(\rG_0,f_0(\rT_0))$ and $\Phi(\rG,f(\rT))$. Hence  h is
an element of $\ul{\Isom}(\Psi_0,\Psi)(\rC)$ and we have a natural
morphism from $\fZ$ to $\ul{\Isom}(\Psi_0,\Psi)$.

By Lemma~\ref{1.13}, there is a canonical isomorphism from
$\ul{\Isom}((\rE_0,\sigma_0),(\rE,\sigma))$ (resp.
$\ul{\Isom}((\rE_0,\sigma_0,k\times k),(\rE,\sigma,K))$, if $\tau$
is of the second kind) to $\ul{\Isom}(\Psi_0,\Psi)$, so we have a
canonical morphism from $\fZ$ to
$\ul{\Isom}((\rE_0,\sigma_0),(\rE,\sigma))$, and hence a canonical
map from $\fZ\wedge(\rE_0,\sigma_0)$ to $(\rE,\sigma)$. Similarly,
by Remark~\ref{1.5}, we have a canonical map from $\fZ$ to
$\ul{\Isom}((\rA_0,\tau_0),(\rA,\tau))$, and hence a canonical map
from $\fZ\wedge(\rA_0,\tau_0)$ to $(\rA,\tau)$. Therefore, we get a
$k$-embedding $\iota:(\rE,\sigma)\ra(\rA,\tau)$ from the map
$\fZ\wedge(\rE_0,\sigma_0)$ to $\fZ\wedge(\rA_0,\tau_0)$ induced by
$\iota_0$, and we denote $\iota$ as $\rJ_\rG(f)$.

Clearly, $\rI_\rA\rJ_\rG(f)=f$ since we construct $\iota$ from $\fZ$
and $\fZ\bigwedge(\rG_0,\rT_0,f_0)$ is canonically isomorphic to
$(\rG,\rT,f)$. On the other hand, we have
$$\rJ_\rG\circ\rI_\rA(\iota)=\iota$$ because of the canonical
isomorphism from $\ul{\Isom}((\rE_0,\rA_0,\iota_0),(\rE,\rA,\iota))$
to \\$\ul{\Isom}((\rG_0,\rT_0,f_0),(\rG,\rT,f))$, where $f$ is
induced by $\iota$. Hence, the first assertion follows.

We prove (2) now. Clearly, if $\iota:(\rE,\sigma)\ra(\rA,\tau)$ is a
$K$-embedding, then the corresponding $k$-embedding $f$ will be a
$k$-point of $\fE(\rG,\Psi)$ and $f$ satisfies $f\circ i_\rT=i_\rG$.

Now suppose $f\in\fE(\rG,\Psi)(k)$ and $f\circ i_\rT=i_\rG$. Then we
need to verify that the map $\rJ_\rG(f)$ from
$\fZ\wedge(\rE_0,\sigma_0)$ to $\fZ\wedge(\rA_0,\tau_0)$ is a
$K$-morphism. From the construction of $\rJ_\rG(f)$, it is enough to
prove that the two maps from
$\ul{\Isom}((\rG_0,\rT_0,f_0),(\rG,\rT,f))(\rR)$ to
$\ul{\Isom}(\gm_{m,k},\rR_{K/k}^{(1)}(\gm_{m,K}))(\rR)$, which map
$(h,g)$ to $i_\rT^{-1}\circ h\circ i_{0,\rT}$ and $i_\rG^{-1}\circ
g\circ i_{0,\rG}$ respectively, coincide. However, it is a direct
consequence from the fact that $f\circ i_\rT=i_\rG$, since
\begin{align*}
i_\rT^{-1}\circ h\circ i_{0,\rT}&=i_\rG^{-1}\circ f\circ h\circ i_{0,\rT}\\
&=i_\rG^{-1}\circ g\circ f_0\circ i_{0,\rT}\\
&=i_\rG^{-1}\circ g\circ i_{0,\rG}
\end{align*}
Therefore, $\rJ_\rG(f)$ is a $K$-algebra morphism.
\end{proof}


\begin{remark}~\label{1.12}
Let $\tau$ be of the second kind. Suppose that $\fE(\rG,\Psi)(k)$ is
nonempty and fix $f\in\fE(\rG,\Psi)(k)$. If $f\circ i_\rT\neq
i_\rG$, then $f\circ\sigma$ will satisfy $(f\circ\sigma)\circ i_\rT=
i_\rG$ since $\sigma$ acts on $\rR_{K/k}^{(1)}(\gm_{m,K})$ as -1.
Therefore, the existence of a $k$-embedding will imply the existence
of a $K$-embedding. Moreover, we will see that the condition $f\circ
i_\rT= i_\rG$ gives a particular orientation
$u\in\ul{\Isomext}(\Psi,\rG)(k)$.
\end{remark}

Now we want to consider the case where $\rG_0$ is of type $D_4$.
Note that since there is a natural monomorphism from
$\ul{\Aut}(\rA_0,\rE_0,\iota_0)$ to
$\ul{\Aut}_{k-gr}(\rG_0,f_0(\rT_0))$, we can still get a $k$-point
of the embedding functor $\fE(\rG,\Psi)$ from a $k$-embedding
$\iota:(\rE,\sigma)\ra(\rA,\tau)$. The problem is that given a
$k$-point $f$ of the embedding functor $\fE(\rG,\Psi)$, we can not
get a $k$-embedding from $f$ as we have done in the proof of
Theorem~\ref{1.4}, because the canonical map from
$\ul{\Aut}(\rA_0,\rE_0,\iota_0)$ to
$\ul{\Aut}_{k-gr}(\rG_0,f_0(\rT_0))$ is not an isomorphism.

To fix the problem, we first observe that  $\ad(\rG_0)$ (resp.
$\rW(\Psi_0)$) is in the image of the canonical morphism from
$\ul{\Aut}(\rA_0,\tau_0)$ (resp. $\ul{\Aut}(\rE_0,\sigma_0)$) to
$\ul{\Aut}_{k-gr}(\rG_0)$ (resp. $\ul{\Aut}(\Psi_0)$). So instead to
associate a split form $(\rA_0,\tau_0)$ to $(\rA,\tau)$, we consider
all "quasi-split" forms of $(\rA,\tau)$. Note that
$\ul{\Aut}(\rA_0,\tau_0)/\ad(\rG_0)$ is the constant group scheme
$(\ent/2\ent)_{k}$ and we can find a section from $(\ent/2\ent)_{k}$
to $\ul{\Aut}(\rA_0,\rE_0,\iota_0)$. For example  we can send $1$ in
$\ent/2\ent$ to the matrix
\begin{center}
$\begin{pmatrix}
0 & 0 & 0 & 0 & 1 & 0 & 0 & 0 \\
0 & 1 & 0 & 0 & 0 & 0 & 0 & 0\\
0 & 0 & 1 & 0 & 0 & 0 & 0 & 0\\
0 & 0 & 0 & 1 & 0 & 0 & 0 & 0\\
1 & 0 & 0 & 0 & 0 & 0 & 0 & 0\\
0 & 0 & 0 & 0 & 0 & 1 & 0 & 0\\
0 & 0 & 0 & 0 & 0 & 0 & 1 & 0\\
0 & 0 & 0 & 0 & 0 & 0 & 0 & 1
\end{pmatrix}$.
\end{center}
Let us fix the section from $(\ent/2\ent)_{k}$ to
$\ul{\Aut}(\rA_0,\rE_0,\iota_0)$ as above.  Let
$$\ul{\Isomext}(\rA_0,\tau_0;\rA,\tau):=\ul{\Isom}(\rA_0,\tau_0;\rA,\tau)/\ad(\rG_0).$$
Then for each $(\rA,\tau)$ we can associate a quasi-split form
$$(\rA_{q},\tau_q)=\ul{\Isomext}(\rA_0,\tau_0;\rA,\tau)\wedge^{(\ent/2\ent)_{k}}(\rA_0,\tau_0).$$
Moreover, since the section has image in
$\ul{\Aut}(\rA_0,\rE_0,\iota_0)$, we get an \'etale algebra with
involution $(\rE_q,\sigma_q)$ and a embedding
$\iota_q:(\rE_q,\sigma_q)\ra(\rA_{q},\tau_q)$ from the datum
$(\rA_0,\rE_0,\iota_0)$. Let $\rG_q=\rU(\rA_q,\tau_q)^{\circ}$ and
$\rT_q=\rU(\rE_q,\sigma_q)$ and $f_q:\rT_q\ra\rG_q$ be the morphism
induced by $\iota_q$.

From our construction,  the group $\rG$ in an inner form of $\rG_q$,
so we can always fix an orientation $v$ in
$\ul{\Isomext}(\rG_q,\rG)(k)$. Then we have the following result:
\begin{prop}\label{1.15}
Let $u$ be a $k$-point of $\ul{\Isomext}(\Psi,\rG)$. Then each $k$-point
of the oriented embedding functor $\fE(\rG,\Psi,u)$ corresponds to a
$k$-embedding $\iota$ from $(\rE,\sigma)$ to $(\rA,\tau)$.
\end{prop}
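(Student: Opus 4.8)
The plan is to run the argument of Theorem~\ref{1.4} again, with the split datum replaced everywhere by the quasi-split datum $(\rG_q,\rT_q,f_q)$, $(\rA_q,\rE_q,\iota_q)$ constructed above, and with the orientations $u$ and $v$ used to cut down the comparison torsor of that proof to a subtorsor whose structure group \emph{does} lift to automorphisms of the algebra with involution. Here $v\in\ul{\Isomext}(\rG_q,\rG)(k)$ is the orientation fixed above, i.e.\ the class of the isomorphisms realising $\rG$ as an inner form of $\rG_q$; write $\rT_q'=f_q(\rT_q)$, and let $\Psi_q$ be the twisted root datum attached to $\rT_q$ by $\iota_q$, so that $f_q\in\fE(\rG_q,\Psi_q,u_q)(k)$ for the orientation $u_q$ it determines. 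The two inputs I will need from the discussion preceding the statement are: $\ad(\rG_q)$, resp.\ $\rW(\Psi_q)$, lies in the image of $\ul{\Aut}(\rA_q,\tau_q)\to\ul{\Aut}_{k-gr}(\rG_q)$, resp.\ of $\ul{\Aut}(\rE_q,\sigma_q)\to\ul{\Aut}(\Psi_q)$; and the natural maps $\ul{\Aut}(\rA_q,\rE_q,\iota_q)\to\ul{\Aut}_{k-gr}(\rG_q,\rT_q')$ and $\ul{\Aut}(\rA_q,\tau_q)\to\ul{\Aut}_{k-gr}(\rG_q)$ are monomorphisms (Lemma~\ref{1.13} and its proof). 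Consequently $G':=\ul{\Norm}_{\ad(\rG_q)}(\rT_q')$ lies in the image of the first monomorphism, and its preimage $\tilde G'\subseteq\ul{\Aut}(\rA_q,\rE_q,\iota_q)$ maps isomorphically onto $G'$.

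Now fix $f\in\fE(\rG,\Psi,u)(k)$ and put $\rX=f(\rT)$. First I would introduce the functor $\fZ$ of pairs $(h,g)$, where $h\colon\rT_q\xrightarrow{\sim}\rT$ and $g\colon\rG_q\xrightarrow{\sim}\rG$ satisfy $f\circ h=g\circ f_q$ and the class of $g$ in $\ul{\Isomext}(\rG_q,\rG)$ is $v$. Over a separable closure $k_s$ one produces such a pair by taking any $g$ of class $v$, composing it with an inner automorphism of $\rG$ so that $g(\rT_q')=\rX$ (all maximal tori being conjugate over $k_s$, and inner automorphisms leaving the class $v$ unchanged), and setting $h=f^{-1}\circ g\circ f_q$; so $\fZ$ is non-empty over $k_s$. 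The automorphisms of the triple $(\rG_q,\rT_q,f_q)$ that preserve the condition $[g]=v$ under composition are exactly those whose underlying automorphism of $\rG_q$ is inner and normalises $\rT_q'$, i.e.\ the group $G'$; hence $\fZ$ is a left $G'$-torsor over $k$. Transporting along $\tilde G'\xrightarrow{\sim}G'$ turns it into a $\tilde G'$-torsor $\tilde\fZ$ with the same class in $\rH^1(k,G')$.

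The candidate embedding is $\iota:=\tilde\fZ\wedge^{\tilde G'}\iota_q$, an injective, involution-preserving morphism $\tilde\fZ\wedge^{\tilde G'}(\rE_q,\sigma_q)\to\tilde\fZ\wedge^{\tilde G'}(\rA_q,\tau_q)$, and it remains to identify the source and target. For the source, $(h,g)\mapsto h$ is a $G'$-equivariant $k$-morphism $\fZ\to\ul{\Isom}_{k-gr}(\rT_q,\rT)\cong\ul{\Isom}((\rE_q,\sigma_q),(\rE,\sigma))$ (using $\ul{\Aut}(\rE_q,\sigma_q)\cong\ul{\Aut}_{k-gr}(\rT_q)$), so the class of $\tilde\fZ$ pushes forward to the class of $(\rE,\sigma)$ as a twisted form of $(\rE_q,\sigma_q)$, and therefore $\tilde\fZ\wedge^{\tilde G'}(\rE_q,\sigma_q)\cong(\rE,\sigma)$. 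For the target, $(\rA,\tau)$ is an inner twist of $(\rA_q,\tau_q)$ (they have the same outer class by construction), and since $v$ is exactly the inner-twisting class, every $g$ with $[g]=v$ lifts---uniquely, by the monomorphism $\ul{\Aut}(\rA_q,\tau_q)\to\ul{\Aut}_{k-gr}(\rG_q)$---to an isomorphism $\tilde g\colon(\rA_q,\tau_q)\xrightarrow{\sim}(\rA,\tau)$; then $(h,g)\mapsto\tilde g$ is a $G'$-equivariant $k$-morphism $\fZ\to\ul{\Isom}((\rA_q,\tau_q),(\rA,\tau))$, so likewise $\tilde\fZ\wedge^{\tilde G'}(\rA_q,\tau_q)\cong(\rA,\tau)$. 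Thus $\iota$ is a $k$-embedding $(\rE,\sigma)\hookrightarrow(\rA,\tau)$; finally I would verify, as in the proof of Theorem~\ref{1.4}, that $\rI_\rA(\iota)=f$ and that $f\mapsto\iota$ is a two-sided inverse to $\iota\mapsto\rI_\rA(\iota)$ on the set of $k$-embeddings whose associated orientation is $u$, which yields the stated correspondence.

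The main obstacle is the target identification: ensuring the algebra recovered is the \emph{given} $(\rA,\tau)$ and not one of its triality twists. This is what forces the two key choices: restricting $\fZ$ to the class $v$, which shrinks its structure group from $\ul{\Aut}_{k-gr}(\rG_q,\rT_q')$---whose outer part (a form of $S_3$ in the $D_4$ case) does not lift to the algebra---down to $G'$, which does lift, being built out of $\ad(\rG_q)$ and $\rW(\Psi_q)$; and taking $v$ to be the inner-twisting class, so that the isomorphisms of class $v$ lift to $(\rA_q,\tau_q)\to(\rA,\tau)$. Granting these, everything else is a transcription of Theorem~\ref{1.4} with $(\rG_0,\rT_0,f_0)$, $(\rA_0,\rE_0,\iota_0)$ replaced by their quasi-split analogues.
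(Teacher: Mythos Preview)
Your overall strategy matches the paper's exactly: pass from the split datum $(\rG_0,\rT_0,f_0)$, $(\rA_0,\rE_0,\iota_0)$ to the quasi-split datum $(\rG_q,\rT_q,f_q)$, $(\rA_q,\rE_q,\iota_q)$, restrict the comparison torsor to the inner class $v$, and exploit that $\ad(\rG_q)$ and $\rW(\Psi_q)$ lift to automorphisms on the algebra side. The paper's $\fZ'=\ul{\Isomint}_v(\rG_q,f_q(\rT_q);\rG,f(\rT))$ is your $\fZ$ (your pair $(h,g)$ is redundant, since $h=f^{-1}\circ g\circ f_q$), and its structure group is your $G'=\ul{\Norm}_{\ad(\rG_q)}(\rT_q')$.

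There is, however, a genuine slip in your source identification. You claim $\ul{\Isom}_{k-gr}(\rT_q,\rT)\cong\ul{\Isom}((\rE_q,\sigma_q),(\rE,\sigma))$ via $\ul{\Aut}(\rE_q,\sigma_q)\cong\ul{\Aut}_{k-gr}(\rT_q)$; this is false. For a split rank-$n$ torus, $\ul{\Aut}_{k-gr}(\rT_0)$ is the constant group $\Gl_n(\ent)$, whereas $\ul{\Aut}(\rE_0,\sigma_0)$ is only $(\ent/2\ent)^n\rtimes\rS_n$. The map $(h,g)\mapsto h$ does not land in anything identifiable with $\ul{\Isom}((\rE_q,\sigma_q),(\rE,\sigma))$ on the nose. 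What is true is that $h$, being induced by $g$, preserves the root-datum structure, so the map factors through $\ul{\Isom}(\Psi_q,\Psi)$; and since $[g]=v$, it in fact lands in $\ul{\Isomint}_{v'}(\Psi_q,\Psi)$ for the orientation $v'=u^{-1}\circ v\circ u_q\in\ul{\Isomext}(\Psi_q,\Psi)(k)$ supplied by Proposition~\ref{1.14}. This is the paper's route: it forms $\fZ'\wedge^{\rW(\Psi_q)}(\rE_q,\sigma_q)$ using the map $\fZ'\to\ul{\Isomint}_{v'}(\Psi_q,\Psi)$ (a $\rW(\Psi_q)$-torsor) together with the inclusion $\rW(\Psi_q)\subset\ul{\Aut}(\rE_q,\sigma_q)$, rather than trying to go through the full torus-automorphism functor. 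Once you replace your source step by this, your argument and the paper's coincide. Your target identification (lifting $g$ of class $v$ to $\tilde g:(\rA_q,\tau_q)\to(\rA,\tau)$ via the monomorphism $\ul{\Aut}(\rA_q,\tau_q)\hookrightarrow\ul{\Aut}_{k-gr}(\rG_q)$) is correct and is precisely the paper's use of the map $\fZ'\to\ul{\Isomint}_v(\rG_q,\rG)$.
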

\begin{proof}
The way to prove it is exactly the same as in Theorem~\ref{1.4}. The
only different thing is that we stay in the inner case. First we fix
an orientation $v$ in $\ul{\Isomext}(\rG_q,\rG)(k).$ Let $u_q$ be
the orientation in $\ul{\Isomext}(\Psi_q,\rG_q)(k)$ which comes from
the map $f_q$. Then there is an orientation $v'=u^{-1}\circ v\circ
u_q$ in $\ul{\Isomext}(\Psi_q,\Psi)(k)$ by Proposition~\ref{1.14}.

For $f\in\fE(\rG,\Psi,u)(k)$, we consider the
$\ul{\Norm}_{\ad(\rG_q)}(\ad(f_q(\rT_q)))$-torsor\\
$\ul{\Isomint}_{v}(\rG_q,f_q(\rT_q);\rG,f(\rT))$. Let
$\fZ'=\ul{\Isomint}_{v}(\rG_q,f_q(\rT_q);\rG,f(\rT))$. Clearly, we
have a canonical morphism from $\fZ'$ to
$\ul{\Isomint}_v(\rG_q,\rG)$ which is an $\ad(\rG_q)$-torsor; and a
canonical morphism from $\fZ'$ to $\ul{\Isomint}_{v'}(\Psi_q,\Psi)$
which is a $\rW(\Psi_q)$-torsor. Note that $\ad(\rG_q)$ (resp.
$\rW(\Psi_q)$) are in the image of the canonical morphism from
$\ul{\Aut}(\rA_q,\tau_q)$ (resp. $\ul{\Aut}(\rE_q,\sigma_q)$) to
$\ul{\Aut}_{k-gr}(\rG_q)$ (resp. $\ul{\Aut}(\Psi_q)$) as they do in
the split case. So we get a $k$-embedding
$\iota:(\rE,\sigma)\ra(\rA,\tau)$ from the map
$\fZ'\wedge^{\rW(\Psi_q)}(\rE_q,\sigma_q)$ to
$\fZ'\wedge^{\ad(\rG_q)}(\rA_q,\tau_q)$ induced by $\iota_q$.
\end{proof}
\end{subsubsection}
\end{subsection}
\end{section}

\begin{section}{Arithmetic properties of embedding functors}
In this section, we focus on the arithmetic properties of the
embedding functor. The main arithmetic technique which we use here
has been developed by Borovoi~\cite{Bo}.

In the first part, we recall the main result in~\cite{Bo}. In the
second part, we give a criterion for an oriented embedding functor
to satisfy the local-global principle. Besides, over a local field
$\rL$, we use the Tits index to give a necessary and sufficient
condition for an oriented embedding functor to have an $\rL$-point.

For a field $k$ with characteristic different from $2$,  embedding
an \'etale algebra over $k$ into a central simple algebra over $k$
commuting with involutions is equivalent to finding a $k$-point of
the corresponding embedding functor. In Section 2.4, we use the
arithmetic properties of oriented embedding functors to give an
alternative proof of Theorem A, Theorem 6.7 and Theorem 7.3 in the
work of Prasad and Rapinchuk~\cite{PR1}.

Throughout this section, we let $k$ be a global field and $k^{s}$ be
a separable closure. Let $\Gamma$ be the absolute Galois group of
$k$, and $\Omega_{k}$ be the set of all places of $k$.

We start this section with some general facts of the local-global
principle of homogeneous spaces established in Borovoi's papers.
\begin{subsection}{The local-global principle for homogeneous spaces}
First, we let $k$ to be a number field. For a $k$-linear algebraic
group $\rG$, we let $\rG^{\circ}$ to be the connected component
containing the neutral element of $\rG$. Let $\rG^{u}$ be the
unipotent radical of $\rG^{\circ}$; $\rG^{red}=\rG^{\circ}/\rG^{u}$;
$\rG^{ss}$ be the derived group of $\rG^{red}$;
$\rG^{tor}$=$\rG^{red}/\rG^{ss}$. Let
$\rG^{ssu}=\ker[\rG^{\circ}\ra\rG^{tor}]$. If $\rG/\rG^{ssu}$ is
abelian, we let $\rG^{mult}=\rG/\rG^{ssu}$ which is a multiplicative
group.

Let $\rX$ be a left homogeneous space under a connected linear
algebraic group $\rG$ over $k$. Let $x\in\rX(k^s)$ and
$\ol{\rH}=\Stab_{\rG_{k^s}}(x)$ be the stabilizer of $x$.

Throughout this section, we will assume that $\rG^{ss}$ is simply
connected, and $\ol{\rH}/\ol{\rH}^{ssu}$ is abelian.

Since $\Gamma$ has
a natural action on $\rG(k^s)$, we can define $\fG$ to be the
semidirect product $\rG(k^s)\rtimes\Gamma$. We have a $\fG$- action
on $\rX(k^s)$ defined as $(g,\sigma)x=g\cdot\sigma(x)$. Let
$\fH=\Stab_{\fG}(x)$. Then we have the following exact sequence:
$$(\ast)\ \ \xymatrix {1\ar[r]& \ol{\rH}(k^s)\ar[r]^i& \fH \ar[r]^{p}&\Gamma\ar[r]& 1},$$
where $i(\ol{h})=(\ol{h},1)$, and $p$ is the projection to $\Gamma$.

Since $\ol{\rH}^{mult}$ is commutative, we can define a
$\Gamma$-action on $\ol{\rH}^{mult}$ by conjugation. To be precise,
for $\sigma\in\Gamma$, choose $(g_{\sigma},\sigma)\in
p^{-1}(\sigma)$. Then since $g_{\sigma}\cdot\sigma(x)=x$, we have
$\inter(g_{\sigma})^{\sigma}\ol{\rH}^{mult}=\ol{\rH}^{mult}$. Note
that, the above definition doesn't depend on the lifting of $\sigma$
in $\fH$ because $\ol{\rH}^{mult}$ is commutative. Hence we get a
$\Gamma$-action on $\ol{\rH}^{mult}$.  Since the point $x$ is
defined over some finite extension $L$ of $k$, $\ol{\rH}^{mult}$ is
defined over $L$. Moreover, for each $\sigma\in\Gal(k^s/L)$, we can
choose $g_{\sigma}=1$. Hence, there is a $k$-form $\rH^m$ of
$\ol{\rH}^{mult}$ defined by this $\Gamma$-action (cf.~\cite{BS}
2.12,~\cite{Ser1} Chap. V, \S 4, $\mathrm{n}^{\circ}$. 20,
and~\cite{FSS} 1.15). One can verify that the isomorphism class of
$\rH^{m}$  is independent of the choice of the geometric point $x$.
Therefore, given $\rG$ and $\rX$, the isomorphism class of $\rH^{m}$
is well-defined (\cite{Bo}, 1.2).

Let $\ol{j}:\ol{\rH}\ra\rG_{k^s}$ be the natural inclusion. Then
$\ol{j}$ induces a group morphism from $\ol{\rH}^{mult}$ to
$\rG^{tor}_{k^s}$, which descends to a group morphism
$j:\rH^{m}\ra\rG^{tor}$ over $k$.

Consider the complex $\xymatrix {0\ar[r]& \rH^{m}\ar[r]^j& \rG^{tor}
\ar[r]& 0}$, where $\rH^{m}$ is in degree $-1$ and $\rG^{tor}$ is in
degree 0. Let $\rH^{1}(k,\rH^{m}\ra\rG^{tor})$ be the first Galois
hypercohomology group of the above complex, and
$\sha^{1}(k,\rH^{m}\ra\rG^{tor})$ be the kernel of the localization
map
$\rH^{1}(k,\rH^{m}\ra\rG^{tor})\ra\underset{v\in\Omega_k}{\prod}\rH^{1}(k_v,\rH^{m}\ra\rG^{tor})$.

For $\sigma\in\Gamma$, let $(g_{\sigma},\sigma)\in\fH$. Let
$u_{\sigma,\tau}=g_{\sigma\tau}({g_\sigma}^{\sigma} g_\tau)^{-1}$,
$\hat{u}_{\sigma,\tau}$ be the image of $u_{\sigma,\tau}$ in
$\ol{\rH}^{mult}(k^s)$, and $\hat{g}_{\sigma}$ be the image of
$g_{\sigma}$ in $\rG^{tor}({k^s})$. Then $(\hat{u},\hat{g})$ is a
hypercocycle, and we let
$\eta(\rX)=\Cl(\hat{u},\hat{g})\in\rH^{1}(k,\rH^{m}\ra\rG^{tor})$.
Note that $\eta(\rX)$ is well-defined (see~\cite{Bo}, 1.4).

We will make use of the following two theorems later.

\begin{theo}~\label{2.1}
Let $k_v$ be a nonarchimedean local field of characteristic $0$. Let
$\rG$, $\rX$ be as above. If $\eta(\rX)=0$, then $\rX$ has a
$k_v$-point.
\end{theo}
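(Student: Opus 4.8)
The plan is to reproduce Borovoi's argument (\cite{Bo}), which runs in three steps: rephrase the existence of a $k_v$-point as the neutrality of a nonabelian second cohomology class (the Springer class of $\rX$), abelianise this class and identify its image with $\eta(\rX)$, and then invoke Kneser's theorem to see that over a nonarchimedean local field the abelianised obstruction is already complete. First I would set up the Springer class: since $\rG$ is connected, $\rG(k_v^{s})$ acts transitively on $\rX(k_v^{s})$, and then the elements $g_\sigma\in\rG(k_v^{s})$ with $g_\sigma\cdot\sigma(x)=x$ together with the $u_{\sigma,\tau}\in\ol{\rH}(k_v^{s})$ appearing in the construction of $\eta(\rX)$ form precisely a cocycle for the nonabelian $\rH^{2}$ of the $k_v$-kernel bound by $\ol{\rH}$ (cf.~\cite{Gir}). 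By Springer's theorem on nonabelian $\rH^{2}$, the homogeneous space $\rX$ has a $k_v$-point if and only if this Springer class $\mathrm{cl}(\rX)$ is neutral; so it remains to show that $\eta(\rX)=0$ forces $\mathrm{cl}(\rX)$ to be neutral.

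Next I would abelianise. The standing hypotheses — $\rG^{ss}$ simply connected and $\ol{\rH}/\ol{\rH}^{ssu}$ abelian — are exactly what make Borovoi's abelianisation map from the nonabelian $\rH^{2}$ to the abelian hypercohomology $\rH^{1}(k_v,\rH^{m}\ra\rG^{tor})$ available. On the cocycle $(g_\sigma,u_{\sigma,\tau})$ this map is given by pushing $g_\sigma$ forward to $\rG^{tor}(k_v^{s})$ and $u_{\sigma,\tau}$ forward to $\ol{\rH}^{mult}(k_v^{s})$, so its value on $\mathrm{cl}(\rX)$ is exactly the hypercocycle class $\Cl(\hat{u},\hat{g})=\eta(\rX)$. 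Thus the abelianisation of the Springer class is $\eta(\rX)$, and the problem reduces to the assertion that over $k_v$ a Springer class whose abelianisation vanishes is already neutral.

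The hard part will be this last reduction. Unwinding the abelianisation, vanishing of $\eta(\rX)$ allows one to modify the $g_\sigma$ by elements of $\ol{\rH}(k_v^{s})$ until the residual cocycle data takes values in $\rG^{ssu}$ — an extension of the simply connected group $\rG^{ss}$ by the unipotent radical $\rG^{u}$ — and in an $\ol{\rH}^{ssu}$-kernel. Over a field of characteristic $0$ the unipotent parts are split and have no higher Galois cohomology, so neutrality of $\mathrm{cl}(\rX)$ comes down to the triviality of $\rH^{1}(k_v,\rG^{ss})$ and of the corresponding pointed sets for inner twists of $\rG^{ss}$; these all vanish by Kneser's theorem, since $k_v$ is nonarchimedean of characteristic $0$ (this is the arithmetic ingredient with no archimedean analogue, which is why the theorem excludes real places). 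Carrying this out rigorously requires the full nonabelian abelianisation machinery of~\cite{Bo}, and is where all three hypotheses are genuinely used. I note that in the situation relevant to this paper the stabiliser $\ol{\rH}=\rX^{\ad}$ is a torus, so $\ol{\rH}^{ssu}=1$, the $k_v$-kernel is represented by the $k_v$-torus $\rH^{m}$, the nonabelian $\rH^{2}$ is ordinary abelian cohomology, and the whole argument collapses to the long exact sequence attached to the complex $\rH^{m}\ra\rG^{tor}$ together with Kneser's theorem applied to $\rG^{ss}$.
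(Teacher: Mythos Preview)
Your sketch is a correct outline of Borovoi's argument and matches the content of the reference the paper invokes; the paper itself does not give an independent proof of this statement but simply cites \cite{Bo}, Thm.~2.1. In that sense you have done more than the paper: you have reproduced the essential mechanism (Springer class, abelianisation to $\rH^{1}(k_v,\rH^{m}\to\rG^{tor})$, Kneser's theorem for $\rH^{1}(k_v,\rG^{ss})$) rather than appealing to it as a black box, and you have correctly noted that in the application made later in the paper the stabiliser is a torus, which collapses the nonabelian machinery to ordinary cohomology (compare the proof of Proposition~\ref{2.17}).
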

\begin{proof}
\cite{Bo}, Thm. 2.1.
\end{proof}
\begin{theo}~\label{2.2}
Let $k$ be a number field, and let $\rG$, $\rX$ be as above. Assume
that $\rX(k_{v})$ is nonempty for every place $v$ of $k$ and
$\eta(\rX)=0$. Then $\rX$ has a rational point.
\end{theo}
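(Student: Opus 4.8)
The plan is to derive this from Borovoi's local--global theorem for homogeneous spaces \cite{Bo}; this statement is the global counterpart of Theorem~\ref{2.1}, so rather than reproving it I will describe the mechanism and indicate where the real work lies.

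First I would reinterpret the existence of a rational point cohomologically. Fix $x\in\rX(k^s)$ as in the setup, so that the orbit map $\rG_{k^s}\to\rX_{k^s}$, $g\mapsto g\cdot x$, identifies $\rX_{k^s}$ with $\rG_{k^s}/\ol{\rH}$ where $\ol{\rH}=\Stab_{\rG_{k^s}}(x)$. Having a $k$-point is equivalent to descending this identification to a $k$-form $\rG/\rH$ of $\rX$ in which $\rH$ acquires a rational point, and the obstruction to this --- built from the extension $(\ast)$ --- lives in a non-abelian $\rH^{2}$ whose band is the Galois-twisted $\ol{\rH}$. By construction (through $\rH^{m}=\ol{\rH}^{mult}$ with its $\Gamma$-action and the map $j:\rH^{m}\to\rG^{tor}$), the class $\eta(\rX)\in\rH^{1}(k,\rH^{m}\ra\rG^{tor})$ is the image of this genuine obstruction under abelianization. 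Hence the hypothesis $\eta(\rX)=0$ says that the \emph{abelianized} obstruction vanishes, while $\rX(k_v)\neq\emptyset$ for all $v$ says that the genuine obstruction is locally trivial everywhere.

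Next I would reduce to the case of an abelian stabilizer, exploiting that $\rG^{ss}$ is simply connected. Writing $\ol{\rH}^{ssu}=\ker[\ol{\rH}\to\ol{\rH}^{tor}]$, one fibres the problem over the quotient homogeneous space whose geometric stabilizer is $\ol{\rH}^{mult}$, the discrepancy being governed by $\rH^{1}$ and $\rH^{2}$ of $\rG^{ss}$ and of the semisimple part of $\ol{\rH}^{ssu}$. The decisive arithmetic inputs are the Hasse principle for simply connected semisimple groups (Kneser--Harder--Chernousov), namely $\rH^{1}(k,\rG^{ss})\hookrightarrow\prod_{v\mid\infty}\rH^{1}(k_v,\rG^{ss})$ together with the vanishing of $\rH^{1}(k_v,\rG^{ss})$ at the finite places, and the exactness properties of Borovoi's abelianization maps $\rH^{i}(k,-)\to\rH^{i}_{\mathrm{ab}}(k,-)$; together these guarantee that controlling the abelian data is enough to control $\rX(k)$.

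With the problem reduced to the hypercohomology of the two-term complex of groups of multiplicative type $\rH^{m}\ra\rG^{tor}$, the endgame combines the everywhere-local points with the global vanishing $\eta(\rX)=0$: one uses the Poitou--Tate-type exact sequence for such a complex, in which $\sha^{1}$ pairs with the $\sha$ of the dual complex, together with the Hasse principle for $\rG^{ss}$, to convert these data into a trivialisation of the genuine obstruction, that is, into an honest $k$-point of $\rX$. The hard part, and the real substance of \cite{Bo}, is precisely this passage from the \emph{abelianized} obstruction back to the genuine non-abelian $\rH^{2}$ obstruction: showing that its vanishing is controlled by $\eta(\rX)$ together with everywhere-local solubility is exactly where simple-connectedness of $\rG^{ss}$ and the good behaviour of the abelianization maps are indispensable.
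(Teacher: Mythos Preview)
Your proposal is essentially correct as a high-level sketch of Borovoi's argument, and in fact goes well beyond what the paper does: the paper's own ``proof'' is simply the citation ``\cite{Bo}, Thm.~2.2'', with no further argument. So there is nothing to compare at the level of proof strategy --- the paper treats this as a black-box import from the literature, exactly as you do at the end of your first paragraph when you write ``rather than reproving it I will describe the mechanism''.

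Your outline of that mechanism (non-abelian $\rH^2$ obstruction, abelianization to $\rH^1(k,\rH^m\to\rG^{tor})$, reduction via simple-connectedness of $\rG^{ss}$ and the Kneser--Harder--Chernousov theorem, and a Poitou--Tate-type argument) is a faithful summary of the ideas in \cite{Bo} and the companion paper \cite{Bo2}. If anything, one could note that Borovoi's 1999 paper frames the obstruction as the Brauer--Manin obstruction (as the paper itself remarks just after Theorem~\ref{2.2}), whereas your sketch leans more on the abelianized-$\rH^2$ picture of \cite{Bo2}; but these are two faces of the same coin, and your description is accurate.
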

\begin{proof}
\cite{Bo}, Thm. 2.2.
\end{proof}
\begin{remark}
Note that if $\rX$ has a $k_v$-point at all places $v\in\Omega_k$,
then  $\eta(\rX)$ lies in $\sha^{1}(k,\rH^{m}\ra\rG^{tor})$.
\end{remark}
\begin{remark}
In fact, up to a sign, $\eta(\rX)$ is the Brauer-Manin obstruction
of $\rX$ (ref.~\cite{Bo}, Thm. 4.5).
\end{remark}
Now, we let $k$ be a global function field, for example,
$k=\mathbb{F}_{q}(t)$, where $\mathbb{F}_{q}$ is a finite field with
$q$ elements. One may ask if Theorem~\ref{2.1} holds over $k$.
Indeed,  we have similar results  when $\rG$ is a connected
reductive group over $k$ and $\rX$ is a $\rG$-homogeneous space for
the \'etale topology.  Since $\rX$ is a $\rG$-homogeneous space for
the \'etale topology, the set $\rX(k^{s})$ is nonempty. Let
$x\in\rX(k^{s})$ and $\ol{\rH}=\Stab_{\rG_{k^{s}}}(x)$. Suppose that
$\ol{\rH}$ is connected reductive. Then we can define an $k$-torus
$\rH^{m}$, which is an $k$-form of $\ol{\rH}^{mult}$ as above. Let
$\fH$ and $\eta(\rX)$ be defined as above. Then we have the
following:
\begin{prop}~\label{2.17}
Let $k$ be a global function field. Let $\rG$ be a connected
reductive group over $k$ and $\rX$ be a $\rG$-homogeneous space for
the \'etale topology. Let $x\in\rX(k^s)$. Define $\ol{\rH}$ as
above. Suppose that $\rG^{ss}$ is simply connected and $\ol{\rH}$ is
a torus. If $\eta(\rX)=0$, then $\rX$ has a $k$-point. The same
result also holds over $k_v$ for $v\in\Omega_k$.
\end{prop}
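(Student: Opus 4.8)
The strategy is to reduce Proposition~\ref{2.17} to Borovoi's theorems over number fields (Theorems~\ref{2.1} and~\ref{2.2}) by transporting their proofs to the global function field setting, using the fact that all the arithmetic inputs Borovoi uses have function field analogues. First I would observe that since $k$ is a global function field and $\rG$ is a connected reductive $k$-group with $\rG^{ss}$ simply connected, the Hasse principle for $\rG$-torsors holds: $\rH^1(k,\rG)\hookrightarrow\prod_{v}\rH^1(k_v,\rG)$ (this is the function field analogue of the Kneser--Harder--Chernousov theorem, which in the function field case is due to Harder for the simply connected factor, combined with the behaviour of $\rH^1$ under the exact sequence $1\to\rG^{ssu}\to\rG\to\rG^{tor}\to1$ and Poitou--Tate duality for the torus $\rG^{tor}$). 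Likewise the local statement uses only that over a nonarchimedean local field $k_v$ (of positive characteristic now) one has $\rH^1(k_v,\rG')=1$ for $\rG'$ simply connected semisimple, which again is Harder/Bruhat--Tits and holds in all characteristics.

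The second ingredient is the cohomological bookkeeping: $\rX$ being a $\rG$-homogeneous space for the étale topology with connected reductive (here: torus) stabilizer $\ol{\rH}$, one has the exact sequence $(\ast)$ and the $k$-torus $\rH^m$, together with the hypercohomology class $\eta(\rX)\in\rH^1(k,\rH^m\to\rG^{tor})$, all exactly as recalled in the excerpt. The key point, following \cite{Bo}, is that the obstruction to lifting a family of local points to a global point is measured precisely by $\eta(\rX)$ via the long exact sequence relating $\rH^1(k,\rG)$, $\rH^1(k,\rG/\rH)$-type objects, and the hypercohomology $\rH^\bullet(k,\rH^m\to\rG^{tor})$. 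I would follow Borovoi's argument line by line: given local points $x_v\in\rX(k_v)$ and $\eta(\rX)=0$, one produces a class in $\rH^1(k,\rG)$ (using $\rH^1$ of the stabilizer being controlled and $\rG^{ss}$ simply connected) whose localizations are trivial, hence which is trivial by the function field Hasse principle, and this triviality yields the $k$-point of $\rX$. For the local assertion over $k_v$, one uses that $\rH^1(k_v,\rG^{ss})=1$ and that the relevant local hypercohomology obstruction vanishes when $\eta(\rX)=0$, exactly as in Theorem~\ref{2.1}.

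In more detail, the steps in order: (1) record the function field versions of the local vanishing $\rH^1(k_v,\rG^{sc})=1$ and the global Hasse principle for $\rG$ with $\rG^{ss}$ simply connected; (2) verify that Borovoi's construction of $\rH^m$, the sequence $(\ast)$, the map $j:\rH^m\to\rG^{tor}$, and the class $\eta(\rX)$ go through verbatim over $k$, since they involve only Galois descent and group-theoretic manipulations insensitive to the characteristic (the torus hypothesis on $\ol{\rH}$ makes $\ol{\rH}=\ol{\rH}^{mult}$, so $\rH^m$ is simply a $k$-form of $\ol{\rH}$ and no unipotent complications arise); (3) run Borovoi's diagram chase in \cite{Bo} Thm.~2.1 and 2.2, replacing each appeal to a number field fact by its function field counterpart, to conclude that $\eta(\rX)=0$ plus local solubility gives a $k$-point, and similarly the local statement.

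**Main obstacle.**
The serious point is step~(1): ensuring that all of Borovoi's arithmetic inputs — the Hasse principle for $\rH^1$ of reductive groups with simply connected derived subgroup, Poitou--Tate duality for tori, and the local vanishing theorems — are available over a global \emph{function} field and that none of them secretly used characteristic zero (e.g.\ real places, which are absent here — in fact their absence \emph{simplifies} the statement, removing the archimedean caveats in Theorem~\ref{2.2}). Once one is satisfied that these hold (Harder, Bruhat--Tits, and Tate's duality theory over global function fields all apply, with the torus $\rG^{tor}$ causing no trouble), the rest of the proof is a formal transcription of \cite{Bo}. I expect no genuinely new difficulty, only the verification that the machinery transplants; accordingly the write-up can be brief, citing \cite{Bo} for the structure of the argument and the function field references for the inputs.
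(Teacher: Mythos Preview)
Your plan would eventually work, but it takes a substantially longer route than the paper and misses the simplification that the function-field setting affords. You propose to transplant Borovoi's number-field argument wholesale: first derive local solubility of $\rX$ from $\rH^1(k_v,\rG^{ss})=1$, then invoke a Hasse principle for $\rH^1(k,\rG)$ to glue. But the proposition does not assume local points exist, and the paper never produces any. Instead the paper uses directly that over a global function field one has the \emph{global} vanishing $\rH^1(k,\rG^{ss})=0$ for $\rG^{ss}$ semisimple simply connected (Harder~\cite{H}, Th\v{a}\'{n}g~\cite{Th}), not merely an injection into $\prod_v\rH^1(k_v,\rG^{ss})$. With this single input the proof is a short cocycle manipulation: pick lifts $(g_\sigma,\sigma)\in\fH$ and form $u_{\sigma,\tau}=g_{\sigma\tau}(g_\sigma\,{}^\sigma g_\tau)^{-1}$; the hypothesis $\eta(\rX)=0$ means there exist $a_\sigma\in\rH^m(k^s)$ and $s\in\rG(k^s)$ with $(\hat u_{\sigma,\tau},\hat g_\sigma)=(-\partial a_\sigma,\, j(a_\sigma)\partial s)$; replacing $g_\sigma$ by $a_\sigma^{-1}g_\sigma$ and $x$ by $s\cdot x$ turns $(g_\sigma)$ into a genuine $1$-cocycle with values in $\rG^{ss}(k^s)$; now $\rH^1(k,\rG^{ss})=0$ gives $t$ with $g_\sigma=t^{-1}\,{}^\sigma t$, and $t\cdot x\in\rX(k)$. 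The same argument works over $k_v$.

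So what each approach buys: yours mirrors the number-field template and would make the analogy with Theorems~\ref{2.1}--\ref{2.2} explicit, but at the cost of invoking a Hasse principle for $\rH^1(k,\rG)$ (which, incidentally, is delicate when $\rG^{tor}$ is nontrivial since $\sha^1$ of a torus need not vanish) and of manufacturing local points that are never actually needed. The paper's approach reveals that the local-solubility hypothesis in Theorem~\ref{2.2} exists solely to handle archimedean places; once those are absent, the argument collapses to three lines. The remark following the paper's proof (that the same works over totally imaginary number fields) underscores this.
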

\begin{proof}
The key point of the proof is that $\rH^{1}(k,\rG^{ss})=0$, for
$\rG^{ss}$ semisimple simply connected (\cite{H}, Satz A
and~\cite{Th}, Thm. A).

For $\sigma\in\Gamma$, let $(g_{\sigma},\sigma)\in\fH$. Let
$u_{\sigma,\tau}=g_{\sigma\tau}({g_\sigma}^{\sigma} g_\tau)^{-1}$.
As above, we have
$$\eta(\rX)=\Cl(\hat{u},\hat{g})\in\rH^{1}(k,\rH^{m}\xrightarrow{j}\rG^{tor}).$$
Since $\ol{\rH}$ is a torus, we have $\ol{\rH}^{mult}=\ol{\rH}$ and
$\rH^{m}$ is a $k$-form of $\ol{\rH}$. Suppose that $\eta(\rX)=0$.
Then we have $a_{\sigma}\in\rH^{m}(k^{s})$ and $s\in\rG(k^{s})$ such
that
$$(\hat{u}_{\sigma,\tau},\hat{g}_\sigma)=(-\partial(a_\sigma),j(a_\sigma)\partial
s),$$ i . e. $u_{\sigma,\tau}=a_{\sigma\tau}({a_\sigma}^{\sigma}
a_\tau)^{-1}$ and $g_{\sigma}=s^{-1}\cdot j(a_\sigma)\cdot\
^{\sigma}s$ (mod $\rG^{ss}$). After replacing $g_\sigma$ by
$a_\sigma^{-1} g_\sigma$, we can assume $u_{\sigma,\tau}=0$. We also
replace $x$ by $s\cdot x$, then we get $g_\sigma\in\rG^{ss}(k^s)$
and $u_{\sigma,\tau}=0$. Therefore, $(g_\sigma)$ is a cocycle of
$\Gamma$ with values in $\rG^{ss}$. Since $\rH^{1}(k,\rG^{ss})=0$
when $\rG^{ss}$ is semisimple simply connected, there is
$t\in\rG^{ss}(k^s)$ such that $s_{\sigma}=t^{-1}\ ^\sigma t$. Then
$t\cdot x$ is a $k$-point of $\rX$.
\end{proof}
\begin{remark}
For $k$ with positive characteristic, we ask $\rG$ to be reductive
because we want to ensure that $\rG^{red}$, $\rG^{ss}$ and
$\rG^{tor}$ are properly defined. Otherwise, it may happen that the
$k$-unipotent radical of $\rG$ is trivial but $\rG$ is not reductive
(see~\cite{CGP}, Example 1.1.3).
\end{remark}
\begin{remark}
The above proposition is also true over a totally imaginary number
field $k$, because in this case, $\rH^{1}(k,\rG^{ss})=0$ by Kneser's
Theorem (\cite{K} Chap IV, Thm. 1 and Chap. V, Thm. 1).
\end{remark}
\end{subsection}

\begin{subsection}{Local-Global principle for oriented embedding functors}
Let $k$ be a global field. Unless otherwise specified, $\rG$ is a
reductive $k$-group, $\Psi$ is a twisted root datum, and $\rT$ is
the $k$-torus determined by $\Psi$. In the following, we always
assume that $\rG$ and $\Psi$ have the same type.

Let $\sico(\rT)$ be the torus determined by the simply connected
root datum $\sico(\Psi)$. We will show that the only obstruction to
the local-global principal for the oriented functor
$\fE(\rG,\Psi,u)$ lies in the Shafarevich group
$\sha^{2}(k,\sico(\rT))$. Moreover, the local-global principle holds
for the oriented embedding functor $\fE(\rG,\Psi,u)$ if
$\ul{\Dyn}(\Psi)$ is of type $C$ or $\rT$ is anisotropic at some
place $v$.

Note that since $\rG$ is reductive,  we have $\rG^{ss}=\der(\rG)$. A
direct application of Theorem~\ref{2.2} is the following:
\begin{prop}~\label{2.3}
Let $u$ be an orientation of $\Psi$ with respect to $\rG$. Then the
only obstruction for $\fE(\rG,\Psi,u)$ to satisfy the local-global
principle lies in the group $\sha^{2}(k,\sico(\rT))$. In particular, if
$\rG$ has no outer automorphisms and $\sha^{2}(k,\sico(\rT))$
vanishes, then $\fE(\rG,\Psi)$ satisfies the local-global principle.
\end{prop}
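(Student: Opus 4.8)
The plan is to realize $\fE(\rG,\Psi,u)$ as a homogeneous space to which Borovoi's Theorem~\ref{2.2} (over number fields) and Proposition~\ref{2.17} (over global function fields) apply, with associated $k$-group $\rH^m\simeq\sico(\rT)$. First I would reduce to the semisimple simply connected case. By Corollary~\ref{0.8}, $u$ induces $u_{\sico}\in\ul{\Isomext}(\sico(\Psi),\sico(\rG))(k)$ together with an isomorphism $\fE(\rG,\Psi,u)\xrightarrow{\sim}\fE(\sico(\rG),\sico(\Psi),u_{\sico})$ of torsors over the common scheme of maximal tori; as this isomorphism is built functorially and $\ad(\rG)=\ad(\sico(\rG))$ acts on both sides, it is equivariant for the adjoint actions. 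Hence we may assume $\rG=\sico(\rG)$, so that $\rG^{ss}=\der(\rG)=\rG$ is simply connected and $\rG^{tor}=1$. Put $\rX=\fE(\rG,\Psi,u)$; by Theorem~\ref{0.7} it is a left homogeneous space under $\rG$ acting by conjugation, and it is nonempty over $k^s$ since $\rG$ and $\Psi$ have the same type.

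Next I would run Borovoi's construction. Choose $x\in\rX(k^s)$ and set $\ol{\rH}=\Stab_{\rG_{k^s}}(x)$. As $\rG$ acts by conjugation, $g\in\ol{\rH}$ exactly when $g$ centralizes the maximal torus $x(\sico(\rT)_{k^s})$ of $\rG_{k^s}$; the centralizer of a maximal torus being the torus itself, $x$ is an isomorphism $\sico(\rT)_{k^s}\xrightarrow{\sim}\ol{\rH}$. In particular $\ol{\rH}$ is a torus, so $\ol{\rH}^{ssu}=1$ and $\ol{\rH}/\ol{\rH}^{ssu}=\ol{\rH}^{mult}=\ol{\rH}$ is abelian; thus the running hypotheses of Section~2.1 hold and $\eta(\rX)$ is defined. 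To identify $\rH^m$, note that $\rG(k^s)$ acts transitively on $\rX(k^s)$ (the stabilizer being a smooth torus, $\rG_{k^s}\to\rX_{k^s}$ is smooth surjective with $\rH^1(k^s,\text{torus})=0$ on the fibres), so for $\sigma\in\Gamma$ there is $(g_\sigma,\sigma)\in\fH$, i.e. $g_\sigma\cdot{}^\sigma x=x$, whence ${}^\sigma x=\ad(g_\sigma)^{-1}\circ x$. Transporting the twisted action $h\mapsto\ad(g_\sigma)({}^\sigma h)$ on $\ol{\rH}$ through $x$ gives, for $t\in\sico(\rT)(k^s)$,
$$\ad(g_\sigma)\bigl(({}^\sigma x)({}^\sigma t)\bigr)=\ad(g_\sigma)\,\ad(g_\sigma)^{-1}\bigl(x({}^\sigma t)\bigr)=x({}^\sigma t),$$
so $x$ intertwines the $\Gamma$-action defining $\rH^m$ with the given Galois action on $\sico(\rT)$; hence $\rH^m\simeq\sico(\rT)$ over $k$. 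Since $\rG^{tor}=1$, the morphism $j\colon\rH^m\to\rG^{tor}$ is zero and $\rH^1(k,\rH^m\to\rG^{tor})=\rH^2(k,\sico(\rT))$, so $\eta(\rX)\in\rH^2(k,\sico(\rT))$.

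It remains to conclude. If $\rX$ has a point over $k_v$ for every $v\in\Omega_k$, then $\eta(\rX)$ lands in $\sha^2(k,\sico(\rT))$, and by Theorem~\ref{2.2} (if $k$ is a number field) or Proposition~\ref{2.17} (if $k$ is a global function field, using $\rH^1(k,\rG)=0$ for $\rG$ semisimple simply connected) the vanishing $\eta(\rX)=0$ implies $\rX(k)\neq\emptyset$; since $\eta(\rX)$ is, up to sign, the Brauer--Manin obstruction, the sole obstruction to the local-global principle for $\fE(\rG,\Psi,u)$ is the class $\eta(\rX)\in\sha^2(k,\sico(\rT))$. For the final assertion, if $\rG$ has no outer automorphisms then $\ul{\Autext}(\rG)=1$, so the left $\ul{\Autext}(\rG)$-torsor $\ul{\Isomext}(\Psi,\rG)$ — which is nonempty since $\rG$ and $\Psi$ have the same type — is trivial, giving a unique orientation $u$ with $\fE(\rG,\Psi)=\fE(\rG,\Psi,u)$; if moreover $\sha^2(k,\sico(\rT))=0$, then $\eta(\rX)=0$ and the local-global principle holds for $\fE(\rG,\Psi)$. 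The main obstacle is the identification $\rH^m\simeq\sico(\rT)$: one must unwind Borovoi's twisted Galois action in terms of the adjoint action on the oriented embedding functor, and also verify that the reduction of Corollary~\ref{0.8} is compatible with the homogeneous-space structure used in Section~2.1.
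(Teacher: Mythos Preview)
Your argument is correct and follows essentially the same route as the paper: reduce to the simply connected case via Corollary~\ref{0.8}, identify $\rH^m$ with $\sico(\rT)$ by unwinding the twisted Galois action (the paper isolates this as Lemma~\ref{2.14}, which you have reproved inline), and then invoke Theorem~\ref{2.2} and Proposition~\ref{2.17} using $\sico(\rG)^{tor}=1$. Your treatment of the final clause (unique orientation when $\ul{\Autext}(\rG)=1$) is slightly more explicit than the paper's, which simply says the rest follows.
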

Before proving the above proposition, we prove the following lemma:
\begin{lemma}\label{2.14}
Suppose furthermore that $\rG$ is a semisimple simply connected
group over $k$, and $\Psi$ is a twisted simply connected root datum.
Let $u\in\ul{\Isomext}(\Psi,\rG)(k)$. As we have shown in
Theorem~\ref{0.7}, the oriented embedding functor $\fE(\rG,\Psi,u)$
is a left homogeneous space under the adjoint $\rG$-action. Then
under this $\rG$-action, the corresponding $\rH^{m}$ (which is
defined in Section 2.1) is isomorphic to $\rT$.
\end{lemma}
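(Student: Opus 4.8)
The plan is to compute the $k$-group $\rH^m$ directly from the description of the stabilizer given in the general construction of Section 2.1. By Theorem~\ref{0.7}, $\fE(\rG,\Psi,u)$ is a left homogeneous space under the adjoint action of $\rG$, so I fix a geometric point $x=f\in\fE(\rG,\Psi,u)(k^s)$, say with $f(\rT_{k^s})=\rX$ a maximal torus of $\rG_{k^s}$. By Proposition~\ref{1:2} the stabilizer of $f$ in $\ul{\Aut}_{k^s-gr}(\rG_{k^s})$ is $\rX^{\ad}$; but here the group acting is $\rG$ itself (through $\ad$), so $\ol{\rH}=\Stab_{\rG_{k^s}}(f)$ is the preimage of $\rX^{\ad}$ under $\ad:\rG_{k^s}\ra\ad(\rG)_{k^s}$, which is exactly $\rX$, the maximal torus $f(\rT_{k^s})$ itself (here I use that $\rG$ is semisimple simply connected, and recall $\rX/\ul{\Centr}(\rG)\xrightarrow{\sim}\rX^{\ad}$). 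In particular $\ol{\rH}$ is a torus, so $\ol{\rH}^{mult}=\ol{\rH}=\rX$, and $\rH^m$ is precisely the $k$-form of $\rX$ cut out by the twisted $\Gamma$-action described in Section 2.1. The whole point is then to identify that twisted action with the given $k$-structure on $\rT$.

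**The key step: matching the Galois actions.** Following Section 2.1, for $\sigma\in\Gamma$ I choose $(g_\sigma,\sigma)\in\fH=\Stab_{\fG}(f)$, i.e. $g_\sigma\in\rG(k^s)$ with $\ad(g_\sigma)\circ{}^\sigma f=f$, and the $\Gamma$-action on $\ol{\rH}^{mult}=\rX$ is $t\mapsto \ad(g_\sigma)({}^\sigma t)$. On the other hand, the $k$-torus $\rT$ underlying $\Psi$ comes with its own $\Gamma$-action on $\rT(k^s)$, transported to $\rX(k^s)$ via the isomorphism $f:\rT_{k^s}\xrightarrow{\sim}\rX$; call this action $t\mapsto f({}^\sigma(f^{-1}t))$. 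I must show these two actions on $\rX(k^s)$ coincide. The equality ${}^\sigma f\circ \ad(g_\sigma)^{-1}|_{\rX}=\ldots$ — more concretely, the relation $\ad(g_\sigma)\circ {}^\sigma f=f$ on $\rT_{k^s}$ — says exactly that $\ad(g_\sigma)|_{{}^\sigma\rX}$ carries $({}^\sigma f)(t)$ to $f(t)$ for every $t$, hence for $t'\in\rX(k^s)$, writing $t'=f(t)$, we get $\ad(g_\sigma)({}^\sigma t')=\ad(g_\sigma)({}^\sigma f({}^\sigma(f^{-1}t')))=f({}^\sigma(f^{-1}t'))$, which is precisely the $\rT$-twisted action. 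This is the crux of the argument, and it is essentially a formal unwinding of the defining property $(g_\sigma,\sigma)f=f$ of an element of $\fH$ together with $f$ being a $\Gamma$-equivariant identification after twisting.

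**Remaining checks.** There are two small points to address to make the identification clean. First, one should note that the twisted action on $\ol{\rH}^{mult}$ in Section 2.1 is \emph{a priori} only well-defined because $\ol{\rH}^{mult}$ is commutative (independence of the lift $g_\sigma$); here $\ol{\rH}=\rX$ is already commutative, so this is automatic, and the computation above does not depend on the choice of $g_\sigma$. Second, one should observe that the resulting $k$-form is independent of the choice of geometric point $f$, which is part of the general theory (\cite{Bo}, 1.2) but is also visible here since any two choices of $f$ over $k^s$ differ by an element of $\fG$ and the construction of $\rH^m$ is $\fG$-equivariant in the appropriate sense; alternatively one simply invokes the well-definedness already recorded in Section 2.1. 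Combining these, the descent datum defining $\rH^m$ agrees with the one defining $\rT$, so $\rH^m\simeq\rT$ as $k$-tori.

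**Expected main obstacle.** I do not anticipate a deep difficulty; the argument is a bookkeeping exercise in Galois descent. The one place demanding care is keeping the direction of maps straight — the embedding functor is defined so that $f^\Psi(\alpha)=\alpha\circ f^{-1}$ on characters, there is an $\mathrm{op}$ twist relating $\ul{\Aut}_{\rS-gr}(\rT)$ and $\ul{\Aut}_{\rS-gr}(\sM)$, and the $\rG$-action is through $\ad$ (conjugation) with the convention $(\sigma\cdot f)(x)=f(x)\circ\sigma^{-1}$ from Section 1.2.1 — so I would be careful that the identification $\ol{\rH}=f(\rT_{k^s})$ really intertwines the $\fH$-twisted action with the native $\Gamma$-action on $\rT$ rather than its inverse or a $\rW(\Psi)$-twist of it. Since $\ol{\rH}$ is abelian the $\rW(\Psi)$-ambiguity in choosing $g_\sigma$ inside $\ul{\Norm}_{\rG}(\rX)$ disappears, which is exactly why the clean answer $\rH^m\simeq\rT$ (rather than some twist) comes out.
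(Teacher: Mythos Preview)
Your proof is correct and follows essentially the same approach as the paper: you identify the geometric stabilizer as $f(\rT_{k^s})$, then unwind the defining relation $\ad(g_\sigma)\circ{}^\sigma f=f$ to show that the twisted $\Gamma$-action on $f(\rT_{k^s})$ is exactly the transport via $f$ of the native $\Gamma$-action on $\rT$. The paper's proof is the same one-line computation $g_\sigma\cdot\sigma(f(t))\cdot g_\sigma^{-1}=f(\sigma(t))$; your version is somewhat more detailed (invoking Proposition~\ref{1:2} for the stabilizer and discussing independence of choices), but the substance is identical.
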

\begin{proof}
Given $f\in\fE(\rG,\Psi,u)(k^{s})$, the stabilizer of $f$ in
$\rG_{k^{s}}$ is $f(\rT_{k^{s}})$. Since $f(\rT_{k^{s}})$ is a
torus, we have $f(\rT_{k^{s}})^{mult}=f(\rT_{k^{s}})$. For
$\sigma\in\Gamma$, $\sigma$ acts on $f$ as $^\sigma f=\sigma\circ
f\circ\sigma^{-1}$. Let $\fG=\rG(k^{s})\rtimes\Gamma,$ and
$\fH=\Stab_{\fG}(f)$. For $g\in\rG(k^{s})$, we let $\inter(g)$
denote the conjugation action of $g$ on $\rG$. Then for
$(g_\sigma,\sigma)\in\fH$, we have $\inter(g_\sigma)\circ\ ^\sigma
f=f$, which means $g_\sigma\cdot\ ^\sigma f(t)\cdot
g_\sigma^{-1}=f(t)$ for all $t\in\rT(k^{s})$. Therefore, we have
$$g_\sigma\cdot \sigma (f(t))\cdot g_\sigma^{-1}=f(\sigma(t)),$$
which means $f$ is a $k$-isomorphism between $\rT$ and
$f(\rT_{k^{s}})^{m}$. Therefore, the $\rH^{m}$ defined in Section
2.1 is isomorphic to $\rT$.
\end{proof}
Now, we are ready to prove Proposition~\ref{2.3}.
\begin{proof}[Proof of Proposition~\ref{2.3}]
By Corollary~\ref{0.8}, it suffices to prove this proposition for
$\fE(\sico(\rG),\sico(\Psi),u_\sico)$. Since
$\fE(\sico(\rG),\sico(\Psi),u_\sico)$ is a homogeneous space under
$\sico(\rG)$,  by Lemma \ref{2.14}, we know that the $\rH^{m}$
corresponding to this $\sico(\rG)$-action  is isomorphic to
$\sico(\rT)$.

Since $\sico(\rG)^{tor}$ is trivial, by Theorem~\ref{2.2} and
Proposition~\ref{2.17}, the only obstruction for
$\fE(\sico(\rG),\sico(\Psi),u_\sico)$ to satisfy the local-global
principle lies in the group $\sha^{2}(k,\sico(\rT))$. The rest of
the proposition then follows.
\end{proof}

Let $k_v$ be a nonarchimedean local field. Then by combining
previous results, we
get the following corollary:
\begin{coro}\label{2.15}
If the group $\rH^{2}(k_v,\sico(\rT))$ is trivial, then the oriented
embedding functor $\fE(\rG,\Psi,u)$ has a $k_v$-point.
\end{coro}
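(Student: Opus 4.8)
The plan is to reduce to the semisimple simply connected case, where the oriented embedding functor is a homogeneous space whose stabilizers are tori isomorphic to $\sico(\rT)$, and then to apply Borovoi's local result. First I would use Corollary~\ref{0.8} to obtain a natural isomorphism of $k_v$-schemes
$$\fE(\rG,\Psi,u)_{k_v}\xrightarrow{\sim}\fE(\sico(\rG),\sico(\Psi),u_\sico)_{k_v},$$
which reduces the problem to producing a $k_v$-point of $\rX:=\fE(\sico(\rG),\sico(\Psi),u_\sico)$. By Theorem~\ref{0.7}, $\rX$ is a homogeneous space for the \'etale topology under the semisimple simply connected group $\sico(\rG)$; in particular $\sico(\rG)^{ss}=\sico(\rG)$ is simply connected and $\sico(\rG)^{tor}=1$. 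I would then pick $f\in\rX(k_v^s)$, which is nonempty since $\rX$ is a homogeneous space and $\rG$, $\Psi$ have the same type at every geometric point; its stabilizer $\ol{\rH}=\Stab_{\sico(\rG)_{k_v^s}}(f)=f(\sico(\rT)_{k_v^s})$ is a torus, so $\ol{\rH}/\ol{\rH}^{ssu}=\ol{\rH}$ is abelian, and by Lemma~\ref{2.14} the associated $k_v$-torus $\rH^{m}$ is isomorphic to $\sico(\rT)$.

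With these hypotheses in place, the machinery of Section~2.1 applies over $k_v$. The obstruction class $\eta(\rX)$ lies in $\rH^{1}(k_v,\rH^{m}\ra\sico(\rG)^{tor})$; since $\sico(\rG)^{tor}=1$, the two-term complex $[\rH^{m}\ra\sico(\rG)^{tor}]$ with $\rH^{m}$ in degree $-1$ is quasi-isomorphic to $\rH^{m}[1]$, so this hypercohomology group is canonically $\rH^{2}(k_v,\rH^{m})\simeq\rH^{2}(k_v,\sico(\rT))$, which vanishes by hypothesis. Hence $\eta(\rX)=0$. If $\mathrm{char}(k_v)=0$, Theorem~\ref{2.1} gives $\rX(k_v)\neq\emptyset$; if $\mathrm{char}(k_v)>0$, then $k$ is a global function field, $\sico(\rG)$ is reductive with $\sico(\rG)^{ss}$ simply connected and $\ol{\rH}$ a torus, so the last assertion of Proposition~\ref{2.17} applies over $k_v$ and again gives $\rX(k_v)\neq\emptyset$. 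Transporting this point back through the isomorphism of Corollary~\ref{0.8} then yields a $k_v$-point of $\fE(\rG,\Psi,u)$.

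This is essentially the local counterpart of the computation in the proof of Proposition~\ref{2.3}, so I do not expect a genuine obstacle here; the only points requiring a little care are the degree-shift identification of $\rH^{1}(k_v,\rH^{m}\ra\sico(\rG)^{tor})$ with $\rH^{2}(k_v,\sico(\rT))$ when $\sico(\rG)^{tor}$ is trivial, and keeping the two cases $\mathrm{char}(k_v)=0$ (Theorem~\ref{2.1}) and $\mathrm{char}(k_v)>0$ (Proposition~\ref{2.17}) separate.
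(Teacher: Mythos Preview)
Your proposal is correct and follows essentially the same route as the paper: reduce to $\sico(\rG)$ via Corollary~\ref{0.8}, identify $\rH^{m}\simeq\sico(\rT)$ via Lemma~\ref{2.14}, use $\sico(\rG)^{tor}=1$ to get $\rH^{1}(k_v,\rH^{m}\ra\sico(\rG)^{tor})=\rH^{2}(k_v,\sico(\rT))$, and conclude by Theorem~\ref{2.1} or Proposition~\ref{2.17}. Your version is slightly more explicit about the degree-shift identification and the characteristic split, but the argument is the same.
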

\begin{proof}
By Corollary~\ref{0.8}, it is enough to prove that
$\fE(\sico(\rG),\sico(\Psi),u_\sico)(k_v)$ is nonempty. By
Lemma~\ref{2.14}, the group $\rH^{m}$ for the
$\sico(\rG)$-homogeneous space $\fE(\sico(\rG),\sico(\Psi),
u_\sico)$ is isomorphic to $\sico(\rT)$. Since $\sico(\rG)^{tor}$ is
trivial, we have
$$\rH^{1}(k_v,\rH^{m}\ra\sico(\rG)^{tor})=\rH^{2}(k_v,\sico(\rT)).$$
By Theorem~\ref{2.1} and Proposition~\ref{2.17}, the set
$\fE(\sico(\rG),\sico(\Psi),u_\sico)(k_v)$ is nonempty if
$\rH^{2}(k_v,\sico(\rT))$ is trivial.
\end{proof}
For a twisted root datum $\Psi$, the Galois group $\Gamma$ has a
natural action on $\Psi_{k^{s}}$. Therefore, we have a group
homomorphism from $\Gamma$ to $\ul{\Aut}(\Psi)(k^{s})$. Recall that
$\Psi$ is said to be \emph{generic}, if the image of $\Gamma$ in
$\ul{\Aut}(\Psi)(k^{s})$ contains the Weyl group $\rW(\Psi)(k^{s})$.

\begin{theo}\label{2.12}
Let $\rG$ be a reductive group over $k$, $\Psi$ be a twisted root
datum over $k$, and $\rT$ be the torus determined by $\Psi$. Let
$u\in\ul{\Isomext}(\Psi,\rG)(k)$. Suppose that $\Psi$ satisfies one
of the following conditions:
\begin{itemize}
  \item [1.] all connected components of $\ul{\Dyn}(\Psi)(k^{s})$ are  of type $C$.
  \item [2.] $\rT$ is anisotropic at one place $v\in\Omega_k$.
\end{itemize}
Then the local-global principle holds for the existence of a
$k$-point of the oriented embedding functor $\fE(\rG,\Psi,u)$. In
particular, when $\Psi$ is generic, the local-global principle
holds.
\end{theo}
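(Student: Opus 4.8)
The plan is to deduce the theorem from Proposition~\ref{2.3}: that proposition already tells us that the only obstruction to the local--global principle for $\fE(\rG,\Psi,u)$ lies in $\sha^{2}(k,\sico(\rT))$, so it suffices to prove that this group vanishes under either hypothesis. Concretely, after replacing $(\rG,\Psi,u)$ by $(\sico(\rG),\sico(\Psi),u_{\sico})$ via Corollary~\ref{0.8}, the Borovoi obstruction $\eta(\fE(\rG,\Psi,u))$ lies in $\rH^{2}(k,\sico(\rT))$ (Lemma~\ref{2.14}, together with the vanishing of $\sico(\rG)^{tor}$), hence in $\sha^{2}(k,\sico(\rT))$ as soon as $\fE(\rG,\Psi,u)$ has a $k_v$-point everywhere; granting $\sha^{2}(k,\sico(\rT))=0$, Theorem~\ref{2.2} (resp.\ Proposition~\ref{2.17} when $k$ is a function field) produces the desired $k$-point. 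So the whole argument reduces to two computations with the Galois cohomology of the torus $\sico(\rT)$, whose character module $P:=X^{*}(\sico(\rT))$ carries the action of $\Gamma$ through $\ul{\Aut}(\Psi)$.

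Case~1, all components of type $C$. Here, over $k^{s}$, $\sico(\rG)$ is a product of symplectic groups and $P=\bigoplus_{i}P(C_{n_{i}})$, each summand being the weight lattice $\ent^{n_{i}}$ of $C_{n_{i}}$ with its hyperoctahedral (signed-permutation) action. For each component the long-root set $\{\pm 2e^{(i)}_{j}\}$ is $\Gamma$-stable, so $\Gamma$ permutes the $2n_{i}$ vectors $\{\pm e^{(i)}_{j}\}$; the surjection $\ent\bigl[\,\bigsqcup_{i,j}\{\pm e^{(i)}_{j}\}\,\bigr]\twoheadrightarrow P$, $[\pm e^{(i)}_{j}]\mapsto\pm e^{(i)}_{j}$, is then $\Gamma$-equivariant, with kernel the permutation module on the set of pairs $\{e^{(i)}_{j},-e^{(i)}_{j}\}$. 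Dualizing yields an exact sequence of $k$-tori $1\to\sico(\rT)\to\rQ\to\rQ'\to1$ with $\rQ,\rQ'$ quasi-trivial. Since $\rH^{1}(k,\rQ')=0$ (Hilbert~90 and Shapiro's lemma) and $\sha^{2}(k,\rQ)=0$ (the Albert--Brauer--Hasse--Noether theorem applied to each Weil-restriction factor), the long exact sequence forces $\sha^{2}(k,\sico(\rT))=0$.

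Case~2, $\rT$ anisotropic at $v$. As $\der(\rT)$ is a subtorus of $\rT$ and $\sico(\rT)\to\der(\rT)$ is an isogeny, $\sico(\rT)$ is also anisotropic at $v$, i.e.\ $P^{\Gamma_{v}}=0$. By Tate--Nakayama duality $\sha^{2}(k,\sico(\rT))$ is in perfect duality with $\sha^{1}(k,P)$, so it is enough to show $\sha^{1}(k,P)=0$. Now $\rH^{1}(k,P)$ is finite (the action factors through a finite quotient and $P$ is torsion-free), of exponent $n$ say; a class $c\in\sha^{1}(k,P)$ is then killed by $n$ and lifts to some $\bar{p}\in(P/nP)^{\Gamma}$ under $\rH^{0}(k,P/nP)\to\rH^{1}(k,P)$. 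Triviality of $c$ over $k_{v}$ means $\bar p$ is in the image of $\rH^{0}(k_{v},P)=P^{\Gamma_{v}}=0$, so $\bar p=0$ and $c=0$. Finally, when $\Psi$ is generic the image of $\Gamma$ in $\ul{\Aut}(\Psi)(k^{s})$ contains $\rW(\Psi)$, and since $\rW(\Psi)$ has no nonzero fixed vectors on $P\otimes\rat$ one gets $P^{\Gamma_{v}}=0$ for every $v$; the Case~2 argument then applies.

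The step requiring care is the construction in Case~1 of the $\Gamma$-equivariant resolution of $P$ by permutation modules — checking that the Galois action genuinely respects the partition of $\bigsqcup_{i,j}\{\pm e^{(i)}_{j}\}$ into orbits, using that it permutes the long roots of each $C_{n_{i}}$ and the (equal-rank) components among themselves — after which the vanishing of $\sha^{2}$ is formal. In Case~2 the only subtle points are the reduction ``$\rT$ anisotropic at $v\Rightarrow\sico(\rT)$ anisotropic at $v$'' and the appeal to Tate--Nakayama duality; the remaining torsion manipulation is routine.
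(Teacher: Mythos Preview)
Your overall strategy and the arguments for Cases~1 and~2 are correct and essentially parallel to the paper's. In Case~1 you phrase in terms of a two-term permutation resolution of the character lattice $P$ what the paper writes via the \'etale-algebra description $\rT=\rR_{\rE^{\sigma}/k}(\rR^{(1)}_{\rE/\rE^{\sigma}}(\gm_m))$ and the norm sequence; the two are the same sequence. In Case~2 you actually prove (via Poitou--Tate and the exact sequence $0\to P\xrightarrow{n}P\to P/nP\to 0$) what the paper simply quotes as Kneser's theorem (Sansuc, Lemma~1.9), which is fine.

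The genuine gap is in the generic case. From ``the image of $\Gamma$ contains $\rW(\Psi)$'' and ``$\rW(\Psi)$ has no nonzero invariants on $P\otimes\rat$'' you may only conclude $P^{\Gamma}=0$; it does \emph{not} follow that $P^{\Gamma_v}=0$ for every $v$, and in fact this is false. The torus $\rT$ splits over a finite Galois extension $L/k$, and at any place $v$ of $k$ that splits completely in $L$ the decomposition group $\Gamma_v$ acts trivially on $P$, so $P^{\Gamma_v}=P\neq 0$. Hence your Case~2 argument cannot be invoked at ``every $v$'' as written. The paper's repair is to produce \emph{one} good place: since the image of $\Gamma$ in $\ul{\Aut}(\Psi)(k^{s})$ contains $\rW(\Psi)$, there is some $\sigma\in\Gal(L/k)$ acting on $\rM$ as a Coxeter element $\omega$, and by Bourbaki (Chap.~V, \S6, Th.~1) one has $\rM^{\omega}=0$; by \v{C}ebotarev there exists a place $v$ whose Frobenius is (conjugate to) $\sigma$, so $\rT$ is anisotropic at $v$ and Case~2 applies. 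If you insert this step, your proof becomes complete.
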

\begin{proof}
If $\Psi$ satisfies one of the above conditions, then $\sico(\Psi)$
also satisfies one of them. Therefore, we can assume that $\Psi$ and
$\rG$ are semisimple simply connected.

By Proposition~\ref{2.3}, the local-global principle holds for the
existence of  $k$-points of the oriented embedding functor
$\fE(\rG,\Psi,u)$ if $\sha^2(k,\rT)$ vanishes. Therefore, it is
enough to prove $\sha^2(k,\rT)=0$ for $\Psi$ satisfying either
condition.

Suppose that $\Psi$ satisfies condition (1).  Let $\Psi_0$ be the
split simple simply connected root datum of type $C_n$
(ref.~\cite{Bou} Plan. III). Let $\rE_0$ be the \'etale algebra
$k^{n}\times k^{n}$ and $\sigma_0$ be the involution which exchanges
the two copies of $k^{n}$. When $\Psi$ is simple simply connected of
type $C_n$, $\Psi$ corresponds to some twisted form $(\rE,\sigma)$
of $(\rE_0,\sigma_0)$ by Lemma~\ref{1.13}, and the torus $\rT$
determined by $\Psi$ is
$\rU(\rE,\sigma)=\rR_{\rE^{\sigma}/k}(\rR_{\rE/\rE^{\sigma}}^{(1)}(\gm_m))$.

Consider the exact sequence:
$$\xymatrix {1\ar[r]& \rR_{\rE/\rE^{\sigma}}^{(1)}(\gm_m)\ar[r]& \rR_{\rE/\rE^{\sigma}}(\gm_m) \ar[r] &\gm_m\ar[r]& 1}.$$
By Hilbert Theorem 90, we have $$\xymatrix
{0\ar[r]&\rH^{2}(\rE^{\sigma},\rR_{\rE/\rE^{\sigma}}^{(1)}(\gm_m))\ar[r]&\rH^{2}(\rE^{\sigma},\rR_{\rE/\rE^{\sigma}}(\gm_m))}.$$
By Shapiro's Lemma,
$\sha^{2}(\rE^{\sigma},\rR_{\rE/\rE^{\sigma}}(\gm_m))=\sha^{2}(\rE,\gm_m)$.
By Brauer-Hasse-Noether Theorem, $\sha^{2}(\rE,\gm_m)=0$. Therefore,
we have
$$\sha^2(k,\rT)=\sha^{2}(\rE^{\sigma},\rR_{\rE/\rE^{\sigma}}^{(1)}(\gm_m))=0.$$

For $\Psi$ not simple, since we can decompose $\Psi_{k^{s}}$ into a
product of isotypic root data  (~\cite{SGA3} Exp. XXI, 6.4.1 and
7.1.6), we can also decompose $\Psi$ into a product of isotypic
twisted root data $\Psi_i$ by descent. By the same reasoning as in
~\cite{SGA3} Exp. XXIV, 5.8 or in~\cite{CGP} Theorem A.5.14, we know
that there exists some \'etale algebra $\rF_i$ over $k$ such that
$\Psi_{i,{\rF_i}}$ is a product of copies of the twisted simple root
datum $\Psi_{i,0}$ over $\rF_i$, and the automorphism group
$\Aut(\rF_i/k)$ acts on $\Psi_{i,\rF_i}$ by permuting
$\Psi_{i,0}$'s. So we have $\Psi_i=\rR_{\rF_i/k}(\Psi_{i,0})$ and
the torus $\rT$ will take the form
$\underset{i}{\prod}\rR_{\rF_i/k}(\rT_{i,0})$, where $\rT_{i,0}$ is
the torus determined by the twisted root datum $\Psi_{i,0}$. Then we
know that $\Psi_{i,0}$ is a  twisted root datum defined by an
\'etale algebra with involution over $F_i$ (Section \ref{s1.3.1.2}).
As in the above discussion, we will have
$\rT_{i,0}=\rU(\rE_i,\sigma_i)$ where $\rE_i$ is an \'etale algebra
over $\rF_i$. By Shapiro's
Lemma,$$\sha^2(k,\rR_{\rF_i/k}(\rT_{i,0}))=\sha^2(\rF_i,\rT_{i,0})=\sha^2(\rE_i^{\sigma_i},\rR_{\rE_i/\rE_i^{\sigma_i}}^{(1)}(\gm_m))=0.$$
By Proposition~\ref{2.3}, the theorem holds when $\Psi$ satisfies
the first condition.

Now, suppose that $\rT$ is anisotropic at some place $v\in\Omega_k$.
Then by Kneser's Theorem (ref.~\cite{San} Lemma 1.9), we have
$\sha^2(k,\rT)=0$.

To complete the proof, we will show that if $\Psi$ is generic, then
$\rT$ is anisotropic at some place $v$. Suppose that $\Psi$ is
generic. Let $L$ be a finite Galois extension of $k$ which splits
$\rT$. Then there exists an element $\sigma\in\Gal(L/k)$ such that
$\sigma$ acts on $\Psi_{L}$ as the Coxeter element
$\omega\in\rW(\Psi)(L)$. Let $\rM$ be the character group of
$\rT_{L}$. Then  the set $\rM^{\omega}=0$, by Theorem 1
in~\cite{Bou} Chap. V, \S 6, and hence $\rM^{\sigma}=0$. By
\v{C}ebotarev Density Theorem, there exists a place $v$ such that
$\sigma$ generates the Frobenius map at $v$, so $\rT$ is anisotropic
at $v$.
\end{proof}
\begin{remark}
Note that for $\Psi$ generic without type $C_n$ (for $n\geq1$),
there is an even stronger result by Klyachko (cf.~\cite{Kl} I.5)
saying that $\rH^{1}(k,\rM)=0$, where $\rM$ is the character group
of $\sico(\rT).$
\end{remark}
\end{subsection}
\begin{subsection}{Oriented embedding functors over local fields}

Let $\rG$ be a reductive group over a local field $L$, and $\Psi$ be
a twisted root datum over $L$. Suppose that
 $\rG$ and $\Psi$ have the
same type and $\ul{\Isomext}(\Psi,\rG)(L)$ is not empty. Let
$u\in\ul{\Isomext}(\Psi,\rG)(L)$. In the following, we are going to
show that the existence of an $L$-point of the oriented embedding
functor is actually determined by the Tits indices of $\Psi$ and
$\rG$. Note that the existence of an orientation $u$ is important
here, because it gives a map between the Dynkin schemes
$\ul{\Dyn}(\rG)$ and $\ul{\Dyn}(\Psi)$, which allows us to compare
the Tits indices of $\rG$ and $\Psi$. An orientation also allows us
to replace the reductive group $\rG$ by the adjoint group $\ad(\rG)$
or simply connected group $\sico(\rG)$ as we have shown in
Corollary~\ref{0.8}.

\subsubsection{Tits indices}
We recall briefly the definition of the Tits index. For a detailed
introduction on Tits indices, we refer to Tits's
paper~\cite{Tit}. For the Tits indices of reductive groups over
connected semilocal rings, one can refer to~\cite{SGA3} Exp. XXVI,
\S5, \S6, and \S7. One can also look at  Petrov and Stavrova's
paper~\cite{PS} \S5. For the Tits indices of a twisted root datum,
we refer to Gille's paper~\cite{Gi} \S7.

Let $\rS$ be the spectrum of a semilocal ring. Let $\rG$ be an
$\rS$-reductive group. For each $\rG$, there exists a minimal
parabolic subgroup $\rP_{min}$ of $\rG$. Let $t_{min}$ be the type
of $\rP_{min}$. Note that given $\rG$, the type $t_{min}$ is well
defined (ref.~\cite{SGA3} Exp. XXVI, 5.7). Moreover, if $\rS$ is
connected, then we call the type $t_{min}$ the Tits index of $\rG$,
and denote it by $\Delta^{\circ}(\rG)$.

For a reduced root datum $\psi=(\rM,\rM^{\vee},\rR,\rR^{\vee})$, a
parabolic subset $P$ is a closed subset of $\rR$ which contains a
system of simple roots. For a reduced twisted root datum
$\Psi=(\sM,\sM^{\vee},\sR,\sR^{\vee})$ over $\rS$, a parabolic
subsheaf (fpqc) $\sP$ is a subsheaf of $\rR$ which is locally
isomorphic to a parabolic subset. Let $\ul{\Par}(\Psi)$ be the
functor such that for each $\rS$-scheme $\rS'$,
$\ul{\Par}(\Psi)(\rS')$ is the set of all the parabolic subsheaves
(fpqc) of $\Psi$ over $\rS'$. Similarly, we can define a type map
$\bt_{\Psi}$ from $\ul{\Par}(\Psi)$ to $\ul{\Dyn}(\Psi)$.

Let $t_{min}$ be the type of a minimal parabolic subsheaf of $\Psi$
(\cite{Gi}, Prop. 7.1). If $\rS$ is connected, then we call
$t_{min}$ the Tits index of $\Psi$, and denote it by
$\Delta^{\circ}(\Psi)$. Note that $\Delta^\circ(\rG)$ and
$\Delta^\circ(\Psi)$ only depend on the roots, so they are invariant
under the operations $\sico$, $\ad$,....

\subsubsection{A criterion for the existence of points of the oriented embedding functor over a local field L}
Let $L$ be a local field of arbitrary characteristic. We have the
following criterion for the existence of an $L$-point of the
oriented embedding functor:
\begin{theo}~\label{1.10} Let $\rG$ be a reductive group
over a local field $L$, and $\Psi$ be a twisted root datum over $L$.
Suppose that $\rG$ and $\Psi$ have the same type and
$\ul{\Isomext}(\Psi,\rG)(L)$ is not empty. Let
$u\in\ul{\Isomext}(\Psi,\rG)(L)$. Then
$\fE(\rG,\Psi,u)(L)\neq\emptyset$ if and only if
$u(\Delta^\circ(\Psi))\supseteq \Delta^\circ(\rG)$.
\end{theo}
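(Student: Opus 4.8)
The plan is to reduce to the simply connected case, then read off both directions of the equivalence by comparing minimal parabolic subgroups of $\rG$ with minimal parabolic subsheaves of $\Psi$ via the orientation $u$. By Corollary~\ref{0.8}, $\fE(\rG,\Psi,u)(L)\neq\emptyset$ if and only if $\fE(\sico(\rG),\sico(\Psi),u_{\sico})(L)\neq\emptyset$, and since $\Delta^\circ$ depends only on the roots it is unchanged by the $\sico$ operation; so I may assume $\rG$ is semisimple simply connected and $\Psi$ is a twisted simply connected root datum. In this setting an $L$-point $f$ of $\fE(\rG,\Psi,u)$ is an embedding of $\rT$ into $\rG$ as a maximal $L$-torus with $f^\Psi\colon\Psi\xrightarrow{\sim}\Phi(\rG,f(\rT))$ lying over $u$ in $\ul{\Isomext}(\Psi,\rG)(L)$.

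For the ``only if'' direction, suppose $f\in\fE(\rG,\Psi,u)(L)$ and set $\rX=f(\rT)$, a maximal $L$-torus of $\rG$. First I would pick a minimal parabolic $L$-subgroup $\rP_{min}\supseteq$ (a Borel of) $\rZ_{\rG}(\rS_{\rX})$, where $\rS_{\rX}$ is the maximal split subtorus of $\rX$: concretely, the anisotropic kernel of $\rX$ forces a parabolic of $\rG$ whose type is at most $\Delta^\circ(\rG)$, while $\rX$ itself determines a minimal parabolic subsheaf of $\Phi(\rG,\rX)$ of type exactly $\Delta^\circ(\rG)$ via the root-theoretic description of minimal parabolics (\cite{SGA3} Exp.~XXVI, §5; \cite{Gi} Prop.~7.1). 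Transporting this minimal parabolic subsheaf of $\Phi(\rG,\rX)$ back along $(f^\Psi)^{-1}$ gives a parabolic subsheaf of $\Psi$ defined over $L$, hence of type $\supseteq\Delta^\circ(\Psi)$; since the type map on $\ul{\Par}$ is compatible with the isomorphism $f^\Psi$ and the induced map on Dynkin schemes is exactly $u$, we get $u(\Delta^\circ(\Psi))\subseteq$ (type of that parabolic) $=\Delta^\circ(\rG)$. Wait --- the inclusion must be stated carefully: a parabolic subsheaf has type $\supseteq\Delta^\circ$ means it is \emph{larger} than minimal, and the minimal one over $L$ pulled from $\rG$ has type precisely $\Delta^\circ(\rG)$, which as a subsheaf of $\Psi$ (via $u$) contains the minimal parabolic subsheaf of $\Psi$, i.e. $u(\Delta^\circ(\Psi))\supseteq\Delta^\circ(\rG)$. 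So the orientation $u$ matches up the two Tits indices with the claimed containment.

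For the ``if'' direction, assume $u(\Delta^\circ(\Psi))\supseteq\Delta^\circ(\rG)$. The idea is to build an embedding $f$ by choosing a maximal $L$-torus of $\rG$ with the prescribed anisotropic behaviour. By Proposition~\ref{0.9}, the containment of types lets me find, inside a minimal parabolic $L$-subgroup $\rP_{min}$ of $\rG$ (with $\Delta(\rP_{min})=\Delta^\circ(\rG)$), a parabolic subsheaf structure on $\Psi$ matching $\Delta^\circ(\Psi)$ through $u$; concretely I want a maximal $L$-torus $\rX\subseteq\rG$ whose maximal split subtorus has the rank dictated by $\Delta^\circ(\Psi)$ and whose anisotropic kernel is $L$-isomorphic to that of $\rT$. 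Over a local field $L$, the classification of maximal tori inside a given parabolic --- equivalently, the fact that the relevant $\rH^1(L,-)$ obstructions for tori in a simply connected group sitting in a fixed minimal parabolic vanish or are controlled by the Tits index (cf.~\cite{Gi} §7, \cite{PS} §5) --- should produce such an $\rX$ together with an $L$-isomorphism $\rT\xrightarrow{\sim}\rX$ inducing an element of $\ul{\Isomext}(\Psi,\rG)(L)$; composing with a suitable Weyl element (using that $\fE(\rG,\Psi,u)$ is a $\rW(\Psi)$-torsor over $\mathcal{T}$, Theorem~\ref{0.7}) adjusts the orientation to be exactly $u$. This gives $f\in\fE(\rG,\Psi,u)(L)$.

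The main obstacle I expect is the ``if'' direction: turning the combinatorial containment $u(\Delta^\circ(\Psi))\supseteq\Delta^\circ(\rG)$ into an actual $L$-rational maximal torus of $\rG$ realizing $\Psi$ with orientation $u$. This requires knowing that over a local field the Tits index is a \emph{complete} invariant for the existence of such a torus inside the simply connected group --- i.e. that there is no further cohomological obstruction once the parabolic types match. I would handle this by combining the vanishing theorems already invoked in the paper ($\rH^1(L,\rG^{ss})=0$ for $\rG^{ss}$ simply connected over a nonarchimedean local or totally imaginary field, used in Proposition~\ref{2.17}) with Corollary~\ref{2.15}/Theorem~\ref{2.1}: the homogeneous space $\fE(\sico(\rG),\sico(\Psi),u_{\sico})$ has stabilizer $\sico(\rT)$ (Lemma~\ref{2.14}), so its $L$-points are controlled by $\rH^2(L,\sico(\rT))$, and one checks that the hypothesis $u(\Delta^\circ(\Psi))\supseteq\Delta^\circ(\rG)$ forces the Borovoi class $\eta$ to vanish --- essentially because a matching minimal parabolic gives a rational point of a suitable torsor locally, killing the obstruction. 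For the archimedean real case one argues separately using the explicit description of real forms and their Tits indices.
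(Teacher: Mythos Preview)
Your ``only if'' direction is in the right spirit, though the paper dispatches it in one line by citing Prop.~7.2 of \cite{Gi}; your reconstruction via pulling back the minimal parabolic subsheaf of $\Phi(\rG,f(\rT))$ along $(f^\Psi)^{-1}$ is essentially what that proposition says.

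The real gap is in the ``if'' direction. You correctly reduce to the simply connected case and correctly observe that the existence of an $L$-point is governed by the Borovoi class $\eta\in\rH^2(L,\sico(\rT))$. But your claim that ``the hypothesis $u(\Delta^\circ(\Psi))\supseteq\Delta^\circ(\rG)$ forces $\eta$ to vanish'' is not an argument: over a nonarchimedean local field, $\eta=0$ is \emph{equivalent} to $\fE(\rG,\Psi,u)(L)\neq\emptyset$ (Theorem~\ref{2.1}/Proposition~\ref{2.17}), so asserting $\eta=0$ is a restatement of the conclusion. The Tits-index hypothesis does not by itself locate or kill $\eta$ inside $\rH^2(L,\sico(\rT))$, which is typically nonzero when $\rT$ is isotropic.

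The mechanism the paper uses, and which your sketch misses, is a \emph{Levi reduction}. Set $\rI=\Delta^\circ(\Psi)$. The hypothesis plus Proposition~\ref{0.9} gives a parabolic $\rP_\rI\subseteq\rG$ of type $u(\rI)$; choose a Levi $\rL_\rI$ and a maximal torus $\rT'\subseteq\rL_\rI$. One then forms the sub--root datum $\Psi_\rI=(\sM,\sM^\vee,\sR_\rI,\sR_\rI^\vee)$ from the minimal parabolic subsheaf $\sP$ of $\Psi$, and constructs an induced orientation $u_\rI\in\ul{\Isomext}(\Psi_\rI,\rL_\rI)(L)$ via the $\rW(\Psi_\rI)$-torsor $\rQ=\ul{\Isomint}_u(\Psi,\sP;\Psi',\sP_\rI)$. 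The key step is that $\rQ$ is isomorphic to $\ul{\Isomint}_{u_\rI}(\Psi_\rI,\Psi'_\rI)$, so an $L$-point of $\fE(\rL_\rI,\Psi_\rI,u_\rI)$ produces an $L$-point of $\rQ$ and hence of $\fE(\rG,\Psi,u)$. Now pass to $\der(\rL_\rI)$ and $\der(\Psi_\rI)$ (Corollary~\ref{0.8}): the torus determined by $\der(\Psi_\rI)$ is \emph{anisotropic}, and only at this stage does one invoke $\rH^2(L,\cdot)=0$ (Tate--Nakayama for nonarchimedean $L$) to conclude via Corollary~\ref{2.15}. The real case is handled separately by an explicit argument with anisotropic real forms and the fact that complex conjugation acts as $-1$ on the character lattice of an anisotropic real torus. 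Your outline gestures at the parabolic $\rP_\rI$ but then abandons the Levi and tries to argue cohomologically at the level of $\rG$ itself; that is where the proof breaks.
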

\begin{proof}
First, we suppose that $\fE(\rG,\Psi,u)(L)\neq\emptyset$ and let
$f\in\fE(\rG,\Psi,u)(L)$. Since $\Psi\simeq\Phi(\rG,f(\rT))$, from
Prop. 7.2 in~\cite{Gi}, $u(\Delta^\circ(\Psi))\supseteq
\Delta^\circ(\rG)$.

Now, suppose $u(\Delta^\circ(\Psi))\supseteq \Delta^\circ(\rG)$, and
we want to show that $\fE(\rG,\Psi,u)(L)$ is nonempty. Again, by
Corollary~\ref{0.8}, we only need to consider the problem for
$\sico(\rG)$ and $\sico(\Psi)$. Therefore, we can assume $\rG$ is
simply connected, and $\Psi$ is reduced simply connected.

Let $\rT$ be the torus determined by $\Psi$ and
$\rI=\Delta^\circ(\Psi)$. We start with the case where $\rT$ is
anisotropic, i.e. $\rI=\ul{\Dyn}(\Psi)(L)$.

\noindent{\textbf{Case 1.}} $L$ is non-archimedean.  Since $\rT$ is
anisotropic, by Tate-Nakayama Theorem, we have $\rH^{2}(L,\rT)=0$
(cf.~\cite{K} 3.2, Thm. 5). Since $\rG$ and $\Psi$ are simply
connected, by Corollary~\ref{2.15}, the oriented embedding functor
$\fE(\rG,\Psi,u)$ has an $L$-point.

\noindent{\textbf{Case 2.}} $L=\re$. In this case, we consider the
oriented embedding functor $\fE(\ad(\rG),\ad(\Psi),u_\ad)$. Let
$\sigma$ be the nontrivial element of $\Gal(\cpx/\re)$. Let
$\ad(\rT)$ be the torus associated to the root datum $\ad(\Psi)$.
Since $\rT$ is anisotropic, the torus $\ad(\rT)$ is also anisotropic
and $\ad(\rT)\simeq(\rR_{\re/\cpx}^{(1)}(\gm_m))^r$.

Suppose that $\ad(\rG)$ is anisotropic and pick a maximal torus
$\rS$ of $\ad(\rG)$. Since $\ad(\Psi)$ and $\ad(\rG)$ have the same
type, there is a $\cpx$-point $f$ of $\fE(\ad(\rG),\ad(\Psi),u_\ad)$
which maps $\ad(\rT)$ to $\rS$. Since $\sigma$ acts on the character
group of the anisotropic torus by -1, $\sigma$ commutes with $f$.
Therefore, $f$ is an $\re$-point of the oriented embedding functor
$\fE(\ad(\rG),\ad(\Psi),u_\ad)$.

Suppose that $\ad(\rG)$ is not anisotropic. Then we can find an
anisotropic form $\tilde{\rG}$ of $\ad(\rG)$ by~\cite{Ge}, Corollary
7. Since $\tilde{\rG}$ has the same type with $\ad(\Psi)$, by the
above argument, we have an $\re$-point $f$ of
$\fE(\tilde{\rG},\ad(\Psi))$. Then f defines an $\re$-point $\tilde
u$ of $\ul{\Isomext}(\ad(\Psi),\tilde{\rG})$. The orientation
$u_\ad$ together with $\tilde{u}$ gives an orientation
$u_\ad\circ\tilde{u}^{-1}\in\ul{\Isomext}(\tilde{\rG},\ad(\rG))(\re)$.
Hence $\ad(\rG)$ is an inner form of $\tilde{\rG}$. However, the
natural inclusion from $\rH^1(\re,f(\ad(\rT)))$ to
$\rH^1(\re,\tilde{\rG})$ is surjective (~\cite{Ge} Thm. 3), so
$\ad(\rG)$ has an anisotropic torus $\rS$. Let $h$ belong to
$\fE(\ad(\rG),\ad(\Psi),u_\ad)(\cpx)$ and suppose that $h$ maps
$\ad(T)$ to $\rS$. Again, since $\sigma$ acts on the character group
of the anisotropic torus by -1, $\sigma$ commutes with $h$ and $h$
is  an $\re$-point of the oriented embedding functor
$\fE(\ad(\rG),\ad(\Psi),u_\ad)$. By Corollary~\ref{0.8}, the
oriented embedding functor $\fE(\rG,\Psi,u)$ has an $\re$-point.

Therefore, the proposition is true when $\rT$ is anisotropic.

Now, suppose that $\rT$ is arbitrary.  Since $u(\rI)\supseteq
\Delta^\circ(\rG)$, we can find a parabolic subgroup $\rP_\rI$ of
$\rG$ such that the type of $\rP$ is $u(\rI)$ by
Proposition~\ref{0.9}. Let $\rL_\rI$ be a Levi subgroup of $\rP_\rI$
and $\rT'$ be a maximal torus of $\rL_\rI$. Let
$\Psi'=\Phi(\rG,\rT')$, and $\Psi'_{\rI}=\Phi(\rL_\rI,\rT')$. Let
$\sP_\rI$ be the subsheaf of roots of $\Psi'$ which is determined by
$\rP_\rI$. Note that $u$ corresponds to an element in
$\ul{\Isomext}(\Psi,\Psi')(L)$, which we still denote as $u$.

Let $\Psi=(\sM,\sM^{\vee},\sR,\sR^{\vee})$. Let $\sP$ be a minimal
parabolic subsheaf of $\sR$. Then by definition, type $\sP=\rI$. Let
$\sR_\rI$ be the subsheaf of $\sP$ defined by the property: for any
$L$-scheme $\rX$,
\begin{center}
$x\in\sR_{\rI}(\rX)$, if and only if both $x$ and $-x$ are in
$\sP_{\rI}(\rX)$.
\end{center}

Let $\Psi_\rI$ be the root system given by
$(\sM,\sM^{\vee},\sR_\rI,\sR^{\vee}_\rI)$. Define
$$\rQ=\ul{\Isomint}_u(\Psi,\sP;\Psi',\sP_\rI)=\ul{\Isom}(\Psi,\sP;\Psi',\sP_\rI)\bigcap\ul{\Isomint}_u(\Psi,\Psi').$$
Note that $\rQ$ is a right $\rW(\Psi_\rI)$-torsor over $\spec(L)$
(for the \'etale topology), so $\rQ$ is representable. By the
definition of $\rQ$, each $h\in\rQ(\rX)$ will send the sheaf
$\sR_{\rI}$ to the sheaf of roots of $\rL_{\rI}$, because
$\rL_{\rI}$ is the unique Levi subgroup of $\rP_{\rI}$ which
contains $\rT'$. Therefore, we have a natural map
$$i_\rI:\rQ\ra\ul\Isom(\Psi_\rI,\Psi_\rI').$$  Let $L^{s}$ be a separable closure of $L$.
Let $x\in\rQ(L^s)$. By the definition of $\rQ$, the image of $x$ in
$\ul\Isomext(\Psi,\Psi')(L^s)$ is $u$. Moreover, since  $\rQ$ is a
right $\rW(\Psi_\rI)$-torsor and $\rW(\Psi_\rI)$ acts trivially on
$\ul\Isomext(\Psi_\rI,\Psi'_\rI)$, $i_\rI(x)$ defines an $L$-point
of $\ul\Isomext(\Psi_\rI,\Psi'_\rI)$ and hence an $L$-point of
$\ul\Isomext(\Psi_\rI,\L_\rI)$. We denote it by $u_\rI$. Note that
the definition of $u_\rI$ is independent of the choice of $\rT'$.

Now we consider the functor $\fE(\rL_\rI,\Psi_\rI,u_\rI)$. We claim
that if $\fE(\rL_\rI,\Psi_\rI,u_\rI)$ has an $L$-point, then
$\fE(\rG,\Psi,u)$ has an $L$-point.

Suppose that $\fE(\rL_\rI,\Psi_\rI,u_\rI)$ has an $L$-point. Let
$f\in\fE(\rL_\rI,\Psi_\rI,u_\rI)(L)$. Then we replace the torus
$\rT'$ above by $f(\rT)$. By the definition of $\rQ$ and $u_\rI$, we
have a natural morphism
$$j:\rQ\ra\ul{\Isomint}_{u_\rI}(\Psi_\rI,\Psi'_\rI).$$ Since both of
them are $\rW(\Psi_\rI)$-torsors, $j$ is an isomorphism. As
$\fE(\rL_\rI,\Psi_\rI,u_\rI)$ has an $L$-point,
$\ul{\Isomint}_{u_\rI}(\Psi_\rI,\Psi'_\rI)(L)$ is not empty, so
$\rQ$ has an $L$-point as well, which means
$\ul{\Isomint}_u(\Psi,\Psi')(L)\neq\emptyset$. Hence,
$\fE(\rG,\Psi,u)$ has an $L$-point.

Now, by Corollary~\ref{0.8}, it is enough to prove that
$\fE(\der(\rL_\rI),\der(\Psi_\rI),u_{\rI,\der})$ has an $L$-point.
Note that $\der(\Psi)$ is reduced simply connected as $\Psi$ is
(ref.~\cite{SGA3}, Exp. XXI, 6.5.11). Since the torus $\der(\rT)$
determined by $\der(\Psi)$ is anisotropic, it follows that
$\fE(\der(\rL_\rI),\der(\Psi_\rI),u_{\rI,\der})$ has an $L$-point as
we have seen above. This finishes the proof.
\end{proof}
\begin{example}
The above theorem does not hold over arbitrary fields. Here is an
example. Let $K=\rat(\sqrt{-1})$ and $\sigma$ be the conjugation on
$K$, and $k=\rat$. Let $\rT$ be the torus
$\rT=\rR^{(1)}_{K/k}(\gm_m)$. Since $\rT$ is of dimension 1, there
is only one semisimple simply connected root datum with respect to
$\rT$. Let $\Psi$ be this root datum. Let $v_1$, $v_2$ be two places
of $\rat$ of the form $4n+1$. Then $\Psi$ splits at $v_1$ and $v_2$.
Let $\rD$ be a quaternion algebra over $\rat$ corresponding to $1/2$
in $\rat/\ent\simeq\rH^{2}(\rat_{v_i},\gm_m)$ for $i=1,$ 2, and
corresponding to $0$ in the other places. Note that such a
quaternion exists by Brauer-Hasse-Noether's Theorem. Let $\rG$ be
$\mathrm{SL}_1(\rD)$. Since $\rG$ has no outer form, there is an
orientation $u$ between $\Psi$ and $\rG$. Since both $\Psi$ and
$\rG$ are anisotropic over $\rat$, we have
$u(\Delta^{\circ}(\Psi))\supseteq\Delta^{\circ}(\rG)$. However, at
place $v_1$ and $v_2$, the root datum $\Psi$ splits but $\rG$ is
anisotropic, so $\fE(\rG,\Psi,u)(\rat_{v_1})=\emptyset$. Therefore,
$\fE(\rG,\Psi,u)(\rat)=\emptyset$ and Theorem~\ref{1.10} does not
hold over $\rat$.
\end{example}
\end{subsection}
\begin{subsection}{Applications-the problems to embed an \'etale algebra in a central simple algebra with respect to involutions}
Let $K$ be a field, $(\rE,\sigma)$ be an \'etale $K$-algebra with
the involution $\sigma$, and $(\rA,\tau)$ be a central simple
algebra over $K$ with the involution $\tau$. Assume
$\sigma\mid_{K}=\tau\mid_{K}$. Let $k=K^{\sigma}$. From now on, we
assume that $k$ is a global field of characteristic different from
2. Let $\Omega_k$ be the set of all places of $k$. Fix a separable
closure $k^s$ of $k$. Let $\sG=\Gal(k^s/k)$ and
$\sG_v=\Gal(k^s_v/k_v)$ for $v\in\Omega_k$. Let
$\rT=\rU(\rE,\sigma)^{\circ}$, and $\rG=\rU(\rA,\tau)^{\circ}$. Note
that by the definition of $\rU(\rE,\sigma)^{\circ}$,
$\rT=\rR_{\rE^{\sigma}/k}(\rR_{\rE/\rE^{\sigma}}^{(1)}(\gm_m))$. We keep all the notation defined in section 1.3.

In the paper \cite{PR1}, Prasad and Rapinchuk consider the
local-global principle for the $K$-embedding from $(\rE,\sigma)$
into $(\rA,\tau)$. As we have mentioned in Theorem~\ref{1.4}, the
local-global principle for the existence of $k$-embeddings from
$(\rE,\sigma)$ into $(\rA,\tau)$ is equivalent to the local-global
principle for the existence of $k$-points of $\fE(\rG,\Psi)$. Here,
we will reduce the original problem to the existence of $k$-points
of oriented embedding functors, and prove that the local-global
principle holds in certain cases by computing the Shafarevich group
$\sha(k,\sico(\rT))$.
\begin{subsubsection}{Symplectic involutions}
For $\tau$ symplectic, $\Psi$ and $\rG$ are semisimple simply
connected of type $C_n$, which is the first case in
Theorem~\ref{2.12}, so we just restate the result as the following:
\begin{prop}~\label{2.4}
If $\tau$ is symplectic, then the local-global principle holds for
the existence of $K$-embeddings of $(\rE,\sigma)$ into $(\rA,\tau)$.
\end{prop}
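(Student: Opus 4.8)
The plan is to reduce the statement to case~(1) of Theorem~\ref{2.12}. Since $\tau$ is symplectic it is an involution of the first kind, so $K=k$ and a $K$-embedding of $(\rE,\sigma)$ into $(\rA,\tau)$ is the same thing as a $k$-embedding. First I would apply Theorem~\ref{1.4}: the group $\rG=\rU(\rA,\tau)^{\circ}$ is semisimple simply connected of type $C_n$, hence in particular it is not of type $D_4$ and $\tau$ is not orthogonal, so we fall into neither exceptional case. Theorem~\ref{1.4} then gives a bijection between the set of $k$-embeddings $(\rE,\sigma)\ra(\rA,\tau)$ and the set $\fE(\rG,\Psi)(k)$, and since this construction is functorial in $k$ it is compatible with base change to each completion $k_v$. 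Therefore the local--global principle for $K$-embeddings is equivalent to the local--global principle for the existence of a $k$-point of $\fE(\rG,\Psi)$.

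Next I would note that, $\rG$ and $\Psi$ being of type $C_n$, both $\ul{\Autext}(\rG)$ and $\ul{\Autext}(\Psi)$ are trivial, since the Dynkin diagram $C_n$ has no nontrivial automorphism. Consequently $\ul{\Isomext}(\Psi,\rG)$ is a one-point $k$-scheme, and for its unique orientation $u$ the oriented embedding functor $\fE(\rG,\Psi,u)$ coincides with the full embedding functor $\fE(\rG,\Psi)$. In other words, here $\fE(\rG,\Psi)$ is already connected and passing to an oriented version costs nothing; so it suffices to prove the local--global principle for $\fE(\rG,\Psi,u)(k)$.

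Finally, since $\Psi$ has type $C_n$ at every geometric point of $\spec(k)$, all connected components of $\ul{\Dyn}(\Psi)(k^{s})$ are of type $C$, so hypothesis~(1) of Theorem~\ref{2.12} holds. Applying Theorem~\ref{2.12} yields the local--global principle for the existence of a $k$-point of $\fE(\rG,\Psi,u)$, and combining this with the two identifications above finishes the argument. There is no obstacle specific to this proposition: the mathematical content sits inside Theorem~\ref{2.12}, whose type-$C$ case reduces via Corollary~\ref{0.8} and Proposition~\ref{2.3} to the vanishing $\sha^{2}(k,\rT)=0$ for $\rT=\rR_{\rE^{\sigma}/k}(\rR_{\rE/\rE^{\sigma}}^{(1)}(\gm_m))$, itself obtained from the Brauer--Hasse--Noether theorem via Shapiro's lemma; the only point requiring care is correctly tracking the chain of reductions and the compatibility of the bijection of Theorem~\ref{1.4} with localization at each place.
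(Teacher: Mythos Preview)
Your proposal is correct and follows exactly the approach the paper intends: the paper's entire ``proof'' is the sentence preceding the proposition, namely that for $\tau$ symplectic the root datum $\Psi$ and the group $\rG$ are semisimple simply connected of type $C_n$, so one is in case~(1) of Theorem~\ref{2.12}. You have simply spelled out the reductions (via Theorem~\ref{1.4} and the triviality of $\ul{\Autext}$ in type $C$) that the paper leaves implicit.
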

\end{subsubsection}
\begin{subsubsection}{Orthogonal involutions}
Throughout this subsection, for an \'etale algebra $\rF$ over $K$,
we let $\rM_{\rF}$ be the character group of the torus
$\rR_{\rF/K}(\gm_m)$ and let $\rJ_{\rF}$ be the character group of
the torus $\rR^{(1)}_{\rF/K}(\gm_m)$. Note that $K=k$ when $\tau$ is
an orthogonal involution.

\begin{paragraph}{The case where the degree of A is odd}
Let us consider the case where $\tau$ is orthogonal, and
$\rA=\bM_{2n+1}(K)$. In this case, the corresponding group $\rG$ is
adjoint of type $B_n$, so there is no outer automorphisms. By
Theorem~\ref{1.4} and Proposition~\ref{2.3}, to prove the
local-global principle for the $K$-embeddings here, it suffices to
prove that $\sha^2(K,\sico(\rT))$ vanishes. Note that in this case,
$\rE=K\times\rE'$ and $\sigma$ acts trivially on the component $K$,
so
$\rT=\rR_{\rE'^{\sigma}/K}(\rR_{\rE'/\rE'^{\sigma}}^{(1)}(\gm_m))$.

Let $\rE'^{\sigma}=\overset{r}{\underset{i=1}{\prod}}{\rF_i}$, where
$\rF_i$ is a field over $K$ for all $i$. Let $d=(d_1,...,d_r)$ be an
element in $\rE'^{\sigma}$ such that
$\rE'=\rE'^{\sigma}[x]/(x^2-d)=\overset{r}{\underset{i=1}{\prod}}{\rF_i}[x]/(x^2-d_i)$.
Let $\rE_i={\rF_i}[x]/(x^2-d_i)$ for all $i$ and $\rE_{i,v}$ (resp.
$\rF_{i,v}$) be $\rE_i\otimes_K K_v$ (resp. $\rF_i\otimes_K K_v$)
for all $v\in\Omega_K$.

\begin{theo}~\label{2.5}
Suppose $\tau$ is orthogonal, and $\rA=\bM_{2n+1}(K)$. If there is a
place $v\in\Omega_K$ such that the following condition holds:
\begin{center}
for all $i$, $d_i\in\rF_i^{{\times}^{2}}$ if and only if
$d_i\in(\rF_{i,v})^{{\times}^{2}}$.
\end{center}
 Then the local-global
principle for the existence of $K$-embeddings from $(\rE,\sigma)$
into $(\rA,\tau)$ holds.
\end{theo}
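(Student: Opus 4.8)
The plan is to use Theorem~\ref{1.4} and Proposition~\ref{2.3} to reduce the statement to the single vanishing $\sha^{2}(K,\sico(\rT))=0$, and then to establish this by a character-lattice computation in which the hypothesis on the $d_i$ is exactly what forces a certain localization map to be injective. Since $\rA=\bM_{2n+1}(K)$ has odd degree, $\rG=\rU(\rA,\tau)^{\circ}$ is the special orthogonal group, which is adjoint of type $B_n$; in particular it is not of type $D_4$ and has no outer automorphisms, and $K=k$ because $\tau$ is orthogonal. So Theorem~\ref{1.4} identifies the $K$-embeddings $(\rE,\sigma)\hookrightarrow(\rA,\tau)$ with $\fE(\rG,\Psi)(K)$, the set $\ul{\Isomext}(\Psi,\rG)(K)$ is a single point, $\fE(\rG,\Psi)=\fE(\rG,\Psi,u)$ for the unique orientation $u$, and by the final clause of Proposition~\ref{2.3} the local--global principle for the embeddings will follow once $\sha^{2}(K,\sico(\rT))=0$.

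Because $\rE=K\times\rE'$ with $\sigma$ trivial on the $K$-factor, we have $\rT=\prod_i\rR_{\rF_i/K}\bigl(\rR^{(1)}_{\rE_i/\rF_i}(\gm_m)\bigr)$, with character lattice $\rM_\rT=\bigoplus_i\mathrm{Ind}^{K}_{\rF_i}\bigl(\ent_{\chi_i}\bigr)$, where $\ent_{\chi_i}$ is the rank one $\Gal(k^{s}/\rF_i)$-lattice on which Galois acts through the quadratic character $\chi_i$ cutting out $\rE_i=\rF_i(\sqrt{d_i})$, so $\chi_i$ is trivial exactly when $d_i\in\rF_i^{{\times}2}$. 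The torus $\sico(\rT)$ is the maximal torus corresponding to $\rT$ inside $\sico(\rG)$, a twisted form of $\mathrm{Spin}_{2n+1}$; dualizing the central isogeny $\sico(\rG)\to\rG$, whose kernel is the central $\mu_2\cong\ent/2\ent$, gives an exact sequence of $\Gal(k^{s}/K)$-modules
\[ 0\lra \rM_\rT\lra\rM_{\sico(\rT)}\lra\ent/2\ent\lra 0, \]
with trivial Galois action on the quotient, the extra generator being the highest weight of the spin representation. By Nakayama--Tate duality $\sha^{2}(K,\sico(\rT))\cong\sha^{1}(K,\rM_{\sico(\rT)})^{\vee}$, so it suffices to prove $\sha^{1}(K,\rM_{\sico(\rT)})=0$.

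Shapiro's lemma, together with the resolution $0\to\ent_{\chi_i}\to\ent[\Gal(\rE_i/\rF_i)]\to\ent\to0$ for each index $i$ with $d_i\notin\rF_i^{{\times}2}$, gives $\rH^{1}(K,\rM_\rT)=\bigoplus_{i:\,d_i\notin\rF_i^{{\times}2}}\ent/2\ent$; moreover the same computation over a completion $K_v$ shows that such an $i$-th summand maps \emph{isomorphically} onto $\rH^{1}(\rF_{i,w},\ent_{\chi_i})\cong\ent/2\ent$ for every place $w$ of $\rF_i$ above $v$ at which $d_i$ stays a non-square. Feeding the displayed sequence into the long exact cohomology sequences over $K$ and over each $K_v$, and using $\sha^{1}(K,\ent/2\ent)=0$ (a quadratic extension of $K$ split at every place is trivial, by \v{C}ebotarev), one obtains
\[ \sha^{1}(K,\rM_{\sico(\rT)})\;\cong\;\bigl\{\,y\in \rH^{1}(K,\rM_\rT)\ :\ \mathrm{res}_v(y)\in\langle c_v\rangle\ \text{for all }v\,\bigr\}\,/\,\langle c\rangle, \]
where $c\in \rH^{1}(K,\rM_\rT)$ is the image of $1\in\ent/2\ent$ under the boundary map and $c_v=\mathrm{res}_v(c)$. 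Now let $v$ be the place supplied by the hypothesis: for every $i$ with $d_i\notin\rF_i^{{\times}2}$ we then have $d_i\notin(\rF_i\otimes_K K_v)^{{\times}2}$, so $d_i$ is a non-square in some $\rF_{i,w}$ with $w\mid v$, whence by the previous sentence $\mathrm{res}_v\colon \rH^{1}(K,\rM_\rT)\to \rH^{1}(K_v,\rM_\rT)$ is injective. Consequently any $y$ as above lies in $\langle c\rangle$, so $\sha^{1}(K,\rM_{\sico(\rT)})=0$, hence $\sha^{2}(K,\sico(\rT))=0$, and Proposition~\ref{2.3} finishes the proof. When $k$ is a global function field of characteristic $\neq2$ the argument is the same, with Nakayama--Tate replaced by the duality already used in the proof of Proposition~\ref{2.17}.

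The step I expect to demand the most care is the explicit identification of $\rM_{\sico(\rT)}$ as a Galois module: checking that the summands of $\rH^{1}(K,\rM_\rT)$ are indexed precisely by the non-square $d_i$, and that on these $\ent/2\ent$-summands the restriction maps are isomorphisms whenever $d_i$ remains a non-square locally. It is exactly this bookkeeping that turns the square/non-square condition on the $d_i$ at $v$ into the injectivity of $\mathrm{res}_v$ used above; the remaining ingredients (Shapiro's lemma, \v{C}ebotarev, and Nakayama--Tate duality) enter only through routine diagram chases.
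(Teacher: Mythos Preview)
Your argument is correct, and the overall reduction via Theorem~\ref{1.4} and Proposition~\ref{2.3} to the vanishing of $\sha^{2}(K,\sico(\rT))$, followed by Poitou--Tate duality to reduce to $\sha^{1}(K,\rP)=0$ with $\rP=\rM_{\sico(\rT)}$, matches the paper exactly. The difference lies in the resolution of $\rP$ you use. The paper proves a separate Lemma~\ref{2.6} identifying $\sico(\rT)$ with $\rR^{(1)}_{\rE'/K}(\gm_m)/\rR^{(1)}_{\rE'^{\sigma}/K}(\gm_m)$, yielding the exact sequence $0\to\rP\to\rJ_{\rE'}\to\rJ_{\rE'^{\sigma}}\to 0$; it then carries out a hands-on computation with a chosen basis $\{e_i\}$ and a parity argument (Lemma~\ref{2.10}) to show that the coefficients $a_i$ of a class in $\rJ_{\rE'^{\sigma}}^{\sG}$ hitting $\sha^{1}(K,\rP)$ all have the same parity, whence the class lifts to $\rJ_{\rE'}^{\sG}$. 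You instead resolve $\rP$ ``from below'' via the central isogeny $\sico(\rG)\to\rG$, obtaining $0\to\rM_\rT\to\rP\to\ent/2\ent\to 0$, and then compute $\rH^{1}(K,\rM_\rT)$ cleanly by Shapiro's lemma as a direct sum of copies of $\ent/2\ent$ indexed by the non-square $d_i$. The hypothesis on $v$ then becomes precisely the injectivity of $\mathrm{res}_v$ on $\rH^{1}(K,\rM_\rT)$, which forces any $y$ in your description of $\sha^{1}(K,\rP)$ to lie in $\langle c\rangle$. Your route avoids the auxiliary norm-one tori and the explicit coordinate calculation, at the cost of having to track the boundary class $c$; the paper's route makes the combinatorics more visible but needs the extra Lemma~\ref{2.6}. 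Both arguments ultimately encode the same parity constraint coming from the half-sum weight of the spin lattice.
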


We start with some calculations:
\begin{lemma}~\label{2.6}
$\sico(\rT)=\rR^{(1)}_{\rE'/K}(\gm_m)/\rR^{(1)}_{\rE'^{\sigma}/K}(\gm_m)$.
\end{lemma}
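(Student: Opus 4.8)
The plan is to identify the character modules of $\sico(\rT)$ and of $\rR^{(1)}_{\rE'/K}(\gm_m)/\rR^{(1)}_{\rE'^{\sigma}/K}(\gm_m)$ as $\Gamma$-modules and then invoke the anti-equivalence between $K$-tori and $\Gamma$-lattices. Since $\rG$ is adjoint of type $B_n$, the twisted root datum $\Psi$ is adjoint of type $B_n$; hence $\rM(\rT)$ is the $B_n$ root lattice $\rQ=\bigoplus_{j=1}^{n}\ent\epsilon_j$ and $\rM(\sico(\rT))$ is the weight lattice $\rP=\rQ+\ent\omega$, $\omega=\tfrac12\sum_{j}\epsilon_j$, both carrying the action of $\Gamma$ obtained from the cocycle $c\colon\Gamma\to\rW(\Psi_0)$ that classifies $(\rE,\sigma)$ (equivalently $\Psi$), followed by the standard signed-permutation action of the Weyl group $\rW(\Psi_0)$ of type $B_n$; here $\rW(\Psi_0)=\Aut(\Psi_0)\cong\Aut(\rE_0,\sigma_0)$ by Lemma~\ref{1.13}, since $B_n$ has no diagram automorphisms. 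It therefore suffices to produce a $\rW(\Psi_0)$-equivariant isomorphism between $\rP$ and the ``split'' character module of the quotient torus, and then twist by $c$.

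To set up the split side I would use the explicit data of Section~\ref{s1.3.1.2}: with $\Gamma$ acting through $c$, the set $\Omega_{\rE'}$ of embeddings $\rE'\hookrightarrow k^{s}$ is $\{1,\dots,n,1',\dots,n'\}$, on which the sign change $s_j\in\rW(\Psi_0)$ acts as the transposition $[j]\leftrightarrow[j']$ and $S_n\subseteq\rW(\Psi_0)$ acts diagonally, while $\Omega_{\rE'^{\sigma}}$ is $\{1,\dots,n\}$ with the $s_j$ acting trivially and $S_n$ permuting, the restriction map sending $[j],[j']\mapsto[j]$. Writing $\rM_\rF=\ent[\Omega_\rF]$ and, from $1\to\rR^{(1)}_{\rF/K}(\gm_m)\to\rR_{\rF/K}(\gm_m)\xrightarrow{N}\gm_m\to1$, $\rJ_\rF=\rM_\rF/\ent\nu_\rF$ with $\nu_\rF=\sum_{\phi}[\phi]$, one checks that $\rM_{\rE'}\to\rM_{\rE'^{\sigma}}$ is surjective and carries $\nu_{\rE'}$ to $2\nu_{\rE'^{\sigma}}$; hence $\rR^{(1)}_{\rE'^{\sigma}/K}(\gm_m)\to\rR^{(1)}_{\rE'/K}(\gm_m)$ is a closed immersion of tori, and the character module of $\rR^{(1)}_{\rE'/K}(\gm_m)/\rR^{(1)}_{\rE'^{\sigma}/K}(\gm_m)$ is $\ker(\rJ_{\rE'}\to\rJ_{\rE'^{\sigma}})$. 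A short computation identifies this kernel with $L/\ent\nu_{\rE'}$, where $L\subseteq\rM_{\rE'}$ is free with basis $z_j:=[j]-[j']$ $(1\le j\le n)$ and $u:=\sum_{j}[j']$, and $\nu_{\rE'}=\sum_{j}z_j+2u$.

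Finally I would verify that $z_j\mapsto\epsilon_j$, $u\mapsto-\omega$ extends to a $\rW(\Psi_0)$-equivariant surjection $\varphi\colon L\to\rP$ with $\ker\varphi=\ent\nu_{\rE'}$: equivariance reduces to matching $s_j(u)=u+z_j$ with $s_j(\omega)=\omega-\epsilon_j$ (the $S_n$-actions obviously match); $\varphi$ is surjective since $\epsilon_j=\varphi(z_j)$ and $\omega=-\varphi(u)$ generate $\rP$; and, as $\rP$ is torsion-free so that $\ker\varphi$ is a rank-one direct summand of $L$ containing the primitive vector $\nu_{\rE'}=\sum_j z_j+2u$ with $\varphi(\nu_{\rE'})=\sum_j\epsilon_j-2\omega=0$, we get $\ker\varphi=\ent\nu_{\rE'}$. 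Twisting $\varphi$ by $c$ yields $\rM(\rR^{(1)}_{\rE'/K}(\gm_m)/\rR^{(1)}_{\rE'^{\sigma}/K}(\gm_m))\cong\rP=\rM(\sico(\rT))$ as $\Gamma$-modules, hence the asserted isomorphism of tori. The step I expect to cost the most care is the first reduction — making precise, via Lemma~\ref{1.13} and the form $\rB$ of Section~\ref{s1.3.1.2}, that the isomorphism $\Aut(\rE_0,\sigma_0)\cong\rW(\Psi_0)$ sends the transposition $[j]\leftrightarrow[j']$ of $\Omega_{\rE'}$ to the reflection $\epsilon_j\mapsto-\epsilon_j$ on $\rQ$; granting that, everything downstream is bookkeeping with $\ent$-lattices.
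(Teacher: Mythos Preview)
Your proposal is correct and follows essentially the same route as the paper: both arguments work in the split case, identify the character lattice of the quotient torus with $\ker(\rJ_{\rE'}\to\rJ_{\rE'^{\sigma}})$, exhibit an $\ul{\Aut}(\rE_0,\sigma_0)\cong\rW(\Psi_0)$-equivariant isomorphism of this kernel with the weight lattice $\rP=\rM+\tfrac12(e_1+\cdots+e_n)$, and then descend by equivariance. The only cosmetic difference is that the paper packages the computation as the short exact sequence $0\to\rP\to\rJ_{\rE'}\to\rJ_{\rE'^{\sigma}}\to 0$ obtained by extending the inclusion $\rM\hookrightarrow\rJ_{\rE'}$ (since $e_1+\cdots+e_n$ maps to $2(\bar\varepsilon_1+\cdots+\bar\varepsilon_n)$), whereas you compute the kernel directly via the basis $\{z_j,u\}$; these are the same identification read in opposite directions.
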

\begin{proof}
Consider the exact sequence over $\rE'^{\sigma}$:
$$\xymatrix {1\ar[r]&\gm_m\ar[r]&\rR_{\rE'/\rE'^{\sigma}}(\gm_m)\ar[r]&\rR^{(1)}_{\rE'/\rE'^{\sigma}}(\gm_m)\ar[r]& 1},$$
where the map from $\rR_{\rE'/\rE'^{\sigma}}(\gm_m)$ to
$\rR^{(1)}_{\rE'/\rE'^{\sigma}}(\gm_m)$ sends $x$ in
$\rR_{\rE'/\rE'^{\sigma}}(\gm_m)(R)$ to $x/\sigma(x)$, for any
$K$-algebra $R$. Let us take the Weil restriction of the above
sequence over $K$. Then we get the exact sequence:
\begin{center}
$(1)$ $\xymatrix
{1\ar[r]&\rR_{\rE'^{\sigma}/K}(\gm_m)\ar[r]&\rR_{\rE'/K}(\gm_m)\ar[r]&\rT\ar[r]&
1}.$
\end{center}
Let $\rM$ (resp. $\rP$) be the character group of $\rT$ (resp.
$\sico(\rT)$).

First, suppose that $(\rE',\sigma)$ is split. Then
$\Aut(\rE',\sigma)=(\ent/2\ent)^{n}\rtimes\rS_n$ and there is a
basis $\{e_i\}_i$ of $\rM$ such that the $\rS_n$-part of
$\Aut(\rE',\sigma)$ acts on $\{e_i\}_i$ by permuting the indices and
$(\ent/2\ent)^{n}$ acts on $\{e_i\}_i$ by change the sign of $e_i$.
In this case, $\rP=\rM+\frac{1}{2}(e_1+...+e_n)$. We choose a basis
$\{\varepsilon_i, \epsilon_i\}_{i=1}^n$ (resp. $\{h_i\}_{i=1}^n$) of
$\rM_{\rE'}$ (resp. $\rM_{\rE'^{\sigma}}$) on which
$\Aut(\rE',\sigma)$ acts as the following: $\rS_n$ permutes the
indices $i$ and $(\ent/2\ent)^{n}$ exchanges $\varepsilon_i,
\epsilon_i$ (resp. $(\ent/2\ent)^{n}$ acts trivially on $h_i$).

The we have the following exact sequence corresponding to $(1)$:
\begin{center}
$\xymatrix{ 0\ar[r]&\rM\ar[r]^{\imath}
 &\rM_{\rE'}\ar[r]^{\jmath} &\rM_{\rE'^{\sigma}}\ar[r]& 0}$,
\end{center}
where $\imath$ maps $e_i$ to $\varepsilon_i-\epsilon_i$ and $\jmath$
maps $\varepsilon_i$, $\epsilon_i$ to $h_i$. Consider the map
$\ol{\imath}$ from $\rM$ to $\rJ_{\rE'}$ induced by $\imath$. Then
$\ol{\imath}(e_1+...+e_n)=2(\ol{\varepsilon_1}+...+\ol{\varepsilon_n})$,
where $\ol{\varepsilon}_i$ is the image of ${\varepsilon_1}$ in
$\rJ_{\rE'}$. Hence $\ol{\imath}$ induces a map from $\rP$ to
$\rJ_{\rE'}$ and we have the following exact sequence:
\begin{center}
$(2)$ $\xymatrix{ 0\ar[r]&\rP\ar[r]
  &\rJ_{\rE'}
\ar[r] &\rJ_{\rE'^{\sigma}}\ar[r] & 0.}$
\end{center}
Since all the maps constructed are equivariant under
$\Aut(\rE',\sigma)$, we conclude
$\sico(\rT)=\rR^{(1)}_{\rE'/K}(\gm_m)/\rR^{(1)}_{\rE'^{\sigma}/K}(\gm_m)$.
\end{proof}

Now we use the above lemma to compute $\sha^2(K,\sico(\rT))$.
\begin{proof}[Proof of Theorem~\ref{2.5}]
Keep all the notations in Lemma~~\ref{2.6}.  By the Poitou-Tate
duality (ref.~\cite{NSW} Chap. VIII, Thm. 8.6.9), we have
$$\sha^2(K,\sico(\rT))\simeq\sha^1(K,\rP)^{\ast}.$$ Hence,  it is
enough to show $\sha^1(K,\rP)=0$.

 From the
exact sequence (2) in the proof of Lemma~\ref{2.6}, we derive the
commutative exact diagram:
\begin{center}

 $\xymatrix{
 & & 0 \ar[d] \ar[r] & 0 \ar[d]
  \\
 & 0\ar[d]\ar[r]& \ent \ar[d] \ar[r]^{\times 2} & \ent \ar[d]
  \\
  0\ar[r]&\rM^{\sG}\ar[d]\ar[r] &\rM_{\rE'}^{\sG}\ar[d] \ar[r] & \rM_{\rE'^{\sigma}}^{\sG} \ar[d]
  \\
0\ar[r]&\rP^{\sG}\ar[r]
  &\rJ_{\rE'}^{\sG}\ar[d]
\ar[r] &\rJ_{\rE'^{\sigma}}^{\sG}\ar[r]\ar[d]&
\rH^1(K,\rP)\ar[r]&\rH^1(K,\rJ_{\rE'})\ar[r]&...\\
&&\rH^{1}(K,\ent)=0& \rH^{1}(K,\ent)=0}$
\end{center}

Again,
$\sha^1(K,\rJ_{\rE'})=\sha^2(K,\rR^{(1)}_{\rE'/K}(\gm_m))^{\ast}$.
By Hilbert Theorem 90, we have that
$\rH^2(K,\rR^{(1)}_{\rE'/K}(\gm_m))$ injects into
$\rH^2(K,\rR_{\rE'/K}(\gm_m))$. However,
$\sha^2(K,\rR_{\rE'/K}(\gm_m))$ vanishes, so does
$\sha^1(K,\rJ_{\rE'})$. Let $x\in\sha^1(K,\rP)$. Since
$\sha^1(K,\rJ_{\rE'})=0$, we have $y\in\rJ_{\rE'^{\sigma}}^{\sG}$
mapped to $x$.

Let $\rI=\{1,2,...,r\}$. Let $\rI_1$ be the subset of $\rI$ such
that $i\in\rI_1$ if and only if $d_i\in {\rF_i^{\times}}^2$. Let
$\rI_2=\rI\setminus\rI_1$. Note that
$\rM_{\rE'^{\sigma}}^{\sG}=\overset{r}{\underset{i=1}{\bigoplus}}\rM_{\rF_i}^{\sG}$
and
$\rM_{\rE'}^{\sG}=\overset{r}{\underset{i=1}{\bigoplus}}\rM_{\rE_i}^{\sG}$.
For $i\in\rI_1$, $\rE_i\simeq\rF_i\times\rF_i$, so
$\rM_{\rE_i}^{\sG}\simeq\rM_{\rF_i}^{\sG}\bigoplus\rM_{\rF_i}^{\sG}$
and $\rM_{\rE_i}^{\sG}$ is mapped surjectively onto
$\rM_{\rF_i}^{\sG}$.

Let $\gamma_i$ be a basis of $\rM_{\rF_i}^{\sG}$. For $i\in\rI_2$,
we have the following observation:
\begin{lemma}\label{2.10}
For $i\in\rI_2$ and $y\in\rM_{\rF_i}^{\sG}$, $y$ is in the image of
$\rM_{\rE_i}^{\sG}$ if and only if the coefficient of $\gamma_i$ in
$y$ is even.
\end{lemma}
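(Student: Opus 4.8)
The plan is to reduce the statement to an elementary index computation for permutation modules. Write $\Omega_i=\Hom_{K}(\rF_i,k^{s})$ and $\Xi_i=\Hom_{K}(\rE_i,k^{s})$; recall that $K=k$ here, so $\sG=\Gal(k^{s}/k)$ acts on both sets by post-composition. Using the standard description of the character module of a Weil restriction one gets $\sG$-equivariant identifications $\rM_{\rF_i}=\ent[\Omega_i]$ and $\rM_{\rE_i}=\ent[\Xi_i]$, and the map $\rM_{\rE_i}\ra\rM_{\rF_i}$ obtained by restricting the arrow $\rM_{\rE'}\ra\rM_{\rE'^{\sigma}}$ of the exact sequence $(1)$ in the proof of Lemma~\ref{2.6} to the $i$-th factor is, on these bases, the $\ent$-linear extension of the restriction map $\Xi_i\ra\Omega_i,\ \psi\mapsto\psi|_{\rF_i}$. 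This is just functoriality of Weil restriction applied to the inclusion $\rF_i\hookrightarrow\rE_i$, together with the fact that a morphism of tori induces a contravariant map on character modules; I would record this identification first.

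Next I would exploit that $\rF_i$ is a field: then $\Omega_i$ is a transitive $\sG$-set, so $\rM_{\rF_i}^{\sG}=\ent[\Omega_i]^{\sG}$ is free of rank one, generated by the orbit sum $\nu_i:=\sum_{\phi\in\Omega_i}\phi$. Hence the chosen basis $\gamma_i$ equals $\pm\nu_i$, and the parity of the coefficient of $\gamma_i$ in an element of $\rM_{\rF_i}^{\sG}$ is the parity of its coefficient of $\nu_i$. Because $i\in\rI_2$ means $d_i\notin(\rF_i^{\times})^{2}$, and $2$ is invertible in $k$, the algebra $\rE_i=\rF_i[x]/(x^{2}-d_i)$ is a separable quadratic field extension of $\rF_i$; in particular $\rE_i$ is again a field, $\Xi_i$ is a transitive $\sG$-set, and $\rM_{\rE_i}^{\sG}=\ent[\Xi_i]^{\sG}$ is free of rank one, generated by $\nu_{\rE_i}:=\sum_{\psi\in\Xi_i}\psi$.

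It then remains to compute the image of $\nu_{\rE_i}$ in $\rM_{\rF_i}$. Partitioning $\Xi_i$ by the restriction map $\Xi_i\ra\Omega_i$, every $\phi\in\Omega_i$ is the restriction of exactly $[\rE_i:\rF_i]=2$ embeddings in $\Xi_i$, each such extension corresponding to a square root of $\phi(d_i)$ in the separably closed field $k^{s}$, of which there are exactly two since $\phi(d_i)\neq0$ and $\mathrm{char}\,k\neq2$. Therefore $\nu_{\rE_i}\mapsto 2\nu_i$, so the image of $\rM_{\rE_i}^{\sG}=\ent\,\nu_{\rE_i}$ inside $\rM_{\rF_i}^{\sG}=\ent\,\nu_i$ equals $2\ent\,\nu_i$. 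Consequently an element $y=n\,\nu_i\in\rM_{\rF_i}^{\sG}$ lies in the image of $\rM_{\rE_i}^{\sG}$ if and only if $n$ is even, which is exactly the assertion, since the coefficient of $\gamma_i=\pm\nu_i$ is even precisely when $n$ is.

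I do not expect a substantive obstacle: this is a clean ``multiplication by the residue degree'' computation. The only points demanding care are the contravariance in passing from algebras to character modules, so that $\rF_i\hookrightarrow\rE_i$ yields $\rM_{\rE_i}\ra\rM_{\rF_i}$ and not the reverse, and the observation that it is precisely the hypothesis $i\in\rI_2$ that guarantees $\rE_i$ and $\rF_i$ are fields, hence that the relevant invariant lattices have rank one, which is what produces the factor $2$ and with it the parity criterion.
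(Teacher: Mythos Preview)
Your argument is correct and follows essentially the same route as the paper: both proofs use that for $i\in\rI_2$ the algebra $\rE_i$ is a field of degree $2$ over the field $\rF_i$, so the Galois-invariant character lattices $\rM_{\rE_i}^{\sG}$ and $\rM_{\rF_i}^{\sG}$ are each free of rank one, and the generator of the former maps to twice the generator of the latter. The paper phrases this in terms of the explicit basis $\{\epsilon_j,\varepsilon_j\}$ fixed in Lemma~\ref{2.6}, while you recast the same computation via the permutation modules $\ent[\Hom_K(\rF_i,k^s)]$ and $\ent[\Hom_K(\rE_i,k^s)]$, but the content is identical.
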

\begin{proof}[Proof of Lemma~\ref{2.10}]
Since $\rE_i$ is a field over $\rF_i$ with degree 2 for $i\in\rI_2$,
the module $\rM_{\rE_i}^{\sG}$ is of rank 1 and is generated by
$\underset{\epsilon_j,\varepsilon_j\in\rM_{\rE_i}}{\sum}(\epsilon_j+\varepsilon_j)$.
Since the element
$\underset{\epsilon_j,\varepsilon_j\in\rM_{\rE_i}}{\sum}(\epsilon_j+\varepsilon_j)$
is mapped to $2\gamma_i$ in $\rM_{\rF_i}^{\sG}$, the lemma then
follows.
\end{proof}

We return to the proof of Theorem~\ref{2.5}. Since
$\rM_{\rE'^{\sigma}}^{\sG}$ is mapped surjectively onto
$\rJ_{\rE'^{\sigma}}^{\sG}$, $\rJ_{\rE'^{\sigma}}^{\sG}$ is
generated by $\gamma_i$'s. Let $\ol{\gamma}_i$ be the image of
$\gamma_i$ in $\rJ_{\rE'^{\sigma}}$. Let
$y=\overset{r}{\underset{i=1}{\sum}}a_i\ol{\gamma}_i$. If for all
$i\in\rI_2$, the $a_i$'s have the same parity, then we can find
$z=\overset{r}{\underset{i=1}{\sum}}b_i{\gamma}_i$, which is a
lifting of y in $\rM_{\rE'^{\sigma}}^{\sG}$, such that $b_i$ is even
for any $i\in\rI_2$. Then by Lemma~\ref{2.10}, $z$ is in the image
of $\rM_{\rE'}^{\sG}$ and hence $y$ is in the image of
$\rJ_{\rE'}^{\sG}$. So it is enough to prove that for all
$i\in\rI_2$, the $a_i$'s have the same parity.

Now, let $v$ be a place of $K$ such that for all $i$,
$d_i\in\rF_i^{{\times}^{2}}$ if and only if
$d_i\in(\rF_{i,v})^{{\times}^{2}}$. Since $x$ is in $\sha^1(K,\rP)$,
$y$ is in the image $\rJ_{\rE'_v/K_v}^{\sG_v}$. For each
$i\in\rI_2$, since $d_i$ is not a square in $\rF_{i,v}$, there is
some $h_{j(i)}\in\rM_{\rF_i}$ such that there exists
$\tau_{j(i)}\in\sG_v$ which exchanges $\epsilon_{j(i)}$ and
$\varepsilon_{j(i)}$.  Therefore, for all $i\in\rI_2$, the
coefficients of $\ol{h}_{j(i)}$'s in the expression of $y$ have the
same parity. Since the coefficient of $\ol{h}_{j(i)}$ in $y$ is
$a_i$, we know that all $a_i$'s have the same parity for
$i\in\rI_2$. By Lemma~\ref{2.10}, $y$ is in the image of
$\rJ_{\rE'}^{\sG}$, which means $\sha^1(K,\rP)=0$.
\end{proof}

\begin{remark}~\label{2.8}
A special case of the above theorem is when there is a place $v$
such that $\sico(\rT)$ is anisotropic over $K_v$.  We now show that
$\sico(\rT)$ is anisotropic over $K_v$ implies all
$d_i\not\in{(\rF_{i,v})}^{{\times}^2}$. To see this, we note that in
our case here, $\sico(\rT)$ is anisotropic if and only if $\rT$ is
anisotropic. If there is $d_i\in{(\rF_{i,v})}^{{\times}^2}$, then
$\rM_{E_{i,v}/K_v}=\rM_{F_{i,v}/K_v}\oplus\rM_{F_{i,v}/K_v}$. Let
$\alpha$ be a nontrivial element in $\rM_{F_{i,v}/K_v}^{\sG_v}$.
Then $(\alpha,-\alpha)\in\rM_{E_{i,v}/K_v}^{\sG_v}$ and it is in the
image of $\rM$, which means $\rM^{\sG_v}$ is nontrivial and
contradicts to the condition that $\rT$ is anisotropic over $K_v$.
Therefore, $d_i\not\in{(\rF_{i,v}})^{{\times}^2}$ for all $i$.
\end{remark}
\end{paragraph}
\begin{paragraph}{The case where the degree of $\rA$ is even}
Throughout this paragraph, we let $\rA$ be $\bM_{2n}(K)$, or
$\bM_{n}(\rD)$ with orthogonal involution $\tau$, where $\rD$ is a
quaternion division algebra over $K$. In this case, the
corresponding group $\rG$ is semisimple of type $D_n$, and
$\ul{\Isomext}(\Psi,\rG)$ satisfies the local-global principle.

For $\rA$ satisfying one of the conditions in Theorem~\ref{2.7}, we
first show that $\fE(\rG,\Psi)(K_v)$ is nonempty implies that
$\fE(\rG,\Psi,u)(K_v)$ is nonempty for any orientation $u$. (see
Lemma~\ref{2.9}.) Then we prove that the local global principle
holds for the oriented embedding functor $\fE(\rG,\Psi,u)$. By
Theorem~\ref{1.4} and Proposition~\ref{1.15}, we get the
local-global principle for the existence of $K$-embeddings from
$(\rE,\sigma)$ into $(\rA,\tau)$.

We first fix some notations. Let
$\rE^{\sigma}=\overset{r}{\underset{i=1}{\prod}}{\rF_i}$, where the
$\rF_i$'s are  fields over $K$. Let $d=(d_1,...,d_r)$ be in
$\rE^{\sigma}$ and
$\rE=\rE^{\sigma}[x]/(x^2-d)=\overset{r}{\underset{i=1}{\prod}}{\rF_i}[x]/(x^2-d_i)$.
Let $\rE_i={\rF_i}[x]/(x^2-d_i)$, and $\rE_{i,v}$ (resp.
$\rF_{i,v}$) be $\rE_i\otimes_K K_v$ (resp. $\rF_i\otimes_K K_v$)
for all $v\in\Omega_K$.

\begin{theo}~\label{2.7}
Suppose that $\rA$ is equal to one of the following:
 \begin{itemize}
   \item [(1)] $\bM_{2n}(K)$, $n>1$.
   \item [(2)] $\bM_{2m+1}(\rD)$, where $\rD$ is a quaternion division algebra over $K$.
   \item [(3)] $\bM_{2m}(\rD)$, where $\rD$ is a quaternion division algebra over $K$, and at each place $v\in\Omega_K$, if $\rA$ is not split and the
   discriminant splits, then $\rE_v$ is not split over $\rE^{\sigma}_v$, i. e. $\rE_v\neq \rE^{\sigma}_v\times\rE^{\sigma}_v$.
 \end{itemize}
 If there is a place $v\in\Omega_K$
such that for all $i$, $d_i\in\rF_i^{{\times}^{2}}$ if and only if
$d_i\in(\rF_{i,v})^{{\times}^{2}}$, then the local-global principle
for the $K$-embedding of $(\rE,\sigma)$ into $(\rA,\tau)$ holds.
\end{theo}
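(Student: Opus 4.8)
The plan is to follow the outline sketched before the statement. By Theorem~\ref{1.4} (and Proposition~\ref{1.15} when $\rG$ is of type $D_4$, which I treat in the same way, replacing $\fE(\rG,\Psi)$ by the relevant oriented embedding functor), the set of embeddings $(\rE,\sigma)\hookrightarrow(\rA,\tau)$ over $K$ and over each completion $K_v$ is in bijection with $\fE(\rG,\Psi)(K)$ and $\fE(\rG,\Psi)(K_v)$, so it suffices to establish the local--global principle for $\fE(\rG,\Psi)$; assume from now on that $\rG$ is not of type $D_4$. Suppose $\fE(\rG,\Psi)(K_v)\neq\emptyset$ for every $v\in\Omega_K$. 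Applying $f\mapsto f^{\Psi}$ gives $\ul{\Isomext}(\Psi,\rG)(K_v)\neq\emptyset$ for all $v$; since $\rG$ is of type $D_n$ with $n\neq 4$, $\ul{\Isomext}(\Psi,\rG)$ is a torsor under $\ul{\Autext}(\rG)\cong(\ent/2\ent)_K$, and this torsor satisfies the local--global principle, so there is an orientation $u\in\ul{\Isomext}(\Psi,\rG)(K)$ and, over $K$, a decomposition $\fE(\rG,\Psi)=\coprod_{u}\fE(\rG,\Psi,u)$.

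The first real step is the local statement (Lemma~\ref{2.9}): for each $v$ with $\fE(\rG,\Psi)(K_v)\neq\emptyset$ and each orientation $u\in\ul{\Isomext}(\Psi,\rG)(K_v)$ one has $\fE(\rG,\Psi,u)(K_v)\neq\emptyset$. Choose $f_0\in\fE(\rG,\Psi)(K_v)$; it carries some orientation $u_0$, and if $u\neq u_0$ it is enough to produce an outer $K_v$-automorphism $\phi$ of $\rG_{K_v}$, for then $\phi\circ f_0\in\fE(\rG,\Psi,u)(K_v)$. Such a $\phi$ exists exactly when $(\rA_{K_v},\tau)$ admits an improper similitude, i.e.\ when $\mathrm{PGO}(\rA_{K_v},\tau)(K_v)\to\ent/2\ent$ is surjective, and this is precisely what the three cases of the hypothesis buy, checked completion by completion via~\cite{KMRT}: if $\rA_{K_v}$ is split one uses a hyperplane reflection; if $\rA=\bM_{2m+1}(\rD)$ the degree is $\equiv 2\pmod 4$ and an improper similitude always exists; if $\rA=\bM_{2m}(\rD)$ the only obstruction arises when $\rA_{K_v}$ is non-split with split discriminant, and the assumption $\rE_v\neq\rE^{\sigma}_v\times\rE^{\sigma}_v$ at such places excludes exactly the configuration in which $\fE(\rG,\Psi)(K_v)$ is nonempty while an oriented component is empty --- the phenomenon responsible for the counterexample~\ref{2.13}. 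Hence every $\fE(\rG,\Psi,u)$ has $K_v$-points for all $v$.

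By Proposition~\ref{2.3}, the local--global principle for $\fE(\rG,\Psi,u)$ is now controlled entirely by $\sha^{2}(K,\sico(\rT))$, and it remains to show this group vanishes under the hypothesis that there is a place $v$ with $d_i\in\rF_i^{{\times}^{2}}\iff d_i\in\rF_{i,v}^{{\times}^{2}}$ for all $i$. This runs parallel to the proof of Theorem~\ref{2.5}. Since $\rT=\rR_{\rE^{\sigma}/K}\bigl(\rR^{(1)}_{\rE/\rE^{\sigma}}(\gm_m)\bigr)$, the computation of Lemma~\ref{2.6} gives $\sico(\rT)=\rR^{(1)}_{\rE/K}(\gm_m)\big/\rR^{(1)}_{\rE^{\sigma}/K}(\gm_m)$, so by Poitou--Tate duality $\sha^{2}(K,\sico(\rT))\cong\sha^{1}(K,\rP)^{\ast}$, where the character module $\rP$ sits in a short exact sequence $0\to\rP\to\rJ_{\rE}\to\rJ_{\rE^{\sigma}}\to 0$ of $\sG$-lattices. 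One has $\sha^{1}(K,\rJ_{\rE})=0$ by Hilbert 90 together with the Brauer--Hasse--Noether theorem over $\rE$, so a class in $\sha^{1}(K,\rP)$ is the image of an element of $\rJ_{\rE^{\sigma}}^{\sG}$; the parity argument of Lemma~\ref{2.10}, evaluated at the place $v$ (where by hypothesis each $d_i$ is a square precisely when it is locally one), then forces this element into the image of $\rJ_{\rE}^{\sG}$. Thus $\sha^{1}(K,\rP)=0$, hence $\fE(\rG,\Psi,u)(K)\neq\emptyset$, hence $\fE(\rG,\Psi)(K)\neq\emptyset$, which yields the desired embedding.

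The main obstacle is Lemma~\ref{2.9}: the Shafarevich-group computation is essentially a transcription of the type $B_n$ argument of Theorem~\ref{2.5}, whereas the claim that local solvability of $\fE(\rG,\Psi)$ propagates to every oriented component is exactly where the disconnectedness of the embedding functor is dangerous, and the three cases of the hypothesis must be verified one local field at a time against the structure theory of central simple algebras with orthogonal involution.
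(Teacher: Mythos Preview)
Your overall architecture matches the paper's proof exactly: fix a global orientation (the paper's Lemma~\ref{2.16}), pass locally from $\fE(\rG,\Psi)(K_v)\neq\emptyset$ to $\fE(\rG,\Psi,u)(K_v)\neq\emptyset$ for every $u$ (the paper's Lemma~\ref{2.9}), and then kill $\sha^{2}(K,\sico(\rT))$ by the same parity computation as in Theorem~\ref{2.5}. Two remarks.

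The substantive difference is in Lemma~\ref{2.9}. The paper argues entirely through the Tits-index criterion Theorem~\ref{1.10}: using the classification of local Tits indices of type $D_n$, it shows that in cases (1) and (2) the index $\Delta^{\circ}(\rG_{K_v})$ is stable under $\ul{\Autext}(\rG_{K_v})$, while in case (3), at the places where $\rG_{K_v}$ carries the unique non-symmetric index, the hypothesis $\rE_v\neq\rE^{\sigma}_v\times\rE^{\sigma}_v$ forces $\Delta^{\circ}(\Psi_{K_v})$ to be symmetric. The mechanism for the latter is explicit: if $\Delta^{\circ}(\Psi_{K_v})$ were non-symmetric it would sit inside the $A_{2m-1}$ subdiagram missing one fork vertex, so the $\rW(\Psi_{0,\rI})$-torsor defining $\Psi_{K_v}$ would land in $\rS_{2m}\subset(\ent/2\ent)^{2m}\rtimes\rS_{2m}=\ul{\Aut}(\Psi_0)$, forcing $\rE_v\simeq\rE'_v\times\rE'_v$ with $\sigma$ the exchange. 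Either way $u(\Delta^{\circ}(\Psi_{K_v}))\supseteq\Delta^{\circ}(\rG_{K_v})$ for every $u$, and Theorem~\ref{1.10} produces the point. Your route via an improper similitude is a genuine shortcut where one exists, but in case (3) at the bad places the hypothesis constrains $\Psi$, not $\rG$, so no outer $K_v$-automorphism of $\rG_{K_v}$ is handed to you, and indeed one need not exist---this is precisely the half-spin phenomenon behind Example~\ref{2.13}. Your assertion that the hypothesis ``excludes exactly the bad configuration'' is the \emph{conclusion} of the paper's argument, not a consequence of the similitude method; proving it needs Theorem~\ref{1.10} or something of equal strength.

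A smaller point: you set $D_4$ aside, but the theorem covers $\rA=\bM_8(K)$ and $\rA=\bM_4(\rD)$. For these the global-orientation step requires showing that the $\ul{\Autext}(\rG)$-torsor $\ul{\Isomext}(\Psi,\rG)$ satisfies the local--global principle with $\ul{\Autext}(\rG)$ a (possibly twisted) form of $\rS_3$; the paper's Lemma~\ref{2.16} does this by a short d\'evissage through $0\to\ent/3\ent\to\rS_3\to\ent/2\ent\to0$ together with \v{C}ebotarev, which your sketch does not supply.
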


First we prove the following lemma:
\begin{lemma}\label{2.9}
For $\rA$ satisfying one of the three conditions in
Theorem~\ref{2.7}, the existence of a $K_v$-point of $\fE(\rG,\Psi)$
implies the existence of a $K_v$-point of $\fE(\rG,\Psi,u)$ for any
$u\in\ul{\Isomext}(\Psi,\rG)(K_v)$.
\end{lemma}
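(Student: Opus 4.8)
The plan is to exploit the fact that, $\rG$ being of type $D_n$, for $n\neq 4$ the embedding functor $\fE(\rG,\Psi)$ breaks geometrically into exactly two connected components, namely the two oriented embedding functors $\fE(\rG,\Psi,u_0)$, $\fE(\rG,\Psi,u_1)$ attached to the two orientations, and these are interchanged both by the left action of $\ul{\Autext}(\rG)$ and by the right action of $\ul{\Autext}(\Psi)$, each of which is geometrically the group $\ent/2\ent$. (For $\rG$ of type $D_4$, where $\ul{\Autext}$ is $S_3$, one argues analogously, realising the relevant elements of $S_3$ by $K_v$-rational automorphisms, inside the quasi-split-form framework of Proposition~\ref{1.15}; and by Corollary~\ref{0.8} we may freely pass to $\ad(\rG)$ or $\sico(\rG)$.) Since $\fE(\rG,\Psi)(K_v)\neq\emptyset$, the torsor $\ul{\Isomext}(\Psi,\rG)$ under the constant group $(\ent/2\ent)_{K_v}$ has a rational point, hence exactly two, say $u_0,u_1$, with $u_1=\rho_\rG\cdot u_0=u_0\cdot\rho_\Psi$ for the nontrivial $\rho_\rG\in\ul{\Autext}(\rG)(K_v)$ and $\rho_\Psi\in\ul{\Autext}(\Psi)(K_v)$. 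Assuming the given point lies over $u_0$, it remains to prove $\fE(\rG,\Psi,u_1)(K_v)\neq\emptyset$.

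The key reduction is that it suffices to exhibit either (a) an element $\theta$ of $\ul{\Aut}_{\rS-gr}(\rG)(K_v)$ mapping to the nontrivial element of $\ul{\Autext}(\rG)(K_v)$, or (b) an element $\theta$ of $\ul{\Aut}(\Psi)(K_v)$ mapping to the nontrivial element of $\ul{\Autext}(\Psi)(K_v)$; for then, respectively, $f\mapsto\theta\circ f$, or $f\mapsto f\circ\theta$ after viewing $\ul{\Aut}(\Psi)\subseteq\ul{\Aut}_{\rS-gr}(\rT)$, carries the given $K_v$-point of $\fE(\rG,\Psi,u_0)$ to a $K_v$-point of $\fE(\rG,\Psi,u_1)$, using that the two orientations differ by $\rho_\rG$ on the left and by $\rho_\Psi$ on the right. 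On one side, $\ul{\Aut}_{\rS-gr}(\rG)=\ul{\Aut}(\rA_v,\sigma_v)=\rPGO(\rA_v,\sigma_v)$ for $n\neq4$, so (a) is available precisely when $(\rA_v,\sigma_v)$ carries an improper similitude; this holds when $\rA_v$ is split (hyperplane reflections), and also, by a short argument with the even Clifford algebra over the quadratic extension $Z/K_v$ cut out by $\operatorname{disc}(\sigma_v)$ (over the local field $Z$ every Brauer class is Galois-stable, so $C_0(\rA_v,\sigma_v)$ is isomorphic to its conjugate), when $\rA_v$ is non-split with $\operatorname{disc}(\sigma_v)$ nontrivial. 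On the other side, by Lemma~\ref{1.13} one has $\ul{\Aut}(\Psi)(K_v)\cong\ul{\Aut}(\rE_v,\sigma_v)(K_v)$, and, writing $\rE^\sigma_v=\prod_i\rF_{i,v}$ with the $\rF_{i,v}$ fields and $\rE_{i,v}=\rF_{i,v}[x]/(x^2-d_i)$, the automorphism of $(\rE_v,\sigma_v)$ acting as $\sigma_v$ on $\rE_{i,v}$ and trivially elsewhere maps to $[\rF_{i,v}:K_v]\bmod 2$ in $\ul{\Autext}(\Psi)(K_v)=\ent/2\ent$ (over $\ol{K_v}$ that component becomes $[\rF_{i,v}:K_v]$ transpositions of idempotent-pairs); hence (b) is available as soon as some $\rF_{i,v}$ has odd degree over $K_v$.

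With this in hand the case analysis is short. For $\rA=\bM_{2n}(K)$ the algebra $\rA_v$ is split, so (a) applies. For $\rA=\bM_{2m+1}(\rD)$ the algebra $\rE^\sigma$ has odd rank $2m+1$ over $K$, so $\sum_i[\rF_{i,v}:K_v]$ is odd, some $\rF_{i,v}$ has odd degree, and (b) applies -- in fact $\sigma_v$ itself is a legitimate $\theta$. For $\rA=\bM_{2m}(\rD)$ as in Theorem~\ref{2.7}(3): if $\rD_v$ is split we are back in the split case; if $\rD_v$ is a division algebra with $\operatorname{disc}(\sigma_v)$ nontrivial, (a) applies by the Clifford-algebra argument above; and in the remaining case, $\rD_v$ division with $\operatorname{disc}(\sigma_v)$ trivial, the hypothesis of Theorem~\ref{2.7}(3) forces $\rE_v$ to be non-split over $\rE^\sigma_v$, from which -- together with the existence of a $K_v$-embedding of $(\rE_v,\sigma_v)$ into $(\rA_v,\sigma_v)$ -- one deduces that $\rE^\sigma_v$ must have a field component of odd degree over $K_v$, so that (b) applies once more.

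The main obstacle is precisely this last sub-case. When $\rA_v$ is non-split of trivial discriminant there is genuinely no improper similitude -- the even Clifford algebra is $C^+\times C^-$ with $[C^+]\ne[C^-]$, since their product is the nonzero class $[\rA_v]$, and the two factors cannot be interchanged -- so mechanism (a) is truly unavailable, and the whole content is to extract the combinatorial statement ``some $[\rF_{i,v}:K_v]$ is odd'' from the non-splitness of $\rE_v$ over $\rE^\sigma_v$ together with the bare existence of the embedding. This will come down to analysing the $\rE_v\otimes_{K_v}\rD_v$-module structure of $\rD_v^{2m}$ and the associated skew-hermitian form over $\rD_v$, and showing that an ``all even degrees'' configuration of $\rE^\sigma_v$ is incompatible with a $\sigma$-stable embedding of $\rE_v$ into the division-type $(\rA_v,\sigma_v)$. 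That incompatibility is the only genuinely non-formal ingredient; everything else is bookkeeping with the bitorsor structure of $\ul{\Isomext}(\Psi,\rG)$ and Lemma~\ref{1.13}.
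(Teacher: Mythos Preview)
Your approach is genuinely different from the paper's, and the difference is instructive. The paper never constructs an automorphism of $\rG$ or of $\Psi$; it works entirely through Theorem~\ref{1.10}. Given $f\in\fE(\rG,\Psi)(K_v)$ with induced orientation $u'$, one has $u'(\Delta^\circ(\Psi_{K_v}))\supseteq\Delta^\circ(\rG_{K_v})$, and the task is to show the same containment for any other $u$. In cases (1) and (2) the paper observes, by inspecting Tits's tables, that $\Delta^\circ(\rG_{K_v})$ is stable under $\ul{\Autext}(\rG_{K_v})$, whence $u(\Delta^\circ(\Psi))=\rho\cdot u'(\Delta^\circ(\Psi))\supseteq\rho(\Delta^\circ(\rG))=\Delta^\circ(\rG)$. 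In the residual sub-case of (3) the Tits index of $\rG_{K_v}$ is genuinely asymmetric, and the paper argues instead that $\Delta^\circ(\Psi_{K_v})$ must be symmetric: if vertex $2m$ were missing from it, the root datum $\Psi_{K_v}$ would arise from a torsor under $\rW_{0,\rI}\subseteq\rS_{2m}\subset\{1\}\rtimes\rS_{2m}\subset\ul{\Aut}(\Psi_0)$, which forces $(\rE_v,\sigma_v)$ to be split over $\rE^\sigma_v$, contradicting hypothesis (3). Either way Theorem~\ref{1.10} then gives a $K_v$-point for every orientation. This argument is uniform and needs no case-splitting on the automorphism side.

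Your mechanisms (a) and (b) are correct where you claim them, and your treatment of cases (1), (2), and the first two sub-cases of (3) is fine. The gap is exactly where you locate it, and it is real. The hypothesis of Theorem~\ref{2.7}(3) says only that some $d_i$ is a nonsquare in $\rF_{i,v}$; it says nothing about the parity of $[\rF_{i,v}:K_v]$. One can easily have all components of $\rE^\sigma_v$ of even degree with $\rE_v$ non-split over $\rE^\sigma_v$: take $\rE^\sigma_v$ a quadratic extension of $K_v$ and $\rE_v$ a quadratic extension of that. In such a configuration your specific automorphism ``$\sigma_v$ on one factor'' lands in $\rW(\Psi)$ and mechanism (b) as you have stated it fails. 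It may be that some \emph{other} element of $\ul{\Aut}(\rE_v,\sigma_v)$ still maps onto the nontrivial class---for instance, when $\rE_{i,v}/K_v$ is cyclic of degree $4$, a generator acts on the two associated characters as a transposition composed with a single sign change, which is odd---but you have not established this in general, and the hermitian-module analysis you sketch does not obviously yield it. The paper's Tits-index argument sidesteps this entirely: it does not ask whether the nontrivial outer class lifts to a $K_v$-rational automorphism of $\Psi$, only whether it fixes the subset $\Delta^\circ(\Psi_{K_v})$, and that is precisely what ``$\rE_v$ not split over $\rE^\sigma_v$'' delivers.

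A small side remark: for the groups at hand ($\rG=\rU(\rA,\tau)^\circ$ with $\tau$ orthogonal), the root datum is of $\rSO$-type even in degree $8$, so $\ul{\Autext}(\rG)$ and $\ul{\Autext}(\Psi)$ are $\ent/2\ent$ rather than $\rS_3$; your worry about $D_4$ is thus unnecessary, and the two-orientation picture you set up already covers that case.
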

\begin{proof}
Suppose that there is a $K_v$-point $f$ of $\fE(\rG,\Psi)$. By
Theorem~\ref{1.10}, there is an orientation $u'$ induced by $f$ such
that $u'(\Delta^\circ(\Psi_{K_v}))\supseteq
\Delta^\circ(\rG_{K_v})$.

According to the list of all possible Tits indices
(ref.~\cite{Tit}), if $\rA$ satisfies (1) or (2) in
Theorem~\ref{2.7}, then the Tits index of $\rG_{K_v}$ will be
symmetric under $\ul{\Autext}(\rG_{K_v})$. Therefore, for any
$u\in\ul{\Isomext}(\Psi,\rG)(K_v)$, we have that
$u(\Delta^\circ(\Psi_{K_v}))$ contains $\Delta^\circ(\rG_{K_v})$,
and again, by Theorem~\ref{1.10}, we have
$\fE(\rG,\Psi,u)(K_v)\neq\emptyset$.

Now assume that $\rA$ satisfies (3) in Theorem~\ref{2.7}. If over
$K_v$, $\rA$ is not split and the discriminant splits, then $\rG$ is
a non-split inner form over $K_v$. In this case, the possible Tits
indices of $\rG$ are symmetric except the following case:

\begin{center}
\begin{picture}(120,20)
\put(00,0){\line(1,0){20}}\put(20,0){\line(1,0){20}}
\put(40,0){\line(1,0){20}} \put(65,0){\dots}
\put(80,0){\line(1,0){20}}
\put(100,0){\line(2,-1){20}}\put(100,0){\line(2,1){20}}
\put(0,0){\circle*{3}} \put(20,0){\circle*{3}}
\put(40,0){\circle*{3}} \put(60,0){\circle*{3}}
\put(80,0){\circle*{3}}\put(100,0){\circle*{3}}
\put(120,-10){\circle*{3}} \put(120,10){\circle*{3}}
\put(20,0){\circle{10}} \put(60,0){\circle{10}}
\put(100,0){\circle{10}} \put(120,10){\circle{10}}
\end{picture}
\end{center}
\smallskip
Suppose that $\Delta^{\circ}(\rG_{K_v})$ takes the above
nonsymmetric form. We will show that condition (3) in
Theorem~\ref{2.7} forces $\Delta^\circ(\Psi_{K_v})$ to be symmetric
under $\ul{\Autext}(\Psi_{K_v})$ in this case.

Consider the Dynkin diagram of $\Psi$:
\begin{center}
\begin{picture}(120,20)
\put(00,0){\line(1,0){20}}\put(20,0){\line(1,0){20}}
\put(40,0){\line(1,0){20}} \put(65,0){\dots}
\put(80,0){\line(1,0){20}}
\put(100,0){\line(2,-1){20}}\put(100,0){\line(2,1){20}}
\put(0,0){\circle*{3}} \put(20,0){\circle*{3}}
\put(40,0){\circle*{3}} \put(60,0){\circle*{3}}
\put(80,0){\circle*{3}}\put(100,0){\circle*{3}}
\put(120,-10){\circle*{3}} \put(120,10){\circle*{3}}
\put(-3,8){\small{1}}\put(17,8){\small{2}} \put(37,8){\small{3}}
\put(57,8){\small{4}}\put(125,10){\small{2m}}\put(125,-10){\small{2m-1}}
\end{picture}
\end{center}
\smallskip
Suppose that $\rI=\Delta^\circ(\Psi_{K_v})$ is not symmetric under
$\ul{\Autext}(\Psi)$. Without loss of generality, we suppose that
the vertex 2m is not in $\rI$. Let $\rI'$ be the Dynkin subdiagram
with vertices 1,...,2m-1 which is of type $A_{2m-1}$. So
$\rI\subseteq\rI'$.

Since there is $f\in\fE(\rG,\Psi)(K_v)$, $\rG_{K_v}$ has a parabolic
subgroup $\rP_\rI$ with the type $\rI$ and $\rP_\rI$ contains
$f(\rT_{K_v})$ by Proposition~\ref{0.9}. Let $\rG_0$, $\Psi_0$ be
the split form of $\rG$ and $\Psi$ respectively
(Section.~\ref{s1.3.1.2}), and let $\rT_0$ be the split torus
determined by $\Psi_0$. Let $\rP_{0,\rI}$ be a parabolic subgroup of
$\rG_{0,K_v}$ with type $\rI$ and contains $\rT_{0,K_v}$.

Let $\sP_\rI$ (resp. $\sP_{0,\rI}$) be the subsheaf of the sheaf of
roots of $\Psi$ (resp. $\Psi_0$) determined by $\rP_\rI$ (resp.
$\rP_{0,\rI}$). Define $\rW_{0,\rI}=\rW(\Psi_{0,\rI})$ and
$\rW_{\rI}=\rW(\Psi_\rI)$ as we have done in the proof of
Theorem~\ref{1.10}. Define $\rW_{0,\rI'}=\rW(\Psi_{0,\rI'})$ in the
same way.

 Let $\Psi_0$ be
$(\rM_0,\rM^{\vee}_0,\rR_0,\rR^{\vee}_0)$, and $\{e_i\}_{i=1}^{2m}$
be a basis of $\rM_0$ such that $\rR_0$ is the set $\{\pm e_i\pm
e_j\}_{i<j}$, where the vertex $i$ corresponds to $e_i-e_{i+1}$ for
$i=1,..;,2m-1$, and the vertex $2m$ corresponds to
$e_{2m-1}+e_{2m}$. Let $\rS_n$ be the permutation group of $n$
elements. Then we have
$$\ul{\Aut}(\Psi_{0,K_v})(K_v)=(\ent/2\ent)^{2m}\rtimes\rS_{2m},$$
where $\rS_{2m}$ acts on $\rR_0$ by permuting the indices of
$\{e_i\}_{i=1}^{2m}$, and $(\ent/2\ent)^{2m}$ acts on $\rR_0$ by
exchanging the sign of $e_i$'s (~\cite{Bou}, Plan. IV). Under this
basis, $\rW_{0,\rI'}$ is just the permutation group of the set
$\{e_i\}_{i=1}^{2m}$. Therefore, the natural inclusion
$\imath_\rW:\rW_{0,\rI'} \ra\ul{\Aut}(\Psi_{0,K_v})$ sends
$w\in\rW_{0,\rI'}\simeq\rS_{2m}$ to
$(1,w)\in(\ent/2\ent)^{2m}\rtimes\rS_{2m}$.

Since  $\rG_{K_v}$ is an inner form of $\rG_{0,K_v}$,  there is an
orientation
$$\mu\in\ul{\Isomext}(\Psi_{0,K_v},\Phi(\rG_{K_v},f(\rT_{K_v})))(K_v).$$
The orientation $\mu$ together with $u'^{-1}$ gives an orientation
$\nu\in\ul{\Isomext}(\Psi_{0,K_v},\Psi_{K_v})(K_v)$.

We then define
$$\rQ=\ul{\Isomint}_{\nu}(\Psi_{0,K_v},\sP_{0,\rI};\Psi_{K_v},\sP_\rI)$$ as we have done in the proof of Proposition~\ref{1.10}.
Since $\rW_{0,\rI}\subseteq\rW_{0,\rI'}$, we can regard
$\rW_{0,\rI}$ as a subgroup of
$\{1\}\rtimes\rS_{2m}\subseteq\ul{\Aut}(\Psi_0)$ through
$\imath_\rW$. Since
$\Psi_{K_v}=\rQ\overset{\rW_{0,\rI}}{\wedge}\Psi_{0,K_v}$, by
Remark~\ref{1.5},
$(\rE_v,\sigma)\simeq\rQ\overset{\rW_{0,\rI}}\wedge(\rE_{0,v},\sigma_0)$.
Therefore, $\rE_v\simeq\rE'_v\times\rE'_v$ with $\sigma$ acts on
$\rE_v$ as the exchange of the two copies of $\rE'_v$, which
contradicts to the assumption (3) in Theorem~\ref{2.7}! Therefore,
$\rI$ is symmetric under $\ul{\Autext}(\Psi_{K_v})$ and we conclude
that $u(\Delta^\circ(\Psi_{K_v}))\supseteq \Delta^\circ(\rG_{K_v})$
for any orientation $u$. Again, by Theorem~\ref{1.10}, we have
$\fE(\rG,\Psi,u)(K_v)\neq\emptyset$, for any
$u\in\ul{\Isomext}(\Psi,\rG)(K_v)$.
\end{proof}

Next, we prove that the $\ul{\Autext}(\rG)$-torsor
$\ul{\Isomext}(\Psi,\rG)$ satisfies the local-global principle.
Namely,
\begin{lemma}\label{2.16}
Let $\rG$ (resp. $\Psi$) be the corresponding semisimple group
(resp. root datum) defined by $\rA$ (resp. $\rE$). If the
$\ul{\Autext}(\rG)$-torsor $\ul{\Isomext}(\Psi,\rG)$ has a
$K_v$-point at each place $v\in\Omega_K$, then
$\ul{\Isomext}(\Psi,\rG)$ has a $K$-point.
\end{lemma}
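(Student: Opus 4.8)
The plan is to reduce the assertion to the vanishing of $\sha^{1}(K,\mu_2)$. Recall from Section~1.2.1 and the proof of Proposition~\ref{1.14} that $\ul{\Isomext}(\Psi,\rG)$ is a principal homogeneous space under $\ul{\Autext}(\rG)\simeq\ul{\Autext}(\Phi(\rG,\rT))$, which by Theorem~\ref{0.3}(ii) is represented by a finite twisted constant $K$-group scheme. The first step is to identify this group scheme. Since $\rA$ has even degree $2n$ with $n\geq 2$ and $\tau$ is orthogonal, $\rG=\rU(\rA,\tau)^{\circ}$ is semisimple of type $D_n$, and $\ul{\Autext}(\rG)\simeq(\ent/2\ent)_K$: for $n\neq 4$ this is immediate from $\ul{\Aut}_{K-gr}(\rG)=\ul{\Aut}(\rA,\tau)=\rPGO(\rA,\tau)$ together with $\rPGO(\rA,\tau)/\rPGO^{+}(\rA,\tau)\simeq(\ent/2\ent)_K$ (cf.~\cite{KMRT}, \S23, \S26), and for $n=4$ it still holds because $\rU(\rA,\tau)^{\circ}$ is of $\mathrm{SO}$-type, so that the triality automorphisms do not act on its based root datum. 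In particular $\ul{\Autext}(\rG)$ has order dividing $2$, and over a field of characteristic $\neq2$ such a group scheme is necessarily the constant group $(\ent/2\ent)_K$.

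With this identification the rest is purely cohomological. A $(\ent/2\ent)_K$-torsor over $\spec K$ for the étale topology is the same as a quadratic étale $K$-algebra $\rF$ with its canonical involution --- for $\ul{\Isomext}(\Psi,\rG)$ one may take $\rF$ to be the discriminant extension comparing $\sigma$ and $\tau$ --- and it has a $K_v$-point if and only if $\rF\otimes_K K_v\simeq K_v\times K_v$. The hypothesis of the lemma thus says that $\rF$ is split at every place of $K$. But a separable quadratic \emph{field} extension of a global field is non-split at infinitely many places, for instance by the \v{C}ebotarev density theorem; hence $\rF\simeq K\times K$, the torsor $\ul{\Isomext}(\Psi,\rG)$ is trivial, and in particular it has a $K$-point. (Equivalently, via Kummer theory $\rH^{1}(K,\ul{\Autext}(\rG))=K^{\times}/K^{\times 2}$, and the argument is just that $\sha^{1}(K,\mu_2)=0$.)

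The delicate point in this plan is the identification $\ul{\Autext}(\rG)\simeq(\ent/2\ent)_K$ for type $D_4$: one must verify that the extra triality automorphisms which enlarge $\ul{\Autext}$ to (a form of) $\rS_3$ for $\mathrm{Spin}_8$ and $\rPGO^{+}_8$ are absent for the $\mathrm{SO}$-type group $\rU(\rA,\tau)^{\circ}$. Once $\ul{\Autext}(\rG)$ is pinned down to $(\ent/2\ent)_K$ the remainder of the argument is formal and uses nothing specific about orthogonal involutions.
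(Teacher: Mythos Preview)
Your argument is correct, and in fact cleaner than the paper's own proof. The key observation you make---that for $\rG=\rU(\rA,\tau)^{\circ}$ of type $D_4$ one still has $\ul{\Autext}(\rG)\simeq(\ent/2\ent)_K$ because the $\mathrm{SO}$-type root datum is not preserved by triality---is correct: the three intermediate lattices between the root and weight lattices of $D_4$ are permuted cyclically by the order-$3$ Dynkin automorphisms, so only the order-$2$ automorphism stabilises the $\mathrm{SO}_8$ lattice $\ent^4$. Once this is established, the lemma reduces uniformly to $\sha^{1}(K,\ent/2\ent)=0$, exactly as you say.

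The paper, by contrast, asserts that in the $D_4$ case $\ul{\Autext}(\rG)$ is (a form of) $\rS_3$, and then spends most of the proof establishing $\sha^{1}(K,\rS_3)=0$ and $\sha^{1}(K,{}_{z}\rS_3)=0$ via the dévissage $1\to\ent/3\ent\to\rS_3\to\ent/2\ent\to 1$ and \v{C}ebotarev. That identification with $\rS_3$ would be appropriate for $\mathrm{Spin}_8$ or $\rPGO^{+}_8$ but not for the $\mathrm{SO}$-type group actually under consideration; the extra work the paper does is therefore unnecessary (though of course harmless for the conclusion). Your route avoids this detour entirely. One small remark: the phrase ``over a field of characteristic $\neq 2$'' in your argument is superfluous---a twisted constant group scheme of order $2$ is automatically constant since $\Aut(\ent/2\ent)$ is trivial, independently of the characteristic.
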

\begin{proof}
If $\rA$ is not equal to $\bM_{8}(K)$ or $\rA=\bM_{4}(\rD)$, then
$\ul{\Autext}(\rG)$ is $(\ent/2\ent)_K$, so the local-global
principle for $\ul{\Isomext}(\Psi,\rG)$ holds in this case.

For $\rG$ an inner form, the outer automorphism group
$\ul{\Autext}(\rG)$ is the symmetric group $\rS_3$. Therefore, to
prove the local-global principal for the $\rS_3$-torsor
$\ul{\Isomext}(\Psi,\rG)$, we only need to prove
$\sha^1(K,\rS_3)=0$. Consider the exact sequence:
$$ (1)\ \ 0\ra\ent/3\ent\ra\rS_3\ra\ent/2\ent\ra 0.$$
From the above exact sequence, we get the following exact sequence
$$0\ra\ent/3\ent\ra\rS_3\ra\ent/2\ent\ra\rH^{1}(K,\ent/3\ent)\ra\rH^{1}(K,\rS_3)\ra\rH^{1}(K,\ent/2\ent).$$
Since the map from $\rS_3$ ro $\ent/2\ent$ is surjective, we have
$$0\ra\rH^{1}(K,\ent/3\ent)\ra\rH^{1}(K,\rS_3)\ra\rH^{1}(K,\ent/2\ent).$$
However, the group $\sha^{1}(K,\ent/2\ent)=0$, so the set
$\sha^{1}(K,\rS_3)$ is in the image of $\rH^{1}(K,\ent/3\ent)$.

Recall that $\sG$ is the absolute Galois group of $K$. Note that
$\rH^{1}(K,\ent/3\ent)=\Hom_{gr}(\sG,\ent/3\ent)$, where
$\Hom_{gr}(\sG,\ent/3\ent)$ is the set of continuous homomorphisms
from $\sG$ to $\ent/3\ent$. Suppose
$\alpha\in\Hom_{gr}(K,\ent/3\ent)$ is mapped into
$\sha^{1}(K,\rS_3)$. Then since the symmetric group $\rS_n$ is
surjective to the group $\ent/2\ent$ for each place $v\in\Omega_K$,
we have
$$0\ra\rH^{1}(K_v,\ent/3\ent)\ra\rH^{1}(K_v,\rS_3)\ra\rH^{1}(K_v,\ent/2\ent).$$
Therefore, the homomorphism $\alpha$ is in $\sha^{1}(K,\ent/3\ent)$.
Now we claim that $\sha^{1}(K,\ent/3\ent)$ is trivial, i.e. for each
$\alpha\in\Hom_{gr}(K,\ent/3\ent)$, if $\alpha$ is in
$\sha^{1}(K,\ent/3\ent)$, then $\alpha$ is the trivial homomorphism.
Suppose that $\alpha$ is not the trivial homomorphism. Let $\sH$ be
the kernel of $\alpha$. Let $L=({K^s})^{\sH}$. Then $L$ is a Galois
extension of $K$ with Galois group $\ent/3\ent$ and we can regard it
as a $\ent/3\ent$-torsor. Since the homomorphism $\alpha$ is in
$\sha^{1}(K,\ent/3\ent)$, $L_v$ is split completely over $K_v$ for
each place $v\in\Omega_{K}$. This contradicts Chebotarev's density
Theorem! Therefore, $\alpha$ is the trivial homomorphism and
$\sha^{1}(K,\ent/3\ent)$ is trivial. Since $\sha^{1}(K,\rS_n)$ is in
the image of $\sha^{1}(K,\ent/3\ent)$, $\sha^{1}(K,\rS_n)$ is also
trivial.

For $\rA=\bM_{4}(\rD)$ and $\rG$  an outer form, let $L$ be the
splitting field of the discriminant of $\rA$. We choose a splitting
$z$ from $\ent/2\ent$ to $\rS_3$ and we twist the sequence $(1)$ by
$z$. Since $z$ acts on $\ent/3\ent$ as $-1$, we have the exact
sequence:
$$(2)\ \ 0\ra\rR^{(1)}_{L/K}(\ent/3\ent)\ra\ _{z}(\rS_3)\ra\ent/2\ent\ra 0,$$
where we regard $\ent/3\ent$ as a constant group scheme. Note that
$z$ is invariant under the twisting because $\ent/2\ent$ is
commutative. Therefore, the sequence still splits. Consider the
exact sequence derived from (2):
\begin{center}
$ 0\ra \rR^{(1)}_{L/K}(\ent/3\ent)(K)\ra\ _{z}(\rS_3)(K)\ra
(\ent/2\ent)(K)\ra \rH^{1}(K,\rR^{(1)}_{L/K}(\ent/3\ent))
\ra\rH^{1}(K,\ _{z}(\rS_3))
\ra\rH^{1}(K,\rR^{(1)}_{L/K}(\ent/2\ent)).$
\end{center}
Since the sequence (2) splits, $_{z}(\rS_3)(K)$ is mapped onto
$(\ent/2\ent)(K)$. Hence we have the exact sequence
$$0\ra\rH^{1}(K,\rR^{(1)}_{L/K}(\ent/3\ent))
\ra\rH^{1}(K,\ _{z}(\rS_3))
\ra\rH^{1}(K,\rR^{(1)}_{L/K}(\ent/2\ent)).$$  Since
$\sha^{1}(K,\ent/2\ent)=0$, the set $\sha^{1}(K,\ _{z}(\rS_3)(K))$
is in the image of  $\rH^{1}(K,\rR^{(1)}_{L/K}(\ent/3\ent)).$ Again,
because the exact sequence (2) splits, for each place
$v\in\Omega_K$, we have
$$0\ra\rH^{1}(K_v,\rR^{(1)}_{L_v/K_v}(\ent/3\ent))
\ra\rH^{1}(K_v,\ _{z}(\rS_3))
\ra\rH^{1}(K_v,\rR^{(1)}_{L_v/K_v}(\ent/2\ent)).$$ Therefore,
$\sha^{1}(K,\ _{z}(\rS_3)(K))$ is in the image of
$\sha^{1}(K,\rR^{(1)}_{L/K}(\ent/3\ent))$. Now, we only need to
prove $\sha^{1}(K,\rR^{(1)}_{L/K}(\ent/3\ent))=0$. By Shapiro's
Lemma, we have
$\sha^{1}(K,\rR_{L/K}(\ent/3\ent))=\sha^{1}(L,\ent/3\ent).$ As we
have proved above, the group $\sha^{1}(L,\ent/3\ent)=0$, so
$\sha^{1}(K,\rR_{L/K}(\ent/3\ent))=0$. Consider the following exact
sequence
$$0\ra\rR^{(1)}_{L/K}(\ent/3\ent)\ra\rR_{L/K}(\ent/3\ent)\xrightarrow{\Nr}\ent/3\ent\ra 0.$$
In our case, the norm map $\Nr$ from
$\rR_{L/K}(\ent/3\ent)(K)=\ent/3\ent$ to $\ent/3\ent$ is just the
multiplication by 2. Hence the norm map $\Nr$ is a surjective map
from $\rR_{L/K}(\ent/3\ent)(K)$ to $(\ent/3\ent)(K)$. Therefore, the
map from  $\rH^{1}(K,\rR^{(1)}_{L/K}(\ent/3\ent))$ to
$\rH^{1}(K,\rR_{L/K}(\ent/3\ent))$ is injective. Hence,
$\sha^{1}(K,\rR^{(1)}_{L/K}(\ent/3\ent))$ is also trivial.
Therefore, in both cases, the local-global principle for
$\ul{\Isomext}(\Psi,\rG)$ holds.

\end{proof}

Now we have all the ingredients to prove Theorem~\ref{2.7}.
\begin{proof}[Proof of Theorem~\ref{2.7}]
Suppose that $(\rE\otimes K_v,\sigma\otimes \id_{K_v})$ can be
embedded into $(\rA\otimes K_v,\tau\otimes\id_{K_v})$ over $K_v$ for
each $v\in\Omega_K$, i.e., $\rE(\rG,\Psi)(K_v)\neq\emptyset$ for
each $v\in\Omega_K$. Then we have
$\ul{\Isomext}(\Psi,\rG)(K_v)\neq\emptyset$ for each place $v$. By
Lemma~\ref{2.16}, we can fix an orientation $u$.

By Lemma~\ref{2.9}, the oriented embedding functor $\fE(\rG,\Psi,u)$
has a $K_v$-point for each $v\in\Omega_K$.  By
Proposition~\ref{2.3}, the only obstruction for $\fE(\rG,\Psi,u)$ to
satisfy the local-global principle lies in $\sha^2(K,\sico(\rT))$.
As the proof of Theorem~\ref{2.5} shows, $\sha^2(K,\sico(\rT))$
vanishes if there is a place $v\in\Omega_K$ such that for all $i$,
$d_i\in\rF_i^{{\times}^{2}}$ if and only if $d_i\in(\rF_{i}\otimes_K
K_v)^{{\times}^{2}}$. Therefore, the oriented embedding functor
$\rE(\rG,\Psi,u)$ satisfies the local-global principle in this case.
By Theorem~\ref{1.4} and Proposition~\ref{1.15}, the local-global
principle for the existence of $K$-embeddings from $(\rE,\sigma)$
into $(\rA,\tau)$ holds.
\end{proof}

In the following, we provide an example when the local-global
principle for the embedding functor fails.
\begin{example}\label{2.13}
Let K be $\rat(\sqrt{-1})$, and $\rF=K[x]/(x^2-3)$. Let
$\rE'=\rF\times\rF\times\rF$ and $\rE=\rE'\times\rE'$. Let $\sigma$
be the $K$-automorphism of $\rE$ which exchanges the two copies of
$\rE'$. Then $\sigma$ is an involution and $\rE^{\sigma}\simeq\rE'$.
With the notations defined in Section~\ref{s1.3}, we know that the
right $\ul{\Aut}(\rE_0,\sigma_0)$-torsor
$\ul{\Isom}((\rE_0,\sigma_0),(\rE,\sigma))$ defines a class in
$\rH^{1}(K, \rS_6)$, where $\rS_6$ is contained in the Weyl group of
$\Psi_0$ (\cite{Bou} Plan. IV).  Let $\Psi$ be the corresponding
root datum. Since $\Psi$ comes from a class of $\rH^{1}(K, \rS_6)$,
$\Psi$ is an inner form of $\Psi_0$.

Let us fix four places of $K$ such that $\rF$ is not split over
$K_v$. For example, we can take a place $v$ which corresponds to a
prime number of the form $7+12l$, where $l$ is a positive integer.
By Gauss reciprocity, $x^2-3$ is not split at $v$. Let $v_1$,...,
$v_4$ be the four places mentioned above. At these places, the
corresponding Tits index of $\Psi$ is the following
\begin{center}
\begin{picture}(120,20)
\put(00,0){\line(1,0){20}}\put(20,0){\line(1,0){20}}
\put(40,0){\line(1,0){20}}\put(60,0){\line(2,1){20}}\put(60,0){\line(2,-1){20}}
\put(0,0){\circle*{3}} \put(20,0){\circle*{3}}
\put(40,0){\circle*{3}} \put(60,0){\circle*{3}}
\put(80,10){\circle*{3}}\put(80,-10){\circle*{3}}
\put(20,0){\circle{10}} \put(60,0){\circle{10}}
\put(80,10){\circle{10}}
\end{picture}
\end{center}

Consider the following central isogeny:
\begin{center}
$\xymatrix@C=0.5cm{
  1 \ar[r] &  \mu_2\times\mu_2 \ar[r] &  \spin_6 \ar[r] & \pso_6 \ar[r] &
  1}$.
\end{center}
Since we have no real places, by~\cite{San} Corollary 4.5, we have
$$\rH^{1}(K,\pso_6)\xrightarrow{\sim}\rH^{2}(K,\mu_2\times\mu_2).$$
Also at each finite place $v$, we have
$$\rH^{1}(K_v,\pso_6)\xrightarrow{\sim}\rH^{2}(K_{v},\mu_2\times\mu_2)\simeq\ent/2\ent\times\ent/2\ent.$$
(ref.~\cite{K} Chap. IV, Thm. 1 and Thm. 2).
 Let $[\xi_i]$ be the class in $\rH^{1}(K_{v_i},\pso_6)$  corresponding to $(1,0)$ in $\ent/2\ent\times\ent/2\ent$, for
 i=1, 2. Let $[\xi_i]$ be the class in $\rH^{1}(K_{v_i},\pso_6)$ corresponding to $(0,1)$ in $\ent/2\ent\times\ent/2\ent$, for
 i=3, 4. For the other places $v\in\Omega_K\setminus\{v_1,\ v_2,\ v_3,\ v_4\}$, we let $[\xi_v]$ in $\rH^{1}(K_{v},\pso_6)$
correspond to $(0,0)\in\ent/2\ent\times\ent/2\ent$. By
Brauer-Hasse-Noether Theorem, we know that there exists a class
$[\xi]$ in $\rH^{1}(K,\pso_6)$ such that the image of $[\xi]$ in
$\rH^{1}(K_{v},\pso_6)$ is $[\xi_v]$ for each $v\in\Omega_K$.

Choose a cocycle $\zeta$ which represents the class $[\xi]$. Let
$\rG$ be the $K$-form of $\rG_0$ twisted by $\zeta$. Since $\rG$ and
$\Psi$ are  inner forms of $\rG_0$ and $\Psi_0$ respectively, we can
fix an orientation $u$ of $\Psi$ with respect to $\rG$. Without loss
of generality, we can choose the orientation $u$ such that
$u(\Delta^{\circ}(\Psi_{K_{v_1}}))\supseteq
(\Delta^{\circ}(\rG_{K_{v_1}}))$. Note that there is no orientation
$u'$ such that  $u'(\Delta^{\circ}(\Psi_{K_{v}}))$ contains
$\Delta^{\circ}(\rG_{K_{v}})$ for both $v=v_1$  and $v=v_3$.

For each place $v\in\Omega_K\setminus\{v_3,\ v_4\}$, we have
$u(\Delta^{\circ}(\Psi_{K_{v}}))\supseteq
\Delta^{\circ}(\rG_{K_{v}})$, so by Theorem~\ref{1.10}, there is a
$K_v$ point of $\fE(\rG,\Psi,u)$. On the one hand, for the place
$v\in\{v_3,\ v_4\}$, by Theorem~\ref{1.10}, $\fE(\rG,\Psi,u)(K_v)$
is empty. Therefore, the embedding functor $\fE(\rG,\Psi,u)$ has no
$K$-points. For the same reason, we conclude
$\fE(\rG,\Psi,u')(K)=\emptyset$ for the other orientation $u'$.
Hence $\fE(\rG,\Psi)$ has no $K$-point. However, at each place $v$,
we can always find an orientation
$u_v\in\ul{\Isomext}(\Psi,\rG)(K_v)$ such that
$u_v(\Delta^{\circ}(\Psi_{K_{v}}))\supseteq
\Delta^{\circ}(\rG_{K_{v}})$, so the embedding functor
$\fE(\rG,\Psi)$ has a $K_v$-point for each place $v$. Therefore, the
local-global principle fails in this case.

\end{example}
\end{paragraph}
\end{subsubsection}
\begin{subsubsection}{Involutions of the second kind}
In this section,  $\rA$ is of degree $n$ over $K$ and $\tau$ is of
the second kind. The corresponding reductive group $\rG$ is of type
$A_{n-1}$. In this case, $K$ and $k$ are no longer the same.

Recall that $i_{\rT}:\rR_{K/k}^{(1)}(\gm_{m,K})\ra\rT$ (resp.
$i_{\rG}:\rR_{K/k}^{(1)}(\gm_{m,K})\ra\rG$) denote the embedding
defined by the $K$-structure morphism of $\rE$ (resp. $\rA$). We
first interpret the $K$-morphism condition into an orientation.
Namely, we  show that the following are equivalent:
\begin{enumerate}
  \item  A $k$-embedding $f$ is a $K$-embedding.
  \item  $f\circ i_\rT=i_\rG$.
  \item  $f$ is a $k$-point of $\fE(\rG,\Psi,u)$ for
some particular orientation $u$
\end{enumerate}

Using the following lemma, we can concretely define the orientation
$u$ mentioned above.
\begin{lemma}\label{2.11}
Let $\rZ=\rR^{(1)}_{K/k}(\gm_m)$ and $\Psi_\rZ=\Phi(\rZ,\rZ)$. Then
\begin{itemize}
  \item [(1)] The natural homomorphism from $\ul{\Aut}(\Psi)$ to $\ul{\Aut}(\rad(\Psi))$ induces an isomorphism from
  $\ul{\Autext}(\Psi)$ to $\ul{\Aut}(\Psi_\rZ)$.
  \item [(2)] $\ul{\Isomext}(\Psi,\rG)$ is a trivial $\ul{\Aut}(\Psi_\rZ)$-torsor.
\end{itemize}
\end{lemma}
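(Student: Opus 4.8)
The plan is to prove (1) by reducing, after base change to a separable closure, to an explicit computation on the split root datum of type $A_{n-1}$ of a general linear group, and then to deduce (2) formally from (1) together with the two structure embeddings $i_\rT$ and $i_\rG$.

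For (1), I would first note that every reflection $s_\alpha$, $\alpha\in\sR$, acts trivially on $\rad(\Psi)$, since $s_\alpha(x)-x$ lies in the subgroup of $\sM$ generated by $\sR$ for every local section $x$ of $\sM$; hence the functorial morphism $\ul{\Aut}(\Psi)\ra\ul{\Aut}(\rad(\Psi))$ annihilates $\rW(\Psi)$ and factors through a morphism $\ul{\Autext}(\Psi)\ra\ul{\Aut}(\rad(\Psi))$. By Remark~\ref{1.9} the torus attached to $\rad(\Psi)$ is $i_\rT(\rZ)\simeq\rZ$, so $\rad(\Psi)$ is canonically isomorphic to $\Phi(\rZ,\rZ)=\Psi_\rZ$, which identifies the target of this morphism with $\ul{\Aut}(\Psi_\rZ)$. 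Since $\ul{\Autext}(\Psi)$ and $\ul{\Aut}(\Psi_\rZ)$ are \'etale $k$-group schemes, it is enough to check that the morphism is an isomorphism over $k^s$; there $\Psi$ becomes the split root datum of $\rG_{k^s}\simeq\Gl_{n,k^s}$ (Section~\ref{s1.3.1}), and inspection of the root datum of $\Gl_n$ shows that $\ul{\Autext}$ is cyclic of order two, generated by the class of the automorphism $g\mapsto{}^{t}g^{-1}$ lifting the nontrivial diagram automorphism of $A_{n-1}$, and that this automorphism acts on the character group of the central $\gm_m$ by $-1$. Hence $\ul{\Autext}(\Psi)_{k^s}\ra\ul{\Aut}(\Psi_\rZ)_{k^s}$ is the nontrivial homomorphism $\ent/2\ent\ra\ent/2\ent$, an isomorphism, and (1) follows by descent.

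For (2), I would exhibit a canonical $k$-point of $\ul{\Isomext}(\Psi,\rG)$. By Remark~\ref{1.9}, $i_\rG$ identifies $\rZ$ with the central torus of $\rG$, so for any maximal torus $\rT'$ of $\rG$ the canonical identification of $\rad(\Phi(\rG,\rT'))$ with the root datum of the central torus, composed with $i_\rG$, identifies $\rad(\Phi(\rG,\rT'))$ with $\Phi(\rZ,\rZ)=\Psi_\rZ$ independently of $\rT'$, while $i_\rT$ identifies $\rad(\Psi)$ with $\Psi_\rZ$ as in (1). Passing to radicals then defines a morphism $\varphi\colon\ul{\Isomext}(\Psi,\rG)\ra\ul{\Aut}(\Psi_\rZ)$, well defined because $\rW$ acts trivially on radicals and independent of $\rT'$ by the transporter construction of $\ul{\Isomext}(\Psi,\rG)$. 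Using the isomorphism of (1) to regard $\ul{\Isomext}(\Psi,\rG)$ as a right $\ul{\Aut}(\Psi_\rZ)$-torsor, $\varphi$ becomes an equivariant morphism onto the trivial right $\ul{\Aut}(\Psi_\rZ)$-torsor $\ul{\Aut}(\Psi_\rZ)$; any morphism of torsors is an isomorphism, so $\ul{\Isomext}(\Psi,\rG)$ is a trivial $\ul{\Aut}(\Psi_\rZ)$-torsor, which is (2). The distinguished orientation $\varphi^{-1}(\id)$ produced in this way is exactly the image in $\ul{\Isomext}(\Psi,\rG)$ of any $f\in\fE(\rG,\Psi)(k)$ satisfying $f\circ i_\rT=i_\rG$, which is what links this lemma to Remark~\ref{1.12}.

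The step I expect to be the main obstacle is the surjectivity in (1): that the diagram automorphism of type $A_{n-1}$ acts by $-1$ on the radical line of $\Gl_n$, equivalently that $g\mapsto{}^{t}g^{-1}$ restricts to $z\mapsto z^{-1}$ on the centre of $\Gl_n$. Once that normalisation is pinned down, everything else is formal.
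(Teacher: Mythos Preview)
Your proposal is correct and follows essentially the same route as the paper: factor through $\ul{\Autext}(\Psi)$ using that $\rW(\Psi)$ acts trivially on the radical, check the isomorphism over $k^s$ where $\Psi$ is the root datum of $\Gl_n$, and for (2) build the trivialising morphism $\ul{\Isomext}(\Psi,\rG)\to\ul{\Aut}(\Psi_\rZ)$ by passing to radicals via $i_\rT$ and $i_\rG$. The only substantive difference is in how injectivity in (1) is obtained: you invoke the known fact $\ul{\Autext}(\Gl_n)\simeq\ent/2\ent$ (essentially Lemma~\ref{1.13} or the standard $\ul{\Aut}(\Gl_n)=\mathrm{PGL}_n\rtimes\ent/2\ent$) and then note the map to $\ent/2\ent$ is nonzero, whereas the paper proves injectivity directly by an explicit lattice computation in $\rE_\Delta(\Psi)$, writing $h(e_1)=\sum a_ie_i$ and solving for the $a_i$ under the constraint that $h$ fixes $\rad(\Psi)$.
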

\begin{proof}
Let $j_\rT$  be the homomorphism from the character group of $\rT$
to the character group of $\rZ$ induced by $i_\rT$. Then
$j_\rT$ induces an isomorphism between $\rad(\Psi)$ and $\Psi_\rZ$
and we have a canonical way to identify $\ul{\Aut}(\rad(\Psi))$ and
$\ul{\Aut}(\Psi_\rZ)$. Consider the natural morphism from
$\ul{\Aut}(\Psi)$ to $\ul{\Aut}(\rad(\Psi))$. Since the Weyl group
acts trivially on $\ul{\Aut}(\rad(\Psi))$, we have a natural
morphism $\eta$ from $\ul{\Autext}(\Psi)$ to
$\ul{\Aut}(\rad(\Psi))$. Note that since $\rZ$ is a torus of
dimension one, $\ul{\Aut}(\Psi_\rZ)\simeq\ent/2\ent$. Hence,
$\ul{\Aut}(\rad(\Psi))\simeq\ent/2\ent$.

To prove $\eta$ is an isomorphism, we only need to check it over
$k^{s}$, so we can assume that $\Psi$ is split and of type
$(\rM,\rM^{\vee},\rR,\rR^{\vee})$.

We first prove the injectivity of $\eta$. By the definition of
$\Psi$, we can find a basis $\{e_i\}_{i=1,...,n}$ of $\rM$ such that
$\Delta=\{e_i-e_{i+1}\}_{i=1,...,n-1}$ is a system of simple roots
of $\rR$ (ref.~\cite{Bou}, Plan. I). By Proposition~\ref{0.5},
$\Autext(\Psi)\simeq\rE_\Delta(\Psi)$. Let $h\in\rE_\Delta(\Psi)$
and suppose that $h$ acts on $\rad(\Psi)$ trivially. We claim that
$h$ acts on $\Psi$ trivially.

To see this, we note that $h$ induces an isomorphism on the Dynkin
diagram, so $h$ can only act on $\Delta$ trivially or exchange
$e_i-e_{i+1}$ with $e_{n-i}-e_{n-i+1}$.

Let $h(e_1)=\overset{n}{\underset{i=1}{\sum}} a_ie_i$.

Suppose that $h$ exchanges $e_i-e_{i+1}$ with $e_{n-i}-e_{n-i+1}$.
Since
\begin{center}
$\alpha^{\vee}(e_1)=h(\alpha)^{\vee}(h(e_1))$ for all
$\alpha^{\vee}\in\rR^{\vee}$, \end{center} we have
\begin{align*}
a_{n-1}&=a_n+1, \\
a_i&=a_{n-1},\ i=1,...,n-1.
\end{align*}
Besides, $h$ acts on $\rad(\Psi)$ trivially, so
$e_1-h(e_1)=\overset{n}{\underset{i=1}{\sum}} b_i(e_i-e_{i+1})$. By
summing up the coefficients, we have
$\overset{n}{\underset{i=1}{\sum}} a_i=1$ and hence $na_{n-1}=2$.
Since $a_i$'s are integers, the only possibility is $n=2$ and
$a_1=1$. In this case, $h(e_1)=e_1$ and $h(e_1-e_2)=e_1-e_2$, so $h$
is identity.

Now suppose that $h$ acts on $\Delta$ trivially. Then by the same
reasoning, we have\begin{align*}
a_1&=a_2+1, \\
a_2&=a_i,\ i=2,...,n.
\end{align*}
Besides, $h$ acts on $\rad(\Psi)$ trivially, so
$\overset{n}{\underset{i=1}{\sum}} a_i=1$. Therefore, $a_1=1$ and
$a_i=0$ for $i\neq 1$, which means $h$ is the identity.

This proves that $\eta$ is injective.

On the other hand, since the $-1$ map on $\Psi$ induces the $-1$ map
on $\rad(\Psi)$ and $\ul{\Aut}(\rad(\Psi))\simeq\ent/2\ent$, we have
$\eta$ is surjective and hence an isomorphism.

To prove (2), we choose a maximal torus $\rT'$ of $\rG$. Note that
since $K$ is in the center of $\rA$, $i_\rG(\rZ)$ is in the center
of $\rG$ and hence $i_\rG(\rZ)$ is in $\rT'$. Let $j_\rG$  be the
map between character groups of $\rT'$ and $\rZ$ induced by $i_\rG$.
Let $\Psi_\rG=\Phi(\rG,\rT')$. Then we have a natural morphism from
$\ul{\Isom}(\Psi,\Psi_\rG)$ to
$\ul{\Isom}(\rad(\Psi),\rad(\Psi_\rG))$. Since the Weyl group
$\rW(\Psi)$ acts trivially on $\rad(\Psi)$, the above morphism
induces an morphism from $\ul{\Isomext}(\Psi,\Psi_\rG)$ to
$\ul{\Isom}(\rad(\Psi),\rad(\Psi_\rG))$, which is an isomorphism.
Through $j_\rT$ and $j_\rG$, we have an isomorphism  from
$\ul{\Isom}(\rad(\Psi),\rad(\Psi_\rG))$ to $\ul{\Aut}(\Psi_\rZ)$,
which sends $f^{\Psi}$ to $j_\rG\circ f^{\Psi}\circ j_\rT^{-1}$.
Therefore, we have
$$\zeta:\ul{\Isomext}(\Psi,\Psi_\rG)\ra\ul{\Aut}(\Psi_\rZ).$$ Since
$\ul{\Autext}(\Psi)\simeq\ul{\Aut}(\Psi_\rZ)$ and $\zeta$ is
compatible with the $\ul{\Aut}(\Psi_\rZ)$-action, $\zeta$ is an
isomorphism between $\ul{\Aut}(\Psi_\rZ)$-principal homogeneous
spaces. Since there is a canonical isomorphism from
$\ul{\Isomext}(\Psi,\Psi_\rG)$ to $\ul{\Isomext}(\Psi,\rG)$, the
result then follows.
\end{proof}

With the  notations defined in the above lemma, we let
$u\in\ul{\Isomext}(\Psi,\rG)(k)$ be $\zeta^{-1}(1)$. Then for a
$k$-embedding $f$, we see that $f\circ i_\rT=i_\rG$ if and only if
$f$ is a $k$-point of $\fE(\rG,\Psi,u).$ Hence again, we can reduce
the embedding problem to the existence of rational points of
$f\in\fE(\rG,\Psi,u)$ and reformulate Prasad-Rapinchuk's Theorem as
the following (\cite{PR1}, Thm. 4.1):

\begin{theo}
Suppose that $\tau$ is an involution of the second type. If $\rE$ is
a field, then the local-global principle for the $K$-embeddings from
$(\rE,\sigma)$ to $(\rA,\tau)$ holds.
\end{theo}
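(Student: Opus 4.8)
The plan is to run the arithmetic machinery of Section~2 exactly as in the proofs of Theorem~\ref{2.5} and Theorem~\ref{2.7}, the only genuinely new ingredient being a vanishing of a Shafarevich group that uses the hypothesis that $\rE$ is a field. First I would reduce to the oriented embedding functor. By Lemma~\ref{2.11} and the discussion following it, for the distinguished orientation $u=\zeta^{-1}(1)\in\ul{\Isomext}(\Psi,\rG)(k)$ a $k$-embedding $f\colon(\rE,\sigma)\ra(\rA,\tau)$ is a $K$-embedding if and only if the $k$-point of $\fE(\rG,\Psi)$ attached to $f$ by Theorem~\ref{1.4}(2) lies in $\fE(\rG,\Psi,u)(k)$; the same equivalence holds after base change to any completion $k_v$, and by Remark~\ref{1.12} the existence of a $k_v$-embedding already forces the existence of a $K_v$-embedding. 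Hence the local-global principle for $K$-embeddings of $(\rE,\sigma)$ into $(\rA,\tau)$ is equivalent to the local-global principle for rational points of the single oriented embedding functor $\fE(\rG,\Psi,u)$. By Proposition~\ref{2.3} the only obstruction to the latter lies in $\sha^{2}(k,\sico(\rT))$, so the whole theorem reduces to proving $\sha^{2}(k,\sico(\rT))=0$ when $\rE$ is a field. (When $\rT$ happens to be anisotropic at some place this is already covered by Theorem~\ref{2.12}(2), but that is not automatic here, so the general case needs the computation below.)

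Next I would make $\sico(\rT)$ explicit. Since $\rE$ is a field and $\sigma|_{k}=\id$ while $\sigma|_{K}\neq\id$, the fixed algebra $\rF:=\rE^{\sigma}$ is a field with $\rE=\rF\otimes_{k}K$ and $[\rE:\rF]=[K:k]=2$, so $\rT=\rU(\rE,\sigma)^{\circ}=\rR_{\rF/k}\bigl(\rR^{(1)}_{\rE/\rF}(\gm_m)\bigr)$. Because $\sico(\rG)=\der(\rG)=\mathrm{SU}(\rA,\tau)$ is simply connected and the reduced norm identifies $\rG/\der(\rG)$ with $\rR^{(1)}_{K/k}(\gm_m)$, the torus $\sico(\rT)$ (the maximal torus of $\sico(\rG)$ corresponding to $\rT$, in the sense of Corollary~\ref{0.8}) is the kernel of the reduced norm restricted to $\rT$, so there is an exact sequence of $k$-tori
\[1\lra\sico(\rT)\lra\rT\xrightarrow{\ N_{\rE/K}\ }\rR^{(1)}_{K/k}(\gm_m)\lra1 .\]
Moreover the $K$-structure inclusion $i_{\rT}\colon\rR^{(1)}_{K/k}(\gm_m)\hookrightarrow\rT$ composed with $N_{\rE/K}$ is multiplication by $n=[\rE:K]$, so $\rT$ is isogenous to $\rR^{(1)}_{K/k}(\gm_m)\times\sico(\rT)$, and dually $X^{*}(\rT)$ is a finite-index submodule of $X^{*}(\rR^{(1)}_{K/k}(\gm_m))\oplus X^{*}(\sico(\rT))$.

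Finally I would prove the vanishing. Two intermediate facts use that $\rE$ (hence $\rF$) is a field. First, $\sha^{2}(k,\rT)=0$: by Shapiro's lemma $\sha^{2}(k,\rT)=\sha^{2}(\rF,\rR^{(1)}_{\rE/\rF}(\gm_m))$, and the norm sequence $1\to\rR^{(1)}_{\rE/\rF}(\gm_m)\to\rR_{\rE/\rF}(\gm_m)\to\gm_m\to1$ together with Hilbert~$90$ embeds this group into $\sha^{2}(\rE,\mathrm{Br})\subseteq\sha^{2}(\rE,\gm_m)$, which vanishes by Albert--Brauer--Hasse--Noether because $\rE$ is a field. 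Second, $\sha^{1}(k,\rR^{(1)}_{K/k}(\gm_m))=0$ by the Hasse norm theorem for the quadratic extension $K/k$; dually $\sha^{2}(k,X^{*}(\rR^{(1)}_{K/k}(\gm_m)))=0$. Translating the first fact by Tate--Nakayama duality gives $\sha^{1}(k,X^{*}(\rT))=0$; combining this with the second fact, the finite-index relation coming from the isogeny, and the elementary vanishing $\sha^{1}(k,X^{*}(\rR^{(1)}_{K/k}(\gm_m)))=0$ (a one-line Chebotarev check on the quadratic character of $K/k$), one extracts $\sha^{1}(k,X^{*}(\sico(\rT)))=0$, hence $\sha^{2}(k,\sico(\rT))=0$ by duality. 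I expect the main obstacle to be exactly this last extraction: the Shafarevich groups of the three character lattices do not form an exact sequence, so the argument has to be carried out at the level of cocycles together with Chebotarev density — the same style of bookkeeping as in the proof of Theorem~\ref{2.5} — and some care is needed with the finite group scheme occurring in the isogeny.
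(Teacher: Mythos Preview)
Your reduction to the oriented embedding functor $\fE(\rG,\Psi,u)$ via Lemma~\ref{2.11}, Remark~\ref{1.12} and Proposition~\ref{2.3} is exactly what the paper does, and your identification of $\sico(\rT)$ as the kernel of $N_{\rE/K}$ in the exact sequence
\[
1\lra\sico(\rT)\lra\rT\lra\rR^{(1)}_{K/k}(\gm_m)\lra 1
\]
is correct. The divergence is in how you try to prove $\sha^{2}(k,\sico(\rT))=0$.

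The paper stays on the torus side and uses the long exact sequence of this very short exact sequence directly. Since $\sha^{2}(k,\rT)=\sha^{2}(\rE^{\sigma},\rR^{(1)}_{\rE/\rE^{\sigma}}(\gm_m))=0$ (your argument for this is fine), any class in $\sha^{2}(k,\sico(\rT))$ is the image of some $x\in\rH^{1}(k,\rR^{(1)}_{K/k}(\gm_m))=k^{\times}/\Nr_{K/k}(K^{\times})$; the condition that it lie in $\sha^{2}$ says that $x$ is everywhere locally in $\Nr_{\rE^{\sigma}/k}(\rE^{\sigma}_v)\cdot\Nr_{K/k}(K_v)$. The decisive arithmetic input is then the \emph{Hasse multinorm principle} for the pair of extensions $\rE^{\sigma}/k$ and $K/k$ (valid because $K/k$ is cyclic; this is \cite{PlR}, Prop.~6.11), which promotes this local condition to the global one and kills the class.

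Your route via Poitou--Tate duality and the isogeny $\rR^{(1)}_{K/k}(\gm_m)\times\sico(\rT)\to\rT$ is not wrong in spirit, but it does not close. Knowing $\sha^{1}(k,X^{*}(\rT))=0$, $\sha^{1}(k,X^{*}(\rR^{(1)}_{K/k}))=0$ and $\sha^{2}(k,X^{*}(\rR^{(1)}_{K/k}))=0$ is not enough to ``extract'' $\sha^{1}(k,X^{*}(\sico(\rT)))=0$: a class there lifts to some $\beta\in\rH^{1}(k,X^{*}(\rT))$ whose localizations lie in the image of $\rH^{1}(k_v,X^{*}(\rR^{(1)}_{K/k}))$, but a single global $\gamma\in\rH^{1}(k,X^{*}(\rR^{(1)}_{K/k}))\simeq\ent/2\ent$ need not hit all these $\beta_v$ simultaneously. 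Resolving this coherence problem is precisely where the multinorm principle enters; Chebotarev alone (which you invoke) gives the ordinary Hasse norm theorem for a single extension, not the multinorm statement for two extensions. So the gap you flag as ``the main obstacle'' is real, and filling it amounts to reproving the input the paper uses directly. I would abandon the dual chase and run the long exact sequence on the torus side, then cite the multinorm principle.
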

\begin{proof}
By Lemma~\ref{2.11}, we can fix an orientation $u$ such that $f$ is
a $k$-point of $\fE(\rG,\Psi,u)$ if and only if $f$ is a
$K$-embedding. By Remark~\ref{1.12}, $\fE(\rG,\Psi,u)(k_v)$ is
nonempty if and only if $\fE(\rG,\Psi)(k_v)$ is nonempty. Hence, it
suffices to show that the local-global principal for
$\fE(\rG,\Psi,u)$ holds. By Theorem~\ref{2.2}, we only need to show
that $\sha^2(k,\sico(\rT))$ vanishes. Consider the exact sequence
\begin{center}
 $\xymatrix{
0\ar[r]&\sico(\rT)\ar[r]
  &\rR_{\rE^{\sigma}/k}(\rR^{(1)}_{\rE/\rE^{\sigma}}(\gm_m))
\ar[r] &\rR^{(1)}_{K/k}(\gm_m)\ar[r]& 0}$,
\end{center}
where we derive the long exact sequence:
\begin{center}
 $\xymatrix{
\ar[r]& \rH^1(k,\sico(\rT)\ar[r])\ar[r]&
\rH^{1}(k,\rR_{\rE^{\sigma}/k}(\rR^{(1)}_{\rE/\rE^{\sigma}}(\gm_m)))
\ar[r]& \rH^{1}(k,\rR^{(1)}_{K/k}(\gm_m)\ar[r])& \\
\ar[r]&\rH^{2}(k,\sico(\rT))\ar[r]&
\rH^{2}(k,\rR_{\rE^{\sigma}/k}(\rR^{(1)}_{\rE/\rE^{\sigma}}(\gm_m)))...}$.
\end{center}
Since
$\sha^{2}(k,\rR_{\rE^{\sigma}/k}(\rR^{(1)}_{\rE/\rE^{\sigma}}(\gm_m)))=\sha^{2}(\rE^{\sigma},\rR^{(1)}_{\rE/\rE^{\sigma}}(\gm_m))=0$,
we know that $\sha^{2}(k,\sico(\rT))$ is in the image of
$\rH^{1}(k,\rR^{(1)}_{K/k}(\gm_m))=k^{\times}/\Nr_{K/k}(K^{\times})$,
where $\Nr_{K/k}$ denotes the norm map from $K$ to $k$. Let $x$ be
an element of $k^{\times}$ and suppose that $x$ is mapped to
$\sha^{2}(k,\sico(\rT))$. At each place $v\in\Omega_k$, let $x_v$ be
the image of $x$ in $k_v$. Since $x$ is mapped to
$\sha^{2}(k,\sico(\rT))$,  $x$ belongs to $k^{\times}\bigcap
\Nr_{\rE^{\sigma}/k}(\rI_{\rE^{\sigma}})\Nr_{K/k}(\rI_K)$, where
$\rI_{\rE^{\sigma}}$ and $\rI_{K}$ are id\`ele groups of
$\rE^{\sigma}$ and $K$ respectively. By Hasse multinorm principle
(ref.~\cite{PlR} Prop. 6.11), x belongs to
$\Nr_{\rE^{\sigma}/k}({\rE^{\sigma}})\Nr_{K/k}(K)$,  so $x$ is in
the image of
$\rH^{1}(k,\rR_{\rE^{\sigma}/k}(\rR^{(1)}_{\rE/\rE^{\sigma}}(\gm_m)))$.
Hence $x$ is mapped to 0 in $\sha^{2}(k,\sico(\rT))$, which implies
$\sha^{2}(k,\sico(\rT))=0$. The theorem then follows.
\end{proof}
\end{subsubsection}
\end{subsection}
\end{section}
\subsection*{Acknowledgements}
Thanks to Brian Conrad,  Philippe Gille, and Boris Kunyavskii for
their precious suggestions and comments.

\addcontentsline{toc}{section}{References}
\bibliographystyle{alpha}

\end{document}